\definecolor{light-gray}{gray}{0.60}
\definecolor{lightblue}{rgb}{0.5,0.5,.9}
\theoremstyle{plain}
\newtheorem{theorem}{Theorem}
\newtheorem{proposition}[theorem]{Proposition}
\newtheorem{corollary}[theorem]{Corollary}
\newtheorem{lemma}[theorem]{Lemma}
\newtheorem{fact}[theorem]{Fact}
\theoremstyle{definition}
\newtheorem{definition}[theorem]{Definition}
\newtheorem{example}[theorem]{Example}
\newtheorem{examples}[theorem]{Examples}
\newtheorem{remark}[theorem]{Remark}
\newtheorem{remarks}[theorem]{Remarks}
\newtheorem{question}[theorem]{Question}
\numberwithin{theorem}{section}
\numberwithin{equation}{section}
\newcommand{\C}{\mathcal C}
\newcommand{\NN}{\mathbb{N}}
\newcommand{\ZZ}{\mathbb{Z}}
\newcommand{\RR}{\mathbb{R}}
\newcommand{\HH}{\mathbb{H}}
\newcommand{\PP}{\mathbb{P}}
\renewcommand{\SS}{\mathbb{S}}
\newcommand{\SL}{\mathrm{SL}}
\newcommand{\GL}{\mathrm{GL}}
\newcommand{\SO}{\mathrm{SO}}
\newcommand{\OO}{\mathrm{O}}
\newcommand{\PO}{\mathrm{PO}}
\newcommand{\PSO}{\mathrm{PSO}}
\newcommand{\PSL}{\mathrm{PSL}}
\newcommand{\PGL}{\mathrm{PGL}}
\newcommand{\Sp}{\mathrm{Sp}}
\newcommand{\Hom}{\mathrm{Hom}}
\newcommand{\ie}{i.e.\ }
\newcommand{\eg}{e.g.\ }
\newcommand{\resp}{resp.\ }
\newcommand{\Fr}{\mathrm{Fr}}
\newcommand{\Int}{\mathrm{Int}}
\newcommand{\partiali}{\partial_{\mathrm{i}}}
\newcommand{\partialn}{\partial_{\mathrm{n}}}
\newcommand{\Lambdao}{\Lambda^{\mathsf{orb}}}
\newcommand{\Lambdacon}{\Lambda^{\mathsf{con}}}
\newcommand{\Ccore}{\C^{\mathsf{cor}}}
\newcommand{\cro}[4]{[#1 \!:\! #2 \!:\! #3 \!:\! #4]}
\title{Convex cocompact actions in real projective geometry}
\author{Jeffrey Danciger}
\address{Department of Mathematics, The University of Texas at Austin, 1 University Station C1200, Austin, TX 78712, USA}
\email{jdanciger@math.utexas.edu}
\author{Fran\c{c}ois Gu\'eritaud}
\address{CNRS and IRMA, Universit\'e de Strasbourg, 7 rue Ren\'e Descartes, 67084 Strasbourg Cedex, France}
\email{francois.gueritaud@unistra.fr}
\author{Fanny Kassel}
\address{CNRS and Laboratoire Alexander Grothendieck, Institut des Hautes \'Etudes Scientifiques, Universit\'e Paris-Saclay, 35 route de Chartres, 91440 Bures-sur-Yvette, France}
\email{kassel@ihes.fr}
\thanks{This project received funding from the European Research Council (ERC) under the European Union's Horizon 2020 research and innovation programme (ERC starting grant DiGGeS, grant agreement No 715982).
The authors also acknowledge support from the GEAR Network, funded by the National Science Foundation under grants DMS 1107452, 1107263, and 1107367 (``RNMS: GEometric structures And Representation varieties'').
J.D. was partially supported by an Alfred P. Sloan Foundation fellowship, and by the National Science Foundation under grant DMS 1510254.
F.G. and F.K. were partially supported by the Agence Nationale de la Recherche through the Labex CEMPI (ANR-11-LABX-0007-01) and the grant DynGeo (ANR-16-CE40-0025-01).
Part of this work was completed while F.K. was in residence at the MSRI in Berkeley, California, for the program \emph{Geometric Group Theory} (Fall 2016) supported by NSF grant DMS 1440140, and at the INI in Cambridge, UK, for the program \emph{Nonpositive curvature group actions and cohomology} (Spring~2017) supported by EPSRC grant EP/K032208/1.}
\begin{document}

\begin{abstract}
We study a notion of convex cocompactness for discrete subgroups of the projective general linear group acting (not necessarily irreducibly) on real projective space, and give various characterizations.
A convex cocompact group in this sense need not be word hyperbolic, but we show that it still has some of the good properties of classical convex cocompact subgroups in rank-one Lie groups.
Extending our earlier work \cite{dgk-ccHpq} from the context of projective orthogonal groups, we show that for word hyperbolic groups preserving a properly convex open set in projective space, the above general notion of convex cocompactness is equivalent to a stronger convex cocompactness condition studied by Crampon--Marquis, and also to the condition that the natural inclusion be a projective Anosov representation.
We investigate examples.
\end{abstract}

\maketitle
\tableofcontents

%%%%%%%%%%%%%%%%%%%%%%%%%%%%%%%%%%%%%%%%%%%%%%%%%%%
\section{Introduction}

In the classical setting of semisimple Lie groups $G$ of real rank one, a discrete subgroup of~$G$ is said to be convex cocompact if it acts cocompactly on some nonempty closed convex subset of the Riemannian symmetric space $G/K$ of~$G$.
Such subgroups have been abundantly studied, in particular in the context of Kleinian groups and real hyperbolic geometry, where there is a rich world of examples.
They are known to display good geometric and dynamical behavior.

On the other hand, in higher-rank semisimple Lie groups~$G$, the condition that a discrete subgroup $\Gamma$ act cocompactly on some nonempty convex subset of the Riemannian symmetric space $G/K$ turns out to be quite restrictive: Kleiner--Leeb \cite{kl06} and Quint \cite{qui05} proved, for example, that if $G$ is simple and such a subgroup $\Gamma$ is Zariski-dense in~$G$, then it is in fact a uniform lattice of~$G$.

The notion of an \emph{Anosov representation} of a word hyperbolic group in a higher-rank semisimple Lie group $G$, introduced by Labourie \cite{lab06} and generalized by Guichard--Wienhard \cite{gw12}, is a much more flexible notion, which has earned a central role in higher Teichm\"uller--Thurston theory, see \eg \cite{biw14,bcls15,klp-survey,ggkw17,bps}.
Anosov representations are defined, not in terms of convex subsets of the Riemannian symmetric space $G/K$, but rather in terms of a dynamical condition for the action on a certain flag variety, \ie on a compact homogeneous space $G/P$.
This dynamical condition guarantees many desirable analogies with convex cocompact subgroups in rank one: see \eg \cite{lab06, gw12, klp14, klp18, klp-survey}.
It also allows for the definition of certain interesting geometric structures associated to Anosov representations: see \eg \cite{gw08, gw12, klp18, ggkw17, ctt19}.
However, natural \emph{convex} geometric structures associated to Anosov representations have been lacking in general.
Such structures could allow geometric intuition to bear more fully on Anosov representations, making them more accessible through familiar geometric constructions such as convex fundamental domains, and potentially unlocking new sources of examples.
While there is a rich supply of examples of Anosov representations into higher-rank Lie groups in the case of surface groups or free groups, it has proven difficult to construct examples for more complicated word hyperbolic groups.

One of the goals of this paper is to show that, when $G = \PGL(\RR^n)$ is a projective linear group, there are, in many cases, natural convex cocompact geometric structures modeled on $\PP(\RR^n)$ associated to Anosov representations into~$G$.
The idea is the following: to any discrete subgroup $\Gamma$ of $G = \PGL(\RR^n)$ are associated two \emph{limit sets} $\Lambda_{\Gamma}$ and $\Lambda_{\Gamma}^*$.
Recall that an element $g\in\PGL(\RR^n)$ is said to be \emph{proximal} in the real projective space $\PP(\RR^n)$ if it admits a unique attracting fixed point in $\PP(\RR^n)$ (see Section~\ref{subsec:prox}).
The \emph{proximal limit set} $\Lambda_{\Gamma}$ of $\Gamma$ in $\PP(\RR^n)$ is defined as the closure of the set of attracting fixed points of proximal elements of $\Gamma$ in $\PP(\RR^n)$.
Similarly, we consider the proximal limit set $\Lambda_{\Gamma}^*$ of $\Gamma$ in the dual projective space $\PP((\RR^n)^*)$ for the dual action; we can view it as a set of projective hyperplanes in $\PP(\RR^n)$.
Suppose the complement $\PP(\RR^n) \smallsetminus \bigcup_{H\in\Lambda_{\Gamma}^*} H$ is nonempty.
Its connected components are open sets which (as soon as $\Lambda_{\Gamma}^*\neq\nolinebreak\emptyset$) are \emph{convex}, in the sense that they are contained and convex in some affine chart of $\PP(\RR^n)$; when $\Gamma$ acts irreducibly on $\PP(\RR^n)$, these components are even \emph{properly convex}, in the sense that they are convex and bounded in some affine chart.
If one of these convex open sets, call it $\Omega_{\max}$, is properly convex and \emph{invariant} under~$\Gamma$, then the action of $\Gamma$ on $\Omega_{\max}$ is necessarily properly discontinuous (see Section~\ref{subsec:prop-conv-proj}), the set $\Lambda_{\Gamma}$ is contained in the boundary $\partial\Omega_{\max}$, and it makes sense to consider the convex hull of $\Lambda_{\Gamma}$ in $\Omega_{\max}$: this is a closed convex subset $\C$ of $\Omega_{\max}$.
When $\Gamma\hookrightarrow G$ is (projective) Anosov, we prove that the action of $\Gamma$ on the convex set~$\C$ is cocompact.
Further, we prove that $\Gamma$ satisfies a \emph{stronger} notion of projective convex cocompactness introduced by Crampon--Marquis~\cite{cm14}. 
Conversely, we show that convex cocompact subgroups of $\PP(\RR^n)$ in the sense of \cite{cm14} always give rise to Anosov representations, which enables us to give new examples of Anosov representations and study their deformation spaces by constructing these geometric structures directly.
In~\cite{dgk-ccHpq} we had previously established this close connection between convex cocompactness in projective space and Anosov representations in the case of irreducible representations valued in a projective orthogonal group $\PO(p,q)$.

One context where such a connection between Anosov representations and convex projective structures has been known for some time is the deformation theory of real projective surfaces, for $G = \PGL(\RR^3)$ \cite{gol90,cg05}.
More generally, it follows from work of Benoist \cite{ben04} that if a discrete subgroup $\Gamma$ of $G=\PGL(\RR^n)$ \emph{divides} (\ie acts cocompactly on) a strictly convex open subset $\Omega$ of $\PP(\RR^n)$, then~$\Gamma$ is word hyperbolic and the natural inclusion $\Gamma\hookrightarrow G$ is Anosov.
In this particular case $\Lambda_{\Gamma}$ is the boundary of~$\Omega$ and $\Lambda_{\Gamma}^*$ is the collection of supporting hyperplanes to~$\Omega$.

Benoist \cite{ben06} also found examples of discrete subgroups of $\PGL(\RR^n)$, acting irreducibly on $\PP(\RR^n)$, which divide properly convex open sets that are not strictly convex, for $4\leq n\leq 7$; these subgroups are not word hyperbolic.
In this paper we study a broad notion of convex cocompactness for discrete subgroups $\Gamma$ of $\PGL(\RR^n)$ acting on $\PP(\RR^n)$ which simultaneously generalizes Crampon--Marquis's notion and Benoist's divisible convex sets \cite{ben04,ben03,ben05,ben06}.
While we mainly take the point of view of examining limit sets and convex hulls in projective space, we also show that this notion of convex cocompactness is characterized by the property that $\Gamma$ is (up to finite index) the holonomy group of a compact convex projective manifold with strictly convex boundary.
Cooper--Long--Tillmann \cite{clt18} have studied the deformation theory of such manifolds and their work implies that this notion is \emph{stable} under small deformations of $\Gamma$ in $\PGL(\RR^n)$.
We show further that it is stable under deformation into larger projective general linear groups $\PGL(\RR^{n+n'})$, after the model of quasi-Fuchsian deformations of Fuchsian groups.
When nontrivial deformations exist, this yields examples of nonhyperbolic discrete subgroups which satisfy our convex cocompactness property but do not divide a properly convex open set.

We now describe our results in more detail.

%%%%%%%%%%%%%%%%%%%%%%%%%
\subsection{Strong convex cocompactness in $\PP(V)$ and Anosov representations} \label{subsec:intro-strong-cc}

In the whole paper, we fix an integer $n\geq 2$ and set $V := \RR^n$.
Recall that an open subset $\Omega$ of the projective space $\PP(V)$ is called \emph{convex} if it is contained and convex in some affine chart, \emph{properly convex} if its closure is convex, and \emph{strictly convex} if in addition its boundary does not contain any nontrivial projective line segment.
It is said to have \emph{boundary of class } $C^1$ (or just \emph{$C^1$ boundary}) if every point of the boundary of~$\Omega$ has a unique supporting hyperplane.

In \cite{cm14}, Crampon--Marquis introduced a notion of \emph{geometrically finite action} of a discrete subgroup $\Gamma$ of $\PGL(V)$ on a strictly convex open domain of $\PP(V)$ with boundary of class $C^1$.
If cusps are not allowed (or equivalently, if we request all infinite-order elements to be proximal), this notion reduces to a natural notion of convex cocompact action on such domains.
We will call discrete groups $\Gamma$ with such actions \emph{strongly convex cocompact}. 

\begin{definition} \label{def:strong-cc}
Let $\Gamma \subset \PGL(V)$ be an infinite discrete subgroup.
\begin{itemize}
 \item Let $\Omega$ be a $\Gamma$-invariant properly convex open subset of $\PP(V)$.
  The action of $\Gamma$ on~$\Omega$ is \emph{strongly convex cocompact} if $\Omega$ is strictly convex with boundary of class~$C^1$ and for some $x \in \Omega$, the convex hull in~$\Omega$ of the \emph{orbital limit set} $\overline{\Gamma\cdot x} \cap \partial \Omega$ is nonempty and has compact quotient by~$\Gamma$. 
  \item The group $\Gamma$ is \emph{strongly convex cocompact in $\PP(V)$} if it admits a strongly convex cocompact action on some properly convex open subset $\Omega$ of $\PP(V)$.
\end{itemize}
\end{definition}

In the definition above, the convex hull in $\Omega$ of the orbital limit set $\overline{\Gamma\cdot x} \cap \partial \Omega$ is defined as the intersection of $\Omega$ with the convex hull of $\overline{\Gamma\cdot x} \cap \partial \Omega$ in the convex set $\overline{\Omega}$.
Moreover, $\overline{\Gamma\cdot x} \cap \partial \Omega$ is independent of the choice of $x \in \Omega$ since $\Omega$ is strictly convex, see Lemma~\ref{lem:strict-convex-Lambda-orb}.
The following observation is an easy consequence.

\begin{remark} \label{rem:def-cc-simpler}
The action of $\Gamma$ on~$\Omega$ is strongly convex cocompact in the sense of Definition~\ref{def:strong-cc} if and only if~$\Omega$ is strictly convex with boundary of class~$C^1$ and \emph{some} nonempty closed convex subset of~$\Omega$ has compact quotient by~$\Gamma$.
\end{remark}

\begin{example} \label{ex:H-p}
For $V=\RR^{p,1}$ with $p\geq 2$, any discrete subgroup of $\mathrm{Isom}(\HH^p)=\PO(p,1)\subset\PGL(V)$ which is convex cocompact in the usual sense is strongly convex cocompact in $\PP(V)$, taking $\Omega = \{ [v] \in \PP(\RR^{p+1}) ~|~ \langle v,v\rangle_{p,1} < 0\}$ to be the projective model of~$\HH^p$ (where $\langle\cdot,\cdot\rangle_{p,1}$ is a symmetric bilinear form of signature $(p,1)$ on~$\RR^{p+1}$).
\end{example}

The first main result of this paper is a close connection between strong convex cocompactness in $\PP(V)$ and Anosov representations into $\PGL(V)$.
Let $P_1$ (\resp $P_{n-1}$) be the stabilizer in $G=\PGL(V)$ of a line (\resp hyperplane) of $V=\RR^n$; it is a maximal proper parabolic subgroup of~$G$, and $G/P_1$ (\resp $G/P_{n-1}$) identifies with $\PP(V)$ (\resp with the dual projective space $\PP(V^*)$).
We shall think of $\PP(V^*)$ as the space of projective hyperplanes in $\PP(V)$.
Let $\Gamma$ be a word hyperbolic group, with Gromov boundary $\partial_{\infty} \Gamma$.
A \emph{$P_1$-Anosov representation} (sometimes also called a \emph{projective Anosov representation}) of $\Gamma$ into~$G$ is a representation $\rho : \Gamma\to G$ for which there exist two continuous, $\rho$-equivariant boundary maps $\xi : \partial_{\infty}\Gamma\to\PP(V)$ and $\xi^* : \partial_{\infty}\Gamma\to\PP(V^*)$ which
\begin{enumerate}[label=(A\arabic*)]
  \item\label{item:compatible} are compatible, \ie $\xi(\eta)\in\xi^*(\eta)$ for all $\eta\in\partial_{\infty}\Gamma$,
  \item\label{item:transverse} are transverse, \ie $\xi(\eta)\notin\xi^*(\eta')$ for all $\eta\neq\eta'$ in $\partial_{\infty}\Gamma$,
  \item\label{item:flow} have an associated flow with some uniform contraction/expansion property described in \cite{lab06,gw12}.
\end{enumerate}
We do not state condition~\ref{item:flow} precisely, since we will use in place of it a simple condition on eigenvalues or singular values described in Definition~\ref{def:P1-Ano} and Fact~\ref{fact:charact-Ano} below, taken from \cite{ggkw17}.
A consequence of~\ref{item:flow} is that every infinite-order element of $\rho(\Gamma)$ is proximal in $\PP(V)$ and in $\PP(V^*)$, and that the image $\xi(\partial_{\infty} \Gamma)$ (\resp $\xi^*(\partial_{\infty} \Gamma)$) of the boundary map is the proximal limit set $\Lambda_\Gamma$ (\resp $\Lambda_\Gamma^*$) of $\rho(\Gamma)$ in $\PP(V)$ (\resp $\PP(V^*)$).
By \cite[Prop.\,4.10]{gw12}, if $\rho$ is irreducible then condition~\ref{item:flow} is automatically satisfied as soon as \ref{item:compatible} and \ref{item:transverse} are, but this is not true in general: see \cite[Ex.\,7.15]{ggkw17}.

It is well known (see \cite[\S\,6.1 \& Th.\,4.3]{gw12}) that a discrete subgroup of $\PO(p,1)$ is convex cocompact in the classical sense if and only if it is word hyperbolic and the natural inclusion $\Gamma \hookrightarrow \PO(p,1) \hookrightarrow \PGL(\RR^{p+1})$ is $P_1$-Anosov.
In this paper, we prove the following higher-rank generalization, where $\Lambda_{\Gamma}^*$ denotes the proximal limit set of $\Gamma$ in $\PP(V^*)$, viewed as a set of projective hyperplanes in $\PP(V)$.

\begin{theorem} \label{thm:Ano-PGL}
Let $\Gamma$ be an infinite discrete subgroup of $G=\PGL(V)$.
Suppose the set $\PP(V) \smallsetminus \bigcup_{H\in\Lambda_{\Gamma}^*} H$ admits a $\Gamma$-invariant connected component (this is always the case if $\Gamma$ preserves a nonempty properly convex open subset of $\PP(V)$, see Proposition~\ref{prop:max-inv-conv}).
Then the following are equivalent:
\begin{enumerate}
  \item\label{item:strong-proj-cc} $\Gamma$ is strongly convex cocompact in $\PP(V)$;
  \item\label{item:Ano-PGL} $\Gamma$ is word hyperbolic and the natural inclusion $\Gamma\hookrightarrow G$ is $P_1$-Anosov.
\end{enumerate}
\end{theorem}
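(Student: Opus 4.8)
The plan is to prove the two implications separately, using throughout the two natural boundary maps: the limit map $\xi:\partial_{\infty}\Gamma\to\PP(V)$, sending $\eta$ to the corresponding point of the limit set, and the dual limit map $\xi^*:\partial_{\infty}\Gamma\to\PP(V^*)$, sending $\eta$ to a supporting hyperplane at $\xi(\eta)$. In the easy direction each map is read off directly from the geometry of $\Omega$; in the hard direction they are given by the Anosov structure and must be used to \emph{build} a good convex set.

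For $(\ref{item:strong-proj-cc})\Rightarrow(\ref{item:Ano-PGL})$, suppose $\Gamma$ acts convex cocompactly on a strictly convex $\Omega$ with $C^1$ boundary, so by Remark~\ref{rem:def-cc-simpler} it acts properly and cocompactly on some nonempty closed convex $\mathcal{C}\subset\Omega$. First I would obtain word hyperbolicity: the Hilbert metric of $\Omega$ restricts to a proper geodesic metric on $\mathcal{C}$ that is Gromov hyperbolic because $\Omega$ is strictly convex with $C^1$ boundary (the Crampon--Marquis extension of Benoist's divisible case); the \v{S}varc--Milnor lemma then makes $\Gamma$ quasi-isometric to $\mathcal{C}$, hence word hyperbolic, and identifies $\partial_{\infty}\Gamma$ with $\Lambdao_{\Omega}(\Gamma)\subset\partial\Omega$. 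This gives $\xi$ as a homeomorphism onto $\Lambdao_{\Omega}(\Gamma)$; the $C^1$ hypothesis makes the supporting hyperplane at each $\xi(\eta)$ unique, defining $\xi^*$ and forcing compatibility~\ref{item:compatible}, while strict convexity yields transversality~\ref{item:transverse}. The flow condition~\ref{item:flow} I would verify via the eigenvalue/singular-value characterization of Fact~\ref{fact:charact-Ano}, converting the uniform linear growth of Hilbert displacement along geodesics of $\mathcal{C}$ into the required uniform gap between the first two singular values of $\gamma$, using the standard comparison between Hilbert distance in $\Omega$ and logarithms of singular-value ratios.

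For $(\ref{item:Ano-PGL})\Rightarrow(\ref{item:strong-proj-cc})$, now $\Gamma$ is word hyperbolic, the inclusion is $P_1$-Anosov with boundary maps $\xi,\xi^*$, and $\Gamma$ preserves some nonempty properly convex open $\Omega_0$. The proximal limit set $\Lambda:=\xi(\partial_{\infty}\Gamma)$ lies in $\partial\Omega_0$, since attracting fixed points of proximal automorphisms of a properly convex open set sit on its boundary. The candidate convex core is the convex hull $\mathrm{Conv}(\Lambda)$, and the candidate strictly convex domain must be squeezed between $\mathrm{Conv}(\Lambda)$ and $\Omega_0$. Compatibility and transversality show that each $\xi^*(\eta)$ meets $\Lambda$ only at $\xi(\eta)$, so $\xi(\eta)$ is an exposed point of $\mathrm{Conv}(\Lambda)$. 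Properness of the action is automatic (discrete subgroups act properly on any invariant properly convex open set), and cocompactness on the core I would obtain by transporting, through $(\xi,\xi^*)$, the proper cocompact action of the word hyperbolic group $\Gamma$ on its geodesic-flow space, using the Anosov dynamics of condition~\ref{item:flow}.

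The main obstacle is the final upgrade in this second implication: producing a $\Gamma$-invariant domain $\Omega$ that is simultaneously strictly convex \emph{and} $C^1$. These two properties are dual, and neither is free: $\mathrm{Conv}(\Lambda)$ is naturally strictly convex but may fail $C^1$, whereas the intersection of the open half-spaces bounded by the hyperplanes $\xi^*(\eta)$ and containing $\Omega_0$ is naturally $C^1$ but may fail strict convexity. Indeed a nontrivial segment in $\partial\,\mathrm{Conv}(\Lambda)$ would have extreme endpoints $\xi(\eta_1),\xi(\eta_2)\in\Lambda$ sharing a common supporting hyperplane, and this cannot be excluded from exposedness alone—it is exactly dual to a failure of $C^1$-ness. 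Breaking this deadlock is where word hyperbolicity is indispensable: I expect to show that the uniform contraction in condition~\ref{item:flow} (Fact~\ref{fact:charact-Ano}) rules out both a nontrivial boundary segment and a repeated supporting hyperplane, since either would furnish a direction along the flow with no uniform contraction, contradicting the Anosov property. Establishing this simultaneous strictness and $C^1$-regularity, and hence the existence of the sandwiched domain $\Omega$ on which $\Gamma$ is convex cocompact, is the crux of the argument and the step I expect to require the most work.
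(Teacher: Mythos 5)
Your skeleton matches the paper's (the theorem is proved there as part of the longer chain of equivalences in Theorem~\ref{thm:main-noPETs}), and you have correctly located the main difficulty, but two steps as you describe them would not go through.

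In \eqref{item:strong-proj-cc}~$\Rightarrow$~\eqref{item:Ano-PGL}, the ``standard comparison between Hilbert distance and logarithms of singular-value ratios'' controls $\mu_1-\mu_n$, not $\mu_1-\mu_2$: the displacement $d_{\Omega}(z,\gamma\cdot z)$ bounds $\log(\Vert\gamma\Vert\,\Vert\gamma^{-1}\Vert)$ from below (Proposition~\ref{prop:d-Omega-mu}), and no displacement estimate alone can detect the gap between the first \emph{two} singular values. The paper's Lemma~\ref{lem:P1-div} obtains $(\mu_1-\mu_2)(\gamma_m)\to+\infty$ by a different mechanism: since $\partiali\C$ contains no segment, \emph{all} orbits $\gamma_m\cdot y$ and $\gamma_m\cdot z$ converge to the \emph{same} boundary point $\xi(\eta)$ (Lemma~\ref{lem:C-hyp}), and writing $\gamma_m=k_ma_mk'_m$ in a Cartan decomposition, the only way two points with equal first coordinate and distinct second coordinate can be attracted to a common limit is if $a_{2,m}/a_{1,m}\to 0$. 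You need this convergence-to-a-single-point input, which is exactly where strict convexity enters; linear growth of Hilbert displacement is not enough.

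In \eqref{item:Ano-PGL}~$\Rightarrow$~\eqref{item:strong-proj-cc}, the Anosov contraction cannot ``rule out a nontrivial boundary segment and a repeated supporting hyperplane'' for the whole boundary of the domain you must produce: it only controls the part of $\partial\Omega$ lying in the limit set. Any natural candidate ($\Omega_{\max}$, or a uniform neighborhood of the convex hull of $\Lambda$) has a large nonideal boundary, far from $\Lambda$, about which the flow dynamics say nothing and which is typically neither strictly convex nor $C^1$ (for instance $\partial\Omega_{\max}$ is built from pieces of the hyperplanes $\xi^*(\eta)$, so it is foliated by segments). The paper resolves this with two ingredients your proposal is missing: (a) an explicit smoothing construction (Lemma~\ref{lem:strict-C1-domain}: convex hulls of translates of a smooth neighborhood of a fundamental domain, a push-in deformation destroying segments, and an averaging over cosets of a torsion-free finite-index subgroup) that makes the \emph{nonideal} boundary strictly convex and $C^1$; and (b) a duality argument (Proposition~\ref{prop:cc-duality} combined with Lemma~\ref{lem:segments-imply-PETs}) showing that the dual convex set also has segment-free ideal boundary, which is what yields uniqueness of supporting hyperplanes, \ie $C^1$-ness, at the limit set. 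Your instinct that strict convexity and $C^1$-ness are dual to each other is exactly right, but the deadlock is broken by this duality-plus-smoothing combination, not by the flow. (Also, cocompactness of the action on the convex hull of $\Lambda$ is not obtained by transporting the geodesic-flow action; the paper proves it by an expansion argument at the limit set in the style of Sullivan, Lemma~\ref{lem:cocompact}.)
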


As mentioned above, for $\Gamma$ acting cocompactly on a strictly convex open set (which is then a \emph{divisible strictly convex set}), the implication \eqref{item:strong-proj-cc}~$\Rightarrow$~\eqref{item:Ano-PGL} follows from work of Benoist \cite{ben04}.

\begin{remarks} \label{rem:Ano-PGL}
\begin{enumerate}[label=(\alph*)]
  \item The fact that a strongly convex cocompact group is word hyperbolic is due to Crampon--Marquis \cite[Th.\,1.8]{cm14}.
  \item\label{item:POpq-irred} In the case where $\Gamma$ acts irreducibly on $\PP(V)$ (\ie does not preserve any nontrivial projective subspace of $\PP(V)$) and is contained in $\PO(p,q) \subset \PGL(V)$ for some $p,q\in\NN^*$ with $p+q=n=\dim(V)$, Theorem~\ref{thm:Ano-PGL} was first proved in our earlier work \cite[Th.\,1.11 \& Prop.\,1.17 \& Prop.\,3.7]{dgk-ccHpq}.
In that case we actually gave a more precise version of Theorem~\ref{thm:Ano-PGL} involving the notion of negative/positive proximal limit set: see Section~\ref{subsec:intro-cc-POpq} below.
Our proof of Theorem~\ref{thm:Ano-PGL} in the present paper uses many of the ideas of \cite{dgk-ccHpq}.
One main improvement here is the treatment of duality in the general case where there is no nonzero $\Gamma$-invariant quadratic form.
  \item\label{item:Zimmer} In independent and simultaneous work, Zimmer \cite{zim} also extends \cite{dgk-ccHpq} by studying a slightly different notion for actions of discrete subgroups $\Gamma$ of $\PGL(V)$ on properly convex open subsets $\Omega$ of $\PP(V)$: by definition \cite[Def.\,1.8]{zim}, a subgroup $\Gamma$ of $\mathrm{Aut}(\Omega)$ is regular convex cocompact if it acts cocompactly on some nonempty, $\Gamma$-invariant, closed, properly convex subset $\C$ of~$\Omega$ such that every extreme point of $\overline{\C}$ in $\partial\Omega$ is a $C^1$ extreme point of~$\Omega$.
By \cite[Th.\,1.22]{zim}, if $\Gamma \subset \mathrm{Aut}(\Omega)$ is regular convex cocompact and acts irreducibly on $\PP(V)$, then $\Gamma$ is word hyperbolic and the natural inclusion $\Gamma\hookrightarrow\PGL(V)$ is $P_1$-Anosov.
Conversely, by \cite[Th.\,1.10]{zim}, any irreducible $P_1$-Anosov representation $\Gamma\hookrightarrow\SL(V)$ can be composed with an irreducible representation $\SL(V)\to\SL(V')$, for some larger vector space~$V'$, so that $\Gamma$ becomes regular convex cocompact in $\mathrm{Aut}(\Omega')$ for some $\Omega'\subset\PP(V')$.
It follows from Theorem~\ref{thm:main-noPETs} below that Zimmer's notion of regular convex cocompactness is equivalent to our notion of strong convex cocompactness even in the case that $\Gamma$ does not act irreducibly on $\PP(V)$ (condition~\ref{item:ccc-CM} of Theorem~\ref{thm:main-noPETs} implies regular convex cocompactness, which implies condition~\ref{item:ccc-noPETs-some} of Theorem~\ref{thm:main-noPETs}).
  \item In this paper, unlike in \cite{dgk-ccHpq} or \cite{zim}, we do not assume $\Gamma$ to act irreducibly on $\PP(V)$.
  This makes the notion of Anosov representation slightly more involved (condition \ref{item:flow} above is not automatic), and also adds some subtleties to the notion of convex cocompactness (see \eg Remarks \ref{rem:equality-sets} and~\ref{rem:not-irred}).
  We note that there exist strongly convex cocompact groups in $\PP(V)$ which do not act irreducibly on $\PP(V)$, and whose Zariski closure is not even reductive (which means that there is an invariant projective subspace $\PP(W)$ of $\PP(V)$, but no complementary subspace $W'$ of~$W$ such that $\PP(W')$ is invariant): see Section~\ref{subsec:cocycles}.
  This contrasts with the case of divisible convex sets described in \cite{vey70}.
\end{enumerate}
\end{remarks}

\begin{remark} \label{rem:Ano-no-conv}
For $n=\dim(V)\geq 3$, there exist $P_1$-Anosov representations $\rho : \Gamma\to G=\PGL(V)$ which do not preserve any nonempty properly convex subset of $\PP(V)$: see \cite[Ex.\,5.2 \& 5.3]{dgk-ccHpq}.
However, by \cite[Th.\,1.7]{dgk-ccHpq}, if $\partial_{\infty}\Gamma$ is \emph{connected}, then any $P_1$-Anosov representation $\rho$ valued in $\PO(p,q)\subset\PGL(\RR^{p+q})$ preserves a nonempty properly convex open subset of $\PP(\RR^{p+q})$, hence $\rho(\Gamma)$ is strongly convex cocompact in $\PP(\RR^{p+q})$ by Theorem~\ref{thm:Ano-PGL}.
Extending this, \cite[Th.\,1.24]{zim} gives sufficient group-theoretic conditions on~$\Gamma$ for $P_1$-Anosov representations $\Gamma\hookrightarrow\PGL(V)$ to be regular convex cocompact in $\mathrm{Aut}(\Omega)$ (in the sense of Remark~\ref{rem:Ano-PGL}.\ref{item:Zimmer}) for some $\Omega\subset\PP(V)$.
\end{remark}

%%%%%%%%%%%%%%%%%%%%%%%%%
\subsection{Convex projective structures for Anosov representations}

We can apply Theorem~\ref{thm:Ano-PGL} to show that some well-known families of Anosov representations, such as Hitchin representations in odd dimension, naturally give rise to convex cocompact real projective manifolds.

\begin{proposition} \label{prop:Hitchin}
Let $\Gamma$ be a closed surface group of genus $\geq 2$ and $\rho : \Gamma\to\PSL(\RR^n)$ a Hitchin representation.
\begin{enumerate}
  \item\label{item:Hit-odd} If $n$ is odd, then $\rho(\Gamma)$ is strongly convex cocompact in $\PP(\RR^n)$.
  \item\label{item:Hit-even} If $n$ is even, then $\rho(\Gamma)$ is not strongly convex cocompact in $\PP(\RR^n)$; in fact it does not even preserve any nonempty properly convex subset of $\PP(\RR^n)$.
\end{enumerate}
\end{proposition}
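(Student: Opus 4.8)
The plan is to derive part~\eqref{item:Hit-odd} from Theorem~\ref{thm:Ano-PGL} and to prove part~\eqref{item:Hit-even} through a topological obstruction to the existence of \emph{any} invariant convex set. I will use two standard facts about a Hitchin representation $\rho : \Gamma\to\PSL(\RR^n)$: it is $P_1$-Anosov, with boundary map $\xi : \partial_\infty\Gamma\simeq S^1\to\PP(\RR^n)$ a convex (Frenet) curve, and it is irreducible (see Labourie~\cite{lab06}). Since a closed surface group is word hyperbolic, Theorem~\ref{thm:Ano-PGL} reduces part~\eqref{item:Hit-odd} to producing a nonempty $\rho(\Gamma)$-invariant properly convex open subset of $\PP(\RR^n)$. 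The central quantity for both parts is a sign $s(\rho)\in\{\pm1\}$, the obstruction to lifting $\xi$ continuously along the double cover $\SS(\RR^n)=(\RR^n\setminus\{0\})/\RR_{>0}\to\PP(\RR^n)$ (equivalently, the sign by which a continuous lift over the universal cover $\RR\to S^1$ fails to be periodic). This is a homotopy invariant of $\xi$, and since boundary maps of Anosov representations vary continuously and the Hitchin component is connected, $s(\rho)$ is constant over the component. Evaluating it on the Fuchsian representation, where $\xi$ is the Veronese curve $\theta\mapsto\bigl[(\cos\theta\, e_1+\sin\theta\, e_2)^{\otimes(n-1)}\bigr]$ in $\PP(\mathrm{Sym}^{n-1}(\RR^2))$, one reads off $(\cos\theta\, e_1+\sin\theta\, e_2)^{\otimes(n-1)}\mapsto(-1)^{n-1}(\cdots)$ under $\theta\mapsto\theta+\pi$, so $s(\rho)=(-1)^{n-1}$, which equals $+1$ exactly when $n$ is odd.

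For part~\eqref{item:Hit-odd} I would construct the invariant convex set directly from $\xi$. Since $n$ is odd, $s(\rho)=+1$, so $\xi$ lifts to a continuous map $\hat\xi:S^1\to\SS(\RR^n)$, and $\rho(\Gamma)$ permutes the pair $\{\hat\xi(S^1),-\hat\xi(S^1)\}$. The next step is to show that $\hat\xi(S^1)$ lies in an open half-space and that its convex hull $C\subset\RR^n$ is a properly convex (salient) cone containing no line: this is where the total positivity of the Hitchin curve enters, guaranteeing that any $n$ of the lifted points form a basis of consistent orientation, whence $0\notin C$ and $C$ is salient. Because $\rho(\Gamma)$ permutes $\{C,-C\}$, the projectivization $\Omega$ of $\mathrm{int}\,C$ is a nonempty $\rho(\Gamma)$-invariant properly convex open set, and Theorem~\ref{thm:Ano-PGL} then gives strong convex cocompactness.

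For part~\eqref{item:Hit-even} I would run the obstruction in reverse. Suppose $\rho(\Gamma)$ preserved some nonempty properly convex subset $\C_0$. By irreducibility the projective span of $\C_0$ is $\rho(\Gamma)$-invariant, hence all of $\PP(\RR^n)$, so $\C_0$ has nonempty interior and we obtain a nonempty $\rho(\Gamma)$-invariant properly convex \emph{open} set $\Omega$. As $\rho(\Gamma)$ is discrete in $\mathrm{Aut}(\Omega)$ it acts properly discontinuously on $\Omega$, so no orbit accumulates inside $\Omega$; in particular the attracting fixed points of proximal elements, which are dense in $\xi(S^1)$, lie in the closed set $\partial\Omega$, giving $\xi(S^1)\subset\partial\Omega$. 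Now a properly convex $\Omega$ lifts to two disjoint salient cones in $\SS(\RR^n)$ whose boundaries each map homeomorphically onto $\partial\Omega$; precomposing the inverse of such a homeomorphism with $\xi$ yields a continuous lift of $\xi$ to $\SS(\RR^n)$, forcing $s(\rho)=+1$. This contradicts $s(\rho)=(-1)^{n-1}=-1$ for $n$ even. Hence no invariant properly convex subset exists.

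The main obstacle is the convexity statement in part~\eqref{item:Hit-odd}: the lifting sign $s(\rho)=+1$ is only the topological half of the argument, and one still needs the total positivity of the Hitchin/Frenet curve to conclude that the convex hull of $\hat\xi(S^1)$ is genuinely salient, so that $\Omega$ is properly convex rather than degenerate. The remaining ingredients---continuity and deformation-invariance of $s(\rho)$, the reduction from an arbitrary convex subset to an open one via irreducibility, and the cone-versus-boundary lifting---are routine once this positivity input is in hand.
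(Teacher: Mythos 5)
Your argument is correct in outline, and for part~\eqref{item:Hit-even} it is essentially the paper's own proof: both compute the $\ZZ/2\ZZ$ homotopy class of the Veronese boundary curve (nontrivial exactly when $n$ is even), propagate it over the connected Hitchin component using the continuity of Anosov boundary maps, and note that a curve lying in the boundary of an invariant properly convex set must lift to the sphere, the reduction from an arbitrary invariant convex set to an open one being supplied by irreducibility in both treatments. For part~\eqref{item:Hit-odd}, however, you take a genuinely different route. The paper does not construct the invariant domain directly for a general Hitchin representation: it first observes (Lemma~\ref{lem:odd-even}) that a \emph{Fuchsian} representation lands in $\PSO(k,k+1)$ and is strongly convex cocompact by the explicit quadratic-form computation of \cite{dgk-ccHpq}, and then spreads this over the whole Hitchin component via Proposition~\ref{prop:conn-comp-Ano}: convex cocompactness is open by Theorem~\ref{thm:properties}.\ref{item:stable} and closed along connected families of irreducible $P_1$-Anosov representations because a Hausdorff limit of invariant properly convex domains is again one when the limit is irreducible (Lemma~\ref{lem:limit-domain}). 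You instead build the domain for \emph{every} Hitchin representation as the projectivized interior of the cone on the lifted boundary curve, which requires the hyperconvexity (Frenet/positivity) property of Hitchin curves from Labourie's work --- a strictly stronger input than the $P_1$-Anosov transversality the paper uses --- to guarantee that cyclically ordered $n$-tuples of lifted points have consistent orientation and hence that the cone is salient. You correctly flag this as the crux; it does go through (consistent orientation follows from hyperconvexity, the existence of the lift, and connectedness of the space of cyclically ordered configurations, and salience then follows from a Carath\'eodory--Cramer sign argument), but it should be stated as an explicit citation rather than left as ``where total positivity enters.'' The trade-off is that your construction is more explicit and identifies the invariant domain intrinsically in terms of $\xi$, while the paper's deformation argument needs only the Fuchsian case plus the general openness/closedness machinery it has already developed.
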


For statement~\eqref{item:Hit-odd}, see also \cite[Cor.\,1.31]{zim}.
This extends \cite[Prop.\,1.19]{dgk-ccHpq}, about Hitchin representations valued in $\SO(k+1,k)\subset\PSL(\RR^{2k+1})$.

\begin{remarks}
\begin{enumerate}[label=(\alph*)]
  \item The case $n=3$ of Proposition~\ref{prop:Hitchin}.\eqref{item:Hit-odd} is due to Choi--Goldman \cite{gol90,cg05}, who proved that the holonomy representations of the convex projective structures on a given closed hyperbolic surface~$S$ are exactly the elements of the Hitchin component of $\Hom(\pi_1(S),\PSL(\RR^3))$.
  \item Guichard--Wienhard \cite[Th.\,11.3 \& 11.5]{gw12} associated different geometric structures to Hitchin representations into $\PSL(\RR^n)$.
  For even~$n$, their geometric structures are modeled on $\PP(\RR^n)$ but can never be convex (see Proposition~\ref{prop:Hitchin}.\eqref{item:Hit-even}).
  For odd~$n$, their geometric structures are modeled on the space $\mathcal{F}_{1,n-1}\subset\PP(\RR^n)\times\PP((\RR^n)^*)$ of pairs $(\ell,H)$ where $\ell$ is a line of~$\RR^n$ and $H$ a hyperplane containing~$\ell$; these geometric structures lack a notion of convexity but live on manifolds that are \emph{closed}, unlike the manifolds $\rho(\Gamma)\backslash\Omega$ of Proposition~\ref{prop:Hitchin}.\eqref{item:Hit-odd} for $n>3$.
\end{enumerate}
\end{remarks}

We refer to Proposition~\ref{prop:conn-comp-Ano} for a more general statement on convex structures for connected open sets of Anosov representations.

%%%%%%%%%%%%%%%%%%%%%%%%%
\subsection{New examples of Anosov representations}

We can also use the implication \eqref{item:strong-proj-cc}~$\Rightarrow$~\eqref{item:Ano-PGL} of Theorem~\ref{thm:Ano-PGL} to obtain new examples of Anosov representations by constructing
explicit strongly convex cocompact groups in $\PP(V)$.
Following this strategy, in \cite[\S\,8]{dgk-ccHpq} we showed that every word hyperbolic right-angled Coxeter group $W$ admits reflection-group representations into some $\PGL(V)$ which are $P_1$-Anosov.
Extending this approach, in~\cite{dgklm} we give an explicit description of the deformation spaces of such representations, for Coxeter groups that are not necessarily right-angled.

%%%%%%%%%%%%%%%%%%%%%%%%%
\subsection{General convex cocompactness in $\PP(V)$} \label{subsec:intro-general-cc}

We now discuss generalizations of Definition~\ref{def:strong-cc} where the properly convex open set $\Omega$ is not assumed to have any regularity at the boundary.
These cover a larger class of groups, not necessarily word hyperbolic.

Given Remark~\ref{rem:def-cc-simpler}, a naive generalization of Definition~\ref{def:strong-cc} that immediately comes to mind is the following.

\begin{definition} \label{def:cc-naive}
An infinite discrete subgroup $\Gamma$ of $\PGL(V)$ is \emph{naively convex cocompact in $\PP(V)$} if it preserves a properly convex open subset $\Omega$ of $\PP(V)$ and acts cocompactly on some nonempty closed convex subset $\C$ of~$\Omega$.
\end{definition}

However, the class of naively convex cocompact subgroups of $\PGL(V)$ is not stable under small deformations: see Remark~\ref{rem:stable-precise}.\ref{item:weak-cond-not-open}.
This is linked to the fact that if $\Gamma$ and~$\Omega$ are as in Definition~\ref{def:cc-naive} with $\Omega$ not strictly convex, then the set of accumulation points of a $\Gamma$-orbit of~$\Omega$ may depend on the orbit (see Example~\ref{ex:Zn}).

To address this issue, we introduce a notion of limit set that does not depend on a choice of orbit.

\begin{definition}\label{def:lambdaorb}
Let $\Gamma \subset \PGL(V)$ be an infinite discrete subgroup and let $\Omega$ be a properly convex open subset of $\PP(V)$ invariant under~$\Gamma$.
The \emph{full orbital limit set} $\Lambdao_{\Omega}(\Gamma)$ of $\Gamma$ in~$\Omega$ is the union of all accumulation points of all $\Gamma$-orbits in~$\Omega$.
\end{definition}

The full orbital limit set $\Lambdao_{\Omega}(\Gamma)$ always contains the proximal limit set $\Lambda_{\Gamma}$ (Lemma~\ref{lem:Lambda-prox-Lambda-orb}), but may be larger.
Using this new limit set, we can introduce another generalization of Definition~\ref{def:strong-cc} which is slightly stronger and has better properties than Definition~\ref{def:cc-naive}: here is the main definition of the paper.

\begin{definition} \label{def:cc-general}
Let $\Gamma$ be an infinite discrete subgroup of $\PGL(V)$.
\begin{itemize}
  \item Let $\Omega$ be a nonempty $\Gamma$-invariant properly convex open subset of $\PP(V)$.
  The action of $\Gamma$ on~$\Omega$ is \emph{convex cocompact} if the convex hull $\Ccore_\Omega(\Gamma)$ of the full orbital limit set $\Lambdao_{\Omega}(\Gamma)$ in~$\Omega$ is nonempty and has compact quotient by~$\Gamma$. 
  \item $\Gamma$ is \emph{convex cocompact in $\PP(V)$} if it admits a convex cocompact action on some nonempty properly convex open subset $\Omega$ of $\PP(V)$.
\end{itemize}
\end{definition}

Note that $\Lambdao_{\Omega}(\Gamma)$ need not be closed in general, hence its convex hull in $\Omega$ need not be either. However, if the action of $\Gamma$ on $\Omega$ is convex cocompact in the above sense, then $\Lambdao_{\Omega}(\Gamma)$ is closed (see Corollary~\ref{cor:ideal-bound-naive-cc}.\eqref{item:ideal-bound-cc}).

In the setting of Definition~\ref{def:cc-general}, the set $\Gamma\backslash\Ccore_\Omega(\Gamma)$ is a compact convex subset of the projective manifold (or orbifold) $\Gamma\backslash\Omega$ which contains all the topology and which is minimal (Lemma~\ref{lem:Ccore-min}); we shall call it the \emph{convex core} of $\Gamma\backslash\Omega$.
By analogy, we shall also call $\Ccore_\Omega(\Gamma)$ the \emph{convex core} of $\Omega$ for~$\Gamma$.

We shall see (Corollary~\ref{cor:ideal-bound-naive-cc}.\eqref{item:ideal-bound-cc}) that if $\Gamma$ acts cocompactly on some closed convex subset $\C$ of~$\Omega$ \emph{containing} $\Ccore_{\Omega}(\Gamma)$, then the action of $\Gamma$ on~$\Omega$ is convex cocompact in the sense of Definition~\ref{def:cc-general}.

\begin{example} \label{ex:strong-cc-implies-cc}
If $\Gamma$ is strongly convex cocompact in $\PP(V)$, then it is convex cocompact in $\PP(V)$.
This is immediate from the definitions since when $\Omega$ is strictly convex, the full orbital limit set $\Lambdao_\Omega(\Gamma)$ coincides with the accumulation set of a single $\Gamma$-orbit of~$\Omega$.
See Theorem~\ref{thm:main-noPETs} below for a refinement.
\end{example}

\begin{example} \label{ex:div-implies-cc}
If $\Gamma$ divides (\ie acts cocompactly on) a nonempty properly convex open subset $\Omega$ of $\PP(V)$, then $\Ccore_\Omega(\Gamma)=\Omega$ and $\Gamma$ is convex cocompact in $\PP(V)$ (see Corollary~\ref{cor:ideal-bound-naive-cc}.\eqref{item:ideal-bound-cc}).
There exist divisible convex sets which are not strictly convex, yielding convex cocompact groups that are not strongly convex cocompact in $\PP(V)$: see Example~\ref{ex:Zn} for a basic case.
As discussed in Section~\ref{subsubsec:ex-div}, an important class of examples is the so-called symmetric ones where $\Omega$ is a higher-rank symmetric space.
The first other indecomposable examples were constructed by Benoist \cite{ben06} for $4\leq\dim(V)\leq 7$, and further examples were recently constructed for $\dim(V)=4$ in \cite{bdl18} and for $5\leq\dim(V)\leq 7$~in~\cite{clm20}.
\end{example}

In Theorem~\ref{thm:main-general} we shall give several characterizations of convex cocompactness in $\PP(V)$.
In particular, we shall prove that the class of subgroups of $\PGL(V)$ that are convex cocompact in $\PP(V)$ is precisely the class of holonomy groups of compact properly convex projective orbifolds with strictly convex boundary; this ensures that this class of subgroups is stable under small deformations, using \cite{clt15}.
Before stating this and other results, let us make the connection with the context of Theorem~\ref{thm:Ano-PGL} (strong convex cocompactness).

%%%%%%%%%%%%%%%%%%%%%%%%%
\subsection{Word hyperbolic convex cocompact groups in $\PP(V)$} \label{subsec:intro-hyp-cc}

Let us recall the following terminology of Benoist \cite{ben06}.

\begin{definition} \label{def:PET}
Let $\C$ be a properly convex subset of $\PP(V)$.
A \emph{properly embedded triangle} (or \emph{PET} for short) in~$\C$ is a nondegenerate planar triangle whose interior is contained in~$\C$, but whose edges and vertices are contained in $\partiali\C := \overline{\C}\smallsetminus\C$.
\end{definition}

Let $\Omega$ be a properly convex open subset of $\PP(V)$ and $\Gamma$ a discrete subgroup of $\PGL(V)$ preserving $\Omega$.
A PET in $\Omega$ plays an analogous role to a flat in a Riemannian manifold.
In particular, the presence of a PET in $\Omega$ obstructs $\delta$-hyperbolicity of the Hilbert metric on $\Omega$ (see Section~\ref{subsec:veryshort}).
Hence by the \v{S}varc--Milnor lemma, a PET in $\Omega$ obstructs word hyperbolicity of $\Gamma$ in the case that $\Gamma$ divides $\Omega$.
In fact, even the presence of a segment in $\partial\Omega$ obstructs word hyperbolicity: Benoist \cite[Th.\,1.1]{ben04} proved that if $\Gamma$ divides~$\Omega$, then $\Gamma$ is word hyperbolic if and only if $\Omega$ is strictly convex.

The direct analogue of Benoist's theorem is not true for convex cocompact actions: for instance, Schottky subgroups of $\PO(2,1)$ act convex cocompactly on properly convex domains $\Omega \supset \HH^2$ that are not strictly convex (see Figure~\ref{fig:Omega-min-max}).
However, we show that for $\Gamma$ acting convex cocompactly on $\Omega$, the hyperbolicity of~$\Gamma$ is determined by the convexity behavior of $\partial \Omega$ \emph{at the full orbital limit set} or, relatedly, by the presence of PETs in the \emph{convex core}.
Here is an expanded version of Theorem~\ref{thm:Ano-PGL}.

\begin{theorem} \label{thm:main-noPETs}
Let $\Gamma$ be an infinite discrete subgroup of $\PGL(V)$.
Then the following are equivalent:
\begin{enumerate}[label=(\roman*)]
  \item\label{item:ccc-CM} $\Gamma$ is strongly convex cocompact in $\PP(V)$ (Definition~\ref{def:strong-cc});
  \item \label{item:ccc-hyp} $\Gamma$ is convex cocompact in $\PP(V)$ (Definition~\ref{def:cc-general}) and word hyperbolic;
  \item \label{item:ccc-noseg-any} $\Gamma$ is convex cocompact in $\PP(V)$ and for \emph{any} properly convex open set $\Omega \subset \PP(V)$ on which $\Gamma$ acts convex cocompactly, the full orbital limit set $\Lambdao_\Omega(\Gamma)$ does not contain any nontrivial projective line segment;
\item \label{item:ccc-noPETs-some} $\Gamma$ is convex cocompact in $\PP(V)$ and for \emph{some} nonempty properly convex open set $\Omega \subset \PP(V)$ on which $\Gamma$ acts convex cocompactly, the convex core $\Ccore_\Omega(\Gamma)$ does not contain a PET;
  \item\label{item:ccc-ugly} $\Gamma$ preserves a properly convex open subset $\Omega$ of $\PP(V)$ and acts cocompactly on some closed convex subset $\C$ of~$\Omega$ with nonempty interior such that $\partiali\C:=\overline{\C}\smallsetminus\C=\overline{\C}\cap\partial\Omega$ does not contain any nontrivial projective line segment;
  \item\label{item:P1Anosov} $\Gamma$ is word hyperbolic, the inclusion $\Gamma\hookrightarrow\PGL(V)$ is $P_1$-Anosov, and $\Gamma$ preserves some nonempty properly convex open subset of $\PP(V)$;
  \item\label{item:P1Anosov-bis} $\Gamma$ is word hyperbolic, the inclusion $\Gamma\hookrightarrow\PGL(V)$ is $P_1$-Anosov, and the set\linebreak $\PP(V) \smallsetminus \bigcup_{H\in\Lambda_{\Gamma}^*} H$ admits a $\Gamma$-invariant connected component.
\end{enumerate}
When these conditions hold, there is equality between the following four sets:
\begin{itemize}
  \item the orbital limit set $\Lambdao_{\Omega}(\Gamma)$ of $\Gamma$ in any $\Gamma$-invariant properly convex open subset $\Omega$ of $\PP(V)$ which is strictly convex with boundary of class $C^1$ as in Definition~\ref{def:strong-cc} (condition~\ref{item:ccc-CM});
  \item the full orbital limit set $\Lambdao_\Omega(\Gamma)$ for any properly convex open set $\Omega$ on which $\Gamma$ acts convex cocompactly as in Definition~\ref{def:cc-general} (conditions \ref{item:ccc-hyp}, \ref{item:ccc-noseg-any}, \ref{item:ccc-noPETs-some});
  \item the segment-free set $\partiali\C$ for any convex subset $\C$ on which $\Gamma$ acts cocompactly in a $\Gamma$-invariant properly convex open set~$\Omega$ (condition~\ref{item:ccc-ugly});
  \item the image of the boundary map $\xi : \partial_{\infty}\Gamma\to\PP(V)$ of the Anosov representation $\Gamma\hookrightarrow\PGL(V)$ (conditions \ref{item:P1Anosov} and~\ref{item:P1Anosov-bis}), which is also the proximal limit set $\Lambda_{\Gamma}$ of $\Gamma$ in $\PP(V)$ (Definition~\ref{def:prox-lim-set}).
\end{itemize}
\end{theorem}

%%%%%%%%%%%%%%%%%%%%%%%%%
\subsection{Properties of convex cocompact groups in $\PP(V)$}

We show that, even in the case of nonhyperbolic discrete groups, the notion of convex cocompactness in $\PP(V)$ (Definition~\ref{def:cc-general}) still has some of the nice properties enjoyed by Anosov representations and convex cocompact subgroups of rank-one Lie groups.
In particular, we prove the following.

\begin{theorem} \label{thm:properties}
Let $\Gamma$ be an infinite discrete subgroup of $G=\PGL(V)$.
\begin{enumerate}[label=(\Alph*)]
  \item\label{item:dual} The group $\Gamma$ is convex cocompact in $\PP(V)$ if and only if it is convex cocompact in $\PP(V^*)$ (for the dual action).
  \item\label{item:cc-QI} If $\Gamma$ is convex cocompact in $\PP(V)$, then it is finitely generated and quasi-isometrically embedded in~$G$.
  \item\label{item:cc-no-unipotent} If $\Gamma$ is convex cocompact in $\PP(V)$, then $\Gamma$ does not contain any unipotent element.
    \item\label{item:stable} If $\Gamma$ is convex cocompact in $\PP(V)$, then there is a neighborhood $\mathcal{U}\subset\Hom(\Gamma,G)$ of the natural inclusion such that any $\rho \in \mathcal{U}$ is injec\-tive and discrete with image $\rho(\Gamma)$ convex cocompact in~$\PP(V)$.
  \item\label{item:include} Let $V'=\RR^{n'}$ and let $i :\nolinebreak \SL^{\pm}(V) \hookrightarrow \SL^{\pm}(V \oplus V')$ be the natural inclusion acting trivially on the second factor.
  If $\Gamma$ is convex cocompact in $\PP(V)$, then $i(\hat \Gamma)$ is convex cocompact in $\PP(V \oplus V')$, where $\hat \Gamma$ is the lift of $\Gamma$ to $\SL^{\pm}(V)$ that preserves a properly convex cone of $V$ lifting~$\Omega$ (see Remark~\ref{rem:lift-Gamma}).
  \item\label{item:cc-ss} If the semisimplification (Definition~\ref{def:reduc-part}) of the natural inclusion $\Gamma\hookrightarrow\PGL(V)$ is injective and its image is convex cocompact in $\PP(V)$, then $\Gamma$ is convex cocompact in $\PP(V)$.
\end{enumerate}
\end{theorem}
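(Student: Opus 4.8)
The plan is to exhibit $i(\hat\Gamma)$ as a convex cocompact group by building, out of the lifted cone $\tilde\Omega\subset V$, a properly convex open set $\Omega^+\subset\PP(V\oplus V')$ whose convex core coincides with that of $\Omega$. Write $\tilde\Omega\subset V$ for the $\hat\Gamma$-invariant sharp convex open cone lifting $\Omega$, and fix a Euclidean norm $|\cdot|$ on~$V'$ (automatically $i(\hat\Gamma)$-invariant, since $i(\hat\Gamma)$ acts trivially on $V'$). The naive guesses fail: the product cone $\tilde\Omega\times V'$ is not properly convex, and the join of $\Omega$ with a properly convex open subset of $\PP(V')$ produces a full orbital limit set whose convex hull has noncompact quotient (already for $\Gamma=\langle\mathrm{diag}(2,1/2)\rangle$ the join with a point gives a triangle whose core is empty). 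The essential point is that the thickening of $\Omega$ in the $V'$-directions must \emph{pinch to zero} along $\partial\Omega$.

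To produce such a pinching in an $i(\hat\Gamma)$-invariant way, I would use the Koszul--Vinberg characteristic function of the cone,
\[
\chi(v)\ :=\ \int_{\tilde\Omega^*}e^{-\phi(v)}\,\D\phi\qquad(v\in\tilde\Omega),
\]
where $\tilde\Omega^*\subset V^*$ is the open sharp dual cone and $\D\phi$ is Lebesgue measure, and set $F:=\chi^{-1/n}$ on $\tilde\Omega$. This $F$ has exactly the three properties needed. First, it is $\hat\Gamma$-invariant, because $\hat\Gamma\subset\SL^{\pm}(V)$ preserves $\tilde\Omega^*$ and Lebesgue measure on $V^*$, so $\chi\circ\hat\gamma=\chi$. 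Second, it is positive, $1$-homogeneous and concave: homogeneity is the scaling $\chi(tv)=t^{-n}\chi(v)$, and concavity follows from the fact that $\chi$, being a Laplace transform of a measure, is log-convex hence convex, so its sublevel sets (the superlevel sets of $F$) are convex, and a positive $1$-homogeneous quasiconcave function on a cone is concave (it is superadditive). Third, $F$ extends continuously by $0$ to $\partial\tilde\Omega$, since $\chi$ is a barrier blowing up at every nonzero boundary point of the cone. I then define
\[
\tilde\Omega^+\ :=\ \{\,(v,v')\in V\oplus V'\ :\ v\in\tilde\Omega,\ |v'|<F(v)\,\},\qquad \Omega^+:=\PP(\tilde\Omega^+).
\]
Concavity of $F$ makes $\tilde\Omega^+$ convex, sharpness of $\tilde\Omega$ together with $F(0)=0$ makes $\tilde\Omega^+$ sharp, and invariance of $F$ makes $\Omega^+$ an $i(\hat\Gamma)$-invariant properly convex open set; moreover $\tilde\Omega^+\cap V=\tilde\Omega$, so $\Omega^+\cap\PP(V)=\Omega$ with $\Omega$ (hence $\Ccore_\Omega(\Gamma)$) in the interior of $\Omega^+$.

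The heart of the verification is the identity $\Lambdao_{\Omega^+}(i(\hat\Gamma))=\Lambdao_\Omega(\Gamma)\subset\PP(V)$. The inclusion $\supseteq$ is immediate, as $\Gamma$-orbits inside $\Omega\subset\Omega^+$ accumulate on $\Lambdao_\Omega(\Gamma)$. For $\subseteq$, take $w=[v+v']\in\Omega^+$ and $\gamma_k\to\infty$ in $\Gamma$; the key estimate is $\|\hat\gamma_k v\|\to\infty$. Indeed $F(\hat\gamma_k v)=F(v)$ is a fixed positive constant by invariance, while $[\hat\gamma_k v]$ leaves every compact subset of $\Omega$ by proper discontinuity, so along any subsequence the unit vectors $\hat\gamma_k v/\|\hat\gamma_k v\|$ converge to some $\bar v\in\partial\tilde\Omega$ with $F(\bar v)=0$; since $F$ is $1$-homogeneous, $\|\hat\gamma_k v\|=F(v)/F\!\big(\hat\gamma_k v/\|\hat\gamma_k v\|\big)\to\infty$. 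Hence $i(\hat\gamma_k)w=[\hat\gamma_k v+v']\to[\bar v]\in\PP(V)$, and $[\bar v]$ is an accumulation point of $\Gamma\cdot[v]$, so lies in $\Lambdao_\Omega(\Gamma)$. This is exactly where the pinching $F\to 0$ does its job: it forbids any new limit point with nonzero $V'$-component. As $\Lambdao_{\Omega^+}(i(\hat\Gamma))$ lies in the projective subspace $\PP(V)$, its convex hull stays in $\PP(V)$, and intersecting with $\Omega^+\cap\PP(V)=\Omega$ gives $\Ccore_{\Omega^+}(i(\hat\Gamma))=\Ccore_\Omega(\Gamma)$. Since $i$ is injective and acts on $\PP(V)$ as $\Gamma$ does, $i(\hat\Gamma)\backslash\Ccore_{\Omega^+}(i(\hat\Gamma))=\Gamma\backslash\Ccore_\Omega(\Gamma)$ is nonempty and compact, so $i(\hat\Gamma)$ acts convex cocompactly on $\Omega^+$ and is convex cocompact in $\PP(V\oplus V')$.

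The main obstacle is the construction of the invariant pinching function $F$ together with the control of the full orbital limit set: one must ensure simultaneously that $\Omega^+$ is properly convex (needing $F$ concave and $\tilde\Omega$ sharp), that it is $i(\hat\Gamma)$-invariant (needing $F$ invariant, which is where $\hat\Gamma\subset\SL^{\pm}(V)$ enters), and that it pinches at $\partial\Omega$ (needing $F\to 0$, which is what removes the extra limit points). The characteristic function packages all three at once; the only slightly delicate classical input is the concavity of $\chi^{-1/n}$, which I would deduce as above from the log-convexity of the Laplace transform.
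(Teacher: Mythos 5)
Your proposal addresses only part \ref{item:include} of the six-part statement: parts \ref{item:dual}--\ref{item:cc-ss} (duality, quasi-isometric embedding, absence of unipotents, stability, semisimplification) are separate claims with separate proofs in the paper, and you do not touch them. I will therefore assess only your proof of \ref{item:include}.

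For part \ref{item:include} your argument is correct, and it takes a genuinely different route from the paper's. The paper first invokes Proposition~\ref{prop:dual-precise} to choose $\Omega$ so that the dual action on $\Omega^*$ is also convex cocompact, and then builds the invariant properly convex neighborhood of $j(\Omega)$ as $\mathcal{O}=\bigcap_{\gamma}i(\gamma)\cdot(\Delta\cap\mathcal{O}_{\max})$, where $\Delta$ is a simplex and $\mathcal{O}_{\max}$ is the (non--properly convex) complement of the hyperplanes dual to $\overline{\Omega^*}$; openness of $\mathcal{O}$ is extracted from a convergence statement for translated hyperplanes (Lemma~\ref{lem:limit-hyperplanes}), which rests on the divergence Lemma~\ref{lem:vector-growth}. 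You instead build the neighborhood explicitly as the tube $\{|v'|<F(v)\}$ over $\widetilde\Omega$, with $F=\chi^{-1/n}$ the normalized Koszul--Vinberg characteristic function: equivariance of $\chi$ under $\SL^{\pm}(V)$ gives invariance, log-convexity of the Laplace transform plus $1$-homogeneity gives concavity of $F$ (your superadditivity argument is the standard and correct one), and the barrier property $\chi\to\infty$ at nonzero boundary points gives the pinching. Your limit-set computation replaces Lemma~\ref{lem:vector-growth} by the clean homogeneity identity $\Vert\hat\gamma_k v\Vert=F(v)/F(\hat\gamma_k v/\Vert\hat\gamma_k v\Vert)\to\infty$, which is a nice self-contained substitute. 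What your approach buys is the elimination of the duality input and of the simplex-intersection argument, at the cost of importing classical analytic facts about the characteristic function (finiteness on the open cone, log-convexity, blow-up at the boundary) -- all standard. Both proofs ultimately run on the same mechanism: vectors in the invariant cone are pushed to infinity, so no accumulation point with nonzero $V'$-component can occur, whence $\Lambdao_{\Omega^+}(i(\hat\Gamma))=\Lambdao_{\Omega}(\Gamma)$ and the convex core is unchanged. (Your motivating remark that the join with a point fails -- the core of the triangle for $\langle\mathrm{diag}(2,1,1/2)\rangle$ being empty because the edge $[e_1][e_2]$ lies in the boundary of the join -- is also correct and is precisely the paper's Remark~\ref{rem:not-Omega}.)
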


\begin{remark} \label{rem:properties-strong-cc}
The equivalence \ref{item:ccc-CM} $\Leftrightarrow$ \ref{item:ccc-hyp} of Theorem~\ref{thm:main-noPETs} shows that Theorem~\ref{thm:properties} still holds if all the occurrences of ``convex cocompact'' are replaced by ``strongly convex cocompact''.
\end{remark}

While some of the properties of Theorem~\ref{thm:properties} are proved directly from Definition~\ref{def:cc-general}, others will be most naturally established using alternative characterizations of convex cocompactness in $\PP(V)$ (Theorem~\ref{thm:main-general}).
We refer to Propositions~\ref{prop:cc-block} and~\ref{prop:cc-quotient} for further operations that preserve convex cocompactness, strengthening properties~\ref{item:include} and~\ref{item:cc-ss}. 

Properties \ref{item:include} and~\ref{item:stable} give a source for many new examples of convex cocompact groups by starting with known examples in $\PP(V)$, including them into $\PP(V \oplus V')$, and then deforming.
This generalizes the picture of Fuchsian groups in $\PO(2,1)$ being deformed into quasi-Fuchsian groups in $\PO(3,1)$.
For instance, we can use Example~\ref{ex:div-implies-cc} to obtain examples of nonhyperbolic discrete subgroups of $\PGL(\RR^{m})$ (where $m=\mathrm{dim}(V\oplus V')$) which are convex cocompact in $\PP(\RR^{m})$ and act irreducibly on $\PP(\RR^{m})$, but do not divide any properly convex open subset of~$\PP(\RR^{m})$.
These groups $\Gamma$ are quasi-isometrically embedded in $\PGL(\RR^{m})$ and structurally stable (\ie there is a neighborhood of the natural inclusion in $\Hom(\Gamma,\PGL(\RR^{m}))$ which consists entirely of injective representations) without being word hyperbolic; compare with Sullivan \cite[Th.\,A]{sul85}.
We refer to Section~\ref{subsec:QF-div} for more details.

%%%%%%%%%%%%%%%%%%%%%%%%%
\subsection{Holonomy groups of convex projective orbifolds}

We now give alternative characterizations of convex cocompact subgroups in $\PP(V)$.
These characterizations are motivated by a familiar picture in rank one: namely, if $\Omega = \HH^p$ is the $p$-dimensional real hyperbolic space and $\Gamma$ is a convex cocompact torsion-free subgroup of $\PO(p,1) = \mathrm{Isom}(\HH^p)$, then any closed uniform neighborhood $\C_{\mathsf{unif}}$ of the convex core $\Ccore_\Omega(\Gamma)$ has strictly convex boundary and the quotient $\Gamma \backslash \C_{\mathsf{unif}}$ is a compact hyperbolic manifold with strictly convex boundary.
Let us fix some terminology and notation in order to discuss the appropriate generalization of this picture to real projective geometry.

\begin{definition} \label{def:boundaries}
Let $\C$ be a nonempty convex subset of $\PP(V)$ (not necessarily open nor closed, possibly with empty interior).
\begin{itemize}
  \item The \emph{frontier} of~$\C$ is $\Fr(\C):=\overline{\C}\smallsetminus\Int(\C)$.
  \item A \emph{supporting hyperplane} of~$\C$ at a point $x\in\Fr(\C)$ is a projective hyperplane $H$ such that 
  $x \in H \cap \overline \C$ and $\overline \C \smallsetminus H$ is connected (possibly empty).
  \item The \emph{ideal boundary} of~$\C$ is $\partiali\C:=\overline{\C}\smallsetminus\C$.
  \item The \emph{nonideal boundary} of~$\C$ is $\partialn\C:= \C \smallsetminus \Int(\C) = \Fr(\C)\smallsetminus\partiali\C$.
  Note that if $\C$ is open, then $\partiali \C = \Fr(\C)$ and $\partialn \C = \emptyset$; in this case, it is common in the literature to denote $\partiali \C$ simply by $\partial \C$.
  \item The convex set~$\C$ has \emph{strictly convex nonideal boundary} if every point $x \in \partialn \C$ is an extreme point of $\overline \C$.
   \item The convex set~$\C$ has \emph{$C^1$ nonideal boundary} if it has a unique supporting hyperplane at each point $x \in \partialn \C$.
  \item \label{item:bisatdef} The convex set~$\C$ has \emph{bisaturated boundary} if for any supporting hyperplane $H$ of~$\C$, the set $H \cap \overline{\C} \subset \Fr(\C)$ is either fully contained in $\partiali \C$ or fully contained in $\partialn \C$.
  \end{itemize}
\end{definition}

Let $\C$ be a properly convex subset of $\PP(V)$ on which the discrete subgroup $\Gamma$ acts properly discontinuously and cocompactly.
To simplify this intuitive discussion, assume $\Gamma$ torsion-free, so that the quotient $M = \Gamma \backslash \C$ is a compact properly convex projective manifold, possibly with boundary.
The group $\Gamma$ is called the \emph{holonomy group} of $M$.
We show that if the boundary $\partial M = \Gamma \backslash \partialn \C$ of $M$ is assumed to have some regularity, then $\Gamma$ is convex cocompact in $\PP(V)$ and, conversely, convex cocompact subgroups in $\PP(V)$ are holonomy groups of properly convex projective manifolds (or more generally orbifolds, if $\Gamma$ has torsion)  whose boundaries are well-behaved.

\begin{theorem}\label{thm:main-general}
Let $\Gamma$ be an infinite discrete subgroup of $\PGL(V)$.
Then the following are equivalent:
\begin{enumerate}
  \item \label{item:ccc-limit-set} $\Gamma$ is convex cocompact in $\PP(V)$ (Definition~\ref{def:cc-general}): it acts convex cocompactly on a nonempty properly convex open subset $\Omega$ of $\PP(V)$;
  \item \label{item:ccc-bisat} $\Gamma$ acts properly discontinuously and cocompactly on a nonempty properly convex set $\C_{\mathsf{bisat}} \subset \PP(V)$ with bisaturated boundary;
  \item \label{item:ccc-strict} $\Gamma$ acts properly discontinuously and cocompactly on a nonempty properly convex set $\C_{\mathsf{strict}} \subset \PP(V)$ with strictly convex nonideal boundary;
  \item \label{item:ccc-strict-C1} $\Gamma$ acts properly discontinuously and cocompactly on a nonempty properly convex set $\C_{\mathsf{smooth}} \subset \PP(V)$ with strictly convex $C^1$ nonideal boundary.
\end{enumerate}
When these conditions hold, $\C_{\mathsf{bisat}}$, $\C_{\mathsf{strict}}$, $\C_{\mathsf{smooth}}$ can be chosen equal, with $\Omega$ satisfying $\Lambdao_\Omega(\Gamma) = \partiali \C_{\mathsf{smooth}}$.
\end{theorem}

\begin{remark}
Given a properly discontinuous and cocompact action of a group~$\Gamma$ on a properly convex set~$\C$, Theorem~\ref{thm:main-general} interprets the convex cocompactness of~$\Gamma$ in terms of the regularity of~$\C$ at the nonideal boundary $\partialn\C$, whereas Theorem~\ref{thm:main-noPETs} (equivalence \ref{item:ccc-CM}~$\Leftrightarrow$~\ref{item:ccc-ugly}) and Theorem~\ref{thm:main-general} together interpret the \emph{strong} convex cocompactness of~$\Gamma$ in terms of the regularity of~$\C$ at both $\partiali\C$ and~$\partialn\C$.
\end{remark}

The equivalence \eqref{item:ccc-limit-set} $\Leftrightarrow$ \eqref{item:ccc-strict} of Theorem~\ref{thm:main-general} states that convex cocompact torsion-free subgroups in $\PP(V)$ are precisely the holonomy groups of compact properly convex projective manifolds with strictly convex boundary. Cooper--Long--Tillmann~\cite{clt18} studied the deformation theory of such manifolds (allowing in addition certain types of cusps).
They established a holonomy principle, which reduces to the following statement in the absence of cusps: the holonomy groups of compact properly convex projective manifolds with strictly convex boundary form an open subset of the representation space of~$\Gamma$.
This result, together with Theorem~\ref{thm:main-general}, gives the stability property~\ref{item:stable} of Theorem~\ref{thm:properties}.
For the case of divisible convex sets, see \cite{kos68}.

The requirement that $\C_{\mathsf{bisat}}$ have bisaturated boundary in condition~\eqref{item:ccc-bisat} can be seen as a ``coarse'' version of the strict convexity of the nonideal boundary in~\eqref{item:ccc-strict-C1}: the prototype situation is that of a convex cocompact real hyperbolic manifold, with $\C_{\mathsf{bisat}}$ a closed polyhedral neighborhood of the convex core, see \eg the top right panel of Figure~\ref{fig:Coxeter}.
Both of these conditions behave well under a natural duality operation generalizing that of open properly convex sets (see Section~\ref{sec:bisat-dual}).
The equivalence \eqref{item:ccc-limit-set}~$\Leftrightarrow$~\eqref{item:ccc-bisat} will be used to prove the duality property~\ref{item:dual} of Theorem~\ref{thm:properties}.

We note that without some form of strengthened convexity requirement on the boundary, properly convex projective manifolds have a poorly-behaved deformation theory: see Remark~\ref{rem:stable-precise}.\ref{item:weak-cond-not-open}.

\begin{figure}
\includegraphics[width = 4.1cm]{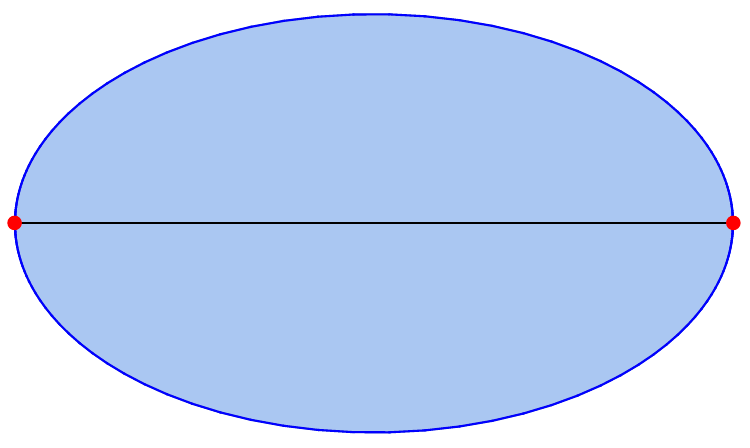}
\hspace{1.5cm}
\includegraphics[width = 4.0cm]{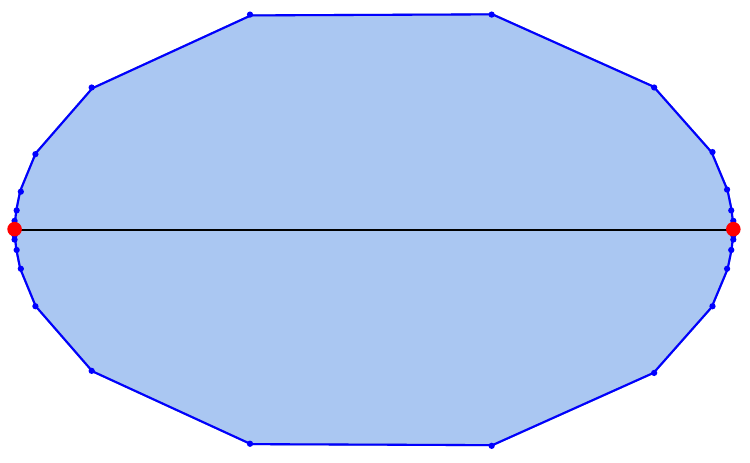} 
\\
\includegraphics[width = 4.0cm]{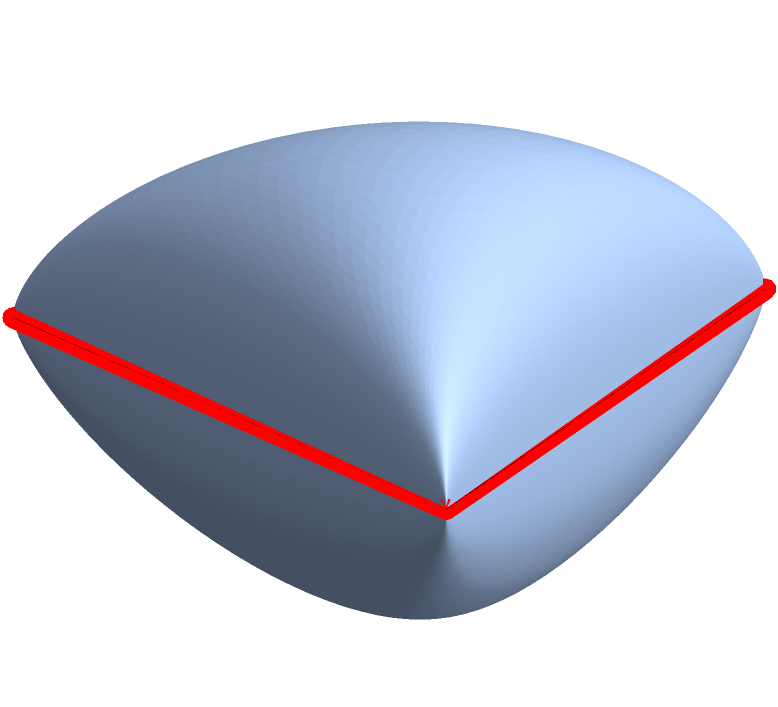}
\hspace{1.0cm}
\includegraphics[width = 4.0cm]{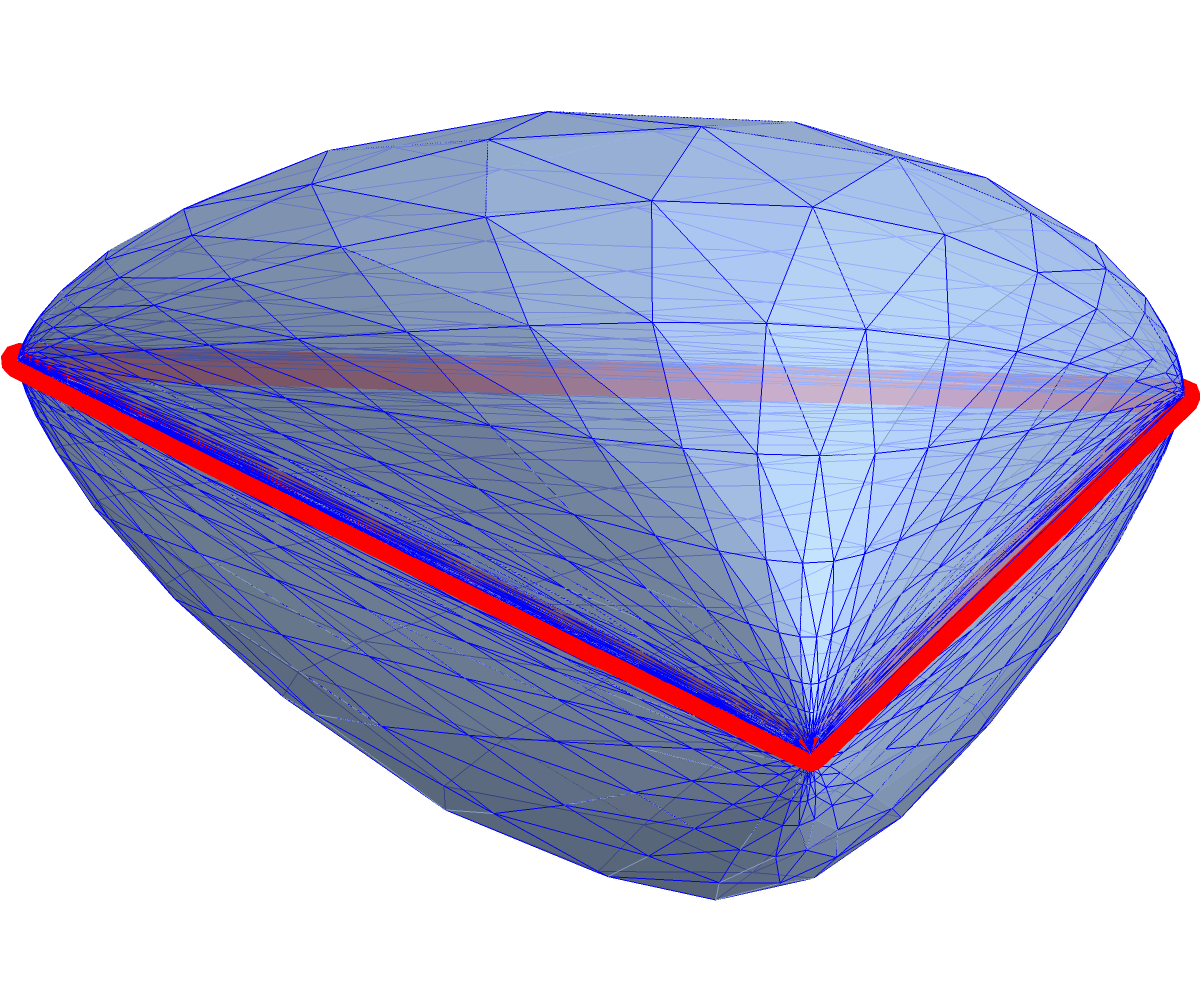}
\caption{The group $\ZZ^{n-1}$ acts diagonally on $\PP(\RR^n)\subset \PP(\RR^{n+1})$, where $n=2$ (top row) or $3$ (bottom row). These actions are convex cocompact in $\PP(\RR^{n+1})$: each row shows two properly convex invariant subsets $\C \subset \PP(\RR^{n+1})$ with bisaturated boundary, the one on the left having also strictly convex $C^1$ nonideal boundary $\partiali\C = \overline{\C} \smallsetminus \C$. We represent $\partiali\C$ in red.}
\label{fig:Coxeter}
\end{figure}

\begin{remark} \label{rem:equality-sets}
If $\Gamma$ acts strongly irreducibly on $\PP(V)$ (\ie all finite-index subgroups of~$\Gamma$ act irreducibly) and if the equivalent conditions of Theorem~\ref{thm:main-general} hold, then $\Lambdao_\Omega(\Gamma) = \partiali \C_{\mathsf{bisat}} = \partiali \C_{\mathsf{strict}} = \partiali \C_{\mathsf{smooth}}$ for \emph{any} $\Omega, \C_{\mathsf{bisat}}, \C_{\mathsf{strict}}$, $\C_{\mathsf{smooth}}$ as in conditions~\eqref{item:ccc-limit-set}, \eqref{item:ccc-bisat}, \eqref{item:ccc-strict}, and~\eqref{item:ccc-strict-C1} respectively: see Section~\ref{sec:unique-limit-set}.
In particular, this set then depends only on~$\Gamma$.
This is not necessarily the case if the action of $\Gamma$ is not strongly irreducible: see Examples \ref{ex:Zn} and~\ref{ex:An}.
\end{remark}

%%%%%%%%%%%%%%%%%%%%%%%%%
\subsection{Convex cocompactness for subgroups of $\PO(p,q)$} \label{subsec:intro-cc-POpq}

As mentioned in Remark \ref{rem:Ano-PGL}.\ref{item:POpq-irred}, when $\Gamma\subset\PGL(V)$ acts irreducibly on $\PP(V)$ and is contained in the subgroup $\PO(p,q) \subset \PGL(V)$ of projective linear transformations that preserve a nondegenerate symmetric bilinear form $\langle \cdot, \cdot \rangle_{p,q}$ of some signature $(p,q)$ on $V=\RR^n$, a more precise version of Theorem~\ref{thm:Ano-PGL} was established in \cite[Th.\,1.11 \& Prop.\,1.17]{dgk-ccHpq}.
Here we remove the irreducibility assumption on~$\Gamma$.

For $p,q\in\NN^*$, let $\RR^{p,q}$ be $\RR^{p+q}$ endowed with a symmetric bilinear form $\langle \cdot, \cdot \rangle_{p,q}$ of signature $(p,q)$.
The spaces
\begin{align*}
\HH^{p,q-1} &= \left\{ [v] \in \PP(\RR^{p,q}) ~|~ \langle v,v\rangle_{p,q} < 0 \right\}\\
\mathrm{and}\quad \SS^{p-1,q} &= \left\{ [v] \in \PP(\RR^{p,q}) ~|~ \langle v,v\rangle_{p,q} >0 \right\}
\end{align*}
are the projective models for pseudo-Riemannian hyperbolic space of signature $(p,q-1)$, and pseudo-Riemannian spherical space of signature $(p-1,q)$.
The geodesics of these two spaces are the (nonempty) intersections of the spaces with the projective lines of $\PP(\RR^{p+q})$.
The group $\PO(p,q)$ acts transitively on $\HH^{p,q-1}$ and on $\SS^{p-1,q}$.
Multiplying the form $\langle \cdot, \cdot \rangle_{p,q}$ by $-1$ produces a form of signature $(q,p)$ and turns~$\PO(p,q)$ into $\PO(q,p)$ and $\SS^{p-1,q}$ into a copy of $\HH^{q,p-1}$, so we consider only the pseudo-Riemannian hyperbolic spaces $\HH^{p,q-1}$.
We denote by $\partial\HH^{p,q-1}$ the boundary of $\HH^{p,q-1}$, namely
$$\partial\HH^{p,q-1} = \left\{ [v] \in \PP(\RR^{p,q}) ~|~ \langle v,v \rangle_{p,q} = 0 \right\}.$$
We call a subset of $\HH^{p,q-1}$ \emph{convex} (\resp \emph{properly convex}) if it is convex (\resp properly convex) as a subset of $\PP(\RR^{p+q})$. 
Since the straight lines of $\PP(\RR^{p+q})$ are the geodesics of the pseudo-Riemannian metric on $\HH^{p,q-1}$, convexity in $\HH^{p,q-1}$ is an intrinsic notion.

\begin{remarks} \label{rem:convex-in-Hpq}
Let $\C$ be a closed convex subset of $\HH^{p,q-1}$.
\begin{enumerate}[label=(\alph*)]
  \item If $\C$ has nonempty interior, then $\C$ is properly convex.
  Indeed, if $\C$ is not properly convex, then it contains a line $\ell$ of an affine chart $\RR^{p+q-1}\supset\nolinebreak\C$, and $\C$ is a union of lines parallel to $\ell$ in that chart.
  All these lines must be tangent to $\partial \HH^{p,q-1}$ at their common endpoint $z \in \partial \HH^{p,q-1}$, which implies that $\C$ is contained in the hyperplane $z^\perp$ and has empty interior.
  \item The ideal boundary $\partiali\C$ is the set of accumulation points of $\C$ in $\partial\HH^{p,q-1}$.
  This set contains no projective line segment if and only if it is \emph{transverse}, \ie $y\notin z^{\perp}$ for all $y\neq z$ in $\partiali \C$.
  \item The nonideal boundary $\partialn\C = \Fr(\C) \cap \HH^{p,q-1}$ is the boundary of $\C$ in~$\HH^{p,q-1}$ in the usual sense.
 \item\label{item:bisat-Hpq-intro} It is easy to see that $\C$ has bisaturated boundary if and only if a segment contained in $\partialn\C\subset\HH^{p,q-1}$ never extends to a full geodesic ray of~$\HH^{p,q-1}$ --- a form of coarse strict convexity for $\partialn\C$.
\end{enumerate}
\end{remarks}

The following definition was introduced in \cite{dgk-ccHpq}, where it was studied for discrete subgroups~$\Gamma$ acting irreducibly on $\PP(\RR^{p+q})$. 

\begin{definition}[{\cite[Def.\,1.2]{dgk-ccHpq}}] \label{def:Hpq-cc}
A discrete subgroup $\Gamma$ of $\PO(p,q)$ is \emph{$\HH^{p,q-1}$-convex cocompact} if it acts properly discontinuously with compact quotient on some closed properly convex subset $\C$ of $\HH^{p,q-1}$ with nonempty interior whose ideal boundary $\partiali \C \subset \partial \HH^{p,q-1}$ does not contain any nontrivial projective line segment.
\end{definition}

If $\Gamma$ acts irreducibly on $\PP(\RR^{p+q})$, then any nonempty $\Gamma$-invariant properly convex subset of $\HH^{p,q-1}$ has nonempty interior, and so the nonempty interior requirement in Definition~\ref{def:Hpq-cc} may simply be replaced by the requirement that $\C$ be nonempty.
See Section~\ref{subsec:ex-Hpq-cc} for examples.

For a word hyperbolic group~$\Gamma$, we shall say that a representation $\rho : \Gamma\to\PO(p,q)$ is \emph{$P_1^{p,q}$-Anosov} if it is $P_1$-Anosov as a representation into $\PGL(\RR^{p+q})$.
We refer to \cite{dgk-ccHpq} for further discussion of this notion.

If the natural inclusion $\Gamma\hookrightarrow\PO(p,q)$ is $P_1^{p+q}$-Anosov, then the boundary map $\xi$ takes values in $\partial \HH^{p,q-1}$ and the hyperplane-valued boundary map $\xi^*$ satisfies $\xi^*(\cdot) = \xi(\cdot)^\perp$ where $z^\perp$ denotes the orthogonal of~$z$ with respect to $\langle \cdot, \cdot \rangle_{p,q}$; the image of~$\xi$ is the proximal limit set $\Lambda_{\Gamma}$ of $\Gamma$ in $\partial\HH^{p,q-1}$ (Definition~\ref{def:prox-lim-set} and Remark~\ref{rem:prox-lim-set-POpq}).
Following \cite[Def.\,1.9]{dgk-ccHpq}, we shall say that $\Lambda_{\Gamma}$ is \emph{negative} (\resp \emph{positive}) if it lifts to a cone of $\RR^{p,q}\smallsetminus\{0\}$ on which all inner products $\langle\cdot,\cdot\rangle_{p,q}$ of noncollinear points are negative (\resp positive); equivalently (see \cite[Lem.\,3.2]{dgk-ccHpq}), any three distinct points of $\Lambda_{\Gamma}$ span a linear subspace of $\RR^{p,q}$ of signature $(2,1)$ (\resp $(1,2)$).
Note that $\Lambda_\Gamma$ can be both negative and positive only if $|\Lambda_\Gamma| = 2$, \ie $\Gamma$ is virtually~$\ZZ$.
With this notation, we prove the following, where $\Gamma$ is \emph{not} assumed to act irreducibly on $\PP(\RR^{p+q})$; for \eqref{item:ccc-CM-Hpq}~$\Leftrightarrow$~\eqref{item:ccc-Hpq-Hqp} and ($2^{\pm}$)~$\Leftrightarrow$~($3^{\pm}$)~$\Leftrightarrow$~($5^{\pm}$)~$\Rightarrow$~($4^{\pm}$) in the irreducible case, see \cite[Th.\,1.11 \& Prop.\,1.17]{dgk-ccHpq}.

\begin{theorem} \label{thm:main-POpq-reducible}
Let $p,q\in\NN^*$ and let $\Gamma$ be an infinite discrete subgroup of $\PO(p,q)$.
Then the following are equivalent:
\begin{enumerate}
  \item\label{item:ccc-POpq} $\Gamma$ is convex cocompact in $\PP(\RR^{p+q})$ (Definition~\ref{def:strong-cc});
  \item\label{item:ccc-CM-Hpq} $\Gamma$ is strongly convex cocompact in $\PP(\RR^{p+q})$ (Definition~\ref{def:strong-cc});
  \item\label{item:ccc-Hpq-Hqp} $\Gamma$ is $\HH^{p,q-1}$-convex cocompact or $\HH^{q,p-1}$-convex cocompact (after identifying $\PO(p,q)$ with $\PO(q,p)$ as above).
\end{enumerate}
The following are also equivalent:
\begin{enumerate}[label=(\arabic*$^-$)]
  \item\label{item:ccc-POpq-neg} $\Gamma$ is convex cocompact in $\PP(\RR^{p+q})$ and $\Lambda_{\Gamma}\subset\partial\HH^{p,q-1}$ is negative;
  \item\label{item:ccc-CM-Hpq-neg} $\Gamma$ is strongly convex cocompact in $\PP(\RR^{p+q})$ and $\Lambda_{\Gamma}\subset\partial\HH^{p,q-1}$ is negative;
  \item\label{item:Hpq-ccc} $\Gamma$ is $\HH^{p,q-1}$-convex cocompact;
  \item\label{item:ccc-CM-Hpq-in-Hpq} $\Gamma$ acts convex cocompactly on some nonempty properly convex open subset $\Omega$ of $\HH^{p,q-1}$;
  \item\label{item:Anosov-neg} $\Gamma$ is word hyperbolic, the natural inclusion $\Gamma\hookrightarrow\PO(p,q)$ is $P_1^{p,q}$-Anosov, and $\Lambda_{\Gamma}\subset\partial\HH^{p,q-1}$ is negative.
\end{enumerate}
Similarly, the following are equivalent:
\begin{enumerate}[label=(\arabic*$^+$)]
  \item\label{item:ccc-POpq-pos} $\Gamma$ is convex cocompact in $\PP(\RR^{p+q})$ and $\Lambda_{\Gamma}\subset\partial\HH^{p,q-1}$ is positive;
  \item\label{item:ccc-CM-Hpq-pos} $\Gamma$ is strongly convex cocompact in $\PP(\RR^{p+q})$ and $\Lambda_{\Gamma}\subset\partial\HH^{p,q-1}$ is positive;
  \item\label{item:Hqp-ccc} $\Gamma$ is $\HH^{q,p-1}$-convex cocompact (after identifying $\PO(p,q)$ with $\PO(q,p)$);
  \item\label{item:ccc-CM-Hpq-in-Hqp} $\Gamma$ acts convex cocompactly on some nonempty properly convex open subset $\Omega$ of $\HH^{q,p-1}$ (after identifying $\PO(p,q)$ with $\PO(q,p)$);
  \item\label{item:Anosov-pos} $\Gamma$ is word hyperbolic, the natural inclusion $\Gamma\hookrightarrow\PO(p,q)$ is $P_1^{p,q}$-Anosov, and $\Lambda_{\Gamma}\subset\partial\HH^{p,q-1}$ is positive.
\end{enumerate}
\end{theorem}

It is not difficult to see that if $\mathcal{T}$ is an open subset of $\Hom(\Gamma,\PO(p,q))$ consisting entirely of $P_1^{p,q}$-Anosov representations, then the condition that $\Lambda_{\rho(\Gamma)}\subset\partial\HH^{p,q-1}$ be negative is both an open and closed condition for $\rho\in\mathcal{T}$ \cite[Prop.\,3.5]{dgk-ccHpq}.
Therefore Theorem~\ref{thm:main-POpq-reducible} and the openness of the set of $P_1^{p,q}$-Anosov representations~\cite{lab06, gw12}  implies the following.

\begin{corollary} \label{cor:Hpq-cc-for-all}
Let $p,q\in\NN^*$ and let $\Gamma$ be a discrete group.
\begin{enumerate}
\item The set of representations with finite kernel and $\HH^{p,q-1}$-convex cocompact image is open in $\Hom(\Gamma,\PO(p,q))$.
\item Let $\mathcal{T}$ be a connected open subset of $\Hom(\Gamma,\PO(p,q))$ consisting entirely of $P_1^{p,q}$-Anosov representations.
If $\rho(\Gamma)$ is $\HH^{p,q-1}$-convex cocompact for some $\rho\in\mathcal{T}$, then $\rho(\Gamma)$ is $\HH^{p,q-1}$-convex cocompact for all $\rho\in\mathcal{T}$.
\end{enumerate}
Both statements also hold if ``$\HH^{p,q-1}$-convex cocompact'' is replaced with ``$\HH^{q,p-1}$-convex cocompact.''
\end{corollary}

It is also not difficult to see that if a closed subset $\Lambda$ of $\partial\HH^{p,q-1}$ is transverse (\ie $z^\perp\cap \Lambda=\{z\}$ for all $z\in \Lambda$) and connected, then it is negative or positive \cite[Prop.\,1.10]{dgk-ccHpq}.
Therefore Theorem~\ref{thm:main-POpq-reducible} implies the following.

\begin{corollary} \label{cor:pqqp-intro}
Let $\Gamma$ be a word hyperbolic group with \emph{connected} boundary $\partial_{\infty}\Gamma$, and let $p,q\in\NN^*$.
For any $P_1^{p,q}$-Anosov representation $\rho : \Gamma\to\PO(p,q)$, the group $\rho(\Gamma)$ is $\HH^{p,q-1}$-convex cocompact or $\HH^{q,p-1}$-convex cocompact (after identifying $\PO(p,q)$ with $\PO(q,p)$).
 \end{corollary}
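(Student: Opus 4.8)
The plan is to reduce everything to the negative/positive dichotomy for transverse connected limit sets, stated as \cite[Prop.\,1.10]{dgk-ccHpq} just above, and then to feed the outcome into the equivalences of Theorem~\ref{thm:main-POpq-reducible}. First I would record that $\rho(\Gamma)$ is an infinite discrete subgroup of $\PO(p,q)$: since $\partial_{\infty}\Gamma$ is connected and nonempty, $\Gamma$ is infinite and non-elementary (the boundary of an infinite elementary hyperbolic group being a two-point set, and that of a finite group being empty), and a $P_1^{p,q}$-Anosov representation is a quasi-isometric embedding, so $\rho$ has finite kernel and discrete image, and the inclusion $\rho(\Gamma)\hookrightarrow\PO(p,q)$ is itself $P_1^{p,q}$-Anosov with the same boundary at infinity. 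Thus Theorem~\ref{thm:main-POpq-reducible} applies to $\rho(\Gamma)$.

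Next I would analyze the proximal limit set $\Lambda_{\rho(\Gamma)}$. As recalled before Definition~\ref{def:Hpq-cc}, for a $P_1^{p,q}$-Anosov inclusion the boundary map $\xi\colon\partial_{\infty}\Gamma\to\partial\HH^{p,q-1}$ satisfies $\xi^*=\xi^{\perp}$, and its image is exactly $\Lambda_{\rho(\Gamma)}$. Being the continuous image of the compact connected set $\partial_{\infty}\Gamma$, the set $\Lambda_{\rho(\Gamma)}=\xi(\partial_{\infty}\Gamma)$ is closed and connected. The Anosov transversality condition~\ref{item:transverse}, which reads $\xi(\eta)\notin\xi^*(\eta')=\xi(\eta')^{\perp}$ for all $\eta\neq\eta'$, translates exactly into $z^{\perp}\cap\Lambda_{\rho(\Gamma)}=\{z\}$ for every $z\in\Lambda_{\rho(\Gamma)}$, i.e.\ $\Lambda_{\rho(\Gamma)}$ is transverse. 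Applying \cite[Prop.\,1.10]{dgk-ccHpq} then forces $\Lambda_{\rho(\Gamma)}$ to be negative or positive.

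Finally, I would split into the two cases and invoke the appropriate implication of Theorem~\ref{thm:main-POpq-reducible}: if $\Lambda_{\rho(\Gamma)}$ is negative, the implication \eqref{item:Anosov-neg}~$\Rightarrow$~\eqref{item:Hpq-ccc} gives that $\rho(\Gamma)$ is $\HH^{p,q-1}$-convex cocompact; if it is positive, the implication \eqref{item:Anosov-pos}~$\Rightarrow$~\eqref{item:Hqp-ccc} gives that $\rho(\Gamma)$ is $\HH^{q,p-1}$-convex cocompact (after identifying $\PO(p,q)$ with $\PO(q,p)$), which is the desired conclusion. I do not expect any serious obstacle here: all the hard analytic and geometric work has been absorbed into Theorem~\ref{thm:main-POpq-reducible} and into the dichotomy of \cite[Prop.\,1.10]{dgk-ccHpq}. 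The only genuine verifications are that $\Lambda_{\rho(\Gamma)}$ is closed, connected, and transverse, each of which is immediate from continuity and compactness of $\partial_{\infty}\Gamma$ together with conditions \ref{item:compatible}--\ref{item:transverse}; the role of the connectedness hypothesis on $\partial_{\infty}\Gamma$ is precisely to remove the a priori ambiguity and pin down one of the two (in general non-exclusive) convex cocompactness conclusions.
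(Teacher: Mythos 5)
Your proposal is correct and follows exactly the route the paper takes: the paper derives this corollary by observing that $\Lambda_{\rho(\Gamma)}=\xi(\partial_{\infty}\Gamma)$ is closed, connected, and transverse, invoking \cite[Prop.\,1.10]{dgk-ccHpq} to conclude it is negative or positive, and then applying the implications \eqref{item:Anosov-neg}~$\Rightarrow$~\eqref{item:Hpq-ccc} and \eqref{item:Anosov-pos}~$\Rightarrow$~\eqref{item:Hqp-ccc} of Theorem~\ref{thm:main-POpq-reducible}. The preliminary reductions you spell out (finite kernel, discreteness of the image, transversality from \ref{item:ano-trans} with $\xi^*=\xi^{\perp}$) are all sound.
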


\begin{remark}
In the special case when $q=2$ (\ie $\HH^{p,q-1}=\mathrm{AdS}^{p+1}$ is the Lorentzian \emph{anti-de Sitter space}) and $\Gamma$ is the fundamental group of a closed hyperbolic $p$-manifold, the equivalence \ref{item:Hpq-ccc}~$\Leftrightarrow$~\ref{item:Anosov-neg} of Theorem~\ref{thm:main-POpq-reducible} follows from work of Mess \cite{mes90} for $p=2$ and Barbot--M\'erigot \cite{bm12} for $p\geq 3$.
\end{remark}

As a consequence of Theorem~\ref{thm:main-general}, we obtain characterizations of $\HH^{p,q-1}$-convex cocompactness where the assumption on the ideal boundary $\partiali \C \subset \partial\HH^{p,q-1}$ in Definition~\ref{def:Hpq-cc} is replaced by various regularity conditions on the nonideal boundary $\partialn \C \subset \HH^{p,q-1}$.

\begin{theorem}\label{thm:Hpq-general}
$p,q\in\NN^*$ and let $\Gamma$ be an infinite discrete subgroup of $\PO(p,q)$.
Then the following are equivalent:
\begin{enumerate}
  \item \label{item:Hpq-cc} $\Gamma$ is $\HH^{p,q-1}$-convex cocompact: it acts properly discontinuously and cocompactly on a closed convex subset $\C$ of $\HH^{p,q-1}$ with nonempty interior whose ideal boundary $\partiali \C$ does not contain any nontrivial projective line segment;
  \item \label{item:Hpq-bisat} $\Gamma$ acts properly discontinuously and cocompactly on a nonempty closed convex subset $\C_{\mathsf{bisat}}$ of $\HH^{p,q-1}$ whose boundary $\partialn \C_{\mathsf{bisat}}$ in $\HH^{p,q-1}$ does not contain any infinite geodesic line of $\HH^{p,q-1}$;
  \item \label{item:Hpq-strict} $\Gamma$ acts properly discontinuously and cocompactly on a nonempty closed convex subset $\C_{\mathsf{strict}}$ of $\HH^{p,q-1}$ whose boundary $\partialn \C_{\mathsf{strict}}$ in $\HH^{p,q-1}$ is strictly convex;
  \item \label{item:Hpq-strict-C1} $\Gamma$ acts properly discontinuously and cocompactly on a nonempty closed convex set $\C_{\mathsf{smooth}}$ of $\HH^{p,q-1}$ whose boundary $\partialn \C_{\mathsf{smooth}}$ in $\HH^{p,q-1}$ is strictly convex and of class~$C^1$.
\end{enumerate}
When these conditions hold, $\C$, $\C_{\mathsf{bisat}}$, $\C_{\mathsf{strict}}$, and $\C_{\mathsf{smooth}}$ may be taken equal.
\end{theorem}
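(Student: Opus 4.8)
The plan is to deduce the statement from the general projective Theorem~\ref{thm:main-general}, converting the regularity conditions on the nonideal boundary in $\HH^{p,q-1}$ into the projective language of Definition~\ref{def:boundaries} by means of Remark~\ref{rem:convex-in-Hpq}. Throughout one uses that a closed convex $\C\subset\HH^{p,q-1}$ with nonempty interior is properly convex in $\PP(\RR^{p+q})$, that its nonideal boundary $\partialn\C$ is its boundary inside $\HH^{p,q-1}$, and that $\partiali\C=\overline{\C}\cap\partial\HH^{p,q-1}\subset\partial\HH^{p,q-1}$ is its set of accumulation points on the null cone. The implications \eqref{item:Hpq-strict-C1}~$\Rightarrow$~\eqref{item:Hpq-strict}~$\Rightarrow$~\eqref{item:Hpq-bisat} are then immediate: forgetting $C^1$-regularity gives \eqref{item:Hpq-strict}, and if $\partialn\C$ is strictly convex then each of its points is extreme in $\overline{\C}$, so $\partialn\C$ contains no nontrivial segment, a fortiori no infinite geodesic line of $\HH^{p,q-1}$, which is \eqref{item:Hpq-bisat}.

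For \eqref{item:Hpq-bisat}~$\Rightarrow$~\eqref{item:Hpq-cc} I would first invoke Lemma~\ref{lem:bisat-Hpq} (see Remark~\ref{rem:convex-in-Hpq}): the hypothesis that $\partialn\C_{\mathsf{bisat}}$ contain no infinite geodesic line says exactly that $\C_{\mathsf{bisat}}$ has bisaturated boundary. By the equivalence \eqref{item:ccc-fib}~$\Leftrightarrow$~\eqref{item:ccc-limit-set} of Theorem~\ref{thm:main-general}, $\Gamma$ is then convex cocompact in $\PP(\RR^{p+q})$, acting convex cocompactly on $\Omega:=\Int(\C_{\mathsf{bisat}})\subset\HH^{p,q-1}$ with convex core $\Ccore_\Omega(\Gamma)\subset\C_{\mathsf{bisat}}$. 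The decisive point is that this forces word hyperbolicity: since $\Ccore_\Omega(\Gamma)\subset\overline{\C_{\mathsf{bisat}}}\subset\HH^{p,q-1}\cup\partial\HH^{p,q-1}$, the ideal boundary of the core lies in $\partial\HH^{p,q-1}$, so any PET (Definition~\ref{def:PET}) in the core would have its three edges in the null cone. But three pairwise-orthogonal null vectors span a totally isotropic $3$-plane on which $\langle\cdot,\cdot\rangle_{p,q}$ vanishes identically, so the interior of such a triangle could not be timelike, contradicting its containment in $\HH^{p,q-1}$. Hence the core has no PET, and by Theorem~\ref{thm:main-noPETs} the group $\Gamma$ is word hyperbolic with $\Gamma\hookrightarrow\PGL(\RR^{p+q})$ (equivalently $\Gamma\hookrightarrow\PO(p,q)$) $P_1$-Anosov. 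As recalled before Definition~\ref{def:Hpq-cc}, the boundary map then takes values in $\partial\HH^{p,q-1}$ with $\xi^*=\xi(\cdot)^\perp$, so transversality gives $\langle\xi(\eta),\xi(\eta')\rangle_{p,q}\neq 0$ for $\eta\neq\eta'$; equivalently (Remark~\ref{rem:convex-in-Hpq}, and by the equality of limit sets in Theorem~\ref{thm:main-noPETs}) the proximal limit set $\Lambda_\Gamma=\partiali\C_{\mathsf{bisat}}$ (Definition~\ref{def:prox-lim-set}) contains no nontrivial segment. Thus $\C_{\mathsf{bisat}}$ itself witnesses \eqref{item:Hpq-cc}.

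For \eqref{item:Hpq-cc}~$\Rightarrow$~\eqref{item:Hpq-strict-C1} I would use Theorem~\ref{thm:main-POpq-reducible}, by which $\HH^{p,q-1}$-convex cocompactness gives a convex cocompact action on a properly convex \emph{open} set $\Omega\subset\HH^{p,q-1}$ with $\Lambda_\Gamma$ transverse and negative; in particular $\Gamma$ is convex cocompact in $\PP(\RR^{p+q})$. Applying Theorem~\ref{thm:main-general} to this $\Omega$ produces $\C_{\mathsf{smooth}}$ with strictly convex $C^1$ nonideal boundary and $\partiali\C_{\mathsf{smooth}}=\Lambdao_\Omega(\Gamma)=\Lambda_\Gamma\subset\partial\HH^{p,q-1}$. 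Inspecting the construction, $\C_{\mathsf{smooth}}$ lies between the convex core and $\overline{\Omega}$, and $\overline{\Omega}\subset\HH^{p,q-1}\cup\partial\HH^{p,q-1}$ meets $\partial\HH^{p,q-1}$ exactly in $\Lambda_\Gamma=\partiali\C_{\mathsf{smooth}}$, so the interior and nonideal boundary of $\C_{\mathsf{smooth}}$ remain in $\HH^{p,q-1}$. Thus $\C_{\mathsf{smooth}}$ is a closed convex subset of $\HH^{p,q-1}$ whose boundary in $\HH^{p,q-1}$ is strictly convex and $C^1$, which is \eqref{item:Hpq-strict-C1}; tracking the sets through the four implications shows that a single $\C$ serves in all of \eqref{item:Hpq-cc}--\eqref{item:Hpq-strict-C1}, giving the final assertion.

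The step I expect to be the main obstacle is exactly this last containment: ensuring that the convex set with smooth strictly convex nonideal boundary furnished abstractly by Theorem~\ref{thm:main-general} does not leak out of $\HH^{p,q-1}$ into the spacelike region $\SS^{p-1,q}$. This is where the indefinite geometry is essential: the negativity and transversality of $\Lambda_\Gamma$ keep its convex hull, and a sufficiently tight neighborhood of the core, timelike. Making this precise amounts to checking that the smoothing procedure underlying Theorem~\ref{thm:main-general} can be carried out while respecting the open condition $\langle x,x\rangle_{p,q}<0$, and is the place where the proof must go beyond a formal translation of the projective statement.
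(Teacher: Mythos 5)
Your proof is correct and follows essentially the same route as the paper: both reduce the statement to the projective Theorems \ref{thm:main-general} and~\ref{thm:main-noPETs} via Lemma~\ref{lem:bisat-Hpq} (no infinite geodesic in $\partialn\C$ $\Leftrightarrow$ bisaturated boundary) together with the observation that closed subsets of $\HH^{p,q-1}$ contain no PETs, the containment of the smoothed convex set in $\HH^{p,q-1}$ being guaranteed simply because the ambient open set $\Omega$ already lies in $\HH^{p,q-1}$. The only difference is organizational: you close the cycle as \eqref{item:Hpq-strict-C1}$\Rightarrow$\eqref{item:Hpq-strict}$\Rightarrow$\eqref{item:Hpq-bisat}$\Rightarrow$\eqref{item:Hpq-cc}$\Rightarrow$\eqref{item:Hpq-strict-C1} and route \eqref{item:Hpq-cc}$\Rightarrow$\eqref{item:Hpq-strict-C1} through Theorem~\ref{thm:main-POpq-reducible}, whereas the paper proves \eqref{item:Hpq-cc}$\Leftrightarrow$\eqref{item:Hpq-bisat}$\Rightarrow$\eqref{item:Hpq-strict-C1} directly from Corollary~\ref{cor:Hpq-cc-implies-v} and Lemma~\ref{lem:strict-C1-domain} --- but these rest on the same underlying lemmas.
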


%%%%%%%%%%%%%%%%%%%%%%%%%
\subsection{Organization of the paper}\label{subsec:organization}

Section~\ref{sec:reminders} contains reminders about properly convex domains in projective space, the Cartan decomposition, and Anosov representations.
In Section~\ref{sec:basic-examples} we establish some general facts about discrete group actions on convex subsets of $\PP(V)$.
Section~\ref{sec:naive-cc} contains some basic examples and develops the basic theory of naively convex cocompact and convex cocompact subgroups of $\PGL(V)$, which will be used throughout the paper.

Sections \ref{sec:bisat-dual} to~\ref{sec:smooth} are devoted to the proofs of the main Theorems \ref{thm:main-noPETs} and~\ref{thm:main-general}, which contain Theorem~\ref{thm:Ano-PGL}.
More precisely, in Section~\ref{sec:bisat-dual} we prove the equivalence \eqref{item:ccc-limit-set}~$\Leftrightarrow$~\eqref{item:ccc-bisat} of Theorem~\ref{thm:main-general}, study a notion of duality, and establish property~\ref{item:dual} of Theorem~\ref{thm:properties}.
In Section~\ref{sec:regularity-limit-set} we prove the equivalences \ref{item:ccc-hyp}~$\Leftrightarrow$~\ref{item:ccc-noseg-any}~$\Leftrightarrow$~\ref{item:ccc-noPETs-some}~$\Leftrightarrow$~\ref{item:ccc-ugly} of Theorem~\ref{thm:main-noPETs} by studying segments in the limit set. 
The connection with the Anosov property is made in Sections~\ref{sec:ccc-implies-Anosov} (\ref{item:ccc-hyp}~$\Rightarrow$~\ref{item:P1Anosov}) and~\ref{sec:Anosov-implies-ccc-noPETs} (\ref{item:P1Anosov}~$\Leftrightarrow$~\ref{item:P1Anosov-bis}~$\Rightarrow$~\ref{item:ccc-hyp}).  
In Section~\ref{sec:smooth} we give a smoothing construction to address the remaining implications of Theorems~\ref{thm:main-noPETs} and~\ref{thm:main-general}.

In Section~\ref{sec:other-properties} we establish properties \ref{item:cc-QI}--\ref{item:cc-ss} of Theorem~\ref{thm:properties}.
In Section~\ref{sec:Hpq-cc} we prove Theorems~\ref{thm:main-POpq-reducible} and~\ref{thm:Hpq-general} on $\HH^{p,q-1}$-convex cocompactness.
Section~\ref{sec:examples} is devoted to more sophisticated examples, including Proposition~\ref{prop:Hitchin}.
Finally, in Appendix~\ref{app:open-q} we collect a few open questions on convex cocompact groups~in~$\PP(V)$; in Appendix~\ref{app:hilbert} we give a sharp statement about Hausdorff limits of balls in Hilbert geometry.

%%%%%%%%%%%%%%%%%%%%%%%%%
\subsection*{Acknowledgements}

We are grateful to Yves Benoist, Sam Ballas, Gye-Seon Lee, Arielle Leitner, and Ludovic Marquis for helpful and inspiring discussions, and to Thierry Barbot for his encouragement.
We thank Daryl Cooper for a useful reference.
The first-named author thanks Steve Kerckhoff for discussions related to Example~\ref{exa:aff-hp} from several years ago, which proved valuable for understanding the general theory.
We warmly thank Pierre-Louis Blayac for useful questions and comments during the revisions of this paper, which contributed in particular to clarifying the discussion on conical limit points.
We are grateful to the referee for many valuable comments and suggestions which helped improve the paper.

%%%%%%%%%%%%%%%%%%%%%%%%%%%%%%%%%%%%%%%%%%%%%%%%%%%
\section{Reminders} \label{sec:reminders}

%%%%%%%%%%%%%%%%%%%%%%%%%
\subsection{Properly convex domains in projective space} \label{subsec:prop-conv-proj}

Recall that a subset of  projective space is called \emph{convex} if it is contained in and convex in some affine chart, and \emph{properly convex} if its closure is convex.

Let $\Omega$ be a properly convex open subset of $\PP(V)$, with boundary $\partial\Omega$.
Recall the \emph{Hilbert metric} $d_{\Omega}$ on~$\Omega$:
\begin{equation} \label{eqn:d-Omega}
d_{\Omega}(x,y) := \frac{1}{2} \log \, \cro{a}{x}{y}{b}
\end{equation}
for all distinct $x,y\in\Omega$, where $\cro{\,\underline{~~}}{\underline{~~}}{\underline{~~}}{\underline{~~}\,}$ is the cross-ratio on $\PP^1(\RR)$, normalized so that $\cro{0}{1}{t}{\infty}=t$, and where $a,b$ are the intersection points of $\partial\Omega$ with the projective line through $x$ and~$y$, with $a,x,y,b$ in this order.
The metric space $(\Omega,d_{\Omega})$ is proper (closed balls are compact) and complete, and the group
$$\mathrm{Aut}(\Omega) := \{g\in\PGL(V) ~|~ g\cdot\Omega=\Omega\}$$
acts on~$\Omega$ by isometries for~$d_{\Omega}$.
As a consequence, any discrete subgroup of $\mathrm{Aut}(\Omega)$ acts properly discontinuously on~$\Omega$.

\begin{remark} \label{rem:Hilb-metric-include}
It follows from the definition that if $\Omega_1 \subset \Omega_2$ are nonempty properly convex open subsets of $\PP(V)$, then the corresponding Hilbert metrics satisfy $d_{\Omega_1}(x,y) \geq d_{\Omega_2}(x,y)$ for all $x,y\in\Omega_1$.
\end{remark}

Let $V^*$ be the dual vector space of~$V$.
By definition, the \emph{dual convex set} of~$\Omega$ is
\begin{equation} \label{eqn:dual-open-convex}
\Omega^* := \PP\big(\big\{ \varphi\in V^* ~|~ \varphi(v)<0\quad \forall v\in\overline{\widetilde{\Omega}}\big\}\big),
\end{equation}
where $\overline{\widetilde{\Omega}}$ is the closure in $V\smallsetminus \{0\}$ of an open convex cone of $V$ lifting~$\Omega$.
The set $\Omega^*$ is a properly convex open subset of $\PP(V^*)$, which is preserved by the dual action of $\mathrm{Aut}(\Omega)$ on $\PP(V^*)$.

Straight lines (contained in projective lines) are always geodesics for the Hilbert metric~$d_{\Omega}$.
When $\Omega$ is not strictly convex, there may be other geodesics as well.
However, a biinfinite geodesic of $(\Omega,d_{\Omega})$ always has well-defined, distinct endpoints in $\partial \Omega$, see \cite[Lem.\,2.6]{dgk-ccHpq}.

%%%%%%%%%%%%%%%%%%%%%%%%%
\subsection{Cartan decomposition} \label{subsec:Cartan}

The group $\hat{G}=\GL(V)$ admits the \emph{Cartan decomposition} $\hat{G}=\hat{K}\exp(\hat{\mathfrak{a}}^+)\hat{K}$ where $\hat{K}=\OO(n)$ and
$$\hat{\mathfrak{a}}^+ := \{ \mathrm{diag}(t_1,\dots,t_n) ~|~ t_1\geq\dots\geq t_n\}.$$
This means that any $\hat{g}\in\hat{G}$ may be written $\hat{g} = \hat{k}_1\exp(\hat{a})\hat{k}_2$ for some $\hat{k}_1,\hat{k}_2\in\hat{K}$ and a unique $\hat{a}=\mathrm{diag}(t_1,\dots,t_n)\in\hat{\mathfrak{a}}^+$; for any $1\leq i\leq n$, the real number $t_i$ is the logarithm $\mu_i(\hat{g})$ of the $i$-th largest \emph{singular value} of~$\hat{g}$.
This induces a Cartan decomposition $G=K\exp(\mathfrak{a}^+)K$ with $K=\PO(n)$ and $\mathfrak{a}^+=\hat{\mathfrak{a}}^+/\RR$, and for any $1\leq i<j\leq n$ a map
\begin{equation} \label{eqn:mu-i-j}
\mu_i - \mu_j : G=\PGL(V) \longrightarrow \RR^+.
\end{equation}
If $\Vert\cdot\Vert_{\scriptscriptstyle V}$ denotes the operator norm associated with the standard Euclidean norm on $V=\RR^n$ invariant under $\hat{K}=\OO(n)$, then for any $g\in G$ with lift $\hat{g}\in\hat{G}$ we have
\begin{equation} \label{eqn:mu-1-n}
(\mu_1 - \mu_n)(g) = \log \big(\Vert\hat{g}\Vert_{\scriptscriptstyle V} \, \Vert{\hat{g}}^{-1}\Vert_{\scriptscriptstyle V}\big).
\end{equation}

%%%%%%%%%%%%%%%%%%%%%%%%%
\subsection{Proximality in projective space} \label{subsec:prox}

We shall use the following classical terminology.

\begin{definition} \label{def:prox}
An element $g\in\PGL(V)$ is \emph{proximal in $\PP(V)$} (\resp $\PP(V^*)$) if it admits a unique attracting fixed point in $\PP(V)$ (\resp $\PP(V^*)$).
Equivalently, any lift $\hat{g}\in\GL(V)$ of~$g$ has a unique complex eigenvalue of maximal (\resp minimal) modulus, with multiplicity~$1$.
This eigenvalue is necessarily real.
\end{definition}

For any $\hat{g}\in\GL(V)$, we denote by $\lambda_1(\hat{g})\geq\lambda_2(\hat{g})\geq\dots\geq\lambda_n(\hat{g})$ the logarithms of the moduli of the complex eigenvalues of~$\hat{g}$.
For any $1\leq i<j\leq n$, this induces a function
\begin{equation} \label{eqn:lambda-i-j}
\lambda_i - \lambda_j : \PGL(V) \longrightarrow \RR^+.
\end{equation}
Thus, an element $g\in\PGL(V)$ is proximal in $\PP(V)$ (\resp $\PP(V^*)$) if and only if\linebreak $(\lambda_1-\nolinebreak\lambda_2)(g)>\nolinebreak 0$ (\resp $(\lambda_{n-1}-\lambda_n)(g)>0$).
We shall use the following terminology.

\begin{definition} \label{def:prox-lim-set}
Let $\Gamma$ be a discrete subgroup of $\PGL(V)$.
The \emph{proximal limit set} of $\Gamma$ in $\PP(V)$ is the closure $\Lambda_{\Gamma}$ of the set of attracting fixed points of elements of~$\Gamma$ which are proximal in $\PP(V)$.
\end{definition}

\begin{remark} \label{rem:irred-prox-lim-set}
When $\Gamma$ is a discrete subgroup of $\PGL(V)$ acting \emph{irreducibly} on $\PP(V)$ and containing at least one proximal element, the proximal limit set $\Lambda_{\Gamma}$ was first studied in \cite{gui90,ben97,ben00}.
In that setting, the action of $\Gamma$ on $\Lambda_{\Gamma}$ is minimal (\ie any orbit is dense), and $\Lambda_{\Gamma}$ is contained in any nonempty, closed, $\Gamma$-invariant subset of $\PP(V)$, by \cite[Lem.\,2.5]{ben00}.
\end{remark}

%%%%%%%%%%%%%%%%%%%%%%%%%
\subsection{Anosov representations} \label{subsec:Anosov}

Let $P_1$ (\resp $P_{n-1}$) be the stabilizer in $G=\PGL(V)$ of a line (\resp hyperplane) of~$V$; it is a maximal proper parabolic subgroup of~$G$, and $G/P_1$ (\resp $G/P_{n-1}$) identifies with $\PP(V)$ (\resp with the dual projective space $\PP(V^*)$).
As in the introduction, we shall think of $\PP(V^*)$ as the space of projective hyperplanes in $\PP(V)$.
The following is not the original definition from \cite{lab06,gw12}, but an equivalent characterization taken from \cite[Th.\,1.7 \& Rem.\,4.3.(c)]{ggkw17}.

\begin{definition} \label{def:P1-Ano}
Let $\Gamma$ be a word hyperbolic group.
A representation $\rho : \Gamma\to G=\PGL(V)$ is \emph{$P_1$-Anosov} if there exist two continuous, $\rho$-equivariant boundary maps $\xi : \partial_{\infty}\Gamma\to\PP(V)$ and $\xi^* : \partial_{\infty}\Gamma\to\PP(V^*)$ such that
\begin{enumerate}[label=(A\arabic*)]
  \item \label{item:ano-comp} $\xi$ and~$\xi^*$ are compatible, \ie $\xi(\eta)\in\xi^*(\eta)$ for all $\eta\in\partial_{\infty}\Gamma$;
  \item \label{item:ano-trans} $\xi$ and~$\xi^*$ are transverse, \ie $\xi(\eta)\notin\xi^*(\eta')$ for all $\eta\neq\eta'$ in $\partial_{\infty}\Gamma$;
\end{enumerate}
\begin{enumerate}[label=(A\arabic*)']\setcounter{enumi}{2}
  \item \label{item:ano-lambda} $\xi$ and~$\xi^*$ are dynamics-preserving and there exist $c,C>0$ such that for any $\gamma\in\Gamma$,
  $$(\lambda_1-\lambda_2)(\rho(\gamma)) \geq c\,\ell_{\Gamma}(\gamma) - C,$$
  where $\ell_{\Gamma} : \Gamma\to\NN$ is the translation length function of $\Gamma$ in its Cayley graph (for some fixed choice of finite generating subset).
\end{enumerate}
\end{definition}

In condition~\ref{item:ano-lambda} we use the notation $\lambda_1-\lambda_2$ from \eqref{eqn:lambda-i-j}.
By \emph{dynamics-preserving} we mean that for any $\gamma\in\Gamma$ of infinite order, the element $\rho(\gamma)\in G$ is proximal and $\xi$ (\resp $\xi^*$) sends the attracting fixed point of $\gamma$ in $\partial_{\infty}\Gamma$ to the attracting fixed point of $\rho(\gamma)$ in $\PP(V)$ (\resp $\PP(V^*)$).
In particular, the set $\xi(\partial_{\infty} \Gamma)$ (\resp $\xi^*(\partial_{\infty} \Gamma)$) is the proximal limit set (Definition~\ref{def:prox-lim-set}) of $\rho(\Gamma)$ in $\PP(V)$ (\resp $\PP(V^*)$).
By \cite[Prop.\,4.10]{gw12}, if $\rho$ is irreducible then condition~\ref{item:ano-lambda} is automatically satisfied as soon as \ref{item:ano-comp} and \ref{item:ano-trans} are, but this is not true in general (see \cite[Ex.\,7.15]{ggkw17}).

If $\Gamma$ is not elementary (\ie not cyclic up to finite index), then the action of $\Gamma$ on $\partial_{\infty}\Gamma$ is minimal, \ie every orbit is dense; therefore the action of $\Gamma$ on $\xi(\partial_{\infty} \Gamma)$ and $\xi^*(\partial_{\infty} \Gamma)$ is also minimal.

We shall use a related characterization of Anosov representations, which we take from \cite[Th.\,1.3]{ggkw17}; it also follows from \cite{klp18}.

\begin{fact} \label{fact:charact-Ano}
Let $\Gamma$ be a word hyperbolic group.
A representation $\rho : \Gamma\to G=\PGL(V)$ is \emph{$P_1$-Anosov} if there exist two continuous, $\rho$-equivariant boundary maps $\xi : \partial_{\infty}\Gamma\to\PP(V)$ and $\xi^* : \partial_{\infty}\Gamma\to\PP(V^*)$ 
satisfying conditions \ref{item:ano-comp}--\ref{item:ano-trans} of Definition~\ref{def:P1-Ano}, as well as
\begin{enumerate}[label=(A\arabic*)'']\setcounter{enumi}{2}
  \item \label{item:ano-mu} $\xi$ and~$\xi^*$ are dynamics-preserving and
  $$(\mu_1-\mu_2)(\rho(\gamma)) \underset{|\gamma|_\Gamma\to +\infty}{\longrightarrow} +\infty,$$
  where $|\cdot|_{\Gamma} : \Gamma\to\NN$ is the word length function of~$\Gamma$ (for some fixed choice of finite generating subset).
\end{enumerate}
\end{fact}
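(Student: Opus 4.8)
The statement is a characterization of $P_1$-Anosov, so the plan is to prove that, in the presence of compatible~\ref{item:ano-comp} and transverse~\ref{item:ano-trans} dynamics-preserving boundary maps $\xi,\xi^*$, the singular-value condition~\ref{item:ano-mu} is equivalent to the eigenvalue condition~\ref{item:ano-lambda} of Definition~\ref{def:P1-Ano}. Since $\rho$ being $P_1$-Anosov is \emph{defined} through~\ref{item:ano-lambda}, the substance is the implication \ref{item:ano-mu}~$\Rightarrow$~\ref{item:ano-lambda}. Two dictionaries organize the argument. First, the passage between the Cartan projection~$\mu$ of \eqref{eqn:mu-i-j} and the Jordan projection~$\lambda$ of \eqref{eqn:lambda-i-j}: for any $g\in\PGL(V)$ one has $\lambda_i(g)=\lim_{k\to\infty}\frac{1}{k}\,\mu_i(g^k)$. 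Second, the reduction of gaps to operator norms via the exterior square: writing $\Vert\cdot\Vert_{\scriptscriptstyle V}$ for the operator norm (so that $\mu_1=\log\Vert\cdot\Vert_{\scriptscriptstyle V}$ on a lift, cf.\ \eqref{eqn:mu-1-n}) and using $(\mu_1+\mu_2)(g)=\mu_1(\Lambda^2 g)$, the gap reads $(\mu_1-\mu_2)(g)=2\log\Vert g\Vert_{\scriptscriptstyle V}-\log\Vert\Lambda^2 g\Vert_{\scriptscriptstyle V}$, a quantity independent of the chosen lift. Thus statements about $\mu_1-\mu_2$ for~$\rho$ become statements about top singular values, which are submultiplicative, for the two representations $\rho$ and $\Lambda^2\rho$.

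The heart of the argument is to upgrade the mere divergence in~\ref{item:ano-mu} to a \emph{uniform linear} lower bound $(\mu_1-\mu_2)(\rho(\gamma))\geq c\,|\gamma|_{\Gamma}-C$. The mechanism is a quantitative form of proximality: there is a threshold $t_0$ such that whenever $(\mu_1-\mu_2)(\rho(\gamma))\geq t_0$, the transformation $\rho(\gamma)$ contracts the complement of a neighborhood of its bottom hyperplane $U^-(\rho(\gamma))\in\PP(V^*)$ into a small neighborhood of its top singular direction $U^+(\rho(\gamma))\in\PP(V)$; moreover, along any sequence $\gamma_n\to\eta$ in $\partial_{\infty}\Gamma$ with diverging gap, continuity of~$\xi$ and the dynamics-preserving hypothesis (which by the remark after Definition~\ref{def:P1-Ano} identifies $\xi(\partial_{\infty}\Gamma)$ with the proximal limit set) force $U^+(\rho(\gamma_n))\to\xi(\eta)$ and $U^-(\rho(\gamma_n))\to\xi^*(\eta)$. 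Transversality~\ref{item:ano-trans} then guarantees that consecutive pieces along a geodesic word of~$\Gamma$ have singular flags in general position, so the product estimates for such pairs (after \cite{ben97,ggkw17}) give coarse additivity of the gap rather than cancellation; the local-to-global property of quasigeodesics in the hyperbolic group~$\Gamma$ propagates this to every geodesic, and the divergence hypothesis forces the accumulated gap to grow at a definite rate. This CLI upgrade, which is precisely where word hyperbolicity of~$\Gamma$ enters, is the main obstacle; it is the content of \cite[Th.\,1.3]{ggkw17}, of \cite{bps}, and of the Morse/URU analysis of \cite{klp14}.

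Granting the linear bound, condition~\ref{item:ano-lambda} follows from the Cartan--Jordan dictionary: for $\gamma$ of infinite order,
$$(\lambda_1-\lambda_2)(\rho(\gamma))=\lim_{k\to\infty}\frac{1}{k}\,(\mu_1-\mu_2)(\rho(\gamma)^k)\geq\lim_{k\to\infty}\frac{1}{k}\big(c\,|\gamma^k|_{\Gamma}-C\big)=c\,\ell_{\Gamma}^{\infty}(\gamma),$$
where $\ell_{\Gamma}^{\infty}(\gamma)=\lim_k |\gamma^k|_{\Gamma}/k$ is the stable translation length; since $\ell_{\Gamma}^{\infty}$ and $\ell_{\Gamma}$ differ by a bounded additive amount in a hyperbolic group, this is exactly~\ref{item:ano-lambda}. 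For the converse implication \ref{item:ano-lambda}~$\Rightarrow$~\ref{item:ano-mu}, which shows the two characterizations genuinely coincide, it is cleanest to recall that both are equivalent to the original flow-contraction definition of \cite{lab06,gw12}: the uniform contraction of the flat bundle over the geodesic flow of~$\Gamma$ translates into linear growth of $\mu_1-\mu_2$ in word length, hence~\ref{item:ano-mu}, and into linear growth of $\lambda_1-\lambda_2$ in translation length, hence~\ref{item:ano-lambda}. The expected difficulty is thus concentrated entirely in the second paragraph, namely in turning the qualitative hypothesis ``$(\mu_1-\mu_2)(\rho(\gamma))\to+\infty$'' into the quantitative uniform rate, where the coarse geometry of~$\Gamma$ and the linear-algebraic product estimates must be combined.
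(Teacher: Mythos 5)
The paper gives no proof of this Fact at all: it is imported verbatim from \cite[Th.\,1.3]{ggkw17} (see also \cite{klp14}), which is precisely the result you invoke for the one genuinely hard step, namely upgrading the qualitative divergence of $\mu_1-\mu_2$ to a uniform linear lower bound in word length via transversality, product estimates, and the local-to-global property of quasigeodesics. Your surrounding reductions --- the exterior-square reformulation of the gap, the Cartan--Jordan limit $(\lambda_1-\lambda_2)(\rho(\gamma))=\lim_k\frac{1}{k}(\mu_1-\mu_2)(\rho(\gamma)^k)$, and the comparison of stable and ordinary translation lengths in a hyperbolic group --- are correct, so your outline is consistent with the paper's treatment, which simply defers to the same sources.
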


In condition~\ref{item:ano-mu} we use the notation $\mu_1-\mu_2$ from \eqref{eqn:mu-i-j}.

By construction, the image of the boundary map $\xi : \partial_{\infty}\Gamma\to\PP(V)$ (\resp $\xi^* : \partial_{\infty}\Gamma\to\PP(V^*)$) of a $P_1$-Anosov representation $\rho : \Gamma\to\PGL(V)$ is the closure of the set of attracting fixed points of proximal elements of $\rho(\Gamma)$ in $\PP(V)$ (\resp $\PP(V^*)$.
Here is a useful alternative description.
We denote by $(e_1,\dots,e_n)$ the standard basis of $V = \RR^n$, orthonormal for the inner product preserved by $\hat{K} = \OO(n)$.

\begin{fact}[{\cite[Th.\,1.3 \& 5.3]{ggkw17}}] \label{fact:Im-xi-Cartan}
Let $\Gamma$ be a word hyperbolic group and $\rho : \Gamma\to\PGL(V)$ a $P_1$-Anosov representation with boundary maps $\xi : \partial_{\infty}\Gamma\to\PP(V)$ and $\xi^* : \partial_{\infty}\Gamma\to\PP(V^*)$.
Let $(\gamma_m)_{m\in\NN}$ be a sequence of elements of~$\Gamma$ converging to some $\eta\in\partial_{\infty}\Gamma$.
For any~$m$, choose $k_m\in K$ such that $\rho(\gamma_m)\in k_m\exp(\mathfrak{a}^+)K$ (see Section~\ref{subsec:Cartan}).
Then, writing $[e_n^*]:=\PP(\mathrm{span}(e_1,\dots,e_{n-1}))$,
$$\left \{\begin{array}{rcl}
\xi(\eta) & = & \lim_{m\to +\infty} k_m\cdot [e_1],\\
\xi^*(\eta) & = & \lim_{m\to +\infty} k_m\cdot [e_n^*].
\end{array} \right.$$
\end{fact}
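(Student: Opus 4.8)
The plan is to prove Fact~\ref{fact:Im-xi-Cartan} by combining the Cartan decomposition with the contraction property of Anosov representations, and identifying limits via the dynamics-preserving property of the boundary maps. Write $\rho(\gamma_m) = k_m \exp(a_m) k_m'$ with $a_m = \mathrm{diag}(\mu_1(\rho(\gamma_m)), \dots, \mu_n(\rho(\gamma_m)))$ and $k_m, k_m' \in K$. The guiding intuition is that, since $(\mu_1 - \mu_2)(\rho(\gamma_m)) \to +\infty$ by the Anosov condition~\ref{item:ano-mu} (using Fact~\ref{fact:charact-Ano}), the element $\rho(\gamma_m)$ strongly contracts toward the line $k_m \cdot [e_1]$ and toward the hyperplane $k_m \cdot [e_n^*]$. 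The point $k_m \cdot [e_1]$ is precisely the image of the highest singular direction, and the content of the statement is that this geometric attractor converges to the value $\xi(\eta)$ prescribed by the boundary map.

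First I would treat the proximal (cyclic) case as a model: if $\gamma_m = \gamma^m$ for a fixed infinite-order $\gamma \in \Gamma$ with $\rho(\gamma)$ proximal, then $\rho(\gamma^m) \cdot [z] \to \xi(\eta^+)$ for generic $[z]$, where $\eta^+$ is the attracting fixed point of $\gamma$; one checks directly that $k_m \cdot [e_1]$ converges to the attracting fixed line of $\rho(\gamma)$, which equals $\xi(\eta^+)$ by the dynamics-preserving hypothesis. For the general case, the key step is to reduce the convergence of $k_m \cdot [e_1]$ to the convergence of an actual orbit $\rho(\gamma_m) \cdot [z_0]$ for a well-chosen basepoint $[z_0] \in \PP(V)$. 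Because $(\mu_1-\mu_2)(\rho(\gamma_m)) \to +\infty$, for any $[z_0]$ not lying in the hyperplane $k_m' \cdot [e_n^*]$ — a condition that can be arranged uniformly after passing to a subsequence so that $k_m' \to k_\infty'$ — the image $\rho(\gamma_m)\cdot[z_0]$ is asymptotic to $k_m \cdot [e_1]$, with the discrepancy controlled by $e^{-(\mu_1-\mu_2)(\rho(\gamma_m))}$. Thus $\lim_m k_m \cdot [e_1] = \lim_m \rho(\gamma_m) \cdot [z_0]$ whenever either limit exists.

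Next I would identify this common limit with $\xi(\eta)$. Since $\gamma_m \to \eta \in \partial_\infty \Gamma$ in the Gromov compactification, and $\xi$ is the boundary map of the Anosov representation, the orbit $\rho(\gamma_m) \cdot [z_0]$ converges to $\xi(\eta)$ for suitable $[z_0]$; this is exactly the statement that the boundary map records the limiting behavior of orbits, which follows from the construction of $\xi$ as the limit of attracting fixed points together with density of infinite-order elements and continuity of $\xi$ on the compact set $\partial_\infty \Gamma$. Concretely, approximate $\eta$ by attracting fixed points $\eta_j^+$ of infinite-order elements $\gamma_{(j)}$, apply the cyclic case to each, and pass to the limit using continuity of $\xi$ and the uniformity of the contraction estimate. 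The dual statement for $\xi^*$ and $k_m \cdot [e_n^*]$ follows by applying the same argument to the contragredient representation $\rho^* : \Gamma \to \PGL(V^*)$, which is again $P_1$-Anosov with boundary maps $\xi^*$ and $\xi$ and with singular values $\mu_i(\rho^*(\gamma)) = -\mu_{n+1-i}(\rho(\gamma))$, so that the top singular direction for $\rho^*$ corresponds to the hyperplane $k_m \cdot [e_n^*]$.

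The main obstacle will be establishing the uniform transversality needed to guarantee that the chosen basepoint $[z_0]$ avoids the repelling hyperplanes $k_m' \cdot [e_n^*]$ for all large $m$, and hence that the contraction estimate applies uniformly along the sequence. Since the $k_m'$ vary with $m$ and need not converge without passing to a subsequence, I expect to argue by contradiction: if $k_m \cdot [e_1]$ failed to converge to $\xi(\eta)$, extract a subsequence along which $k_m \to k_\infty$ and $k_m' \to k_\infty'$, so that the contraction estimate gives a clean limit, and then derive a contradiction with the defining property of $\xi$. Once the subsequential convergence is pinned down and shown to always yield $\xi(\eta)$ independently of the subsequence, the full convergence follows. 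The transversality between $\xi(\eta)$ and $\xi^*(\eta')$ from condition~\ref{item:ano-trans} is what ultimately forces the genericity of $[z_0]$ to be a nonempty open condition, making the reduction legitimate.
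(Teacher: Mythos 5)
The first thing to note is that the paper gives no proof of this statement: it is quoted as a Fact from \cite[Th.\,1.3 \& 5.3]{ggkw17}, so there is no internal argument to compare yours against. Judged on its own terms, your first reduction is sound, and it is exactly how the paper later \emph{uses} the Fact (compare the proof of Lemma~\ref{lem:accumulation}): since $(\mu_1-\mu_2)(\rho(\gamma_m))\to+\infty$ by Fact~\ref{fact:charact-Ano}, the point $k_m\cdot[e_1]$ and the image $\rho(\gamma_m)\cdot[z_0]$ of any basepoint bounded away from the repelling hyperplane of $\rho(\gamma_m)$ have the same limit, with discrepancy controlled by $e^{-(\mu_1-\mu_2)(\rho(\gamma_m))}$. (Minor bookkeeping: that hyperplane is $(k_m')^{-1}\cdot\PP(\mathrm{span}(e_2,\dots,e_n))$, not $k_m'\cdot[e_n^*]$.) The duality reduction via the contragredient representation is also fine.

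The genuine gap is in the identification step. You assert that $\rho(\gamma_m)\cdot[z_0]\to\xi(\eta)$ because ``the boundary map records the limiting behavior of orbits,'' and propose to obtain this by approximating $\eta$ by attracting fixed points and invoking ``uniformity of the contraction estimate.'' But this convergence of orbits is essentially equivalent to the statement being proved --- in the present paper it is Lemma~\ref{lem:accumulation}, which is \emph{deduced from} Fact~\ref{fact:Im-xi-Cartan} --- and the uniformity you need is precisely what is not automatic: the rate of convergence $\rho(\gamma_{(j)})^m\cdot[z_0]\to\xi(\eta_j^+)$ in the cyclic case is governed by the eigenvalue gap $(\lambda_1-\lambda_2)(\rho(\gamma_{(j)}))$ and by the distance from $[z_0]$ to the repelling hyperplane of $\rho(\gamma_{(j)})$, neither of which is uniform in $j$. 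The only natural source of the required uniform transversality is the identification of the limiting repelling hyperplane of $\rho(\gamma_m)$ with $\xi^*(\eta')$, where $\eta'=\lim_m\gamma_m^{-1}$ --- but that is the dual half of the very statement under proof, so the argument as written is circular. Breaking this circle is the actual content of \cite[Th.\,5.3]{ggkw17}: one needs quantitative comparisons of Cartan attractors of nearby group elements (estimates of the type $d_{\PP}(k_g\cdot[e_1],k_{gh}\cdot[e_1])\lesssim e^{-(\mu_1-\mu_2)(g)}$ for $h$ in a bounded set) together with uniform transversality of $\xi$ and $\xi^*$ on pairs of points at definite distance in $\partial_{\infty}\Gamma$. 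Note also that the equivariance shortcut (take $[z_0]=\xi(\zeta)$ with $\zeta\neq\eta'$, so that $\rho(\gamma_m)\cdot\xi(\zeta)=\xi(\gamma_m\cdot\zeta)\to\xi(\eta)$) closes the argument only if some such $\xi(\zeta)$ avoids the limiting repelling hyperplane; since the paper does not assume $\rho$ irreducible, $\xi(\partial_{\infty}\Gamma)$ may lie in a proper projective subspace and this cannot be taken for granted.
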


In particular, the image of~$\xi$ is the set of accumulation points in $\PP(V)$ of the set\linebreak $\{ k_{\rho(\gamma)}\cdot [e_1] \,|\, \gamma\in\Gamma\}$ where $\gamma\in k_{\rho(\gamma)}\exp(\mathfrak{a}^+)K$; the image of~$\xi^*$ is the set of accumulation points in $\PP(V^*)$ of $\{ k_{\rho(\gamma)}\cdot [e_n^*] \,|\, \gamma\in\nolinebreak\Gamma\}$.

Here is an easy consequence of Fact~\ref{fact:charact-Ano}.

\begin{remark} \label{rem:Ano-restrict}
Let $\Gamma$ be a word hyperbolic group, $\Gamma'$ a subgroup of~$\Gamma$, and $\rho : \Gamma\to\PGL(V)$ a representation.
\begin{itemize}
  \item If $\Gamma'$ has finite index in~$\Gamma$, then $\rho$ is $P_1$-Anosov if and only if its restriction to~$\Gamma'$ is $P_1$-Anosov.
  \item If $\Gamma'$ is quasi-isometrically embedded in~$\Gamma$ and if $\rho$ is $P_1$-Anosov, then the restriction of $\rho$ to~$\Gamma'$ is $P_1$-Anosov.
\end{itemize}
\end{remark}

%%%%%%%%%%%%%%%%%%%%%%%%%%%%%%%%%%%%%%%%%%%%%%%%%%%
\section{Basic facts: actions on convex subsets of $\PP(V)$} \label{sec:basic-examples}

In this section we collect a few useful facts on actions of discrete subgroups of $\PGL(V)$ on properly convex open subsets of $\PP(V)$.

%%%%%%%%%%%%%%%%%%%%%%%%%
\subsection{The full orbital limit set in the strictly convex case}

The following elementary observation was mentioned in Section~\ref{subsec:intro-strong-cc}.

\begin{lemma} \label{lem:strict-convex-Lambda-orb}
Let $\Gamma$ be a discrete subgroup of $\PGL(V)$ preserving a nonempty properly convex open subset $\Omega$ of $\PP(V)$.
If $\Omega$ is strictly convex, then all $\Gamma$-orbits of~$\Omega$ have the same accumulation points in $\partial \Omega$.
\end{lemma}

\begin{proof}
It is enough to prove that for any points $x,y\in\Omega$, any accumulation point of $\Gamma\cdot x$ is also an accumulation point of $\Gamma\cdot y$.
Consider $(\gamma_m) \in \Gamma^{\NN}$ such that $(\gamma_m\cdot x)$ converges to some $x_{\infty}\in \partial\Omega$. 
After possibly passing to a subsequence, $(\gamma_m \cdot y)$ converges to some $y_{\infty} \in \partial \Omega$.
By properness of the action of $\Gamma$ on~$\Omega$, the limit $[x_{\infty}, y_{\infty}]$ of the sequence of compact intervals $(\gamma_m\cdot [x,y])$ is contained in $\partial \Omega$.
Strict convexity then implies that $x_{\infty} = y_{\infty}$, and so $x_{\infty}$ is also an accumulation point of $\Gamma\cdot y$.
\end{proof}

%%%%%%%%%%%%%%%%%%%%%%%%%
\subsection{Divergence for actions on properly convex cones}

We shall often use the following observation.

\begin{remark} \label{rem:lift-Gamma}
Let $\Gamma$ be a discrete subgroup of $\PGL(V)$ preserving a nonempty properly convex open subset $\Omega$ of $\PP(V)$.
Then there is a unique lift $\hat\Gamma$ of $\Gamma$ to $\SL^{\pm}(V)$ that preserves a properly convex cone $\widetilde \Omega$ of~$V$ lifting~$\Omega$.
\end{remark}
Here we say that a convex open cone of~$V$ is \emph{properly convex} if its projection to $\PP(V)$ is properly convex in the sense of Section~\ref{subsec:intro-strong-cc}.

The following observation will be useful in Sections \ref{sec:ccc-implies-Anosov} and~\ref{sec:other-properties}.

\begin{lemma} \label{lem:vector-growth}
Let $\hat\Gamma$ be a discrete subgroup of $\SL^{\pm}(V)$ preserving a properly convex open cone $\widetilde \Omega$ in $V$.
For any sequence $(\gamma_m)_{m\in\NN}$ of pairwise distinct elements of~$\hat\Gamma$ and any nonzero vector $v \in \widetilde \Omega$, the sequence $(\gamma_m\cdot v)_{m\in\NN}$ goes to infinity in $V$ as $m\to +\infty$. 
This divergence is uniform as $v$ varies in a compact set $\mathcal{K} \subset \widetilde \Omega$.
\end{lemma}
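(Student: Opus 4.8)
The plan is to prove divergence by comparison with a linear functional that is positive on the cone. First I would use the structure of $\widetilde\Omega$ as a properly convex open cone: by definition its projection to $\PP(V)$ is properly convex, so there exists a linear functional $\ell \in V^*$ lying in (the lift of) the dual cone $\widetilde\Omega^*$, \ie $\ell(y) > 0$ for all nonzero $y \in \overline{\widetilde\Omega}$. Since $\hat\Gamma \subset \SL^{\pm}(V)$ preserves $\widetilde\Omega$, the dual action preserves the dual cone, so each $\gamma_m^{-1}\cdot\ell$ again lies in $\overline{\widetilde\Omega^*}$ and is positive on the closed cone.

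The key idea is that growth in $V$ of $\gamma_m \cdot x$ is detected by the operator norm $\Vert\gamma_m\Vert_{\scriptscriptstyle V}$, which by the Cartan decomposition (Section~\ref{subsec:Cartan}) equals $e^{\mu_1(\gamma_m)}$. I would argue that $\mu_1(\gamma_m) \to +\infty$: since the $\gamma_m$ are pairwise distinct elements of a discrete subgroup of $\SL^{\pm}(V)$, they leave every compact set of $\SL^{\pm}(V)$, and because $\det(\gamma_m) = \pm 1$ forces $\sum_i \mu_i(\gamma_m) = 0$, escaping to infinity means $\mu_1(\gamma_m) \to +\infty$ (the largest singular value blows up). This gives $\Vert \gamma_m \Vert_{\scriptscriptstyle V} \to +\infty$, but the genuine obstacle is to transfer this from the operator norm to the specific vector $\gamma_m\cdot x$ with $x \in \widetilde\Omega$ — a priori $x$ could align with a contracting singular direction.

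This is where proper convexity does the real work, and it is the step I expect to be the main obstacle. Writing $\gamma_m = k_m \exp(a_m) k_m'$ in Cartan form with $a_m = \mathrm{diag}(\mu_1(\gamma_m),\dots)$, the vector $\gamma_m\cdot x$ is small only if $k_m' \cdot x$ concentrates along the small singular directions $e_j$ with $j$ large. I would rule this out using the dual functional: consider $\ell_m := \gamma_m^{-1}\cdot\ell \in \overline{\widetilde\Omega^*}$. Evaluating $\ell(\gamma_m\cdot x) = (\gamma_m^{-1}\cdot\ell)(x) \cdot(\pm 1)$ is the wrong direction, so instead I would bound below $\ell(\gamma_m \cdot x)$ directly. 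Because both $\ell$ and $x$ lie in the (open) cone and its dual, and $\gamma_m$ preserves $\widetilde\Omega$, the quantity $\ell(\gamma_m\cdot x)$ stays bounded away from zero relative to the norm — more precisely, I would show that for $x$ in a compact subset $\mathcal{K}\subset\widetilde\Omega$ there is a uniform constant $\delta > 0$ with $\ell(y) \geq \delta \Vert y\Vert_{\scriptscriptstyle V}$ for all $y$ in the closed cone $\overline{\widetilde\Omega}$ (this is exactly the statement that $\ell$ is positive on the compact slice $\{y \in \overline{\widetilde\Omega} : \Vert y\Vert_{\scriptscriptstyle V}=1\}$, which is compact precisely because the cone is properly convex and hence salient). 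Applying this to $y = \gamma_m\cdot x \in \widetilde\Omega$ gives $\Vert \gamma_m\cdot x\Vert_{\scriptscriptstyle V} \leq \delta^{-1}\ell(\gamma_m\cdot x)$, which reverses the inequality I want, so I would instead use the analogous lower bound coming from a supporting functional for the image point.

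Let me restructure the final step more carefully: I would fix the compact slice $S := \{y \in \overline{\widetilde\Omega} : \Vert y \Vert_{\scriptscriptstyle V} = 1\}$ and note $\gamma_m\cdot x / \Vert\gamma_m\cdot x\Vert_{\scriptscriptstyle V} \in S$. Suppose for contradiction that $\Vert\gamma_m\cdot x\Vert_{\scriptscriptstyle V}$ does not go to infinity along a subsequence; passing to a further subsequence, $\gamma_m\cdot x \to y_\infty$ with $y_\infty \in \overline{\widetilde\Omega}$. Then the projective points $[\gamma_m\cdot x] \to [y_\infty]$ converge inside $\overline\Omega$, while $[\gamma_m\cdot x] = \gamma_m\cdot[x]$, and properness of the $\Gamma$-action on $\Omega$ (Section~\ref{subsec:prop-conv-proj}, the Hilbert-metric properness giving proper discontinuity) would force $[y_\infty] \in \partial\Omega$, not in $\Omega$; combined with the norm bound this produces a contradiction with the pairwise-distinctness. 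The cleanest route, which I would ultimately take to get the uniform statement, is: convergence of $\gamma_m\cdot x$ in $V$ along a subsequence would make $[\gamma_m\cdot[x]]$ converge in $\overline\Omega$, and because $d_\Omega(z, \gamma_m\cdot z)\to\infty$ by proper discontinuity for any fixed $z\in\Omega$, the accumulation must occur on $\partial\Omega$ where the Hilbert metric degenerates — the only way a bounded sequence in $\widetilde\Omega$ projects to $\partial\Omega$ is if its norm escapes, giving the desired divergence. Uniformity over $\mathcal{K}$ follows since all the estimates (the constant $\delta$, the lower bound on $d_\Omega$) can be taken uniform over the compact set $\mathcal{K}$ and its image slice $S$.
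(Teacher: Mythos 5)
Your setup is on the right track --- choosing $\ell\in V^*$ positive on $\overline{\widetilde\Omega}\smallsetminus\{0\}$, noting that discreteness plus $\det=\pm 1$ forces $\Vert\gamma_m\Vert_{\scriptscriptstyle V}\to+\infty$, and correctly isolating the real difficulty (that $x$ might a priori align with a contracting singular direction) --- but the step that is supposed to resolve that difficulty does not work. Your final contradiction rests on the claim that ``the only way a bounded sequence in $\widetilde\Omega$ projects to $\partial\Omega$ is if its norm escapes.'' This is false: in the positive quadrant of $\RR^2$ the sequence $(1,1/m)$ is bounded, lies in the open cone, and converges to the nonzero boundary point $(1,0)$, which projects to $\partial\Omega$. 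So after you pass to a subsequence with $\gamma_m\cdot x\to y_\infty$ and use proper discontinuity to place $[y_\infty]$ in $\partial\Omega$, you have derived no contradiction at all; ruling out exactly this scenario (convergence to a nonzero boundary point of the cone) \emph{is} the content of the lemma. There is also a secondary gap: if $y_\infty=0$ the projective points $[\gamma_m\cdot x]$ need not converge, so the projective argument does not even get started in that case. The alternative you gesture at (``a supporting functional for the image point'') is not developed and is problematic because the image point varies with $m$, so such a functional gives no uniform lower bound.

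The missing idea is a convexity argument that transfers the blow-up of the operator norm to the \emph{specific} vector $\gamma_m\cdot x$. You already have both ingredients: $\ell$ is comparable to $\Vert\cdot\Vert_{\scriptscriptstyle V}$ on the cone (your constant $\delta$), so $\sup\,\ell\circ\gamma_m$ over any fixed open bounded subset of $\widetilde\Omega$ tends to $+\infty$ with $\Vert\gamma_m\Vert_{\scriptscriptstyle V}$. What remains is to show that $\ell(\gamma_m\cdot x)$ dominates that supremum up to a uniform constant. The paper does this by working in the slice $\{\ell=1\}$: since $x/\ell(x)$ lies uniformly in the interior of the bounded convex set $\widetilde\Omega\cap\{\ell=1\}$ (uniformly over $x\in\mathcal{K}$), for every boundary point $y$ of that slice one can write $x/\ell(x)=ty+(1-t)y'$ with $y'$ another boundary point and $t\geq\varepsilon>0$ uniform; applying $\gamma_m$ and $\ell$, and using $\ell(\gamma_m\cdot y')>0$, gives $\ell(\gamma_m\cdot x)\geq\varepsilon\,\ell(x)\,\ell(\gamma_m\cdot y)$ for \emph{every} such $y$, hence $\ell(\gamma_m\cdot x)\geq\kappa\,\max\,\ell\circ\gamma_m$ over the slice boundary, which tends to $+\infty$. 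Without this (or an equivalent) step your proof does not close.
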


\begin{proof}
Fix a compact subset $\mathcal{K}$ of $\widetilde{\Omega}$.
Let $\varphi\in V^*$ be a linear form which takes positive values on the closure of~$\widetilde{\Omega}$.
The set $\widetilde{\Omega}\cap\nolinebreak\{\varphi=\nolinebreak 1\}$ is bounded, with compact boundary~$\mathcal{B}$ in~$V$.
By compactness of $\mathcal{K}$ and~$\mathcal{B}$, we can find $0<\varepsilon<1$ such that for any $v\in\mathcal{K}$ and $w\in\mathcal{B}$ the line through $v/\varphi(v)$ and~$w$ intersects~$\mathcal B$ in a point $w'\neq w$ such that $v/\varphi(v) = tw + (1-t)w'$ for some $t\geq\varepsilon$.
For any $m\in\NN$, we then have $\varphi(\gamma_m\cdot v)/\varphi(v) \geq \varepsilon \, \varphi(\gamma_m\cdot w)$, and since this holds for any $w\in\mathcal{B}$ we obtain
$$\varphi(\gamma_m\cdot v) \geq \kappa \, \max_{\mathcal{B}}(\varphi\circ\gamma_m)$$
where $\kappa := \varepsilon \, \min_{\mathcal{K}}(\varphi) > 0$.
Thus it is sufficient to see that the maximum of $\varphi\circ\gamma_m$ over~$\mathcal{B}$ tends to infinity with~$m$.
By convexity, it is in fact sufficient to see that the maximum of $\varphi\circ\gamma_m$ over $\widetilde{\Omega}\cap\{\varphi<1\}$ tends to infinity with~$m$.
This follows from the fact that the set $\widetilde{\Omega} \cap \{\varphi<1\}$ is open, that the operator norm of $\gamma_m\in\hat\Gamma\subset\mathrm{End}(V)$ goes to $+\infty$, and that $\varphi |_{\widetilde{\Omega}}$ is bounded above and below by positive multiples of any given norm of~$V$.
\end{proof}

In the case that $\hat \Gamma$ preserves a quadratic form on~$V$, the fact that $(\gamma_m\cdot v)_{m\in\NN}$ goes to infinity as $m\to +\infty$ implies that any accumulation point of $([\gamma_m \cdot v])_{m\in\NN}$ in $\PP(V)$ is isotropic.
We thus get the following corollary, which will be used in Section~\ref{sec:Hpq-cc}.

\begin{corollary} \label{cor:Lambdao-Hpq}
For $p,q\in\NN^*$, let $\Gamma$ be an infinite discrete subgroup of $\PO(p,q)$ preserving a properly convex open subset $\Omega$ of $\PP(\RR^{p+q})$.
Then the full orbital limit set $\Lambdao_{\Omega}(\Gamma)$ is contained in $\partial \HH^{p,q-1}$.
\end{corollary}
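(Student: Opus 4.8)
The plan is to deduce Corollary~\ref{cor:Lambdao-Hpq} directly from Lemma~\ref{lem:vector-growth}, by transferring the statement about accumulation points of orbits in $\Omega$ into a statement about the growth of lifted vectors, and then reading off the signature constraint. First I would fix a point $[z] \in \Omega \subset \HH^{p,q-1}$ and a sequence $(\gamma_m)$ of pairwise distinct elements of~$\Gamma$ such that $\gamma_m \cdot [z]$ converges to some point $[w] \in \partial\Omega$; by definition of the full orbital limit set, every point of $\Lambdao_\Omega(\Gamma)$ arises this way, so it suffices to show $[w] \in \partial\HH^{p,q-1}$, \ie $\langle w, w \rangle_{p,q} = 0$.

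The key step is to normalize lifts so that the quadratic form can be evaluated in the limit. By Remark~\ref{rem:lift-Gamma}, $\Gamma$ lifts to a subgroup $\hat\Gamma$ of $\SL^{\pm}(V)$ preserving a properly convex open cone $\widetilde\Omega$ lifting~$\Omega$; choose a lift $x \in \widetilde\Omega$ of $[z]$, so that $\gamma_m \cdot x \in \widetilde\Omega$ lifts $\gamma_m \cdot [z]$. Since $[z] \in \HH^{p,q-1}$ we may scale so that $\langle x, x \rangle_{p,q} = -1$, and because $\Gamma \subset \PO(p,q)$ preserves the form (up to sign, controlled by working in $\SL^\pm$), we have $\langle \gamma_m \cdot x, \gamma_m \cdot x \rangle_{p,q} = -1$ for all~$m$. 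By Lemma~\ref{lem:vector-growth}, the vectors $\gamma_m \cdot x$ go to infinity in~$V$ as $m \to +\infty$. Writing $v_m := \gamma_m \cdot x / \Vert \gamma_m \cdot x \Vert$ for the Euclidean-normalized representatives, we then pass to a subsequence so that $v_m$ converges to a unit vector $v$ representing the limit point $[w]$, and compute
$$\langle v, v \rangle_{p,q} = \lim_{m \to +\infty} \langle v_m, v_m \rangle_{p,q} = \lim_{m \to +\infty} \frac{\langle \gamma_m \cdot x, \gamma_m \cdot x \rangle_{p,q}}{\Vert \gamma_m \cdot x \Vert^2} = \lim_{m \to +\infty} \frac{-1}{\Vert \gamma_m \cdot x \Vert^2} = 0,$$
using continuity of the bilinear form and $\Vert \gamma_m \cdot x \Vert \to +\infty$. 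Hence $[w] \in \partial\HH^{p,q-1}$, as desired.

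The only genuine subtlety, and the step I would take care over, is the bookkeeping of signs and scalings when moving between $\PO(p,q)$ and its lift to $\SL^\pm(V)$: one must be sure that the chosen lifts $\gamma_m \in \hat\Gamma$ act on the cone $\widetilde\Omega$ while preserving $\langle\cdot,\cdot\rangle_{p,q}$ exactly (not merely up to a scalar that could blow up), so that the numerator in the displayed computation stays bounded. This is harmless because elements of $\OO(p,q)$ preserve the form on the nose and the lift $\hat\Gamma$ is chosen in $\SL^\pm(V)$ preserving~$\widetilde\Omega$; the constant $-1$ is genuinely constant along the orbit. Everything else is a routine compactness-and-continuity argument, and the real content is entirely supplied by the divergence $\gamma_m \cdot x \to \infty$ furnished by Lemma~\ref{lem:vector-growth}.
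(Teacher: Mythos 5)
Your proof is correct and is precisely the argument the paper intends: the corollary is stated as an immediate consequence of Lemma~\ref{lem:vector-growth} with no written proof, and your write-up (divergence of the lifted orbit $\hat\gamma_m\cdot x$ in~$V$, invariance of $\langle\cdot,\cdot\rangle_{p,q}$ under either sign of the lift, and the normalization $\langle v_m,v_m\rangle_{p,q}=-1/\Vert\gamma_m\cdot x\Vert^2\to 0$) is exactly the intended deduction. The sign/scaling point you flag is handled correctly, since both lifts $\pm g$ of an element of $\PO(p,q)$ preserve the quadratic form on the nose.
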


%%%%%%%%%%%%%%%%%%%%%%%%%
\subsection{Comparison between the Hilbert and Euclidean metrics}

The following will be used later in this section, and in the proofs of Lemmas \ref{lem:Lambda-con} and~\ref{lem:limit-hyperplanes}.

\begin{lemma} \label{lem:compare-Hilb-Eucl}
Let $\Omega\subset \PP(V)$ be open, properly convex, contained in a Euclidean affine chart $(\RR^{n-1},d_{\mathrm{Euc}})$ of~$\PP(V)$.
If $R>0$ is the diameter of $\Omega$ in $(\RR^{n-1},d_{\mathrm{Euc}})$, then the natural inclusion defines an $R/2$-Lipschitz map
$$(\Omega,d_{\Omega}) \longrightarrow (\RR^{n-1},d_{\mathrm{Euc}}).$$
\end{lemma}

\begin{proof}
We may assume $n=2$ up to restricting to an affine line intersecting~$\Omega$, and $R=2$ up to rescaling, so that $\Omega= \HH^1 = (-1,1) \subset \RR$.
Since $\frac{1}{2} \log \cro{-1}{0}{\tanh(t)}{1}=t$, the arclength parametrization of $\Omega$ is then given by the map $\tanh:\RR\to (-1,1)$, which is $1$-Lipschitz.\end{proof}

\begin{corollary} \label{cor:compare-Hilb-Eucl}
Let $\Omega$ be a properly convex open subset of $\PP(V)$.
Let $(x_m)_{m\in\NN}$ and $(y_m)_{m\in\NN}$ be two sequences of points of~$\Omega$, and $z\in\partial\Omega$.
If $x_m\to z$ and $d_{\Omega}(x_m,y_m)\to 0$, then $y_m\to z$.
\end{corollary}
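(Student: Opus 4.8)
The plan is to deduce this directly from Lemma~\ref{lem:compare-Hilb-Eucl}. First I would fix a Euclidean affine chart $(\RR^{n-1},d_{\mathrm{Euc}})$ of $\PP(V)$ containing the compact set $\overline{\Omega}$; such a chart exists precisely because $\Omega$ is properly convex, and in it $\Omega$ has finite Euclidean diameter $R>0$. On this chart the topology induced from $\PP(V)$ coincides with the one defined by $d_{\mathrm{Euc}}$, so the hypothesis $y_m\to y$ means $d_{\mathrm{Euc}}(y_m,y)\to 0$, and it suffices to prove $d_{\mathrm{Euc}}(z_m,y)\to 0$.

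Next, by Lemma~\ref{lem:compare-Hilb-Eucl} the natural inclusion $(\Omega,d_{\Omega})\to(\RR^{n-1},d_{\mathrm{Euc}})$ is $R/2$-Lipschitz, so that
$$d_{\mathrm{Euc}}(y_m,z_m) \leq \frac{R}{2}\,d_{\Omega}(y_m,z_m)$$
for every $m\in\NN$. Since $d_{\Omega}(y_m,z_m)\to 0$ by assumption, the left-hand side tends to~$0$. The triangle inequality then yields
$$d_{\mathrm{Euc}}(z_m,y) \leq d_{\mathrm{Euc}}(z_m,y_m) + d_{\mathrm{Euc}}(y_m,y) \longrightarrow 0,$$
whence $z_m\to y$ in $\PP(V)$, as desired.

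There is no genuine obstacle here: the entire content of the statement is carried by the comparison estimate of Lemma~\ref{lem:compare-Hilb-Eucl}, and the corollary follows in one line once a bounded chart is fixed. The only point requiring a moment's care is that the Lipschitz constant $R/2$ is \emph{finite}, which is exactly what proper convexity of~$\Omega$ guarantees; without boundedness of $\Omega$ in some affine chart the estimate would be vacuous.
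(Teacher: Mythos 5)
Your proof is correct and is exactly the argument the paper intends: the corollary is stated immediately after Lemma~\ref{lem:compare-Hilb-Eucl} with no written proof, precisely because it follows from the $R/2$-Lipschitz comparison in a bounded affine chart together with the triangle inequality, as you wrote.
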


%%%%%%%%%%%%%%%%%%%%%%%%%
\subsection{Closed ideal boundary}

The following elementary observation will be used in Sections \ref{sec:naive-cc}, \ref{sec:bisat-dual}, and~\ref{sec:regularity-limit-set}.

\begin{remark} \label{rem:closed-ideal-bound-C-Omega}
Let $\Gamma$ be an infinite discrete subgroup of $\PGL(V)$ preserving a properly convex open subset $\Omega$ of $\PP(V)$ and a subset $\C$ of~$\Omega$.
If $\Gamma\backslash\C$ is closed in $\Gamma\backslash\Omega$, then $\C$ is closed in~$\Omega$ and $\partiali\C = \overline{\C}\cap\partial\Omega$; this is the case in particular if the action of $\Gamma$ on~$\C$ is cocompact.
\end{remark}

In Sections \ref{sec:bisat-dual} and~\ref{sec:smooth} we shall consider convex subsets $\C$ of $\PP(V)$ that are not assumed to be subsets of a properly convex open set~$\Omega$.
An arbitrary convex subset $\C$ of $\PP(V)$ (not necessarily open nor closed) is locally compact for the induced topology if and only if its ideal boundary $\partiali\C = \overline{\C}\smallsetminus\C$ is closed in $\PP(V)$. In the non-locally compact setting, there are several possible definitions for proper discontinuity and cocompactness, of varying strengths, see~\cite[\S 4]{bourbaki} or~\cite{kap-prop-disc}.
We will use the following definitions: the action of a discrete group $\Gamma$ on a topological space $\C$ is \emph{properly discontinuous} if for any compact subset $K$ of~$\C$, the set of elements $\gamma\in\Gamma$ such that $K \cap \gamma\cdot K \neq \emptyset$ is finite.
The action is \emph{cocompact} if there exists a compact subset $\mathcal{D}$ of~$\C$ such that $\C = \bigcup_{\gamma\in\Gamma} \gamma\cdot\mathcal{D}$. 
Then the following holds.

\begin{lemma} \label{lem:closed-ideal-boundary}
Let $\Gamma$ be a discrete subgroup of $\PGL(V)$ and $\C$ a $\Gamma$-invariant convex subset of $\PP(V)$.
Suppose the action of $\Gamma$ on~$\C$ is properly discontinuous and cocompact.
Then $\partiali\C$ is closed in $\PP(V)$.
\end{lemma}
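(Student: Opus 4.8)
The plan is to show that $\partiali\C = \overline{\C}\smallsetminus\C$ is closed in $\PP(V)$ by exhibiting $\C$ itself as \emph{open} in its closure $\overline{\C}$, equivalently by showing that $\C$ is relatively open in $\overline\C$; since $\overline{\C}$ is closed in $\PP(V)$, this immediately gives that $\partiali\C = \overline\C \smallsetminus \C$ is closed. So the heart of the matter is to prove that every point of $\C$ has a neighborhood (in $\overline\C$) entirely contained in $\C$.

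First I would pick an arbitrary point $z \in \C$ and a compact fundamental domain $\mathcal{D} \subset \C$ for the cocompact action, so that $\Gamma \cdot \mathcal{D} = \C$; such a $\mathcal D$ exists because the action is properly discontinuous and cocompact. The key local fact I want is uniformity: there should exist a fixed radius $r > 0$ such that for \emph{every} point $w \in \mathcal D$, the closed Hilbert-type ball (or rather, a ball measured in the ambient Euclidean chart intersected with $\overline\C$) of radius $r$ around $w$ meets $\overline\C$ only inside $\C$. I would establish this by contradiction: if no such uniform $r$ exists, then there are points $w_m \in \mathcal D$ and points $p_m \in \overline\C \smallsetminus \C = \partiali\C$ with Euclidean distance $d(w_m, p_m) \to 0$. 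By compactness of $\mathcal D$, after passing to a subsequence $w_m \to w_\infty \in \mathcal D \subset \C$, and then $p_m \to w_\infty$ as well. But $\partiali\C$ is contained in $\overline\C$, which is closed, so the limit $w_\infty$ would have to lie in $\overline{\partiali\C}$; the tension to exploit is that $w_\infty \in \C$ while the $p_m$ escape $\C$.

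The main obstacle, and the step requiring the most care, is controlling this escape using the cocompactness: I expect to need to translate the configuration back into the fundamental domain. The natural mechanism is that $\C = \Gamma\cdot\mathcal D$ is a \emph{convex open-in-itself} body relative to its affine hull, and the points $p_m \in \partiali\C$ accumulate at an interior-type point $w_\infty$ of $\C$, which should be impossible because $\C$, being a convex set acted on cocompactly, must be relatively open in $\overline\C$ near every point where the action is concentrated. Concretely, I would use proper convexity of (a convex set containing) $\C$ together with the observation that a point of $\C$ attained in the fundamental domain is a genuine interior point of $\C$ within its affine span; a sequence in $\partiali\C$ converging to such a point contradicts the fact that $\partiali\C \cap \C = \emptyset$ once one knows the segment from $w_\infty$ into $\C$ can be extended slightly past $w_\infty$ inside $\C$. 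Here I would invoke Lemma~\ref{lem:compare-Hilb-Eucl} and Corollary~\ref{cor:compare-Hilb-Eucl} to pass between the Euclidean and Hilbert metrics, so that Euclidean closeness of $p_m$ to $w_\infty \in \C$ forces $p_m$ to lie in $\C$ for large~$m$, the desired contradiction.

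In summary, the argument would proceed: (i) reduce to showing $\C$ is relatively open in $\overline\C$; (ii) use a compact fundamental domain $\mathcal D$ and $\Gamma$-invariance to get a uniform ``interior radius'' $r>0$ valid on $\mathcal D$ hence, by equivariance, on all of $\C$; (iii) derive the contradiction using the comparison between Hilbert and Euclidean metrics. I expect step~(ii) to be where the cocompactness hypothesis is genuinely used and where the bulk of the technical work lies, since transferring the local interiority uniformly across translates and ruling out the accumulation of ideal points at genuine points of $\C$ is precisely what makes $\partiali\C$ closed. Properness of the action guarantees $\mathcal D$ exists and is compact, which is what converts a pointwise interior statement into the uniform one needed to close the argument.
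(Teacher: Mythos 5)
Your reduction is fine: since $\overline{\C}$ is closed in $\PP(V)$, the set $\partiali\C=\overline{\C}\smallsetminus\C$ is closed if and only if no sequence of $\partiali\C$ accumulates at a point of~$\C$, and by $\Gamma$-invariance of $\overline{\partiali\C}$ together with $\C=\Gamma\cdot\mathcal{D}$ it suffices to rule out accumulation at a point $w_\infty$ of the compact fundamental domain~$\mathcal{D}$. (No metric or ``uniform radius'' is needed for this transfer, which is fortunate because Euclidean balls are not $\Gamma$-equivariant and no $\Gamma$-invariant Hilbert metric is available on a general convex~$\C$.) The gap is in the step that is supposed to produce the contradiction. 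The assertion that ``a point of $\C$ attained in the fundamental domain is a genuine interior point of $\C$ within its affine span'' is false: since $\Gamma\cdot\mathcal{D}=\C\supset\partialn\C$ and $\partialn\C$ is $\Gamma$-invariant, $\mathcal{D}$ necessarily contains points of $\partialn\C$ whenever $\partialn\C\neq\emptyset$, and these are exactly the points at which the relative openness of $\C$ in $\overline{\C}$ is at stake. The claim that ``Euclidean closeness of $p_m$ to $w_\infty\in\C$ forces $p_m$ to lie in $\C$'' is precisely the statement being proved, so invoking it is circular; and Lemma~\ref{lem:compare-Hilb-Eucl} and Corollary~\ref{cor:compare-Hilb-Eucl} say nothing of this kind --- they compare $d_\Omega$ with the Euclidean metric on a properly convex \emph{open} set $\Omega$, which is not available here ($\C$ is merely convex, possibly with empty interior, and the relevant points lie on its frontier).

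What is missing is any actual use of proper discontinuity beyond the existence of~$\mathcal{D}$. The mechanism in the paper's proof is the following: assuming $p_m\in\partiali\C$ converges to $w_\infty\in\partialn\C$, approximate each $p_m$ by points $q_{m,k}\in\C$ with $q_{m,k}\in\gamma_{m,k}\cdot\mathcal{D}$; for each fixed $m$ the elements $\gamma_{m,k}$ must leave every finite subset of $\Gamma$ as $k\to+\infty$, since otherwise $p_m$ would be a limit of points of a fixed compact subset of $\C$ and hence would lie in $\C$, contradicting $p_m\in\partiali\C$. A diagonal choice then yields $q_{m,k_m}\to w_\infty\in\C$ carried by infinitely many distinct group elements, so the compact set $\{w_\infty\}\cup\{q_{m,k_m}\}_{m}\cup\mathcal{D}\subset\C$ meets infinitely many of its own translates, contradicting properness. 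Your sketch never produces this escape to infinity, so the argument as written does not close.
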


\begin{proof}
Let $(z_m)_{m\in\NN}$ be a sequence of points of $\partiali \C$ converging to some $z\in\Fr(\C)$.
Suppose for contradiction that $z \notin \partiali \C$, so that $z \in \partialn \C$.
For each $m$, let $(y_{m,k})_{k\in\NN}$ be a sequence of points of~$\C$ converging to $z_m$ as $k \to +\infty$.
Let $\mathcal{D}\subset\C$ be a compact subset such that $\C = \bigcup_{\gamma\in\Gamma} \gamma\cdot\mathcal{D}$.
For any $m,k\in\NN$, there exists $\gamma_{m,k} \in \Gamma$ such that $\gamma_{m,k} \cdot y_{m,k} \in \mathcal{D}$.
Note that for each $m$, the collection $\{\gamma_{m,k}\}_{k=1}^{\infty}$ is infinite.
We now choose a sequence $(y_{m,k_m})_{m\in\NN}$ inductively as follows.
Fix an auxiliary metric $d(\cdot, \cdot)$ on $\PP(V)$. Let $k_1$ be such that $d(y_{1,k_1}, z_1) < 1$.
For each $m > 1$, let $k_m$ be such that $d(y_{m,k_m}, z_m) < 1/m$ and $\gamma_{m, k_m}$ is distinct from all $\gamma_{1,k_1}, \ldots, \gamma_{m-1, k_{m-1}}$. 
Then $y_{m,k_m}$ converges to $z$ as $m\to +\infty$ and the sequence $(\gamma_{m,k_m})_{m=1}^\infty$ is injective.
The compact subset $\{z\}\cup\{y_{m,k_m}\}_{m\in \NN}$ of $\C$ then has the property that its translate by any of the infinitely many elements $\{\gamma_{m,k_m}\}_{m=1}^\infty $ intersects the compact set $\mathcal{D}$.
This contradicts properness.
\end{proof}

%%%%%%%%%%%%%%%%%%%%%%%%%
\subsection{Nonempty interior}

The following observation shows that in the setting of Definition~\ref{def:cc-naive} we may always assume $\C$ to have nonempty interior.

\begin{lemma} \label{lem:naive-cc-nonempty-int}
Let $\Gamma$ be an infinite discrete subgroup of $\PGL(V)$ preserving a properly convex open subset $\Omega$ of $\PP(V)$ and acting cocompactly on some nonempty closed convex subset $\C$ of~$\Omega$.
For $R>0$, let $\C_R$ be the closed uniform $R$-neighborhood of $\C$ in $(\Omega,d_{\Omega})$.
Then $\C_R$ is a closed convex subset of~$\Omega$ with nonempty interior on which $\Gamma$ acts cocompactly.
\end{lemma}

\begin{proof}
The set $\C_R$ is properly convex by \cite[(18.12)]{bus55}.
The group $\Gamma$ acts properly discontinuously on~$\C_R$ since it acts properly discontinuously on~$\Omega$, and cocompactly on~$\C_R$ since it acts cocompactly on~$\C$: the set $\C_R$ is the union of the $\Gamma$-translates of the closed uniform $R$-neighborhood of a compact fundamental domain of $\C$ in $(\Omega,d_{\Omega})$.
\end{proof}

%%%%%%%%%%%%%%%%%%%%%%%%%
\subsection{Maximal invariant convex sets} \label{sec:unique-limit-set}

The following was first observed by Benoist \cite[Prop.\,3.1]{ben00} for discrete subgroups of $\PGL(V)$ acting irreducibly on $\PP(V)$.
Here we do not make any irreducibility assumption.

\begin{proposition} \label{prop:max-inv-conv}
Let $\Gamma$ be a discrete subgroup of $\PGL(V)$ preserving a nonempty properly convex open subset $\Omega$ of $\PP(V)$ and containing a proximal element.
Let $\Lambda_{\Gamma}$ (\resp $\Lambda_{\Gamma}^*$) be the proximal limit set of $\Gamma$ in $\PP(V)$ (\resp $\PP(V^*)$) (Definition~\ref{def:prox-lim-set}).
Then
\begin{enumerate}
  \item\label{item:Lambda-prox-in-boundary} $\Lambda_{\Gamma}$ (\resp $\Lambda_{\Gamma}^*$) is contained in the boundary of $\Omega$ (\resp its dual~$\Omega^*$);
  \item\label{item:Lambda-prox-lift-Omega} more specifically, $\Omega$ and $\Lambda_{\Gamma}$ lift to cones $\widetilde{\Omega}$ and $\widetilde{\Lambda}_\Gamma$ of $V\smallsetminus\{0\}$ with $\widetilde{\Omega}$ properly convex containing $\widetilde{\Lambda}_{\Gamma}$ in its boundary, and $\Omega^*$ and $\Lambda_{\Gamma}^*$ lift to cones $\widetilde{\Omega}^*$ and $\widetilde{\Lambda}_\Gamma^*$ of $V^*\smallsetminus\{0\}$ with $\widetilde{\Omega}^*$ properly convex containing $\widetilde{\Lambda}_{\Gamma}^*$ in its boundary, such that $\varphi(v)\geq 0$ for all $v\in\widetilde{\Lambda}_{\Gamma}$ and $\varphi\in\widetilde{\Lambda}_{\Gamma}^*$;
  \item\label{item:Omega-max} for $\widetilde{\Lambda}_{\Gamma}^*$ as in (\ref{item:Lambda-prox-lift-Omega}), the set
  $$\Omega_{\max} = \PP(\{ v\in V ~|~ \varphi(v)>0\quad \forall\varphi\in\widetilde{\Lambda}_{\Gamma}^*\})$$
is the unique connected component of  $\PP(V) \smallsetminus \bigcup_{z^*\in\Lambda_{\Gamma}^*} z^*$ containing~$\Omega$; it is $\Gamma$-invariant, convex, and open in $\PP(V)$; any $\Gamma$-invariant properly convex open subset $\Omega'$ of $\PP(V)$ containing~$\Omega$ is contained in~$\Omega_{\max}$.
\end{enumerate}
\end{proposition}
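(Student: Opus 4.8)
The plan is to prove the three statements in order, bootstrapping each from the previous, and to treat $\PP(V)$ and $\PP(V^*)$ symmetrically by duality. Throughout I fix the lift $\hat\Gamma\subset\SL^\pm(V)$ preserving a properly convex (hence salient) open cone $\widetilde\Omega$ lifting $\Omega$, as in Remark~\ref{rem:lift-Gamma}, and set $\widetilde\Omega^*:=\{\ell\in V^* : \ell(x)>0\ \forall x\in\overline{\widetilde\Omega}\smallsetminus\{0\}\}$, the positive dual cone, which is $\hat\Gamma$-invariant.

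For statement (\ref{item:Lambda-prox-in-boundary}), I would take $\gamma\in\Gamma$ proximal in $\PP(V)$ with attracting fixed point $z_\gamma^+$, and choose $x\in\Omega$ outside the repelling hyperplane of $\gamma$ and distinct from $z_\gamma^+$ (possible since $\Omega$ is open). The proximal dynamics of Section~\ref{subsec:prox} give $\gamma^n\cdot x\to z_\gamma^+$, so $z_\gamma^+\in\overline\Omega$. If $z_\gamma^+$ lay in $\Omega$, then, the Hilbert topology agreeing with the ambient one there, we would get $d_\Omega(\gamma^n x,\gamma^{n+1}x)\to 0$; but $\gamma$ acts by $d_\Omega$-isometries, so this quantity is the positive constant $d_\Omega(x,\gamma x)$, a contradiction. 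Hence $z_\gamma^+\in\partial\Omega$, and taking closures gives $\Lambda_\Gamma\subset\partial\Omega$. The same argument applied to the dual action on the properly convex open set $\Omega^*\subset\PP(V^*)$ yields $\Lambda_\Gamma^*\subset\partial\Omega^*$ (vacuous if $\Gamma$ contains no element proximal in $\PP(V^*)$).

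For statement (\ref{item:Lambda-prox-lift-Omega}), given $\gamma$ proximal and $x\in\widetilde\Omega$, the orbit $\hat\gamma^n\cdot x$ stays in the closed cone $\overline{\widetilde\Omega}$ (and leaves every compact of $V$ by Lemma~\ref{lem:vector-growth}), so its limiting direction lies in $\partial\widetilde\Omega$. Since $\widetilde\Omega$ is salient, the dominant eigenvalue of $\hat\gamma$ must be positive — otherwise $\hat\gamma^n x$ would accumulate on both $\pm$ of the top eigenray — so this limiting ray is a canonical lift $\tilde z_\gamma^+\in\partial\widetilde\Omega$ of $z_\gamma^+$. Let $\widetilde\Lambda_\Gamma\subset\partial\widetilde\Omega$ be the closure of the cone generated by the $\tilde z_\gamma^+$, an $\hat\Gamma$-invariant lift of $\Lambda_\Gamma$, and define $\widetilde\Lambda_\Gamma^*\subset\partial\widetilde\Omega^*$ dually. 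The pairing inequality is then immediate: every $\ell\in\widetilde\Lambda_\Gamma^*\subset\overline{\widetilde\Omega^*}$ satisfies $\ell\geq 0$ on $\overline{\widetilde\Omega}$, while $\widetilde\Lambda_\Gamma\subset\overline{\widetilde\Omega}$, so $\ell(x)\geq 0$ for all such $x,\ell$.

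Finally, for statement (\ref{item:Omega-max}), which carries the main point: the cone $C_{\max}:=\{x : \ell(x)>0\ \forall\ell\in\widetilde\Lambda_\Gamma^*\}$ is an intersection of open half-spaces, hence convex, and open because $\Lambda_\Gamma^*$ is compact (the defining infimum is attained and continuous); it contains $\widetilde\Omega$, meets no $\ker\ell$, and is salient since $x\in C_{\max}\Rightarrow -x\notin C_{\max}$, so it lies in an open half-space and $\Omega_{\max}=\PP(C_{\max})$ is convex and open in $\PP(V)$. As the sign of each $\ell$ is locally constant on $\PP(V)\smallsetminus\bigcup_{z^*\in\Lambda_\Gamma^*}z^*$ and positive on $\Omega$, this $\Omega_{\max}$ is exactly the component of that complement containing $\Omega$; $\Gamma$-invariance follows from the $\hat\Gamma$-invariance of $\widetilde\Lambda_\Gamma^*$ via $\ell(\gamma x)=(\gamma^{-1}\cdot\ell)(x)$. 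For the maximality, let $\Omega'$ be any $\Gamma$-invariant properly convex open set containing $\Omega$; applying statement (\ref{item:Lambda-prox-in-boundary}) to $\Omega'$ gives $\Lambda_\Gamma^*\subset\partial((\Omega')^*)\subset\overline{(\Omega')^*}$, and every hyperplane represented by a point of $\overline{(\Omega')^*}$ is disjoint from the open set $\Omega'$ (a supporting functional cannot vanish at an interior point). Thus $\Omega'$ avoids each $z^*\in\Lambda_\Gamma^*$, and being connected and containing $\Omega$ it lies in $\Omega_{\max}$. The step requiring the most care is (\ref{item:Lambda-prox-in-boundary}) together with the consistent lifts in (\ref{item:Lambda-prox-lift-Omega}): without irreducibility one cannot invoke minimality of the $\Gamma$-action on $\Lambda_\Gamma$ as in Benoist's original argument, so the positivity of the dominant eigenvalue of the salient-cone-preserving lift and the resulting canonical lift of $z_\gamma^+$ into $\partial\widetilde\Omega$ must be argued directly; once this is in hand, the maximality in (\ref{item:Omega-max}) is the essentially formal — but conceptually central — consequence of applying (\ref{item:Lambda-prox-in-boundary}) to the larger set $\Omega'$.
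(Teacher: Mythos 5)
Your proof is correct and follows essentially the same route as the paper's: proximal dynamics force $\Lambda_\Gamma$ into $\partial\Omega$ (the paper invokes proper discontinuity where you use the Hilbert-metric isometry argument, which is the same mechanism), the properly convex cone $\widetilde\Omega$ and its dual give the lifts and the pairing inequality, and maximality in~(\ref{item:Omega-max}) is obtained exactly as in the paper by applying~(\ref{item:Lambda-prox-in-boundary}) to the larger set $\Omega'$. The only difference is cosmetic: where you argue positivity of the dominant eigenvalue of $\hat\gamma$ to produce the lift $\tilde z_\gamma^+$, the paper simply observes that every point of $\partial\Omega$ has a canonical lift ray in $\partial\widetilde\Omega$ because the cone is salient, so $\widetilde\Lambda_\Gamma$ is determined directly by $\Lambda_\Gamma\subset\partial\Omega$ without further dynamics.
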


\begin{proof}
\eqref{item:Lambda-prox-in-boundary} Let $\gamma\in\Gamma$ be proximal in $\PP(V)$, with attracting fixed point $z_{\gamma}^+$ and complementary $\gamma$-invariant hyperplane $H_{\gamma}^-$.
Since $\Omega$ is open, there exists $x\in\Omega\smallsetminus H_{\gamma}^-$.
We then have $\gamma^m\cdot x\to z_{\gamma}^+$, and so $z_{\gamma}^+\in\partial\Omega$ since the action of $\Gamma$ on~$\Omega$ is properly discontinuous.
Thus $\Lambda_{\Gamma}\subset \partial \Omega$.
Similarly, $\Lambda_{\Gamma}^*\subset \partial \Omega^*$.

\eqref{item:Lambda-prox-lift-Omega} The set $\Omega$ lifts to a properly convex cone $\widetilde{\Omega}$ of $V\smallsetminus\{0\}$, unique up to global sign.
This determines a cone $\widetilde{\Lambda}_{\Gamma}$ of $V\smallsetminus\{0\}$ lifting $\Lambda_{\Gamma}\subset\partial\Omega$ and contained in the boundary of~$\widetilde{\Omega}$.
By definition, $\Omega^*$ is the projection to $\PP(V^*)$ of the dual cone
$$\widetilde{\Omega}^* := \big\{ \varphi\in V^*\smallsetminus\{0\} ~|~ \varphi(v)>0\quad \forall v\in\overline{\widetilde{\Omega}}\smallsetminus\{0\}\big\}.$$
This cone determines a cone $\widetilde{\Lambda}^*_{\Gamma}$ of $V^*\smallsetminus\{0\}$ lifting $\Lambda_{\Gamma}^*\subset\partial\Omega^*$ and contained in the boundary of~$\widetilde{\Omega}^*$.
By construction, $\varphi(v)\geq 0$ for all $v\in\widetilde{\Lambda}_{\Gamma}$ and $\varphi\in\widetilde{\Lambda}_{\Gamma}^*$.

\eqref{item:Omega-max} The set $\Omega_{\max} := \PP(\{ v\in V \,|\, \varphi(v)>0\ \forall\varphi\in\widetilde{\Lambda}_{\Gamma}^*\})$ is a connected component of $\PP(V) \smallsetminus \bigcup_{z^*\in\Lambda_{\Gamma}^*} z^*$.
It is convex, open in $\PP(V)$ by compactness of~$\Lambda_{\Gamma}^*$, and it contains~$\Omega$.
The action of~$\Gamma$, which permutes the connected components of $\PP(V) \smallsetminus \bigcup_{z^*\in\Lambda_{\Gamma}^*} z^*$, preserves $\Omega_{\max}$ because it preserves $\Omega$.
Moreover, any $\Gamma$-invariant properly convex open subset $\Omega'$ of $\PP(V)$ containing~$\Omega$ is contained in~$\Omega_{\max}$: indeed, $\Omega'$ cannot meet $z^*$ for $z^*\in\Lambda_{\Gamma}^*$ since $\Lambda_{\Gamma}^*\subset\partial{\Omega'}^*$ by~\eqref{item:Lambda-prox-in-boundary}.
\end{proof}

\begin{remark} \label{rem:not-irred}
In the context of Proposition~\ref{prop:max-inv-conv}, when $\Gamma$ preserves a nonempty properly convex open subset $\Omega$ of $\PP(V)$ but does not act irreducibly on $\PP(V)$, the following may happen:
\begin{enumerate}[label=(\alph*)]
  \item $\Gamma$ may not contain any proximal element in $\PP(V)$: this is the case \eg if $V = V'\oplus V'$ for some vector space~$V'$ and $\Gamma\subset\PGL(V)$ is the image of a diagonal embedding of a discrete group $\hat\Gamma'\subset\SL^{\pm}(V')$ preserving a properly convex open set in $\PP(V')$;
  \item assuming that $\Gamma$ contains a proximal element in $\PP(V)$ (hence $\Lambda_{\Gamma},\Lambda_{\Gamma}^*\neq\nolinebreak\emptyset$), the set $\Omega_{\max}$ of Proposition~\ref{prop:max-inv-conv}.\eqref{item:Omega-max} may fail to be properly convex: this is the case \eg if $\Gamma$ is a convex cocompact subgroup of $\SO(2,1)_0$, embedded into $\PGL(\RR^4)$ where it preserves the properly convex open set $\HH^3 \subset \PP(\RR^4)$ of Example~\ref{ex:H-p};
  \item even if $\Omega_{\max}$ is properly convex, it may not be the unique maximal $\Gamma$-invariant properly convex open set in $\PP(V)$: indeed, there may be multiple components of $\PP(V) \smallsetminus \bigcup_{z^*\in\Lambda_{\Gamma}^*} z^*$ that are properly convex and $\Gamma$-invariant, as in Example~\ref{ex:Zn} below.
\end{enumerate}
\end{remark}

However, if $\Gamma$ acts irreducibly on $\PP(V)$, then the following holds.

\begin{fact}[{\cite[Prop.\,3.1]{ben00}}] \label{fact:benoist-irred-Gamma}
Let $\Gamma$ be a discrete subgroup of $\PGL(V)$ acting irreducibly on $\PP(V)$ and preserving a nonempty properly convex open subset $\Omega$ of $\PP(V)$.
Then
\begin{enumerate}
  \item\label{item:benoist-irred-1} $\Gamma$ always contains a proximal element and the set $\Omega_{\max}$ of Proposition~\ref{prop:max-inv-conv}.(\ref{item:Omega-max}) is always properly convex (see Figure~\ref{fig:Omega-min-max}); it is a maximal $\Gamma$-invariant properly convex open subset of $\PP(V)$ containing~$\Omega$;
  \item\label{item:benoist-irred-2} if moreover $\Gamma$ acts strongly irreducibly on $\PP(V)$ (\ie all finite-index subgroups of~$\Gamma$ act irreducibly), then $\Omega_{\max}$ is the unique maximal $\Gamma$-invariant properly convex open set in $\PP(V)$; it contains all other invariant properly convex open subsets;
  \item\label{item:benoist-irred-3} in general, there is a smallest nonempty $\Gamma$-invariant convex open subset $\Omega_{\min}$ of $\Omega_{\max}$, namely the interior of the convex hull of $\Lambda_{\Gamma}$ in~$\Omega_{\max}$.
\end{enumerate}
\end{fact}

\begin{figure}
\includegraphics[width = 5.0cm]{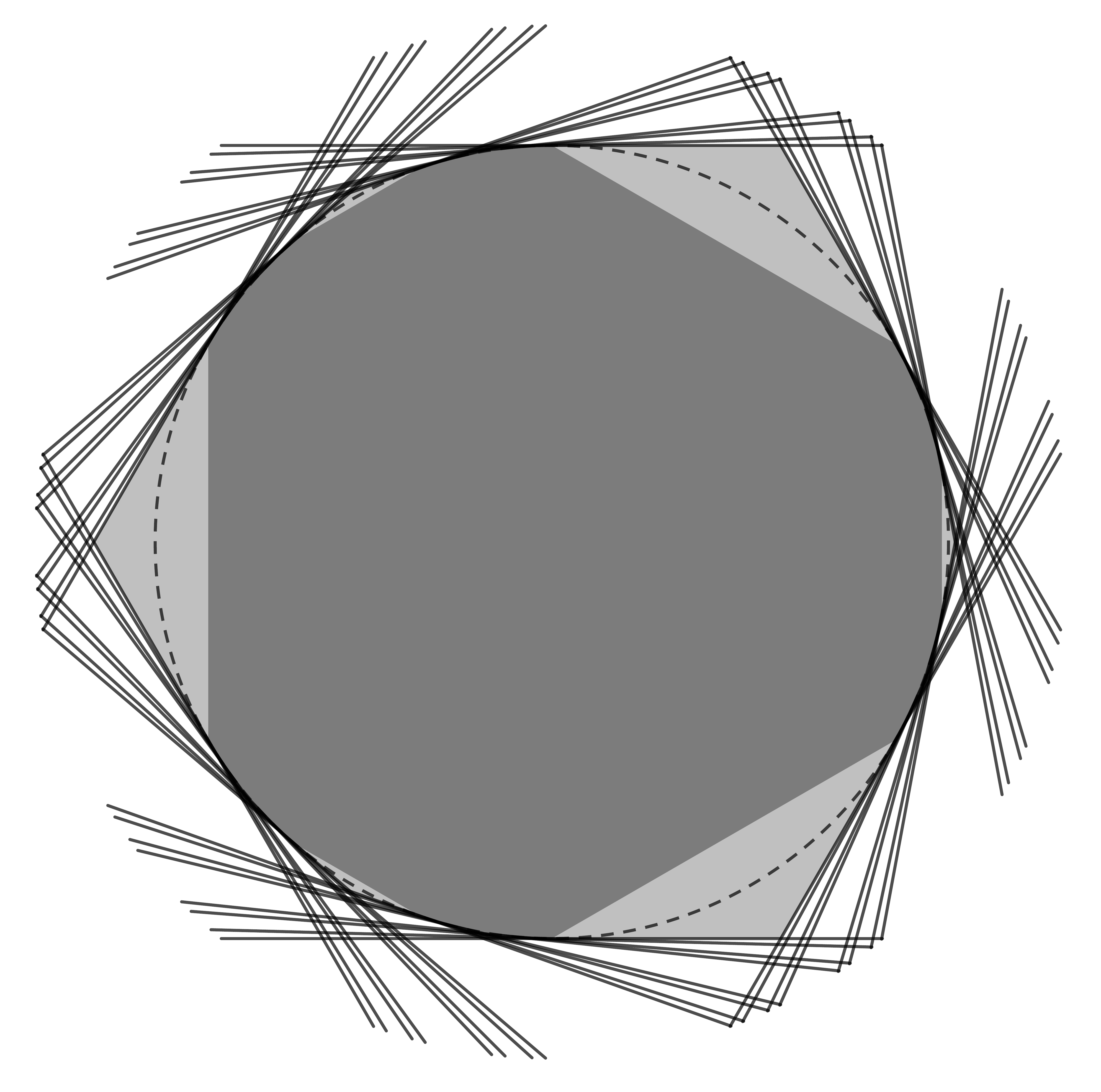}
\caption{The sets $\Omega_{\min}$ (dark gray) and $\Omega_{\max}$ (light gray) for a convex cocompact subgroup of $\PO(2,1)$. Here $\HH^2$ is the disk bounded by the dashed circle. The set $\Gamma \backslash \Ccore_{\Omega_{\max}}(\Gamma)$ is compact (equal to $\Gamma \backslash \Ccore_{\HH^2}(\Gamma)$) but $\Gamma \backslash \Ccore_{\Omega_{\min}}(\Gamma)$ is not.}
\label{fig:Omega-min-max}
\end{figure}

In general, the following strengthening of Proposition~\ref{prop:max-inv-conv}.\eqref{item:Lambda-prox-in-boundary} holds.

\begin{lemma} \label{lem:Lambda-prox-Lambda-orb}
Let $\Gamma$ be a discrete subgroup of $\PGL(V)$ preserving a nonempty properly convex open subset $\Omega$ of $\PP(V)$.
Then the proximal limit set $\Lambda_{\Gamma}$ is contained in the set of accumulation points of any $\Gamma$-orbit of~$\Omega$; in particular, $\Lambda_{\Gamma}$ is contained in the full orbital limit set $\Lambdao_{\Omega}(\Gamma)$ (Definition~\ref{def:lambdaorb}).
\end{lemma}

\begin{proof}
For any proximal element $\gamma\in \Gamma$, the repelling hyperplane of $\gamma$ is disjoint from (in fact, tangent to) $\Omega$, by Proposition~\ref{prop:max-inv-conv}.\eqref{item:Lambda-prox-in-boundary}. 
Therefore, for any point $x\in \Omega$, the sequence $(\gamma^m \cdot x)_{m\in \mathbb{N}}$ converges to the attracting fixed point of $\gamma$. A diagonal extraction argument shows that $\Gamma \cdot x$ contains the whole proximal limit set $\Lambda_\Gamma$. 
\end{proof}

%%%%%%%%%%%%%%%%%%%%%%%%%
\subsection{The case of a connected proximal limit set}

We make the following observation.

\begin{lemma} \label{lem:Omega-max-connected}
Let $\Gamma$ be an infinite discrete subgroup of $\PGL(V)$ preserving a nonempty properly convex open subset $\Omega$ of $\PP(V)$.
Suppose the proximal limit set $\Lambda_{\Gamma}^*$ of $\Gamma$ in $\PP(V^*)$ (Definition~\ref{def:prox-lim-set}) is \emph{connected}.
Then the set $\PP(V) \smallsetminus \bigcup_{z^*\in\Lambda_{\Gamma}^*} z^*$ is a nonempty $\Gamma$-invariant convex open subset of $\PP(V)$, not necessarily properly convex, but containing all $\Gamma$-invariant properly convex open subsets of $\PP(V)$; it is equal to the set $\Omega_{\max}$ of Proposition~\ref{prop:max-inv-conv}.
\end{lemma}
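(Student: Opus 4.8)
The plan is to identify the set $U := \PP(V) \smallsetminus \bigcup_{z^*\in\Lambda_{\Gamma}^*} z^*$ with the set $\Omega_{\max}$ of Proposition~\ref{prop:max-inv-conv}, and to read off the asserted properties from there. Since $\Lambda_{\Gamma}^*$ is connected and nonempty, $\Gamma$ contains an element proximal in $\PP(V^*)$, so Proposition~\ref{prop:max-inv-conv} applies (to the dual action on $\PP(V^*)$, which preserves $\Omega^*$, if one insists on a proximal element in $\PP(V)$) and produces the lifted cones $\widetilde{\Omega}^* \supset \widetilde{\Lambda}_{\Gamma}^*$ together with $\Omega_{\max} = \PP(\widetilde{U}_+)$, where $\widetilde{U}_{\pm} := \{ x\in V ~|~ \pm\ell(x)>0 \ \ \forall \ell\in\widetilde{\Lambda}_{\Gamma}^*\}$. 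Several of the asserted properties are then formal: $U$ is $\Gamma$-invariant because the proximal limit set $\Lambda_{\Gamma}^*$ is $\Gamma$-invariant and $\Gamma$ acts compatibly on $\PP(V)$ and $\PP(V^*)$; it contains $\Omega$ (hence is nonempty) because every $z^*\in\Lambda_{\Gamma}^*\subset\partial\Omega^*$ is a supporting hyperplane of $\Omega$ by Proposition~\ref{prop:max-inv-conv}.\eqref{item:Lambda-prox-in-boundary} and so misses the open set $\Omega$; and one inclusion $\Omega_{\max}\subset U$ is immediate, since a point of $\Omega_{\max}$ lifts to some $x$ with $\ell(x)>0\neq 0$ for all $\ell\in\widetilde{\Lambda}_{\Gamma}^*$, hence lies on no hyperplane~$z^*$.

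The real content is the reverse inclusion $U\subset\Omega_{\max}$, and this is where connectedness enters. First I would check that the lifted cone $\widetilde{\Lambda}_{\Gamma}^*$ is connected: since $\widetilde{\Omega}^*$ is properly convex, its intersection with a Euclidean unit sphere is a section meeting each ray of $\overline{\widetilde{\Omega}^*}\smallsetminus\{0\}$ exactly once, and projection restricts to a homeomorphism of this section onto $\overline{\Omega^*}$; the preimage of the connected set $\Lambda_{\Gamma}^*$ is therefore connected, and $\widetilde{\Lambda}_{\Gamma}^*$ is the cone over it, hence connected. Now take any $x\in V\smallsetminus\{0\}$ with $[x]\in U$. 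By definition of $U$ the continuous function $\ell\mapsto\ell(x)$ is nowhere vanishing on $\widetilde{\Lambda}_{\Gamma}^*$, so by connectedness it has constant sign: either $x\in\widetilde{U}_+$, or $\ell(x)<0$ for all $\ell$, in which case $-x\in\widetilde{U}_+$. In both cases $[x]\in\PP(\widetilde{U}_+)=\Omega_{\max}$. This proves $U\subset\Omega_{\max}$, hence $U=\Omega_{\max}$; convexity and openness of $U$ now follow from Proposition~\ref{prop:max-inv-conv}.\eqref{item:Omega-max}.

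It remains to upgrade the containment statement: I claim $U=\Omega_{\max}$ contains every $\Gamma$-invariant properly convex open subset $\Omega'$ of $\PP(V)$, not only those containing $\Omega$ as in Proposition~\ref{prop:max-inv-conv}.\eqref{item:Omega-max}. Indeed, $\Gamma$ preserves such an $\Omega'$, so Proposition~\ref{prop:max-inv-conv}.\eqref{item:Lambda-prox-in-boundary} applied to $\Omega'$ gives $\Lambda_{\Gamma}^*\subset\partial{\Omega'}^*$; each $z^*\in\Lambda_{\Gamma}^*$ is thus a supporting hyperplane of $\Omega'$ and misses the open set $\Omega'$, whence $\Omega'\subset U$. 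I expect the only genuinely delicate point to be the connectedness of the lift $\widetilde{\Lambda}_{\Gamma}^*$: one must make the sign choice in the lift coherent across all of $\Lambda_{\Gamma}^*$, which is exactly what the properness (salience) of $\widetilde{\Omega}^*$ secures; everything else is the constant-sign observation together with bookkeeping already carried out in Proposition~\ref{prop:max-inv-conv}.
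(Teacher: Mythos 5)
Your proof is correct, and it reaches the key point by a different mechanism than the paper. The paper's proof isolates the purely projective fact that for any nonempty \emph{closed connected} subset $L^*$ of $\PP(V^*)$, the set $\PP(V)\smallsetminus\bigcup_{z^*\in L^*}z^*$ has at most one connected component, necessarily convex; it proves this by passing to the convex hull $\CH(L^*)$ in an affine chart, observing that closedness and connectedness make $\CH(L^*)$ chart-independent and that a hyperplane misses $L^*$ if and only if it misses $\CH(L^*)$. Combined with Proposition~\ref{prop:max-inv-conv}.\eqref{item:Omega-max}, which exhibits $\Omega_{\max}$ as one (convex, open, nonempty) component, this gives the equality. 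You instead work linearly: you lift $\Lambda_\Gamma^*$ into the boundary of the salient cone $\overline{\widetilde{\Omega}^*}$, use salience to see that this lift is the cone over a homeomorphic copy of $\Lambda_\Gamma^*$ (hence connected), and then run a constant-sign argument for the nowhere-vanishing function $\ell\mapsto\ell(x)$. The two arguments exploit connectedness in the same essential way (forcing a coherent choice of side of the hyperplanes), but yours avoids the convex hull and the chart-independence discussion at the cost of verifying connectedness of the lifted cone, which you do correctly; the paper's version has the mild advantage of being stated for an arbitrary closed connected $L^*$, independent of any invariant convex set. Your explicit treatment of the final containment claim (applying Proposition~\ref{prop:max-inv-conv}.\eqref{item:Lambda-prox-in-boundary} to an arbitrary invariant $\Omega'$, not only those containing $\Omega$) correctly fills in a step the paper leaves implicit.
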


This lemma will follow from a basic fact about well-definedness of convex hulls in projective space.

\begin{lemma} \label{lem:general-convex-hulls}
\begin{enumerate}
  \item \label{ite:gch-one} Let $L$ be a closed, connected subset of $\PP(V)$, and let $H\neq H'$ be two hyperplanes in $\PP(V)$ disjoint from $L$.
  Then the convex hull of $L$ taken in the affine chart $\PP(V) \smallsetminus H$ agrees with the convex hull of $L$ taken in the affine chart $\PP(V) \smallsetminus H'$.
  \item \label{ite:gch-two} Let $L^*$ be a closed, connected subset of $\PP(V^*)$.
  Then, thinking of each $z^* \in \PP(V^*)$ as a hyperplane in $\PP(V)$, the open set $\PP(V) \smallsetminus \bigcup_{z^*\in L^*} z^*$ has at most one connected component, which is convex.
\end{enumerate}
\end{lemma}

\begin{proof}
\eqref{ite:gch-one} The set $\PP(V) \smallsetminus (H \cup H')$ has two connected components, each of which is convex. Since $L$ is connected, it must be contained in exactly one of these, which we denote by~$\mathcal{O}$.
The convex hull of $\Lambda$ in $\PP(V) \smallsetminus H$, or in $\PP(V) \smallsetminus H'$, agrees with the convex hull of $\Lambda$ in $\mathcal{O}$.

\eqref{ite:gch-two} Let $\Omega$ be a connected component of $\PP(V) \smallsetminus \bigcup_{z^*\in L^*} z^*$. The dual convex set $\Omega^*$ contains $L^*$ in its closure. In fact, if $x \in \Omega$, then $\overline{\Omega^*}$ is the convex hull of $L^*$, taken in the affine chart $\PP(V^*) \smallsetminus x$, where we think of $x \in \PP(V) = \PP((V^*)^*)$ as a hyperplane in $\PP(V^*)$. 
Since $L^*$ is connected, it now follows from~\eqref{ite:gch-one} that the open set $\PP(V) \smallsetminus \bigcup_{z^*\in L^*} z^*$ can have at most one connected component.
\end{proof}

\begin{proof}[Proof of Lemma~\ref{lem:Omega-max-connected}]
By Lemma~\ref{lem:general-convex-hulls}, since $\Lambda_{\Gamma}^*$ is connected, the set $\PP(V) \smallsetminus \bigcup_{z^*\in\Lambda_{\Gamma}^*} z^*$ has either one or zero connected components.
It contains $\Omega$, hence is nonempty, so it must have exactly one component, which is convex.
\end{proof}

%%%%%%%%%%%%%%%%%%%%%%%%%%%%%%%%%%%%%%%%%%%%%%%%%%%
\section{Convex cocompact and naively convex cocompact subgroups} \label{sec:naive-cc}

In this section we investigate the difference between naively convex cocompact groups (Definition~\ref{def:cc-naive}) and convex cocompact groups in $\PP(V)$ (Definition~\ref{def:cc-general}). We also study a notion of conical limit points and how it relates to faces of the domain $\Omega$, and study minimality properties of the convex core.

%%%%%%%%%%%%%%%%%%%%%%%%%
\subsection{Examples} \label{subsec:ex-counterex}

The following basic examples are designed to make the notions of convex cocompactness and naive convex cocompactness in $\PP(V)$ more concrete, and to point out some subtleties.

\begin{example} \label{ex:Zn}
Let $V=\RR^n$ with standard basis $(e_1,\dots,e_n)$, where $n\geq 2$.
Let $\Gamma\simeq \ZZ^n/\ZZ \simeq \ZZ^{n-1}$ be the discrete subgroup of $\PGL(V)$ of diagonal matrices whose entries are powers of some fixed~$t>\nolinebreak 1$; it is not word hyperbolic if $n\geq 3$.
The hyperplanes
$$H_k = \PP(\mathrm{span}(e_1,\dots,e_{k-1},e_{k+1},\dots,e_n))$$
for $1\leq k\leq n$ cut $\PP(V)$ into $2^{n-1}$ properly convex open connected components~$\Omega$, which are not strictly convex if $n\geq 3$ (see Figure~\ref{fig:triangle}).
The group $\Gamma$ acts properly discontinuously and cocompactly on each of them, hence is convex cocompact in $\PP(V)$ (see Example~\ref{ex:div-implies-cc}).
\end{example}

\begin{figure}
\includegraphics[width = 4.0cm]{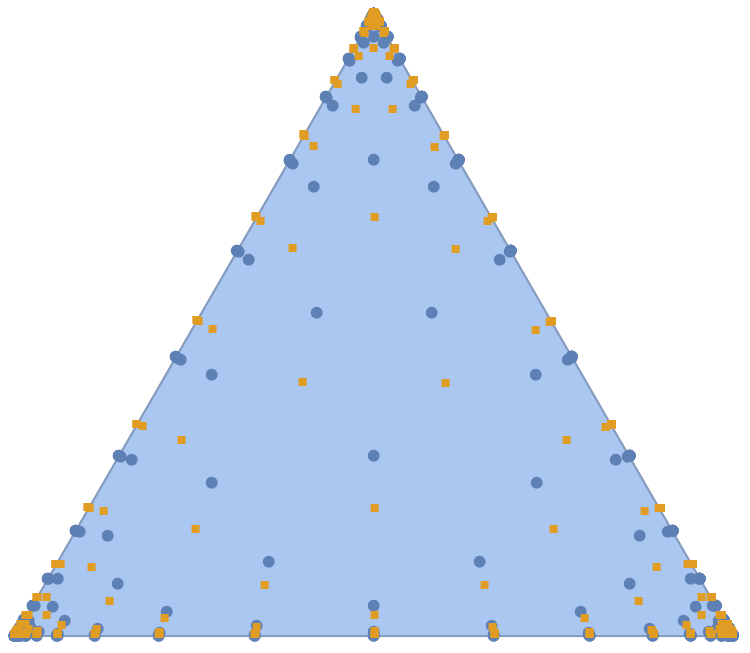}
\caption{The subgroup $\Gamma \subset \PGL(\RR^3)$ of all diagonal matrices with each entry a power of $t=2$ divides a triangle $\Omega$ in $\PP(\RR^3)$ with vertices the standard basis. Shown are two $\Gamma$-orbits with different accumulation sets in~$\partial\Omega$.}
\label{fig:triangle}
\end{figure}

In Example~\ref{ex:Zn} the set of accumulation points of one $\Gamma$-orbit of~$\Omega$ depends on the choice of orbit (see Figure~\ref{fig:triangle}), but the full orbital limit set $\Lambdao_{\Omega}(\Gamma)=\partial\Omega$ depends only on~$\Omega$.
The proximal limit set $\Lambda_{\Gamma}$ (Definition~\ref{def:prox-lim-set}) is the set of extreme points of 
$\overline{\Omega}$ and is the projectivized standard basis, independently of the choice of~$\Omega$.

Here is an irreducible variant of Example~\ref{ex:Zn}, containing it as a finite-index subgroup.
It shows that even when $\Gamma$ acts irreducibly on $\PP(V)$, it may preserve and divide several disjoint properly convex open subsets of $\PP(V)$.
(This does not happen under strong irreducibility, see Fact~\ref{fact:benoist-irred-Gamma}.\eqref{item:benoist-irred-2}.)

\begin{example} \label{ex:An}
Let $F$ be a finite group acting transitively on a set $I$ of cardinality $n\geq 2$. 
Let $V=\RR^I\simeq \RR^n$, and $\Gamma \simeq \ZZ^I/\ZZ$ be as in Example~\ref{ex:Zn}.
Then $\overline{\Gamma}:=F\ltimes \Gamma$ acts irreducibly on $\PP(V)$, preserving the positive orthant $\Delta_+:=\PP(\RR_{>0}^I)$.
For any index-two subgroup $F'$ of~$F$ whose restricted action has two orbits $I',I''$ partitioning $I$, the same group $\overline{\Gamma}$ also preserves and acts cocompactly on the orthant 
$\Delta_{F'}:=\PP(\RR_{>0}^{I'} \times \RR_{<0}^{I''})$.
There may be many such $F'\subset F$, \eg any index-two subgroup for $F=I=(\ZZ/2\ZZ)^{\nu}$ with $\nu\in\NN^*$.
\end{example}

Here is an example showing that a discrete subgroup which is convex cocompact in $\PP(V)$ need not act convex cocompactly on every invariant properly convex open subset of $\PP(V)$.

\begin{example}\label{ex:KleinianFuchsian}
Suppose $\Gamma$ is a convex cocompact subgroup (in the classical sense) of $\PO(p-1,1)\subset \PO(p,1) \subset \PGL(\RR^{p+1})$.
Then $\Gamma$ acts convex cocompactly (Definition~\ref{def:cc-general}), and even strongly convex cocompactly (Definition~\ref{def:strong-cc}), on the properly convex open set $\HH^p$, hence $\Gamma$ is strongly convex cocompact in $\PP(\RR^{p+1})$.
Note that the set $\Lambda_{\Gamma}$ is contained in the equatorial sphere $\partial\HH^{p-1}$ of $\partial\HH^p\subset\PP(\RR^{p+1})$.
Let $\Omega$ be one of the two $\Gamma$-invariant hyperbolic half-spaces of $\HH^p$ bounded by $\HH^{p-1}$. Then $\Ccore_{\Omega}(\Gamma) \subset \Ccore_{\HH^p}(\Gamma) \cap \Omega = \emptyset$.
Hence $\Gamma$ does not act convex cocompactly on~$\Omega$.
\end{example}

The following basic examples, where $\Gamma$ is a cyclic group acting on the projective plane $\PP(\RR^3)$, may be useful to keep in mind.

\begin{examples} \label{ex:triangle}
Let $V=\RR^3$, with standard basis $(e_1,e_2,e_3)$.
Let $\Gamma$ be a cyclic group generated by an element $\gamma\in\PGL(V)$.
\begin{enumerate}
  \item\label{ex:triangle:cc} Suppose $\gamma=\left(\begin{smallmatrix} a & 0 & 0\\ 0 & b & 0\\ 0 & 0 & c\end{smallmatrix}\right)$ where $a > b > c > 0$.
Then $\gamma$ has attracting fixed point $[e_1]$ and repelling fixed point $[e_3]$.
There is a $\Gamma$-invariant properly convex open neighborhood $\Omega$ of an open segment $([e_1], [e_3])$ connecting $[e_1]$ to $[e_3]$.
The full orbital limit set $\Lambdao_\Omega(\Gamma)$ is just $\{[e_1],[e_3]\}$ and its convex hull $\Ccore_{\Omega}(\Gamma) = ([e_1], [e_3])$ has compact quotient by~$\Gamma$ (a circle).
Thus $\Gamma$ is convex cocompact in $\PP(V)$.
  \item\label{ex:triangle:borderline} Suppose $\gamma=\left(\begin{smallmatrix} a & 0 & 0\\ 0 & b & 0\\ 0 & 0 & b\end{smallmatrix}\right)$ where $a > b > 0$.
Any $\Gamma$-invariant properly convex open set $\Omega$ is a triangle with tip $[e_1]$ and base an open segment $I$ of the line $\PP(\mathrm{span}(e_2,e_3))$. 
For any $x\in\Omega$, the $\Gamma$-orbit of~$x$ has two accumulation points, namely $[e_1]$ and the intersection of $I$ with the projective line through $[e_1]$ and~$x$. 
Therefore, the full orbital limit set $\Lambdao_\Omega(\Gamma)$ consists $\{[e_1]\} \cup I$, hence the convex hull $\Ccore_\Omega(\Gamma)$ of $\Lambdao_\Omega(\Gamma)$ is the whole of~$\Omega$, and $\Gamma$ does not act cocompactly on it.
Thus $\Gamma$ is not convex cocompact in $\PP(V)$.
However, $\Gamma$ is naively convex cocompact in $\PP(V)$ (Definition~\ref{def:cc-naive}): it acts cocompactly on the convex hull $\C$ in~$\Omega$ of $\{[e_1]\}$ and of any closed segment $I'\subset I$.
The quotient $\Gamma \backslash \C$ is a closed convex projective annulus (or a circle if $I'$ is reduced to a singleton).
  \item\label{ex:triangle:unipot} Suppose $\gamma=\left(\begin{smallmatrix} 2 & 0 & 0\\ 0 & 1 & t\\ 0 & 0 & 1\end{smallmatrix}\right)$ where $t>0$.
Then there exist nonempty $\Gamma$-invariant properly convex open subsets of $\PP(V)$, for instance $\Omega_s = \PP(\{ (v_1,v_2,1) \,|\, v_1>s\,2^{v_2/t}\})$ for any $s>0$.
However, for any such set~$\Omega$, we have $\Lambdao_\Omega(\Gamma)=\{[e_1],[e_2]\}$, and the convex hull of $\Lambdao_\Omega(\Gamma)$ in~$\overline{\Omega}$ is contained in $\partial\Omega$ (see \cite[Prop.\,2.13]{mar12}).
Hence $\Ccore_\Omega(\Gamma)$ is empty and $\Gamma$ is not convex cocompact in $\PP(V)$.
It is an easy exercise to check that $\Gamma$ is not even naively convex cocompact in $\PP(V)$.
  \item\label{ex:triangle:fail} Suppose $\gamma=\left(\begin{smallmatrix} a & 0 & 0\\ 0 & b\cos\theta & -b\sin\theta\\ 0 & b\sin\theta & b\cos\theta\end{smallmatrix}\right)$ where $a > b > 0$ and $0 < \theta \leq \pi$.
Then $\Gamma$ does not preserve any nonempty properly convex open subset of $\PP(V)$ (see \cite[Prop.\,2.4]{mar12}).
\end{enumerate}
\end{examples}

\begin{remarks} \label{rem:stable-precise}
\begin{enumerate}[label=(\alph*)]
  \item Convex cocompactness in $\PP(V)$ is not a closed condition in general.
Indeed, Example~\ref{ex:triangle}.\eqref{ex:triangle:cc}, which is convex cocompact, can limit to Example~\ref{ex:triangle}.\eqref{ex:triangle:borderline}, which is not.
  \item\label{item:weak-cond-not-open} Naive convex cocompactness (Definition~\ref{def:cc-naive}) is not an open condition (even if we require the cocompact convex subset $\C \subset \Omega$ to have nonempty interior, which we can always do by Lemma~\ref{lem:naive-cc-nonempty-int}).
  Indeed, Example~\ref{ex:triangle}.\eqref{ex:triangle:borderline}, which satisfies the condition, is a limit of both~\ref{ex:triangle}.\eqref{ex:triangle:unipot} and~\ref{ex:triangle}.\eqref{ex:triangle:fail}, which do not.
\end{enumerate}
\end{remarks}

Here is a slightly more complicated example of a discrete subgroup of $\PGL(V)$ which is naively convex cocompact but not convex cocompact in~$\PP(V)$.

\begin{example}\label{ex:cone-H2}
Let $\Gamma_1$ be a convex cocompact subgroup of $\SO(2,1) \subset \GL(\RR^3)$, let $\Gamma_2\simeq\ZZ$ be a discrete subgroup of $\GL(\RR^1)\simeq\RR^*$ acting on~$\RR^1$ by scaling, and let $\Gamma=\Gamma_1\times\Gamma_2\subset\GL(V)$, where $V:=\RR^3\oplus\RR^1$.
Any $\Gamma$-invariant properly convex open subset $\Omega$ of $\PP(\RR^3 \oplus \RR^1)$ is a cone with base some $\Gamma_1$-invariant properly convex open subset $\Omega_1$ of $\PP(\RR^3) \subset \PP(V)$ and tip $z := \PP(\RR^1) \subset \PP(V)$.
The full orbital limit set $\Lambdao_{\Omega}(\Gamma)$ contains both $\{z\}$ and the full base~$\Omega_1$, hence $\Ccore_{\Omega}(\Gamma)$ is equal to~$\Omega$.
Therefore $\Gamma$ acts convex cocompactly on $\Omega$ if and only if $\Gamma_1$ divides~$\Omega_1$, if and only if $\Gamma_1$ is cocompact (not just convex cocompact) in $\SO(2,1)$.
On the other hand, $\Gamma$ is always naively convex cocompact in $\PP(V)$: it acts cocompactly on the closed convex subcone $\C \subset \Omega$ with tip $z$ and base the convex hull in $\HH^2$ of the limit set of~$\Gamma_1$.
\end{example}

In further work \cite{dgk-bad-ex}, we shall describe examples of discrete subgroups of $\PGL(V)$ which are naively convex cocompact but not convex cocompact in $\PP(V)$, and which act \emph{irreducibly} on $\PP(V)$.
This includes free discrete subgroups of $\PGL(\RR^4)$ containing Example~\ref{ex:triangle}.\eqref{ex:triangle:borderline} as a free factor.

%%%%%%%%%%%%%%%%%%%%%%%%%
\subsection{Finite-index subgroups} \label{subsec:finite-index}

We observe that the notions of convex cocompactness and naive convex cocompactness in $\PP(V)$ behave well with respect to finite-index subgroups.

\begin{lemma} \label{lem:finite-index}
Let $\Gamma$ be a discrete subgroup of $\PGL(V)$ preserving a properly convex open subset $\Omega$ of $\PP(V)$, and let $\Gamma'$ be a finite-index subgroup of~$\Gamma$.
Then
\begin{itemize}
  \item $\Ccore_{\Omega}(\Gamma)=\Ccore_{\Omega}(\Gamma')$; in particular, $\Gamma'$ is convex cocompact in $\PP(V)$ if and only if $\Gamma$~is;
  \item if $\Gamma'$ acts cocompactly on some closed convex subset $\C'$ of~$\Omega$, then $\Gamma$ acts cocompactly on some closed convex subset $\C$ of~$\Omega$; in particular, $\Gamma'$ is naively convex cocompact in $\PP(V)$ if and only if $\Gamma$ is.
\end{itemize}
\end{lemma}

\begin{proof}
Write $\Gamma$ as the disjoint union of cosets $\Gamma'\gamma_1,\dots,\Gamma'\gamma_m$ where $\gamma_i\in\Gamma$.
Since any orbit $\Gamma\cdot x$, for $x\in\Omega$, is a union of $m$ orbits $\Gamma'\cdot(\gamma_i \cdot x)$, we have $\Lambdao_{\Omega}(\Gamma) = \Lambdao_{\Omega}(\Gamma')$, hence $\Ccore_{\Omega}(\Gamma)=\Ccore_{\Omega}(\Gamma')$.

Suppose $\Gamma'$ acts cocompactly on some closed convex subset $\C'$ of~$\Omega$, and let $\mathcal{D}' \subset \C'$ be a compact fundamental domain for this action.
There exists a closed uniform neighborhood $\C'_{\mathsf{unif}}$ of $\C'$ in $(\Omega,d_{\Omega})$ such that $\bigcup_{i=1}^m \gamma_i\cdot\mathcal{D}' \subset \C'_{\mathsf{unif}}$.
The group $\Gamma'$ acts cocompactly on $\C'_{\mathsf{unif}}$ (Lemma~\ref{lem:naive-cc-nonempty-int}).
We have $\Gamma\cdot\C' = \bigcup_{i=1}^m \Gamma'\gamma_i\cdot\mathcal{D}' \subset \C'_{\mathsf{unif}}$ since $\C'_{\mathsf{unif}}$ is $\Gamma'$-invariant.
Let $\C \subset \C'_{\mathsf{unif}}$ be the convex hull of $\Gamma\cdot\C'$ in~$\Omega$.
Then $\C$ is $\Gamma$-invariant and the action of $\Gamma'$ (hence of $\Gamma$) on~$\C$ is cocompact.
\end{proof}

%%%%%%%%%%%%%%%%%%%%%%%%%
\subsection{Conical limit points}

We use the following terminology.

\begin{definition} \label{def:conical}
Let $\Omega$ be a nonempty properly convex open subset of $\PP(V)$ and let $z\in\partial\Omega$.
\begin{itemize}
  \item A sequence $(y_m)_{m\in\NN}\in\Omega^{\NN}$ \emph{converges conically} to~$z$ if $y_m\to z$ and the supremum over $m\in\NN$ of the distances $d_{\Omega}(y_m, [y,z))$ is finite for some (hence any) $y\in\Omega$.
\end{itemize}
Suppose in addition that $\Omega$ is preserved by an infinite discrete subgroup $\Gamma$ of $\PGL(V)$.
\begin{itemize}
  \item The point $z$ is a \emph{conical limit point} of the $\Gamma$-orbit of some point $y\in\Omega$ if there exists a sequence $(\gamma_m)\in\Gamma^{\NN}$ such that $(\gamma_m\cdot y)_{m\in\NN}$ converges conically to~$z$.
  In this case we say that $z$ is a \emph{conical limit point of $\Gamma$ in $\partial\Omega$}.
  \item The \emph{conical limit set of $\Gamma$ in~$\partial\Omega$} is the set $\Lambdacon_{\Omega}(\Gamma)$ of conical limit points of $\Gamma$ in~$\partial\Omega$.
\end{itemize}
\end{definition}

Here $[y,z)$ denotes the projective ray of~$\Omega$ starting from~$y$ with endpoint~$z$.
By definition, the conical limit set $\Lambdacon_{\Omega}(\Gamma)$ is contained in the full orbital limit set $\Lambdao_{\Omega}(\Gamma)$.

The following observation will have several important consequences.

\begin{lemma} \label{lem:Lambda-con}
Let $\Gamma$ be an infinite discrete subgroup of $\PGL(V)$ and $\Omega$ a nonempty $\Gamma$-invariant properly convex open subset of $\PP(V)$.
For $z\in\partial\Omega$ and a sequence $(\gamma_m)\in\Gamma^{\NN}$ of pairwise distinct elements, if there exists $y\in\Omega$ such that $d_{\Omega}(\gamma_m\cdot y, [y,z))$ is bounded, then there exists $y'\in\Omega$ in the closure of $\bigcup_{\gamma\in\Gamma} \gamma\cdot [y,z)$ in~$\Omega$ such that a subsequence of $(\gamma_m\cdot y')_{m\in\NN}$ converges conically to~$z$; in particular, $z\in\Lambdacon_{\Omega}(\Gamma)$.
\end{lemma}

\begin{proof}
Suppose there exist a sequence $(\gamma_m)\in\Gamma^{\NN}$ of pairwise distinct elements, a point $y\in\Omega$, and a sequence $(y_m)_{m\in\NN}$ of points of $[y,z)$ such that $d_{\Omega}(\gamma_m\cdot y,y_m)$ is bounded.
Then $d_{\Omega}(y,\gamma_m^{-1}\cdot y_m)$ is bounded, and so, up to passing to a subsequence, we may assume that $\gamma_m^{-1}\cdot y_m\to y'$ for some $y'\in\Omega$.
We have $y_m\to z$ and $d_{\Omega}(y_m,\gamma_m\cdot y')\to 0$, hence $\gamma_m\cdot y'\to z$ by Corollary~\ref{cor:compare-Hilb-Eucl}.
Moreover, $d_{\Omega}(\gamma_m\cdot y', [y',z))$ is bounded.
Indeed, $d_{\Omega}(\gamma_m\cdot y',\gamma_m\cdot y) = d_{\Omega}(y',y)$ and $d_{\Omega}(\gamma_m\cdot y, [y,z))$ is bounded, hence $d_{\Omega}(\gamma_m\cdot y', [y,z))$ is bounded by the triangle inequality.
We conclude using the fact that the rays $[y,z)$ and $[y',z)$ remain at bounded distance for~$d_{\Omega}$.
\end{proof}

\begin{corollary} \label{cor:ideal-bound-naive-cc}
Let $\Gamma$ be an infinite discrete subgroup of $\PGL(V)$ and $\Omega$ a nonempty $\Gamma$-invariant properly convex open subset of $\PP(V)$.
Suppose $\Gamma$ acts cocompactly on some closed convex subset $\C$ of~$\Omega$.
Then
\begin{enumerate}
  \item\label{item:ideal-bound-naive-cc} the ideal boundary $\partiali\C$ is contained in $\Lambdacon_{\Omega}(\Gamma)$; more precisely, any point of $\partiali\C$ is a conical limit point of the $\Gamma$-orbit of some point of~$\C$;
  \item\label{item:Lambda-con-naive-cc} $\Lambdacon_{\Omega}(\Gamma) = \Lambdao_{\Omega}(\Gamma)$;
  \item\label{item:ideal-bound-cc} if $\C$ contains $\Ccore_{\Omega}(\Gamma)$, then $\partiali\C = \partiali\Ccore_{\Omega}(\Gamma) = \Lambdao_{\Omega}(\Gamma)$; in that case, the set $\Lambdao_{\Omega}(\Gamma)$ is closed in $\PP(V)$, the set $\Ccore_\Omega(\Gamma)$ is closed in~$\Omega$, and the action of $\Gamma$ on~$\Omega$ is convex cocompact (Definition~\ref{def:cc-general}).
\end{enumerate}
\end{corollary}

As a special case, Corollary~\ref{cor:ideal-bound-naive-cc}.\eqref{item:ideal-bound-cc} with $\C=\Omega$ shows that if $\Gamma$ divides (\ie acts cocompactly on) $\Omega$, then $\Lambdao_{\Omega}(\Gamma) = \partial\Omega$ and $\Ccore_{\Omega}(\Gamma) = \Omega$ (this also follows from \cite[Prop.\,3]{vey70}) and the action of $\Gamma$ on~$\Omega$ is convex cocompact in the sense of Definition~\ref{def:cc-general}.

\begin{proof}
\eqref{item:ideal-bound-naive-cc} Since $\C$ is convex and the action of $\Gamma$ on~$\C$ is cocompact, for $z\in\partiali\C$ and $y\in\C$, all points in the ray $[y,z)$ lie at uniformly bounded distance from $\Gamma\cdot y$.
We conclude using Lemma~\ref{lem:Lambda-con}.

\eqref{item:Lambda-con-naive-cc} Let $z\in\Lambdao_{\Omega}(\Gamma)$: it is a limit of points in $\Gamma\cdot y$ for some $y\in\Omega$.
Let $\C_{\mathsf{unif}}$ be a closed uniform neighborhood of~$\C$ in $(\Omega,d_{\Omega})$ containing~$y$.
The group $\Gamma$ still acts cocompactly on~$\C_{\mathsf{unif}}$, and $z\in\partiali\C_{\mathsf{unif}}$ (Lemma~\ref{lem:naive-cc-nonempty-int}), hence $z\in\Lambdacon_{\Omega}(\Gamma)$ by~\eqref{item:ideal-bound-naive-cc}.

\eqref{item:ideal-bound-cc} If $\C$ contains $\Ccore_{\Omega}(\Gamma)$, then $\partiali\C\supset\partiali\Ccore_{\Omega}(\Gamma)\supset\Lambdao_{\Omega}(\Gamma)$, and so $\partiali\C = \partiali\Ccore_\Omega(\Gamma) = \Lambdao_{\Omega}(\Gamma)$ by~\eqref{item:ideal-bound-naive-cc}.
In particular, $\Lambdao_{\Omega}(\Gamma)$ is closed in $\PP(V)$ by Remark~\ref{rem:closed-ideal-bound-C-Omega}, and so its convex hull $\Ccore_\Omega(\Gamma)$ is closed in $\Omega$, and compact modulo $\Gamma$ because $\C$ is.
\end{proof}

%%%%%%%%%%%%%%%%%%%%%%%%%
\subsection{Open faces of $\partial\Omega$}

We shall use the following terminology.

\begin{definition} \label{def:face}
For any properly convex open subset $\Omega$ of $\PP(V)$ and any $z\in\partial\Omega$, the \emph{open face} $F_{\scriptscriptstyle\partial\Omega}(z)$ of $\partial\Omega$ at~$z$ is the union of $\{z\}$ and of all open segments of $\partial\Omega$ containing~$z$.
\end{definition}

In other words, $F_{\scriptscriptstyle\partial\Omega}(z)$ is the largest convex subset of $\partial\Omega$ containing~$z$ which is relatively open, in the sense that it is open in the projective subspace $\PP(W)$ that is spans.
In particular, we can consider the Hilbert metric $d_{F_{\partial\Omega}(z)}$ on $F_{\scriptscriptstyle\partial\Omega}(z)$ seen as a properly convex open subset of $\PP(W)$ (if $F_{\scriptscriptstyle\partial\Omega}(z) = \{z\}$, take $d_{F_{\partial\Omega}(z)}:=0$).
The set $\partial\Omega$ is the disjoint union of its open faces.

Here is a consequence of Lemma~\ref{lem:Lambda-con}.

\begin{corollary} \label{cor:Lambda-con-faces}
Let $\Gamma$ be a discrete subgroup of $\PGL(V)$ and $\Omega$ a nonempty $\Gamma$-invariant properly convex open subset of $\PP(V)$.
Then the conical limit set $\Lambdacon_{\Omega}(\Gamma)$ is a union of open faces of $\partial\Omega$.
\end{corollary}

\begin{proof}
Suppose $z\in\Lambdacon_{\Omega}(\Gamma)$: there exist $y\in\Omega$ and $(\gamma_m)\in\Gamma^{\NN}$ such that $(\gamma_m\cdot y)_{m\in\NN}$ converges conically to~$z$.
For any $z'\in F_{\scriptscriptstyle\partial\Omega}(z)$, the rays $[y,z)$ and $[y,z')$ remain at bounded distance for~$d_{\Omega}$, hence $d_{\Omega}(\gamma_m\cdot y, [y,z'))$ is bounded.
By Lemma~\ref{lem:Lambda-con}, there exists $y'\in\Omega$ such that some subsequence of $(\gamma_m\cdot y')_{m\in\NN}$ converges conically to~$z'$, and so $z'\in\Lambdacon_{\Omega}(\Gamma)$.
\end{proof}

Corollaries \ref{cor:ideal-bound-naive-cc}.\eqref{item:Lambda-con-naive-cc} and~\ref{cor:Lambda-con-faces} immediately yield the following.

\begin{corollary} \label{cor:naive-cc-Lambdaorb-full}
Let $\Gamma$ be a discrete subgroup of $\PGL(V)$ and $\Omega$ a $\Gamma$-invariant properly convex open subset of $\PP(V)$.
If $\Gamma$ acts cocompactly on some nonempty closed convex subset $\C$ of~$\Omega$, then the full orbital limit set $\Lambdao_{\Omega}(\Gamma)$ is a union of open faces of $\partial\Omega$.
\end{corollary}

The following lemma will be used in the proofs of Lemma~\ref{lem:Ccore-min} and Proposition~\ref{prop:cc-Omega-subset-Omega'}.\eqref{item:cc-small->big} below.
We endow any open face $F$ of $\partial\Omega$ with its Hilbert metric $d_F$.

\begin{lemma} \label{lem:unif-neighb-face}
Let $\Omega$ be a nonempty properly convex open subset of $\PP(V)$ and let $R>0$.
\begin{enumerate}
  \item \label{item:distance-goes-down} Let $(x_m)_{m\in\NN}$ and $(x'_m)_{m\in\NN}$ be sequences of points of~$\Omega$ converging respectively to points $z$ and~$z'$ of $\partial\Omega$.
  If $d_{\Omega}(x_m,x'_m)\leq R$ for all $m\in\NN$, then $z$ and~$z'$ belong to the same open face $F$ of $\partial\Omega$ and $d_F(z,z') \leq R$.
  \item \label{item:R-nbhd-faces} Let $\C$ be a nonempty closed convex subset of~$\Omega$ and let $\C_R$ be the closed uniform $R$-neighborhood of $\C$ in $(\Omega,d_{\Omega})$.
  Then for any open face $F$ of $\partial\Omega$, the set $\partiali \C_R \cap F$ is equal to the closed uniform $R$-neighborhood of $\partiali \C \cap F$ in $(F,d_F)$. 
\end{enumerate}
\end{lemma}

\begin{proof}
\eqref{item:distance-goes-down} We may assume $z\neq z'$. 
For any $m\in\NN$, let $a_m, b_m \in \partial \Omega$ be such that $a_m, x_m, x'_m, b_m$ are aligned in this order.
Up to passing to a subsequence, we may assume that $(a_m)_{m\in\NN}, (b_m)_{m\in\NN}$ converge respectively to $a,b \in \partial\Omega$, with $a, z, z', b$ aligned in this order. 
By continuity of the cross-ratio, $\lim_m \cro{a_m}{x_m}{x'_m}{b_m} = \cro{a}{z}{z'}{b}$.
Using $d_{\Omega}(x_m,x'_m)\leq R$ and the definition \eqref{eqn:d-Omega} of the Hilbert metric, we deduce $a\neq z$ and $z'\neq b$ and $\lim_m d_{\Omega}(x_m, x'_m) = d_{(a,b)}(z,z')$, where $d_{(a,b)}$ is the Hilbert metric on the interval $(a,b)$.
The interval $(a,b)$ (hence also $z$ and~$z'$) is contained in some open face $F$ of $\partial\Omega$.
We have $d_F(z,z') \leq d_{(a,b)}(z,z')$ (Remark~\ref{rem:Hilb-metric-include}), with equality if and only if $a$ and~$b$ both lie in the boundary of~$F$.
Thus $d_F(z,z') \leq \lim_m d_{\Omega}(x_m, x'_m) \leq R$.

\eqref{item:R-nbhd-faces} By~\eqref{item:distance-goes-down}, the set $\partiali \C_R \cap F$ is contained in the closed uniform $R$-neighborhood of $\partiali \C \cap F$ in $(F,d_F)$.
Let us prove the reverse inclusion.
If $F$ is a singleton, there is nothing to prove.
If not, consider $z \in \partiali \C \cap F$ and  $z' \in F$ with $0<d_F(z,z') \leq R$.
In order to check that $z' \in \partiali\C_R$, it is enough to choose a point $x \in \C$ and check that the ray $[x,z')$ is contained in~$\C_R$.
Let $\Omega'$ (\resp $F'$) be the intersection of $\Omega$ (\resp $F$) with the two-dimensional projective subspace of $\PP(V)$ spanned by $x, z, z'$.
Observe that $F' = (a,b)$ is an interval and that $d_{F'}$ is equal to $d_F$ on~$F'$ (see Figure~\ref{fig:UnifNeighbFace}).
For any $y' \in [x,z')$, let $y \in [x,z) \subset \C$ be such that the line between $y, y'$ does not intersect~$F'$ (\eg choose $y$ to be the intersection with $(x,z)$ of the line through $y'$ parallel to the direction of $F'$ in some affine chart of $\PP(W)$ containing $\Omega'$). 
If $T \subset \Omega'$ denotes the open triangle spanned by $x, a, b$, then $d_{\Omega}(y,y') \leq d_T(y,y') = d_{F'}(z,z') \leq R$, hence $y' \in \C_R$.
\end{proof}

\begin{figure}
%\centering
\labellist
\small\hair 2pt
\pinlabel {$x \in \C$} [u] at 50 91
\pinlabel {$y'$} [u] at 261 111
\pinlabel {$y\in \C$} [u] at 272 67
\pinlabel {$T\subset \Omega$} [u] at 320 45
\pinlabel {$F$} [u] at 406 50
\pinlabel {$z$} [u] at 390 72
\pinlabel {$z'$} [u] at 392 120
\pinlabel {$F'$} [u] at 372 138
\pinlabel {$a$} [u] at 381 188
\pinlabel {$b$} [u] at 381 13
\endlabellist
\includegraphics[width = 10.0cm]{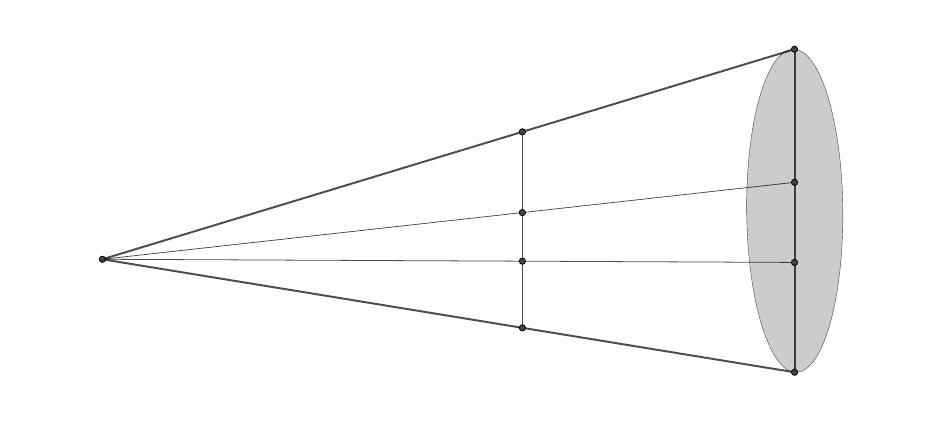}
\vspace{-0.3cm}
\caption{Illustration of the proof of Lemma~\ref{lem:unif-neighb-face}.\eqref{item:R-nbhd-faces}}
\label{fig:UnifNeighbFace}
\end{figure}

\begin{remark}
The way that Hilbert distances behave as points approach an open face $F$ of $\partial\Omega$ can be quite subtle.
Let $(x_m)_{m\in\NN}$ be a sequence of points in~$\Omega$ converging to some $z \in F$, and consider the sequence of closed $R$-balls $\overline{B}_{d_{\Omega}}(x_m,R)$.
By Lemma~\ref{lem:unif-neighb-face}.\eqref{item:distance-goes-down}, their limit in the Hausdorff topology (if it exists) is some closed subset of~$F$ contained in the closed $R$-ball $\overline{B}_{d_F}(z,R)$.
However, it is sometimes the case that the containment is strict: see Appendix~\ref{app:hilbert}.
In the case that $x_m \to z$ conically, it can be shown that the limit contains some ball around~$z$, but the radius may be smaller than~$R$: we give precise bounds in Lemma~\ref{lem:loss}.
In general, the limit may fail to contain a neighborhood of $z$ in~$F$ (see Example~\ref{ex:twocones}).
\end{remark}

%%%%%%%%%%%%%%%%%%%%%%%%%
\subsection{Minimality of the convex core for convex cocompact actions} \label{subsec:Ccore-min}

Recall that $\Ccore_\Omega(\Gamma)$ denotes the convex core of $\Omega$ for~$\Gamma$, \ie the convex hull of $\Lambdao_\Omega(\Gamma)$ in~$\Omega$.

\begin{lemma} \label{lem:Ccore-min}
Let $\Gamma$ be a discrete subgroup of $\PGL(V)$ acting convex cocompactly (Definition~\ref{def:cc-general}) on some nonempty properly convex open subset $\Omega$ of $\PP(V)$.
Then any nonempty $\Gamma$-invariant closed convex subset $\C$ of~$\Omega$ contains~$\Ccore_\Omega(\Gamma)$.
In particular, the nonempty $\Gamma$-invariant closed convex subsets of~$\Omega$ on which the action of $\Gamma$ is cocompact are exactly those nested between $\Ccore_\Omega(\Gamma)$ and some uniform neighborhood of $\Ccore_\Omega(\Gamma)$ in $(\Omega,d_{\Omega})$.
\end{lemma}

\begin{proof}
Consider a nonempty closed convex $\Gamma$-invariant subset $\C$ of~$\Omega$.

Let us prove that $\C$ contains $\Ccore_\Omega(\Gamma)$.
By definition of $\Ccore_\Omega(\Gamma)$, it is sufficient to prove that $\partiali \C$ contains $\Lambdao_{\Omega}(\Gamma)$.
For this, it is sufficient to prove that $\Lambdao_{\Omega}(\Gamma) \cap F \subset \partiali \C \cap F$ for any open face $F$ of $\partial\Omega$.
Since the action of $\Gamma$ on $\Ccore_{\Omega}(\Gamma)$ is cocompact, there exists $R > 0$ such that $\Ccore_\Omega(\Gamma)$ is contained in the closed uniform $R$-neighborhood $\C_R$ of $\C$ in $(\Omega,d_{\Omega})$.
We have $\Lambdao_{\Omega}(\Gamma) \subset \partiali \Ccore_{\Omega}(\Gamma) \subset \partiali \C_R$.
Let $F$ be an open face of $\partial \Omega$.
Then $ \Lambdao_\Omega(\Gamma) \cap F \subset \partiali \C_R \cap F$. 
By Corollary~\ref{cor:naive-cc-Lambdaorb-full}, the set $\Lambdao_\Omega(\Gamma) \cap F$ is either empty (in which case there is nothing to prove) or equal to~$F$.
Suppose $\Lambdao_\Omega(\Gamma) \cap F = F$ and hence $\partiali \C_R \cap F = F$.
By Lemma~\ref{lem:unif-neighb-face}.\eqref{item:R-nbhd-faces}, the set $\partiali \C_R \cap F$ is the closed uniform $R$-neighborhood of $\partiali \C$ in $(F,d_F)$.
Since the Hilbert metric $d_F$ is proper on $F$, it follows that $\partiali \C \cap F = F$.
This shows that $\partiali \C \supset \Lambdao_\Omega(\Gamma)$ and so $\C \supset \Ccore_\Omega(\Gamma)$.

Moreover, the action of $\Gamma$ on~$\C$ is cocompact if and only if $\C$ is contained in a uniform neighborhood of $\Ccore_\Omega(\Gamma)$ in $(\Omega,d_{\Omega})$.
\end{proof}

\begin{corollary}
Let $\Gamma$ be a discrete subgroup of $\PGL(V)$ dividing (\ie acting cocompactly on) some nonempty properly convex open subset $\Omega$ of $\PP(V)$.
Then any $\Gamma$-invariant properly convex open subset of $\PP(V)$ intersecting $\Omega$ nontrivially is equal to~$\Omega$.
\end{corollary}

\begin{proof}
Let $\Omega'$ be a $\Gamma$-invariant properly convex open subset of $\PP(V)$ intersecting~$\Omega$ nontrivially.
By Corollary~\ref{cor:ideal-bound-naive-cc}.\eqref{item:ideal-bound-cc}, we have $\partial\Omega = \Lambdao_{\Omega}(\Gamma)$ and $\Ccore_{\Omega}(\Gamma) = \Omega$.
By Lemma~\ref{lem:Ccore-min}, we have $\Omega\cap\Omega' = \Omega$, and so $\Omega \subset \Omega'$.
By Remark~\ref{rem:closed-ideal-bound-C-Omega} applied to $(\Omega',\Omega)$ instead of $(\Omega,\C)$, we have $\partial\Omega = \partial\Omega' \cap \overline{\Omega}$, hence $\Omega = \Omega'$.
\end{proof}

%%%%%%%%%%%%%%%%%%%%%%%%%
\subsection{On which convex sets $\Omega$ is the action convex cocompact?}

Here is a consequence of Corollary~\ref{cor:ideal-bound-naive-cc}.\eqref{item:ideal-bound-cc}.

\begin{proposition} \label{prop:cc-Omega-subset-Omega'}
Let $\Gamma$ be a discrete subgroup of $\PGL(V)$ preserving two nonempty properly convex open subsets $\Omega \subset \Omega'$ of $\PP(V)$.
\begin{enumerate}
  \item\label{item:cc-small->big} Suppose the action of $\Gamma$ on~$\Omega$ is convex cocompact.
  Then the action of $\Gamma$ on~$\Omega'$ is convex cocompact, $\Lambdao_{\Omega}(\Gamma) = \Lambdao_{\Omega'}(\Gamma)$, and $\Ccore_{\Omega}(\Gamma) = \Ccore_{\Omega'}(\Gamma)$.
  \item\label{item:cc-big->small} Suppose the action of $\Gamma$ on~$\Omega'$ is convex cocompact.
  Then the action of $\Gamma$ on~$\Omega$ is convex cocompact if and only if $\Ccore_{\Omega}(\Gamma) = \Ccore_{\Omega'}(\Gamma)$, if and only if $\Omega$ contains $\Ccore_{\Omega'}(\Gamma)$.
\end{enumerate}
\end{proposition}

\begin{proof}
\eqref{item:cc-small->big} We first check that the inclusion $\Lambdao_{\Omega}(\Gamma) \subset \Lambdao_{\Omega'}(\Gamma)$ is an equality.
For this it is sufficient to check that for any open face $F'$ of $\partial\Omega'$ meeting $\Lambdao_{\Omega'}(\Gamma)$, we have $F' \cap\nolinebreak \Lambdao_{\Omega}(\Gamma) = F'$.
The set $F' \cap \Lambdao_{\Omega}(\Gamma)$ is nonempty by Lemma~\ref{lem:unif-neighb-face}.\eqref{item:distance-goes-down}.
Since $\Gamma$ acts cocompactly on $\Ccore_{\Omega}(\Gamma)$, there exists $\varepsilon>0$ such that the uniform $\varepsilon$-neighborhood $\C_{\varepsilon}$ of $\Ccore_{\Omega}(\Gamma)$ in $(\Omega',d_{\Omega'})$ is contained in~$\Omega$.
The uniform $\varepsilon$-neighborhood of $F' \cap \Lambdao_{\Omega}(\Gamma)$ in $(F',d_{F'})$ is equal to $\partiali\C_{\varepsilon}$ by Lemma~\ref{lem:unif-neighb-face}.\eqref{item:R-nbhd-faces}, hence to $F' \cap \Lambdao_{\Omega}(\Gamma)$ itself by Corollary~\ref{cor:ideal-bound-naive-cc}.\eqref{item:ideal-bound-cc}, and so $F' \cap \Lambdao_{\Omega}(\Gamma) = F'$.
This shows that $\Lambdao_{\Omega}(\Gamma) = \Lambdao_{\Omega'}(\Gamma)$.
We deduce that $\Ccore_{\Omega'}(\Gamma)$ is the closure of $\Ccore_{\Omega}(\Gamma)$ in~$\Omega'$.
Since the action of $\Gamma$ on $\Ccore_{\Omega}(\Gamma)$ is cocompact by assumption, the set $\Ccore_{\Omega}(\Gamma)$ must already be closed in $\Omega'$ (Remark~\ref{rem:closed-ideal-bound-C-Omega}), hence $\Ccore_{\Omega'}(\Gamma) = \Ccore_{\Omega}(\Gamma)$ and the action of $\Gamma$ on $\Omega'$ is convex cocompact.

\eqref{item:cc-big->small} If the action of $\Gamma$ on~$\Omega$ is convex cocompact, then $\Ccore_{\Omega}(\Gamma) = \Ccore_{\Omega'}(\Gamma)$ by~\eqref{item:cc-small->big}, and so $\Omega$ contains $\Ccore_{\Omega'}(\Gamma)$.
Conversely, if $\Omega$ contains $\Ccore_{\Omega'}(\Gamma)$, then $\Ccore_{\Omega'}(\Gamma)$ is a closed convex subset of $\Omega$ containing $\Ccore_{\Omega}(\Gamma)$ on which $\Gamma$ acts cocompactly; by Corollary~\ref{cor:ideal-bound-naive-cc}.\eqref{item:ideal-bound-cc}, the action of $\Gamma$ on~$\Omega$ is convex cocompact.
\end{proof}

In the irreducible case, we obtain the following description of the convex sets on which the action is convex cocompact.
Using Fact~\ref{fact:benoist-irred-Gamma}.\eqref{item:benoist-irred-2}, it shows that if $\Gamma$ acts strongly irreducibly on $\PP(V)$ and is convex cocompact in $\PP(V)$, then the set $\Lambdao_\Omega(\Gamma)$ is the same for all properly convex open subsets $\Omega$ of $\PP(V)$ on which $\Gamma$ acts convex cocompactly.

\begin{corollary} \label{cor:cc-sets-irred}
Let $\Gamma$ be an infinite discrete subgroup of $\PGL(V)$ acting convex cocompactly on a nonempty properly convex open subset $\Omega$ of $\PP(V)$.
Suppose that $\Gamma$ contains a proximal element, so that the maximal convex open set $\Omega_{\max} \supset \Omega$ of Proposition~\ref{prop:max-inv-conv} is well defined, and suppose that $\Omega_{\max}$ is properly convex (this is always the case if $\Gamma$ acts irreducibly on $\PP(V)$, see Fact~\ref{fact:benoist-irred-Gamma}).
Then
\begin{enumerate}
  \item\label{item:Omega'-cc-irred} the properly convex open subsets $\Omega'$ of $\Omega_{\max}$ on which $\Gamma$ acts convex cocompactly are exactly those containing $\Ccore_{\Omega_{\max}}(\Gamma)$; they satisfy $\Lambdao_{\Omega'}(\Gamma) = \Lambdao_{\Omega_{\max}}(\Gamma)$ and $\Ccore_{\Omega'}(\Gamma) = \Ccore_{\Omega_{\max}}(\Gamma)$;
  \item\label{item:Lambda-orb-cc-irred} if $\Gamma$ acts irreducibly on $\PP(V)$, then
$$\Lambdao_{\Omega_{\max}}(\Gamma) = \partial \Omega_{\min} \cap \partial \Omega_{\max}$$
and
$$\Ccore_{\Omega_{\max}}(\Gamma) = \overline{\Omega_{\min}} \cap \Omega_{\max},$$
where $\Omega_{\min}\subset\Omega$ is the minimal nonempty $\Gamma$-invariant properly convex set given by Fact~\ref{fact:benoist-irred-Gamma}.
Thus, for any properly convex open subset $\Omega'$ of $\Omega_{\max}$ on which $\Gamma$ acts convex cocompactly, the convex core $\Ccore_{\Omega'}(\Gamma)$ is the convex hull in~$\Omega'$ of the proximal limit set $\Lambda_{\Gamma}$.
\end{enumerate}
\end{corollary}

\begin{proof}
\eqref{item:Omega'-cc-irred} This is an immediate consequence of Proposition~\ref{prop:cc-Omega-subset-Omega'}.

\eqref{item:Lambda-orb-cc-irred} Suppose $\Gamma$ acts irreducibly on $\PP(V)$.
By Remark~\ref{rem:closed-ideal-bound-C-Omega}, the set $\C := \Ccore_{\Omega_{\max}}(\Gamma)$ is closed in $\Omega_{\max}$.
The interior $\Int(\C)$ is nonempty since $\Gamma$ acts irreducibly on $\PP(V)$, and $\Int(\C)$ contains $\Omega_{\min}$.
In fact $\Int(\C) = \Omega_{\min}$: indeed, if $\Omega_{\min}$ were strictly smaller than $\Int(\C)$, then the closure of $\Omega_{\min}$ in $\C$ would be a nonempty strict closed subset of $\C$ on which $\Gamma$ acts properly discontinuously and cocompactly, contradicting Lemma~\ref{lem:Ccore-min}.
Thus $\C = \overline{\Omega_{\min}} \cap \Omega_{\max}$, which is the convex hull of the proximal limit set $\Lambda_{\Gamma}$ in~$\Omega_{\max}$.
Moreover, by Corollary~\ref{cor:ideal-bound-naive-cc}.\eqref{item:ideal-bound-cc} we have $\Lambdao_{\Omega_{\max}}(\Gamma) = \partiali\C = \overline{\Omega_{\min}} \cap \partial \Omega_{\max}$.
\end{proof}

For an arbitrary discrete subgroup $\Gamma$ of $\PGL(V)$ acting irreducibly on $\PP(V)$ and preserving a nonempty properly convex open subset $\Omega$ of $\PP(V)$, the convex hull $\Ccore_{\Omega}(\Gamma)$ of the full orbital limit set $\Lambdao_{\Omega}(\Gamma)$ may be larger than the convex hull $\overline{\Omega_{\min}} \cap \Omega_{\max}$ of the proximal limit set $\Lambda_{\Gamma}$ in~$\Omega$.
This happens for instance if $\Gamma$ is naively convex cocompact in $\PP(V)$ (Definition~\ref{def:cc-naive}) but not convex cocompact in $\PP(V)$.
Examples of such behavior will be given in the forthcoming paper \cite{dgk-bad-ex}.

%%%%%%%%%%%%%%%%%%%%%%%%%
\subsection{Convex cocompactness and conicality}

By Corollary~\ref{cor:ideal-bound-naive-cc}.\eqref{item:Lambda-con-naive-cc}, if $\Gamma$ acts convex cocompactly on~$\Omega$, then the full orbital limit set $\Lambdao_{\Omega}(\Gamma)$ consists entirely of conical limit points. 
We now investigate the converse; this is not needed anywhere in the paper.
We also refer the reader to recent work of Weisman~\cite{Weisman20}, in which conical convergence is studied further in relation to an expansion condition at faces of the full orbital limit set.

When $\Omega$ is strictly convex with boundary of class $C^1$, the property that $\Lambdao_{\Omega}(\Gamma)$ consist entirely of conical limit points implies that the action of $\Gamma$ on~$\Omega$ is convex cocompact by \cite[Cor.\,8.6]{cm14}.
In general, the following holds, as was pointed out to us by Pierre-Louis Blayac.

\begin{lemma}
Let $\Gamma$ be an infinite discrete subgroup of $\PGL(V)$ and $\Omega$ a nonempty $\Gamma$-invariant properly convex open subset of $\PP(V)$.
\begin{enumerate}
  \item\label{item:conical} If $\C$ is a nonempty $\Gamma$-invariant closed convex subset of~$\Omega$ whose ideal boundary $\partiali\C$ consists entirely of conical limit points of $\Gamma$ in $\partial\Omega$, then the action of $\Gamma$ on~$\C$ is cocompact.
  \item\label{item:conical-bis} In particular, if $\Ccore_\Omega(\Gamma)$ is closed in $\Omega$ and $\partiali \Ccore_\Omega(\Gamma)$ consists entirely of conical limit points of $\Gamma$ in $\partial\Omega$, then the action of $\Gamma$ on~$\Omega$ is convex cocompact.
\end{enumerate}
\end{lemma}

\begin{proof}
\eqref{item:conical} Fix a point $x_0\in\C$ and, by analogy with Dirichlet domains, let
$$\mathcal{D} := \{ x\in\C ~|~ d_{\Omega}(x,x_0) \leq d_{\Omega}(x,\gamma\cdot x_0) \quad\forall\gamma\in\Gamma\} .$$
Note that $\Gamma\cdot\mathcal{D} = \C$.
Therefore, it is sufficient to prove that $\mathcal{D}$ is compact.
Suppose by contradiction that this is not the case: there exists a sequence $(x_m)\in\mathcal{D}^{\NN}$ such that $d_{\Omega}(x_m,x_0)\to +\infty$.
Up to passing to a subsequence, we may assume that $(x_m)_{m\in\NN}$ converges to some point $z\in\partiali\C$.
By assumption $z$ is a conical limit point of $\Gamma$ in $\partial\Omega$: there exist $R>0$, and a sequence $(\gamma_m)\in\Gamma^{\NN}$ of pairwise distinct elements such that for any $m\in\NN$ we can find a point $y_m$ on the ray $[x_0,z)$ with $d_{\Omega}(y_m,\gamma_m\cdot x_0)\leq R$.
Then $y_m\to z$, and so we can find $m$ such that $d_{\Omega}(x_0,y_m)\geq R+2$.
Let us fix such an~$m$.
Since $x_k\to z$ as $k\to +\infty$, we can find a sequence $(x'_k)_{k\in\NN}$ such that $x'_k\in [x_0,x_k)$ for all~$k$ and $x'_k\to y_m$.
In particular, for large enough~$k$ we have $d_{\Omega}(x'_k,y_m)<1$.
Using the triangle inequality, we obtain
\begin{eqnarray*}
d_{\Omega}(x_k,\gamma_m\cdot x_0) & \leq & d_{\Omega}(x_k,x'_k) + d_{\Omega}(x'_k,y_m) + d_{\Omega}(y_m,\gamma_m\cdot x_0)\\
& = & d_{\Omega}(x_k,x_0) - d_{\Omega}(x_0,x'_k) + d_{\Omega}(x'_k,y_m) + d_{\Omega}(y_m,\gamma_m\cdot x_0)\\
& \leq & d_{\Omega}(x_k,x_0) - d_{\Omega}(x_0,y_m) + 2 d_{\Omega}(x'_k,y_m) + d_{\Omega}(y_m,\gamma_m\cdot x_0)\\
& < & d_{\Omega}(x_k,x_0) - (R+2) + 2 + R \,=\, d_{\Omega}(x_k,x_0).
\end{eqnarray*}
This contradicts the fact that $x_k\in\mathcal{D}$.

\eqref{item:conical-bis} Apply \eqref{item:conical} with $\C=\Ccore_\Omega(\Gamma)$.
\end{proof}

%%%%%%%%%%%%%%%%%%%%%%%%%%%%%%%%%%%%%%%%%%%%%%%%%%%
\section{Convex sets with bisaturated boundary and duality} \label{sec:bisat-dual}

In this section we interpret convex cocompactness in terms of convex sets with bisaturated boundary (Definition~\ref{def:boundaries}), and use this to prove that convex cocompactness is stable under duality (property~\ref{item:dual} of Theorem~\ref{thm:properties}).

More precisely, in Sections \ref{subsec:limit-set->bisat} and~\ref{subsec:bisat-boundary} we establish the implications \eqref{item:ccc-limit-set}~$\Rightarrow$~\eqref{item:ccc-bisat} and \eqref{item:ccc-bisat}~$\Rightarrow$~\eqref{item:ccc-limit-set} of Theorem~\ref{thm:main-general};
we prove (Corollaries \ref{coro:ccc-limit-set->bisat} and~\ref{cor:bisat-interior}) that when these conditions hold, sets $\Omega$ as in condition~\eqref{item:ccc-limit-set} and $\C_{\mathsf{bisat}}$ as in condition~\eqref{item:ccc-bisat} may be chosen so that the full orbital limit set $\Lambdao_\Omega(\Gamma)$ of $\Gamma$ in~$\Omega$ coincides with the ideal boundary $\partiali \C_{\mathsf{bisat}}$ of~$\C_{\mathsf{bisat}}$.
In Section~\ref{subsec:dual-def} we study a notion of duality for properly convex sets which are not necessarily open.
In Sections \ref{subsec:proper-cocomp-dual} and~\ref{subsec:proof-dual}, we prove Proposition~\ref{prop:dual-precise}, which is a more precise version of Theorem~\ref{thm:properties}.\ref{item:dual}.
Finally, in Section~\ref{subsec:which-bisat} we describe on which convex sets with bisaturated boundary the action of a given group is properly discontinuous and cocompact, when such sets exist.

%%%%%%%%%%%%%%%%%%%%%%%%%
\subsection{Bisaturated boundary for neighborhoods of the convex core} \label{subsec:limit-set->bisat}

Let us prove the implication \eqref{item:ccc-limit-set}~$\Rightarrow$~\eqref{item:ccc-bisat} of Theorem~\ref{thm:main-general}.
We start with the following observation.

\begin{lemma} \label{lem:1-neighb-bisat}
Let $\Gamma$ be a discrete subgroup of $\PGL(V)$ and $\Omega$ a nonempty $\Gamma$-invariant properly convex open subset of $\PP(V)$.
Let $\C_0\subset\C_1$ be two nonempty closed properly convex subsets of~$\Omega$ on which $\Gamma$ acts cocompactly.
Suppose that $\partiali\C_1=\partiali\C_0$ and that $\C_1$ contains a neighborhood of $\C_0$ in~$\Omega$. 
Then $\C_1$ has bisaturated boundary.
\end{lemma}

\begin{proof}
By cocompactness, there exists $\varepsilon>0$ such that any point of $\partialn\C_1$ is at $d_{\Omega}$-distance $\geq\varepsilon$ from $\C_0$.
Let $H$ be a supporting hyperplane of~$\C_1$ and suppose for contradiction that $H$ contains both a point $x \in \partialn \C_1$ and a point $z \in \partiali \C_1$.
Since $\partiali \C_1$ is closed in $\PP(V)$ by Remark~\ref{rem:closed-ideal-bound-C-Omega}, we may assume without loss of generality that the interval $[x,z)$ is contained in $\partialn \C_1$.
Consider a sequence $y_m \in [x,z)$ converging to $z$.
Since $\Gamma$ acts cocompactly on~$\C_1$, there is a sequence $(\gamma_m) \in \Gamma^{\NN}$ such that $\gamma_m\cdot y_m$ remains in some compact subset of~$\C_1$.
Up to taking a subsequence, we may assume that $\gamma_m\cdot x\to x_{\infty}$ and $\gamma_m\cdot y_m\to y_{\infty}$ and $\gamma_m\cdot z\to z_{\infty}$ for some $x_{\infty},y_{\infty},z_{\infty}\in\overline{\C_1}$.
We have $x_{\infty} \in \partiali \C_1$ since the action of $\Gamma$ on~$\C_1$ is properly discontinuous, $y_{\infty} \in \partialn \C_1$ since $\gamma_m\cdot y_m$ remains in some compact subset of $\C_1$, and $z_{\infty}\in\partiali\C_1$ since $\partiali\C_1$ is closed.
The point $y_{\infty} \in \partialn \C_1$ belongs to the convex hull of $\{x_{\infty}, z_{\infty}\} \subset \partiali \C_0$, hence $y_{\infty}\in\overline{\C_0}$.
This contradicts the fact that $y_{\infty} \in \partialn \C_1$ is at distance $\geq\varepsilon$ from~$\overline{\C_0}$.
\end{proof}

The implication \eqref{item:ccc-limit-set} $\Rightarrow$~\eqref{item:ccc-bisat} of Theorem~\ref{thm:main-general} is contained in the following consequence of Corollary~\ref{cor:ideal-bound-naive-cc} and Lemma~\ref{lem:1-neighb-bisat}.

\begin{corollary} \label{coro:ccc-limit-set->bisat}
Let $\Gamma$ be an infinite discrete subgroup of $\PGL(V)$ acting convex cocompactly (Definition~\ref{def:cc-general}) on some nonempty properly convex open subset $\Omega$ of $\PP(V)$.
Let $\C$ be a closed convex subset of~$\Omega$ on which $\Gamma$ acts cocompactly and which contains a neighborhood of $\Ccore_\Omega(\Gamma)$ (\eg a closed uniform neighborhood of $\Ccore_\Omega(\Gamma)$ in $(\Omega,d_{\Omega})$, see Lemma~\ref{lem:naive-cc-nonempty-int}).
Then $\C$ is a properly convex subset of $\PP(V)$ with bisaturated boundary on which $\Gamma$ acts properly discontinuously and cocompactly.
Moreover, $\partiali\C = \partiali\Ccore_\Omega(\Gamma) = \Lambdao_{\Omega}(\Gamma)$.
\end{corollary}

\begin{proof}
Since the convex set $\C$ is contained in~$\Omega$, it is properly convex and $\Gamma$ acts properly discontinuously and cocompactly on it.
By Corollary~\ref{cor:ideal-bound-naive-cc}.\eqref{item:ideal-bound-cc}, we have $\partiali\C=\Lambdao_{\Omega}(\Gamma)=\partiali\Ccore_{\Omega}(\Gamma)$.
By Lemma~\ref{lem:1-neighb-bisat} with $\C_0=\Ccore_{\Omega}(\Gamma)$ and $\C_1=\C$, the set $\C$ has bisaturated boundary.
\end{proof}

%%%%%%%%%%%%%%%%%%%%%%%%%
\subsection{Convex cocompact actions on the interior of convex sets with bisaturated boundary} \label{subsec:bisat-boundary}

In this section we prove the implication \eqref{item:ccc-bisat} $\Rightarrow$~\eqref{item:ccc-limit-set} of Theorem~\ref{thm:main-general}.
We first make the following general observations.

\begin{lemma} \label{lem:elementary-bisat}
Let $\C$ be a properly convex subset of $\PP(V)$ with bisaturated boundary.
Assume $\partiali \C$ is not empty. 
Then
\begin{enumerate}
  \item\label{item:nonempty-int} $\C$ has nonempty interior $\Int(\C)$;
  \item\label{item:closed-conv-hull} the convex hull of $\partiali\C$ in~$\C$ is contained in $\Int(\C)$ whenever $\partiali\C$ is closed in $\PP(V)$.
\end{enumerate}
\end{lemma}

\begin{proof}
\eqref{item:nonempty-int} 
If the interior of~$\C$ were empty, then $\C$ would be contained in a hyperplane.
Since $\C$ has bisaturated boundary, $\C$ would be equal either to its ideal boundary (hence empty) or to its nonideal boundary (hence closed).

\eqref{item:closed-conv-hull} Suppose $\partiali\C$ is closed in $\PP(V)$, hence compact.
Since $\C$ has bisaturated boundary, every supporting hyperplane of~$\C$ at a point $x \in \partialn \C$ misses $\partiali\C$, hence by compactness of $\partiali\C$ there is a hyperplane strictly separating $x$ from $\partiali\C$ (in an affine chart containing $\overline{\C}$).
It follows that the convex hull of $\partiali\C$ in~$\C$ does not meet $\partialn\C$, hence is contained in~$\Int(\C)$.
\end{proof}

The implication \eqref{item:ccc-bisat} $\Rightarrow$~\eqref{item:ccc-limit-set} of Theorem~\ref{thm:main-general} is contained in the following consequence of Lemma~\ref{lem:closed-ideal-boundary}, Corollary~\ref{cor:ideal-bound-naive-cc}, and Lemma~\ref{lem:elementary-bisat}.

\begin{corollary} \label{cor:bisat-interior}
Let $\Gamma$ be an infinite discrete subgroup of $\PGL(V)$ acting properly discontinuously and cocompactly on a nonempty properly convex subset $\C$ of $\PP(V)$ with bisaturated boundary.
Then $\Omega:=\Int(\C)$ is nonempty and $\Gamma$ acts convex cocompactly (Definition~\ref{def:cc-general}) on~$\Omega$.
Moreover, $\Lambdao_{\Omega}(\Gamma) = \partiali\C$, \ie $\C = \overline{\Omega} \smallsetminus \Lambdao_{\Omega}(\Gamma)$.
\end{corollary}

\begin{proof}
Since $\Gamma$ is infinite, $\C$ is not closed in $\PP(V)$, and so $\Omega=\Int(\C)$ is nonempty by Lemma~\ref{lem:elementary-bisat}.\eqref{item:nonempty-int}.
The set $\partiali\C$ is closed in $\PP(V)$ by Lemma~\ref{lem:closed-ideal-boundary}, and so the convex hull $\C_0$ of $\partiali\C$ in~$\C$ is closed in~$\C$ and contained in~$\Omega$ by Lemma~\ref{lem:elementary-bisat}.\eqref{item:closed-conv-hull}.
The action of $\Gamma$ on~$\C_0$ is still cocompact.
Since $\Gamma$ acts properly discontinuously on~$\C$, the set $\Lambdao_{\Omega}(\Gamma)$ is contained in $\partiali \C$, and $\Ccore_\Omega(\Gamma)$ is contained in~$\C_0$.
By Corollary~\ref{cor:ideal-bound-naive-cc}.\eqref{item:ideal-bound-cc}, the group $\Gamma$ acts convex cocompactly on~$\Omega$ and $\Lambdao_{\Omega}(\Gamma)=\partiali\C_0=\partiali\C$.
\end{proof}

%%%%%%%%%%%%%%%%%%%%%%%%%
\subsection{The dual of a properly convex set} \label{subsec:dual-def}

Given an open properly convex set $\Omega \subset \PP(V)$, there is a notion of dual convex set $\Omega^* \subset \PP(V^*)$ which is very useful in the study of divisible convex sets: see Section~\ref{subsec:prop-conv-proj}.
We generalize this notion here to properly convex sets with possibly nonempty nonideal boundary.

\begin{definition} \label{def:dual-C}
Let $\C$ be a properly convex subset of $\PP(V)$ with nonempty interior, but not necessarily open nor closed.
The \emph{dual} $\C^*\subset\PP(V^*)$ of~$\C$ is the set of elements of $\PP(V^*)$ which, viewed as projective hyperplanes in $\PP(V)$, do \emph{not} meet $\Int(\C)\cup\partiali\C$, \ie those hyperplanes meet $\overline{\C}$ in a (possibly empty) subset of $\partialn\C$.
\end{definition}

\begin{remark} \label{rem:explicit-dual}
The set $\C^*$ is convex in $\PP(V^*)$.
Indeed, $\overline{\C^*} = \overline{\Int(\C)^*}$ implies that $\overline{\C^*}$ is convex, and if $H,H'',H'\in \overline{\C^*} $ are three distinct points aligned in this order with $H,H'\in\nolinebreak\C^*$, and if we view them as projective hyperplanes in $\PP(V)$, then $H''\cap \overline{\C} \subset (H\cap H') \cap \overline{\C} \subset \partialn \C$, hence $H''\in \C^*$. 
By construction,
\begin{itemize}
  \item $\Int(\C^*)$ is the set of projective hyperplanes in $\PP(V)$ that miss~$\overline{\C}$, \ie $\Int(\C)$ and $\Int(\C^*)$ are dual in the usual sense for open convex sets, as in \eqref{eqn:dual-open-convex};
  \item $\partialn\C^*$ is the set of projective hyperplanes in $\PP(V)$ whose intersection with~$\overline{\C}$ is a nonempty subset of $\partialn\C$ (such a hyperplane misses $\partiali \C$ if $\C$ has bisaturated boundary);
  \item $\partiali\C^*$ is the set of supporting projective hyperplanes of $\C$ at points of $\partiali \C$ (such a hyperplane misses $\partialn \C$ if $\C$ has bisaturated boundary).
\end{itemize}
\end{remark}

\begin{lemma} \label{lem:dual}
Let $\C$ be a properly convex subset of $\PP(V)$, not necessarily open nor closed, but with bisaturated boundary and with nonempty interior.
Then
\begin{enumerate}
  \item \label{item:bisat} the dual~$\C^*$ has bisaturated boundary;
  \item \label{item:dual-dual} the bidual~$(\C^*)^*$ coincides with~$\C$ (after identifying $(V^*)^*$~with~$V$);
   \item \label{item:dual-PETs} the dual $\C^*$ has a PET (properly embedded triangle, Definition~\ref{def:PET}) if and only if $\C$ does.
\end{enumerate}
\end{lemma}

\begin{proof}
\eqref{item:bisat} By Remark~\ref{rem:explicit-dual}, since $\C$ has bisaturated boundary, $\partialn\C^*$ (\resp $\partiali\C^*$) is the set of projective hyperplanes in $\PP(V)$ whose intersection with~$\overline{\C}$ is a nonempty subset of $\partialn\C$ (\resp $\partiali\C$).
In particular, a point of $\partialn\C^*$ and a point of $\partiali\C^*$, seen as projective hyperplanes of $\PP(V)$, can only meet outside of~$\overline{\C}$.
This means exactly that a supporting hyperplane of~$\C^*$ in $\PP(V^*)$ cannot meet both $\partialn\C^*$ and $\partiali\C^*$.

\eqref{item:dual-dual} By definition, $(\C^*)^*$ is the set of hyperplanes of $\PP(V^*)$ missing $\mathrm{Int}(\C^*) \cup \partiali \C^*$.
Viewing hyperplanes of $\PP(V^*)$ as points of $\PP(V)$, by Remark~\ref{rem:explicit-dual} the set $(\C^*)^*$ consists of those points of $\PP(V)$ not belonging to any hyperplane that misses $\C$, or any supporting hyperplane at a point of $\partiali \C$.
Since $\C$ has bisaturated boundary, this is $\overline{\C} \smallsetminus \partiali \C$, namely~$\C$.

\eqref{item:dual-PETs} By \eqref{item:bisat} and~\eqref{item:dual-dual}, it is enough to prove one implication.
Suppose $\C$ has a PET contained in a two-dimensional projective plane $P$, \ie $\C \cap P = T$ is an open triangle.
Let $H_1, H_2, H_3$ be projective hyperplanes of $\PP(V)$ supporting~$\C$ and containing the edges $E_1, E_2, E_3 \subset \partiali \C$ of~$T$.
For $1\leq k\leq 3$, the supporting hyperplane $H_k$ intersects $\partiali\C$, hence lies in the ideal boundary $\partiali \C^*$ of the dual convex set.
Since $\C^*$ has bisaturated boundary by~\eqref{item:bisat}, the whole edge $[H_k, H_{k'}] \subset \overline{\C^*}$ is contained in $\partiali \C^*$ for $1\leq k<k'\leq 3$.
Hence $H_1, H_2, H_3$ span a $2$-plane $Q \subset \PP(V^*)$ whose intersection with $\C^*$ is a PET of~$\C^*$. 
\end{proof}

%%%%%%%%%%%%%%%%%%%%%%%%%
\subsection{Proper and cocompact actions on the dual} \label{subsec:proper-cocomp-dual}

The following is the key ingredient in Theorem~\ref{thm:properties}.\ref{item:dual}.

\begin{proposition} \label{prop:cc-duality}
Let $\Gamma$ be a discrete subgroup of $\PGL(V)$ and $\C$ a $\Gamma$-invariant convex subset of $\PP(V)$ with bisaturated boundary.
Suppose $\C$ has nonempty interior, so that the dual $\C^*$ is well defined.
Then the action of $\Gamma$ on~$\C$ is properly discontinuous and cocompact if and only if the action of $\Gamma$ on~$\C^*$ is.
\end{proposition}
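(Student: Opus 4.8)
The plan is to exploit the symmetry provided by Lemma~\ref{lem:dual}: since $\C$ has bisaturated boundary, so does $\C^*$ by part~\eqref{item:fib}, and the bidual $(\C^*)^*$ equals $\C$ by part~\eqref{item:dual-dual}. Hence it suffices to prove one implication, say that proper discontinuity and cocompactness of the $\Gamma$-action on $\C$ imply the same for the action on $\C^*$; applying this to $\C^*$ in place of $\C$ then yields the converse. This reduces the work to a single direction, which is the right economy.

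For that direction, the first step I would take is to produce, from the given cocompact action on $\C$, a dual object on which $\Gamma$ acts with compact fundamental domain. The natural candidate is a map relating $\C$ (or rather pairs consisting of a point of $\partialn\C$ together with a supporting hyperplane there) to the nonideal boundary $\partialn\C^*$. Concretely, by Remark~\ref{rem:explicit-dual} the points of $\partialn\C^*$ are exactly the projective hyperplanes meeting $\overline\C$ in a nonempty subset of $\partialn\C$; I would set up a correspondence sending such a hyperplane $H$ to the (nonempty, compact) face $H\cap\overline\C\subset\partialn\C$, and conversely use supporting hyperplanes at nonideal boundary points. Because $\C$ has bisaturated boundary, a supporting hyperplane at a nonideal boundary point misses $\partiali\C$ entirely, so this correspondence stays within the nonideal boundaries and interiors and never leaks into the ideal parts. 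The key geometric input is that cocompactness of $\Gamma$ on $\C$ gives a compact fundamental domain $\mathcal D\subset\C$, and I want to manufacture from it a compact subset of $\C^*$ meeting every $\Gamma$-orbit.

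The cleanest route is probably to work with a uniform-neighborhood / fundamental-domain argument at the level of the interiors. Since $\Int(\C^*)$ is the ordinary dual of the open properly convex set $\Int(\C)$ (Remark~\ref{rem:explicit-dual}), and $\mathrm{Aut}(\Int(\C))=\mathrm{Aut}(\Int(\C^*))$ act by Hilbert isometries on $\Int(\C)$ and $\Int(\C^*)$ respectively, properness of both actions is automatic from Section~\ref{subsec:prop-conv-proj}. The real content is cocompactness. Here I would argue that a compact fundamental domain for $\Gamma$ acting on $\C$ determines, via supporting hyperplanes, a compact set of hyperplanes that is cocompact in $\C^*$: given any $H\in\C^*$, its contact face $H\cap\overline\C$ lies in $\partialn\C$ (when $H\in\partialn\C^*$) or $H$ misses $\overline\C$ (when $H\in\Int(\C^*)$), and in either case one can translate by $\Gamma$ to bring an associated point of $\C$ into $\mathcal D$ and thereby control $H$ up to the compact set of hyperplanes ``supporting $\mathcal D$'' in a suitable sense. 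The bisaturation hypothesis is exactly what guarantees that this control is uniform and that the contact faces behave well (they never straddle $\partiali\C$ and $\partialn\C$), so the resulting set of hyperplanes is compact in $\C^*$ and meets every orbit.

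The main obstacle I anticipate is the cocompactness estimate: making precise the claim that the contact-face correspondence is proper and cocompact, i.e.\ that a compact fundamental domain in $\C$ pushes forward to a compact cocompact set in $\C^*$ without the hyperplanes escaping to ones that only touch $\overline\C$ along the ideal boundary. Controlling this requires the bisaturated-boundary condition in an essential way, together with a compactness argument showing that supporting hyperplanes at points of a compact region of $\partialn\C$ form a compact set of hyperplanes whose contact faces stay uniformly inside $\partialn\C$. I would handle the potential degeneration by a standard extraction-of-subsequence argument, as in the proofs of Lemma~\ref{lem:1-neighb-fib} and Lemma~\ref{lem:closed-ideal-boundary}: if cocompactness failed on $\C^*$, one would extract a sequence of hyperplanes $H_m\in\C^*$ with no $\Gamma$-translate in a fixed compact set, pass to a limiting hyperplane $H_\infty$, and derive a contradiction with bisaturated boundary (the limit would have to meet both $\partialn\C$ and $\partiali\C$, which a supporting hyperplane of a bisaturated convex set cannot do). Properness on $\C^*$, by contrast, should follow immediately since $\Gamma\subset\mathrm{Aut}(\Int(\C^*))$ acts by Hilbert isometries on the open dual $\Int(\C^*)$.
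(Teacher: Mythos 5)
Your reduction to a single implication via Lemma~\ref{lem:dual} is exactly the paper's first move, and your instinct that bisaturation is what prevents leakage into the ideal boundary is right. But there are two genuine gaps in the one direction you then try to prove.

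First, properness does \emph{not} ``follow immediately'' from the Hilbert-isometry action on $\Int(\C^*)$: by Remark~\ref{rem:explicit-dual}, $\C^*=\Int(\C^*)\cup\partialn\C^*$, and the hyperplanes in $\partialn\C^*$ (those touching $\overline{\C}$ along $\partialn\C$) lie in the \emph{frontier} of the open set $\Int(\C^*)$, where the Hilbert-metric argument of Section~\ref{subsec:prop-conv-proj} says nothing. The paper closes this gap by shrinking $\C$: it takes the convex hull $\C_0$ of $\partiali\C$ (contained in $\Int(\C)$ by bisaturation), thickens it to $\C_1$ with $\partialn\C_1\cap\partialn\C=\emptyset$, and observes that $\Int(\C_1)^*$ is a $\Gamma$-invariant properly convex \emph{open} set containing all of $\C^*$; properness then does follow from the Hilbert metric on that larger set. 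You need some version of this step.

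Second, and more seriously, your cocompactness argument has no mechanism for handling the hyperplanes in $\Int(\C^*)$, i.e.\ those missing $\overline{\C}$ entirely --- which form the bulk of $\C^*$. Your contact-face correspondence assigns a point of $\partialn\C$ only to hyperplanes in $\partialn\C^*$; for $H\in\Int(\C^*)$ there is no contact face, so there is no ``associated point of $\C$'' to translate into the fundamental domain $\mathcal{D}$, and the extraction-of-subsequences argument you sketch cannot start (extract along which group elements?). The technical heart of the paper's proof is precisely the missing device: the projectively invariant quantity $\delta_\Omega(y,H)$ and Lemma~\ref{lem:pseudo-distance}, which shows by a center-of-mass computation that \emph{every} $H\in\Omega^*$ satisfies $\delta_\Omega(y,H)\le\frac{-1}{n-1}$ for some $y\in\Omega$. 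This ties each hyperplane of $\Omega^*$ equivariantly to a point of $\Omega$, yielding a set $\mathcal{U}^*$ with $\Gamma\cdot\mathcal{U}^*=\Omega^*$; the limiting argument you describe (using bisaturation to show a limit hyperplane cannot meet $\Int(\C)\cup\partiali\C$) is then used, correctly, to show $\overline{\mathcal{U}^*}\subset\C^*$. Without Lemma~\ref{lem:pseudo-distance} or an equivalent substitute, your proof does not go through.
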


Recall from Corollary~\ref{cor:bisat-interior} that if $\Gamma$ acts properly discontinuously and cocompactly on~$\C$, then $\C$ automatically has nonempty interior.

In order to prove Proposition~\ref{prop:cc-duality}, we assume $n=\dim(V)\geq 2 $ and first make some definitions.
Consider a properly convex open subset $\Omega$ in $\PP(V)$.
For any distinct points $x,y \in \PP(V) \smallsetminus \partial \Omega$, at least one of which is in $\Omega$, the line $L$ through $x$ and~$y$ intersects $\partial\Omega$ in two points $a,b$ and we set
\begin{align*}
\delta_\Omega(x,y) &:= \max \big\{\cro{a}{x}{y}{b}, \cro{b}{x}{y}{a} \big\}.
\end{align*}
If $x,y \in \Omega$, then $\delta(x,y) = \exp (2\,d_{\Omega}(x,y)) > 1$ where $d_{\Omega}$ is the Hilbert metric on~$\Omega$ (see Section~\ref{subsec:prop-conv-proj}).
However, if $x \in \Omega$ and $y \in \PP(V) \smallsetminus \overline \Omega$, then we have $-1 \leq \delta_\Omega(x,y) < 0$.
For any point $x \in \Omega$ and any projective hyperplane $H \in \Omega^*$ (\ie disjoint from $\overline{\Omega}$), we set
\begin{align*}
\delta_\Omega(x,H) &:= \max_{y\in H} \delta_\Omega(x,y).
\end{align*}
Then $\delta(x,H) \in [-1,0)$ is close to $0$ when $H$ is ``close'' to $\partial\Omega$ as seen from~$x$.

\begin{lemma}\label{lem:pseudo-distance}
Let $\Omega$ be a nonempty properly convex open subset of $\PP(V)=\PP(\RR^n)$.
For any $H \in \Omega^*$, there exists $x \in \Omega$ such that
$$\delta_\Omega(x,H) \leq \frac{-1}{n-1}.$$
\end{lemma}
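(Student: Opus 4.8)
The plan is to choose an affine chart of $\PP(V)$ in which $H$ becomes the hyperplane at infinity, and to take $y$ to be the centroid of $\Omega$ in that chart. Since $H \in \Omega^*$ means precisely that $H$ is disjoint from $\overline{\Omega}$, in the affine chart $\RR^{n-1} = \PP(V) \smallsetminus H$ the closure $\overline{\Omega}$ becomes a compact convex body $K$ with nonempty interior. I would then set $y$ to be the centroid of $K$, which lies in $\Omega = \Int(K)$.

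The next step is to compute $\delta_\Omega(y,z)$ for a point $z \in H$. Such a $z$ is a point at infinity, i.e.\ a direction in $\RR^{n-1}$, so the projective line through $y$ and~$z$ is the affine line through $y$ in direction~$z$; it meets $\partial\Omega$ in two points $a,b$ with $y$ strictly between them. Writing $s := |y-a|$ and $t := |y-b|$ for the two Euclidean exit distances and passing to the limit $z \to \infty$ in the cross-ratio (normalized as in Section~\ref{subsec:prop-conv-proj}), one finds $[a,y,z,b] = -t/s$ and $[b,y,z,a] = -s/t$, so that
$$\delta_\Omega(y,z) = \max\{-t/s,\,-s/t\} = -\,\frac{\min(s,t)}{\max(s,t)}.$$
Thus the inequality $\delta_\Omega(y,z) \leq -1/(n-1)$ for all $z \in H$ is \emph{equivalent} to the statement that the centroid $y$ divides every chord of $K$ into two segments whose lengths differ by a factor at most $n-1$. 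Because $z$ ranges over the entire hyperplane $H$ at infinity, ``all $z \in H$'' corresponds exactly to ``all chord directions,'' so this is precisely the bound that must hold in every direction.

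This last statement is the classical centroid inequality for convex bodies: the centroid of a convex body in $\RR^d$ divides every chord through it into two pieces whose lengths differ by a factor at most~$d$, with equality for a cone (or simplex) and the chord issuing from its apex. I would prove it by the standard reduction: fixing a chord direction~$u$, the function $\lambda \mapsto A(\lambda)$ giving the $(d-1)$-dimensional volume of the slice $K \cap \{\langle \cdot, u\rangle = \lambda\}$ has concave $(d-1)$-st root by the Brunn--Minkowski inequality, and the centroid condition $\int \lambda\, A(\lambda)\,\D\lambda = 0$ then forces the claimed ratio bound, the extremal profile $A(\lambda) \propto (\text{distance to apex})^{d-1}$ being exactly the cone. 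Taking $d = n-1$ yields the ratio bound $n-1$, hence $\min(s,t)/\max(s,t) \geq 1/(n-1)$ for every chord through~$y$, and therefore $\delta_\Omega(y,H) = \max_{z\in H}\delta_\Omega(y,z) \leq -1/(n-1)$, as required.

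The main obstacle is the centroid inequality itself; everything else is a change of chart and a routine cross-ratio limit. This inequality is classical and may simply be cited (e.g.\ from Gr\"unbaum's work on measures of symmetry of convex bodies), but the Brunn--Minkowski argument above makes the estimate self-contained. A minor point to verify carefully is that $K$ is genuinely bounded --- this is exactly where the hypothesis $H \cap \overline{\Omega} = \emptyset$ (\ie $H \in \Omega^*$) is used --- so that $K$ is a bona fide convex body and its centroid lies in $\Omega$.
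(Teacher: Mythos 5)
Your overall strategy coincides with the paper's: pass to the affine chart $\RR^{n-1}$ in which $H$ is the hyperplane at infinity, take $y$ to be the centroid of $\Omega$ there, compute $\delta_\Omega(y,z)=-\min(s,t)/\max(s,t)$ for $z$ at infinity, and reduce to the Minkowski--Radon statement that the centroid of a convex body in $\RR^{d}$ divides every chord through it in ratio at most $d:1$ (here $d=n-1$). The chart reduction and the cross-ratio limit are correct, you state the classical chord inequality correctly, and simply citing it would complete the proof.

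The gap is in your self-contained Brunn--Minkowski argument for that inequality. Slicing $K$ by the hyperplanes $\{\langle\cdot,u\rangle=\lambda\}$ orthogonal to the chord direction $u$, and using concavity of $A^{1/(d-1)}$ together with $\int\lambda A(\lambda)\,\D\lambda=0$, controls the position of the centroid inside the \emph{support} $[-\alpha,\beta]$ of $A$, i.e.\ the ratio of the distances from $y$ to the two supporting hyperplanes orthogonal to~$u$. It does not control the ratio $t/s$ of the two pieces of the chord: the line through $y$ in direction $u$ generally exits $K$ strictly before reaching those supporting hyperplanes (one only gets $s\leq\alpha$ and $t\leq\beta$, with no lower bounds), and the profile $A$ carries no information about where that particular line meets $\partial K$. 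What the lemma needs is genuinely the chord version. Two ways to repair this. (i) The paper's route: choose an affine functional $\ell$ with $\ell(a)=0$ and $\ell(b)=1=\sup_\Omega\ell$ (so the level set through $b$ supports~$\Omega$), and compare $\Omega$ with the cone over the slice $\Omega\cap\ell^{-1}(h)$ with apex at the chord endpoint~$a$, truncated at $\ell=1$; the two inclusions between $\Omega$ and this truncated cone force the average of $\ell$ on~$\Omega$, which is $h=s/(s+t)$ by the centroid property, to be at most the cone's average $(n-1)/n$ --- no Brunn--Minkowski is needed, and swapping $a$ and $b$ gives the other inequality. (ii) Keep a slicing argument, but slice by the level sets of this supporting functional $\ell$ rather than orthogonally to the chord: the top of the support of $A$ is then exactly $\ell(b)=1$ while the bottom satisfies $\inf_\Omega\ell\leq\ell(a)=0$, so the resulting bound $h\leq\inf_\Omega\ell+\frac{n-1}{n}(1-\inf_\Omega\ell)\leq\frac{n-1}{n}$ does yield the chord estimate; this still requires the one-dimensional comparison with the linear profile, so it is no shorter than~(i).
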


This lemma is classical: see \eg \cite[Th.\,2.7]{dgk63}, which gives a proof using Helly's theorem.
The original result goes back to Radon~\cite{rad16}.
We include a proof for convenience.

\begin{proof}
Fix $H\in\Omega^*$ and consider an affine chart $\RR^{n-1}$ of $\PP(V)$ for which $H$ is at infinity, endowed with a Euclidean norm $\Vert\cdot\Vert$.
We take for $x$ the center of mass of $\Omega$ in this affine chart with respect to the Lebesgue measure.
It is enough to show that if $a,b\in\partial\Omega$ satisfy $x\in [a,b]$, then $\Vert x-a \Vert/\Vert b-a \Vert \leq (n-1)/n$.
Up to translation, we may assume $a=0\in \RR^{n-1}$.
Let $\varphi$ be a linear form on $\RR^{n-1}$ such that $\varphi(b)=1=\sup_{\Omega} \varphi$.
Let $h:=\varphi(x)$, so that $\Vert x-a \Vert/\Vert b-a\Vert=h$, and let $\Omega':=\RR_{>0}\cdot (\Omega\cap \varphi^{-1}(h)) \cap \varphi^{-1}((-\infty,1))$ (see Figure~\ref{fig:Strata}).
\begin{figure}[h]
\centering
\labellist
\small\hair 2pt
\pinlabel {$\Omega'$} [u] at 121 97
\pinlabel {$\Omega$} [u] at 173 93
\pinlabel {$a=0$} [u] at 230 24
\pinlabel {$x$} [u] at 222 64
\pinlabel {$b$} [u] at 232 104
\pinlabel {$\varphi^{-1}(h)$} [u] at 265 73
\pinlabel {$\varphi^{-1}(1)$} [u] at 340 110
\endlabellist
\includegraphics[scale=1.0]{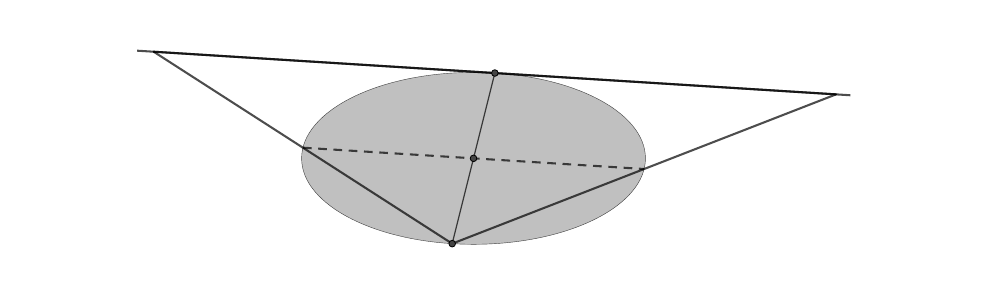}
\caption{Illustration for the proof of Lemma~\ref{lem:pseudo-distance}}
\label{fig:Strata}
\end{figure}
The average value $\mathbb{E}_\Omega(\varphi)$ of $\varphi$ on~$\Omega$, for the Lebesgue measure, is $\mathbb{E}_\Omega(\varphi)=\varphi(x)=h$, since $x$ is the center of mass of~$\Omega$.
Moreover, $\mathbb{E}_{\Omega'}(\varphi) \geq \mathbb{E}_{\Omega}(\varphi)$ since by convexity
\begin{eqnarray*}
\Omega' \cap \varphi^{-1}\big((-\infty, h]\big) & \subset & \Omega \cap \varphi^{-1}\big((-\infty, h]\big),\\
\Omega' \cap \varphi^{-1}\big([h,1)\big) & \supset & \Omega \cap \varphi^{-1}\big([h,1)\big).
\end{eqnarray*}
But $\mathbb{E}_{\Omega'}(\varphi)=(n-1)/n$ since $\Omega'$ is a truncated open cone in~$\RR^{n-1}$. Thus, $h\leq (n-1)/n$.
\end{proof}

\begin{proof}[Proof of Proposition~\ref{prop:cc-duality}]
By Lemma~\ref{lem:dual}, the set $\C^*$ has bisaturated boundary and $(\C^*)^* = \C$.
Thus it is enough to prove that if the action on $\C$ is properly discontinuous and cocompact, then so is the action on $\C^*$.

Let us begin with properness.
Recall from Lemma~\ref{lem:closed-ideal-boundary} that the set $\partiali \C$ is closed in $\PP(V)$.
Let $\C_0$ be the convex hull of $\partiali\C$ in~$\C$.
Note that any supporting hyperplane of~$\C_0$ contains a point of $\overline{\partiali \C}$: otherwise it would be separated from $\partiali\C$ by a hyperplane, contradicting the definition of~$\C_0$.
By Lemma~\ref{lem:elementary-bisat}.\eqref{item:closed-conv-hull}, we have $\C_0 \subset \Omega := \Int(\C)$.
Let $\C_1$ be the closed uniform $1$-neighborhood of $\C_0$ in $(\Omega,d_{\Omega})$.
It is properly convex \cite[(18.12)]{bus55} (see Lemma~\ref{lem:naive-cc-nonempty-int}), with nonempty interior, and $\partialn \C_1 \cap \partialn \C = \emptyset$. 
Taking the dual, we obtain that $\Int(\C_1)^*$ is a $\Gamma$-invariant properly convex open set containing~$\C^*$.
In particular, the action of $\Gamma$ on $\C^*\subset\Int(\C_1)^*$ is properly discontinuous (see Section~\ref{subsec:prop-conv-proj}).

Let  us show that the action of $\Gamma$ on $\C^*$ is cocompact.
Let $\Omega = \Int (\C)$ and $\Omega^* = \Int (\C^*)$. 
Let $\mathcal{D} \subset \C$ be a compact fundamental domain for the action of $\Gamma$ on~$\C$.
Consider the following subset of $\C^*$, where $n=\dim(V)$:
$$\mathcal{U}^* = \bigcup_{x\in\mathcal{D}\cap\Omega} \big\{ H \in \Omega^* ~|~ \delta_\Omega(x, H) \leq {\textstyle \frac{-1}{n-1}}\big\}.$$
It follows from Lemma~\ref{lem:pseudo-distance} that $\Gamma \cdot \mathcal{U}^* = \Omega^*$. 
We claim that $\overline{\mathcal{U}^*} \subset \C^*$.
To see this, suppose a sequence of elements $H_m \in \mathcal{U}^*$ converges to some $H \in \overline{\mathcal{U}^*}$; let us show that $H \in \C^*$.
If $H \in \Omega^*$ there is nothing to prove, so we may assume that $H \in \partial\Omega^* = \Fr(\C^*)$ is a supporting hyperplane of $\C$ at a point $y\in\Fr(\C)$.
For every~$m$, let $x_m \in \mathcal{D} \cap \Omega$ satisfy $\delta_\Omega(x_m, H_m) \leq \frac{-1}{n-1}$. 
Up to passing to a subsequence, we may assume $x_m \to x \in \mathcal{D} \subset \C$.
Let $y_m \in H_m$ such that $y_m \to y$.
For every~$m$, let $a_m,b_m\in\partial\Omega$ be such that $a_m, x_m, b_m, y_m$ are aligned in this order.
Then
$$\frac{-1}{n-1} \geq \delta_{\Omega}(x_m, y_m) \geq \cro{b_m}{x_m}{y_m}{a_m} \geq - \frac{\Vert b_m - y_m\Vert}{\Vert b_m - x_m\Vert},$$
where $\| \cdot \|$ is a fixed Euclidean norm on an affine chart containing $\overline{\Omega}$.
Since $\Vert b_m - y_m\Vert\to 0$, we deduce $\Vert b_m - x_m\Vert\to 0$, and so $x = y \in \C \cap H \subset \partialn \C$.
Since $\C$ has bisaturated boundary, we must have $H \in \C^*$, by definition of~$\C^*$. 
Therefore $\overline{\mathcal{U}^*} \subset \C^*$. 
Since $\overline{\mathcal{U}^*}$ is compact and the action on $\C^*$ is properly discontinuous, the fact that $\Gamma\cdot \mathcal{U}^*=\Omega^*$ yields $\Gamma\cdot \overline{\mathcal{U}^*}=\C^*$.
\end{proof}

%%%%%%%%%%%%%%%%%%%%%%%%%
\subsection{Proof of Theorem~\ref{thm:properties}.\ref{item:dual}} \label{subsec:proof-dual}

We establish the following more precise result, which implies Theorem~\ref{thm:properties}.\ref{item:dual}.

\begin{proposition} \label{prop:dual-precise}
Let $\Gamma$ be an infinite discrete subgroup of $\PGL(V)$ which is convex cocompact in $\PP(V)$.
Then there exists a nonempty properly convex open subset $\Omega$ of $\PP(V)$ such that $\Gamma$ acts convex cocompactly on both $\Omega$ and~$\Omega^*$.
Such sets $\Omega$ are exactly the interiors of the nonempty properly convex subsets of $\PP(V)$ with bisaturated boundary on which $\Gamma$ acts properly discontinuously and cocompactly.
\end{proposition}

\begin{proof}
Let $\C$ be a nonempty properly convex subset of $\PP(V)$ with bisaturated boundary on which $\Gamma$ acts properly discontinuously and cocompactly: such a $\C$ exists by Corollary~\ref{coro:ccc-limit-set->bisat}.
By Lemma~\ref{lem:dual}.\eqref{item:bisat}, the dual $\C^* \subset \PP(V^*)$ has bisaturated boundary, and $\Gamma$ acts properly discontinuously and cocompactly on~$\C^*$ by Proposition~\ref{prop:cc-duality}.
By construction (see Remark~\ref{rem:explicit-dual}), the dual $\Omega^*$ of $\Omega := \Int(\C)$ satisfies $\Omega^* = \Int(\C^*)$.
By Corollary~\ref{cor:bisat-interior}, the set $\Omega$ (\resp $\Omega^*$) is a nonempty properly convex open subset of $\PP(V)$ (\resp $\PP(V^*)$) on which $\Gamma$ acts convex cocompactly, and $\C = \overline{\Omega} \smallsetminus \Lambdao_{\Omega}(\Gamma)$ (\resp $\C^* = \overline{\Omega^*} \smallsetminus \Lambdao_{\Omega^*}(\Gamma)$).

Conversely, let $\Omega$ be a nonempty properly convex open subset of $\PP(V)$ such that the action of $\Gamma$ on both $\Omega$ and $\Omega^*$ is convex cocompact.
Let us check that $\C := \overline{\Omega} \smallsetminus \Lambdao_{\Omega}(\Gamma)$ is a properly convex set with bisaturated boundary on which $\Gamma$ acts properly discontinuously and cocompactly.
Let ${\C'}^*$ be a closed uniform neighborhood of $\Ccore_{\Omega^*}(\Gamma)$ in $(\Omega^*,d_{\Omega^*})$.
By Corollary~\ref{coro:ccc-limit-set->bisat}, the set ${\C'}^*$ is a properly convex subset of $\PP(V^*)$ with bisaturated boundary on which $\Gamma$ acts properly discontinuously and cocompactly.
By Lemma~\ref{lem:dual}.\eqref{item:bisat} and Proposition~\ref{prop:cc-duality}, the dual $\C' := ({\C'}^*)^*$ is a properly convex subset of $\PP(V)$ with bisaturated boundary on which $\Gamma$ acts properly discontinuously and cocompactly.
By Corollary~\ref{cor:bisat-interior} and Proposition~\ref{prop:cc-Omega-subset-Omega'}, the group $\Gamma$ acts convex cocompactly on $\Omega' := \Int(\C')$ and $\partiali\C' = \Lambdao_{\Omega'}(\Gamma) = \Lambdao_{\Omega}(\Gamma) = \partiali\C$.
In particular, $\C$ is contained and closed in~$\C'$, hence $\Gamma$ also acts properly discontinuously and cocompactly on~$\C$.
Since $\partialn\C^* \cap \partialn{\C'}^* = \emptyset$, Remark~\ref{rem:explicit-dual} implies that $\partialn\C \cap \partialn\C' = \emptyset$, and so $\C$ is contained $\Omega' = \Int(\C')$.
Moreover, $\Omega$ is a neighborhood in $\Omega'$ of $\Ccore_{\Omega'}(\Gamma) = \Ccore_{\Omega}(\Gamma)$. Since $\C$ contains~$\Omega$, Corollary~\ref{coro:ccc-limit-set->bisat} implies that $\C$ has bisaturated boundary.
\end{proof}

%%%%%%%%%%%%%%%%%%%%%%%%
\subsection{On which convex sets with bisaturated boundary is the action properly discontinuous and cocompact?} \label{subsec:which-bisat}

Here is a consequence of Corollaries \ref{cor:cc-sets-irred}, \ref{coro:ccc-limit-set->bisat}, and~\ref{cor:bisat-interior}.

\begin{corollary} \label{cor:which-sets-bisat}
Let $\Gamma$ be an infinite discrete subgroup of $\PGL(V)$ acting convex cocompactly on a nonempty properly convex open subset $\Omega$ of $\PP(V)$.
Suppose that $\Gamma$ contains a proximal element, so that the maximal convex open set $\Omega_{\max} \supset \Omega$ of Proposition~\ref{prop:max-inv-conv} is well defined, and suppose that $\Omega_{\max}$ is properly convex (this is always the case if $\Gamma$ acts irreducibly on $\PP(V)$, see Fact~\ref{fact:benoist-irred-Gamma}).
Then the nonempty properly convex subsets of $\overline{\Omega_{\max}}$ with bisaturated boundary on which $\Gamma$ acts properly discontinuously and cocompactly are exactly the closed convex subsets of $\Omega_{\max}$ that are nested between two open uniform neighborhoods of $\Ccore_{\Omega_{\max}}(\Gamma)$ in $(\Omega_{\max},d_{\Omega_{\max}})$.
\end{corollary}

\begin{proof}
We first observe that, by Corollary~\ref{cor:cc-sets-irred}.\eqref{item:Omega'-cc-irred}, the action of $\Gamma$ on $\Omega_{\max}$ is convex cocompact.

If $\C$ is a closed convex subset of $\Omega_{\max}$ that is nested between two open uniform neighborhoods of $\Ccore_{\Omega_{\max}}(\Gamma)$ in $(\Omega_{\max},d_{\Omega_{\max}})$, then Corollary~\ref{coro:ccc-limit-set->bisat} yields that $\C$ is a properly convex set with bisaturated boundary on which $\Gamma$ acts properly discontinuously and cocompactly.

Conversely, let $\C$ be a nonempty properly convex subset of $\overline{\Omega_{\max}}$ with bisaturated boundary on which $\Gamma$ acts properly discontinuously and cocompactly.

First, we show that $\C$ is contained in $\Omega_{\max}$.
For this we observe that by Lemma~\ref{lem:dual}.\eqref{item:bisat} and Proposition~\ref{prop:cc-duality}, the dual~$\C^*$ is a properly convex subset of $\PP(V^*)$ with bisaturated boundary on which $\Gamma$ acts properly discontinuously and cocompactly.
In particular, $\C^*$ has nonempty interior by Lemma~\ref{lem:elementary-bisat}.\eqref{item:nonempty-int}, and so $\Lambda^*_{\Gamma} \subset \Lambdao_{\Int(\C^*)}(\Gamma) \subset \partiali\C^*$ by Lemma~\ref{lem:Lambda-prox-Lambda-orb}.
In other words, by Remark~\ref{rem:explicit-dual}, any element of $\Lambda^*_{\Gamma}$, seen as a hyperplane in $\PP(V)$, is a supporting hyperplane to $\Omega_{\max}$ at a point of $\partiali\C$, hence misses $\partialn \C$ since $\C$ has bisaturated boundary.
From this and from the definition of $\Omega_{\max}$ (see Proposition~\ref{prop:max-inv-conv}), we deduce that $\partial\Omega_{\max} \cap \partialn\C = \emptyset$, and so $\C$ is contained in $\Omega_{\max}$.

By Corollary~\ref{cor:bisat-interior}, the properly convex open set $\Omega':=\Int(\C)$ is nonempty and $\Gamma$ acts convex cocompactly on it.
By Corollary~\ref{cor:cc-sets-irred}.\eqref{item:Omega'-cc-irred}, the set $\Omega'$ contains $\Ccore_{\Omega_{\max}}$, and so $\C$ contains a neighborhood of $\Ccore_{\Omega_{\max}}$ in $\Omega_{\max}$.

By considering a compact fundamental domain for the action of $\Gamma$ on $\Ccore_{\Omega_{\max}}$, we see that $\Omega'$ (hence $\C$) actually contains an open uniform neighborhood of $\Ccore_{\Omega_{\max}}(\Gamma)$ in $(\Omega_{\max},d_{\Omega_{\max}})$, and that $\C$ is contained in an open uniform neighborhood of $\Ccore_{\Omega_{\max}}(\Gamma)$ in $(\Omega_{\max},d_{\Omega_{\max}})$.
\end{proof}

%%%%%%%%%%%%%%%%%%%%%%%%%%%%%%%%%%%%%%%%%%%%%%%%%%%
\section{Segments in the full orbital limit set} \label{sec:regularity-limit-set}

In this section we establish the equivalences \ref{item:ccc-hyp}~$\Leftrightarrow$~\ref{item:ccc-noseg-any}~$\Leftrightarrow$~\ref{item:ccc-noPETs-some}~$\Leftrightarrow$~\ref{item:ccc-ugly} in Theorem~\ref{thm:main-noPETs}.
For this it will be helpful to introduce two intermediate conditions, weaker than~\ref{item:ccc-noseg-any} but stronger than~\ref{item:ccc-noPETs-some}:
\begin{enumerate}[label=(\roman*)']
  \setcounter{enumi}{2}
  \item \label{item:ccc-noseg-some} $\Gamma$ is convex cocompact in $\PP(V)$ and for \emph{some} nonempty properly convex open set $\Omega$ on which $\Gamma$ acts convex cocompactly, $\Lambdao_{\Omega}(\Gamma)$ does not contain a nontrivial projective line segment;
  \item \label{item:ccc-noPETs-any} $\Gamma$ is convex cocompact in $\PP(V)$ and for \emph{any} nonempty properly convex open set $\Omega$ on which $\Gamma$ acts convex cocompactly, $\Ccore_\Omega(\Gamma)$ does not contain a PET.
\end{enumerate}
The implications \ref{item:ccc-noseg-any}~$\Rightarrow$~\ref{item:ccc-noseg-some}, \ref{item:ccc-noPETs-any}~$\Rightarrow$~\ref{item:ccc-noPETs-some}, \ref{item:ccc-noseg-any}~$\Rightarrow$~\ref{item:ccc-noPETs-any}, and \ref{item:ccc-noseg-some}~$\Rightarrow$~\ref{item:ccc-noPETs-some} are trivial.
The implication \ref{item:ccc-hyp}~$\Rightarrow$~\ref{item:ccc-noPETs-any} holds by Lemma~\ref{lem:noPETs} below.
In Section~\ref{subsec:segments-to-PETs} we simultaneously prove \ref{item:ccc-noPETs-any}~$\Rightarrow$~\ref{item:ccc-noseg-any} and \ref{item:ccc-noPETs-some}~$\Rightarrow$~\ref{item:ccc-noseg-some}.
In Section~\ref{subsec:no-seg-implies-hyp} we prove \ref{item:ccc-noseg-some}~$\Rightarrow$~\ref{item:ccc-hyp}.
In Section~\ref{subsec:ugly} we prove \ref{item:ccc-noseg-some}~$\Leftrightarrow$~\ref{item:ccc-ugly}.
We include the following diagram of implications for the reader's convenience:
\begin{equation*}
\xymatrix{  *+[F]{\begin{tabular}{c} \text{\tiny \ref{item:ccc-noseg-any}:} \\ \text{\tiny for any cc $\Omega$,}\\ \text{\tiny $\Lambdao_\Omega \not\supset$ segment}\end{tabular}} \ar@{=>}[rr]\ar@{=>}[dd] & & *+[F]{\begin{tabular}{c}\text{\tiny \ref{item:ccc-noseg-some}:}\\ \text{\tiny for some cc $\Omega$,}\\ \text{\tiny $\Lambdao_\Omega \not\supset$ segment}\end{tabular}} \ar@{==>}[dl]^{\S\,\ref{subsec:no-seg-implies-hyp}} \ar@{=>}[dd] \ar@{<==>}@/^24pt/[dr]^{\S\,\ref{subsec:ugly}}
&  \\
&   *+[F]{\begin{tabular}{c}\text{\tiny \ref{item:ccc-hyp}:}\\ \text{\tiny $\Gamma$ cc and}\\ \text{\tiny $\Gamma$ hyperbolic}\end{tabular}} \ar@{==>}[dl]^{\S\, \ref{subsec:veryshort}} & & *+[F]{\begin{tabular}{c}\text{\tiny \ref{item:ccc-ugly}:}\\ \text{\tiny $\exists \Omega \supset \C$}\\ \text{\tiny $\C$ cocompact, nonempty interior}\\ \text{\tiny $\partiali \C \not\supset$ segment } \end{tabular}} \\
  *+[F]{\begin{tabular}{c}\text{\tiny \ref{item:ccc-noPETs-any}:} \\ \text{\tiny for any cc $\Omega$,}\\ \text{\tiny $\Ccore_\Omega \not\supset$ PET}\end{tabular}} \ar@{==>}@/^16pt/[uu]^{\S\,\ref{subsec:segments-to-PETs}} \ar@{=>}[rr] & &  *+[F]{\begin{tabular}{c}\text{\tiny \ref{item:ccc-noPETs-some}:} \\ \text{\tiny for some cc $\Omega$,}\\ \text{\tiny$\Ccore_\Omega \not\supset$ PET}\end{tabular}}\ar@{==>}@/_16pt/[uu]_{\S\,\ref{subsec:segments-to-PETs}} }
\end{equation*}

%%%%%%%%%%%%%%%%%%%%%%%%%
\subsection{PETs obstruct hyperbolicity} \label{subsec:veryshort}

We start with an elementary remark.

\begin{lemma} \label{lem:noPETs}
Let $\Gamma$ be an infinite discrete subgroup of $\PGL(V)$ preserving a properly convex open subset $\Omega$ and acting cocompactly on a closed convex subset $\C$ of $\Omega$.
If $\C$ contains a PET (Definition~\ref{def:PET}), then $\Gamma$ is not word hyperbolic.
\end{lemma}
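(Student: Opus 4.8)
The plan is to use the PET to exhibit a quasi-isometrically embedded flat plane in $\Gamma$, which is incompatible with word hyperbolicity. Write $\overline{T}$ for the closed triangle and $T$ for its open interior, so that $T\subset\C\subset\Omega$ while the three edges of $\overline{T}$ lie in $\partiali\C$. Since $\Gamma$ acts cocompactly on~$\C$, the set $\C$ is closed in~$\Omega$, so that $\partiali\C=\overline{\C}\smallsetminus\C\subset\partial\Omega$; in particular the three edges of $\overline{T}$ lie in $\partial\Omega$. Let $P$ be the projective plane spanned by~$T$. Then $\Omega\cap P$ is a convex open subset of $P$ containing~$T$ whose boundary contains the three edges of $\overline{T}$, and a convex region of~$P$ whose boundary contains three segments bounding a triangle must equal that triangle; hence $\Omega\cap P=T$.

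The key consequence is a metric identification. For any two distinct points $y,z\in T$, the projective line through them lies in~$P$, so its two intersection points with $\partial\Omega$ coincide with its intersection points with $\partial(\Omega\cap P)=\partial T$. Therefore the Hilbert metric $d_{\Omega}$ restricted to~$T$ agrees with the Hilbert metric $d_{T}$ of the triangle~$T$ viewed as a properly convex open subset of~$P$. It is classical that the Hilbert metric on a triangle is isometric to a normed plane (whose unit ball is a hexagon); in particular $(T,d_{\Omega}|_{T})=(T,d_{T})$ is bi-Lipschitz, hence quasi-isometric, to the Euclidean plane~$\RR^2$.

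Next I would invoke the Milnor--\v{S}varc lemma. The metric space $(\C,d_{\Omega})$ is proper (closed balls are compact, since $\C$ is closed in the proper space $(\Omega,d_{\Omega})$) and geodesic (straight segments are geodesics and remain in the convex set~$\C$), and $\Gamma$ acts on it properly discontinuously (being a discrete subgroup of $\mathrm{Aut}(\Omega)$, see Section~\ref{subsec:prop-conv-proj}) and cocompactly. Hence $\Gamma$ is quasi-isometric to $(\C,d_{\Omega})$. Composing the isometric inclusion $(T,d_{T})\hookrightarrow(\C,d_{\Omega})$ with this quasi-isometry yields a quasi-isometric embedding of~$\RR^2$ into~$\Gamma$.

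Finally, a word hyperbolic group admits no quasi-isometrically embedded copy of~$\RR^2$: if $f:\RR^2\to\Gamma$ were such an embedding, then for the geodesic triangle with vertices $f(-R,0),f(R,0),f(0,2R)$ one compares the image of the Euclidean midpoint $(0,0)$ of the base (which, by stability of quasi-geodesics, lies within bounded distance of that base geodesic) with the $\delta$-thinness of the triangle; this forces $(0,0)$ to lie within bounded Euclidean distance of the segment from $(-R,0)$ to $(0,2R)$, contradicting that the latter distance grows like~$R$. Therefore $\Gamma$ cannot be word hyperbolic. I expect the step requiring the most care to be the identification $\Omega\cap P=T$ and the resulting equality $d_{\Omega}|_{T}=d_{T}$, i.e.\ verifying that a PET is a genuinely \emph{isometrically} embedded flat rather than merely a coarse quasi-flat; once this is in place, the hyperbolicity obstruction is standard.
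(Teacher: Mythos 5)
Your proof is correct and follows essentially the same route as the paper's: the paper likewise observes that a PET is totally geodesic for $d_\Omega$ and quasi-isometric to the Euclidean plane, so that $(\C,d_\Omega)$ cannot be Gromov hyperbolic, and concludes via the \v{S}varc--Milnor lemma. You have simply filled in the details the paper leaves implicit (the identification $\Omega\cap P=T$, hence $d_\Omega|_T=d_T$, and the standard fact that a hyperbolic geodesic space admits no quasi-isometrically embedded flat plane), and these verifications are all sound.
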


\begin{proof}
The classical \v{S}varc--Milnor lemma states that a finitely generated group is quasi-isometric to any proper, geodesic metric space on which it acts properly and cocompactly by isometries.
Hence $\Gamma$ is quasi-isometric to the metric space $(\C, d_{\Omega})$.
A PET in $\Omega$ is totally geodesic for the Hilbert metric $d_{\Omega}$ and is quasi-isometric to the Euclidean plane.
Thus, if $\C$ contains a PET, then the metric space $(\C, d_{\Omega})$ is not Gromov hyperbolic and therefore $\Gamma$ is not word hyperbolic.  
\end{proof}

%%%%%%%%%%%%%%%%%%%%%%%%%
\subsection{Segments in the full orbital limit set yield PETs in the convex core}\label{subsec:segments-to-PETs}

The implications \ref{item:ccc-noPETs-any}~$\Rightarrow$~\ref{item:ccc-noseg-any} and \ref{item:ccc-noPETs-some}~$\Rightarrow$~\ref{item:ccc-noseg-some} in Theorem~\ref{thm:main-noPETs} will be a consequence of the following lemma, which is similar to \cite[Prop.\,3.8.(a)]{ben06} but without the divisibility assumption nor the restriction to dimension~$3$; our proof is different, close to~\cite{ben04}.

\begin{lemma} \label{lem:segments-imply-PETs}
Let $\Gamma$ be an infinite discrete subgroup of $\PGL(V)$.
Let $\Omega$ be a nonempty $\Gamma$-invariant properly convex open subset of $\PP(V)$.
Suppose $\Gamma$ acts cocompactly on some nonempty closed convex subset $\C$ of~$\Omega$.
If $\partiali\C$ contains a nontrivial segment which is inextendable in $\partial\Omega$, then $\C$ contains a PET.
\end{lemma}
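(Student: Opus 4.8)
The plan is to realize the PET as a renormalized limit of $\Gamma$-translates of a triangular ``fan'' spanned by the segment together with an interior apex, an approach close to \cite{ben04}. Write the inextendable segment as $\sigma=[a,b]\subset\partiali\C\subset\partial\Omega$; it lies in some supporting hyperplane of~$\Omega$, and recall that $\partiali\C$ is closed in $\PP(V)$ and $\Gamma$-invariant (Lemma~\ref{lem:closed-ideal-boundary}). Fix an apex $o\in\C$ and a point $p$ in the relative interior of~$\sigma$, and consider the fan $S:=\mathrm{conv}(o,a,b)\subset\overline{\C}$, whose edge $[a,b]$ lies in $\partial\Omega$ while its open part lies in $\C=\overline{\C}\cap\Omega$. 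First I would run along the straight ray $\rho$ from $o$ to~$p$ (which lies in $\C$), pick $t_m\to+\infty$ with $\rho(t_m)\to p$, and use cocompactness of the action of $\Gamma$ on~$\C$ to find $\gamma_m\in\Gamma$ with $\gamma_m^{-1}\rho(t_m)$ converging to some $c\in\C\subset\Omega$.

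Next I would extract, after passing to a subsequence, the three limiting vertices $a_\infty:=\lim\gamma_m^{-1}a$, $b_\infty:=\lim\gamma_m^{-1}b$, and $o_\infty:=\lim\gamma_m^{-1}o$. The first two lie in $\partiali\C$ since $a,b\in\partiali\C$ and $\partiali\C$ is closed and $\Gamma$-invariant; the third lies in $\partiali\C$ as well, because $d_\Omega(\gamma_m^{-1}o,c)\geq d_\Omega(o,\rho(t_m))-O(1)=t_m-O(1)\to+\infty$ forces $\gamma_m^{-1}o\in\C$ to leave every compact subset of~$\Omega$, so that $o_\infty\in\overline{\C}\cap\partial\Omega=\partiali\C$. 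The edge $[a_\infty,b_\infty]$ is the Hausdorff limit of the boundary segments $\gamma_m^{-1}\sigma\subset\partiali\C$, hence lies in $\partiali\C$. I would then verify that the limiting triangle $T:=\mathrm{conv}(a_\infty,b_\infty,o_\infty)$ is nondegenerate with open part in~$\C$: working in a lift to the cone $\widetilde\Omega$ (Remark~\ref{rem:lift-Gamma}), a short computation gives a lift identity $\tilde c=\tilde a'_\infty+\tilde b'_\infty+\tilde o'_\infty$ placing $c$ in the relative interior of~$T$, and since $c\in\Omega$ while a degenerate $T$ would lie in $\partial\Omega$, the triangle is nondegenerate and $c\in T^\circ$; the usual supporting-hyperplane argument then yields $T^\circ\subset\Omega$, whence $T^\circ\subset\overline{\C}\cap\Omega=\C$.

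It remains --- and this is the main obstacle --- to show that the two ``apex'' edges $[o_\infty,a_\infty]$ and $[o_\infty,b_\infty]$ also lie in $\partiali\C$, which is exactly what upgrades the inscribed triangle~$T$ to a genuine PET (Definition~\ref{def:PET}), so that Lemma~\ref{lem:noPETs} applies downstream. This is where the inextendability of~$\sigma$ must enter. My plan is to show that every interior point $w$ of $[o_\infty,a_\infty]$ is a limit $w=\lim\gamma_m^{-1}z_m$ with $z_m\in\C$ and $z_m\to a$, and to establish the Hilbert-distance blow-up $d_\Omega(z_m,\rho(t_m))\to+\infty$; since $d_\Omega(\gamma_m^{-1}z_m,c)=d_\Omega(z_m,\rho(t_m))+O(1)$, this forces $\gamma_m^{-1}z_m$ out of every compact subset of~$\Omega$, whence $w\in\overline{\C}\cap\partial\Omega=\partiali\C$, and symmetrically for $[o_\infty,b_\infty]$. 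The blow-up is precisely the point at which inextendability is needed: the two sequences $z_m$ and $\rho(t_m)$ approach the \emph{distinct} boundary points $a$ and $p$ of the same boundary segment~$\sigma$, so their Hilbert distance could a priori remain bounded along a flat strip; inextendability of~$\sigma$ at its endpoint~$a$ ``caps'' this strip and forces the approach directions to diverge, giving the required blow-up (the estimate being carried out in the lift via Lemma~\ref{lem:vector-growth} and Corollary~\ref{cor:compare-Hilb-Eucl}). Once all three edges are shown to lie in $\partiali\C$, the triangle~$T$ is a PET contained in~$\C$, completing the proof; I expect this final distance estimate to be the technical heart of the argument.
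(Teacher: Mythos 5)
Your architecture is exactly that of the paper's proof: push a point of the triangle $\mathrm{conv}(o,a,b)$ toward the relative interior of the segment, renormalize by cocompactness, pass to a limit triangle, and argue that its two apex edges are absorbed into $\partiali\C$ because the renormalization basepoints end up at infinite Hilbert distance from those edges. The limit extraction, the placement of $a_\infty$, $b_\infty$, $o_\infty$ and of $[a_\infty,b_\infty]$ in $\partiali\C$, and the reduction of the whole lemma to the single estimate $d_\Omega(z_m,\rho(t_m))\to+\infty$ for $z_m\in(o,a]$ with $z_m\to a$ are all as in the paper. But that estimate \emph{is} the content of the lemma --- everything else is soft --- and you have not proved it: you state the correct intuition (inextendability ``caps the flat strip'') and then defer it, and the tools you name would not close it. Corollary~\ref{cor:compare-Hilb-Eucl} only rules out $d_\Omega(z_m,\rho(t_m))\to 0$ when the two sequences have distinct boundary limits, which is much weaker than divergence; and Lemma~\ref{lem:vector-growth} concerns orbits of a fixed compact set under a divergent sequence of group elements, which is not the situation here, since no group element separates $z_m$ from $\rho(t_m)$.

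Here is how the paper closes the gap, and you should reproduce this step explicitly. Let $u_m,v_m\in\partial\Omega$ be the endpoints of the chord of~$\Omega$ through $\rho(t_m)$ and $z_m$, ordered $u_m,\rho(t_m),z_m,v_m$. After extraction, $u_m\to u$ and $v_m\to v$ with $u,p,a,v$ aligned in this order; since $p,a\in\partial\Omega$, convexity forces the whole limit segment $[u,v]$ into $\partial\Omega$, and it lies on the line spanned by $[a,b]$ because it contains the two distinct points $p\in(a,b)$ and~$a$. If $v\neq a$, the union of $[a,b]$ and $[p,v]$ would be a segment of $\partial\Omega$ strictly extending $[a,b]$ beyond~$a$, contradicting inextendability; hence $v=a$, so $\Vert z_m-v_m\Vert\to 0$ while $\Vert\rho(t_m)-v_m\Vert\to\Vert p-a\Vert>0$, and the cross-ratio defining $d_\Omega(\rho(t_m),z_m)$ blows up. (You must also treat the easy case where $z_m$ accumulates at a point of $(o,a]\subset\C\subset\Omega$, which follows from properness of~$d_\Omega$.) One further small point: your nondegeneracy claim that ``a degenerate $T$ would lie in $\partial\Omega$'' is false as stated --- a segment with endpoints in $\partial\Omega$ can be a chord of~$\Omega$ --- but once all three edges of~$T$ are known to lie in $\partiali\C\subset\partial\Omega$ and $c\in T\cap\Omega$, nondegeneracy follows because $c$ cannot lie on any edge.
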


\begin{proof}
The ideal boundary $\partiali\C$ is closed in $\PP(V)$ by Remark~\ref{rem:closed-ideal-bound-C-Omega}.
Suppose $\partiali\C$ contains a nontrivial segment $[a,b]$ which is inextendable in $\partial \Omega$.
Let $c \in \C$ and consider a sequence of points $x_m\in\C$ lying inside the open triangle with vertices $a,b,c$ and converging to a point $x \in (a,b)$.

We claim that the $d_{\Omega}$-distance from $x_m$ to either projective interval $(a,c]$ or $(b,c]$ tends to infinity with~$m$.
Indeed, consider a sequence $(y_m)_m$ of points of $(a,c]$ converging to $y \in [a,c]$ and let us check that $d_{\Omega}(x_m,y_m) \to +\infty$ (the proof for $(b,c]$ is the same).
If $y \in (a,c]$, then $y\in\C$ and so $d_{\Omega}(x_m,y_m) \to +\infty$ by properness of the Hilbert metric.
Otherwise $y = a$.
In that case, for each~$m$, consider $x'_m, y'_m \in \partial \Omega$ such that $x'_m, x_m, y_m, z'_m$ are aligned in that order.
Up to taking a subsequence, we may assume $x'_m \to x'$ and $z'_m \to z'$ for some $x',z' \in \partial \Omega$, with $x', x, a, z'$ aligned in that order.
By inextendability of $[a,b]$ in $\partial \Omega$, we must have $z = a = z'$, hence $d_{\Omega}(x_m,z_m) \to +\infty$ in this case as well, proving the claim.

Since the action of $\Gamma$ on~$\C$ is cocompact, there is a sequence $(\gamma_m)\in\Gamma^{\NN}$ such that $\gamma_m\cdot x_m$ remains in a fixed compact subset of~$\C$.
Up to passing to a subsequence, we may assume that $(\gamma_m\cdot x_m)_m$ converges to some $x_{\infty} \in \C$, and $(\gamma_m\cdot a)_m$ and $(\gamma_m\cdot b)_m$ and $(\gamma_m\cdot c)_m$ converge respectively to some $a_{\infty}, b_{\infty}, c_{\infty} \in \partiali \C$, with $[a_{\infty},b_{\infty}]\subset\partiali\C$ since $\partiali\C$ is closed (Remark \ref{rem:closed-ideal-bound-C-Omega}).
The triangle with vertices $a_{\infty}, b_{\infty}, c_{\infty}$ is nondegenerate since it contains $x_{\infty} \in\nolinebreak\C$.
Further, $x_{\infty}$ is infinitely far (for the Hilbert metric on~$\Omega$) from the edges $[a_{\infty}, c_{\infty}]$ and $[b_{\infty}, c_{\infty}]$, and so these edges are fully contained in $\partiali\C$.
Thus the triangle with vertices $a_{\infty}, b_{\infty}, c_{\infty}$ is a PET of~$\C$.
\end{proof}

\begin{proof}[Simultaneous proof of \ref{item:ccc-noPETs-any}~$\Rightarrow$~\ref{item:ccc-noseg-any} and  \ref{item:ccc-noPETs-some}~$\Rightarrow$~\ref{item:ccc-noseg-some}]
We prove the contrapositive.
Suppose $\Gamma \subset \PGL(V)$ acts convex cocompactly on the properly convex open set $\Omega \subset \PP(V)$ and $\Lambdao_\Omega(\Gamma)$ contains a nontrivial segment.
By Corollary~\ref{cor:ideal-bound-naive-cc}.\eqref{item:ideal-bound-cc}, the set $\partiali\Ccore_\Omega(\Gamma)$ is equal to $\Lambdao_{\Omega}(\Gamma)$ and closed in $\PP(V)$, and it contains the open face of $\partial\Omega$ at any interior point of that segment; in particular, $\Lambdao_\Omega(\Gamma)$ contains a nontrivial segment which is inextendable in~$\partial \Omega$.
By Lemma~\ref{lem:segments-imply-PETs} with $\C=\Ccore_{\Omega}(\Gamma)$, the set $\Ccore_\Omega(\Gamma)$ contains a PET.
\end{proof}

%%%%%%%%%%%%%%%%%%%%%%%%%
\subsection{Word hyperbolicity of the group in the absence of segments}\label{subsec:no-seg-implies-hyp}

In this section we prove the implication \ref{item:ccc-noseg-some}~$\Rightarrow$~\ref{item:ccc-hyp} in Theorem~\ref{thm:main-noPETs}.
We proceed exactly as in \cite[\S\,4.3]{dgk-ccHpq}, with arguments inspired from \cite{ben04}.

Recall that any geodesic ray of $(\C,d_\Omega)$ has a well-defined endpoint in $\partiali\C$ (see \cite[Th.\,3]{fk05} or \cite[Lem.\,2.6.(1)]{dgk-ccHpq}).
It is sufficient to apply the following general result to $\C=\Ccore_{\Omega}$.

\begin{lemma} \label{lem:C-hyp}
Let $\Gamma$ be a discrete subgroup of $\PGL(V)$.
Let $\Omega$ be a nonempty $\Gamma$-invariant properly convex open subset of $\PP(V)$, with Hilbert metric $d_{\Omega}$.
Suppose $\Gamma$ acts cocompactly on some nonempty closed convex subset $\C$ of~$\Omega$ such that $\partiali\C$ does not contain any nontrivial projective line segment.
Then
\begin{enumerate}
  \item\label{item:geod-non-strict-conv} there exists $R>0$ such that any geodesic ray of $(\C,d_{\Omega})$ lies at Hausdorff distance $\leq R$ from the projective interval with the same endpoints;
  \item\label{item:C-hyp} the metric space $(\C,d_{\Omega})$ is Gromov hyperbolic with Gromov boundary $\Gamma$-equivariantly homeomorphic to $\partiali\C$;
  \item\label{item:Gamma-hyp} the group $\Gamma$ is word hyperbolic and any orbit map $\Gamma \to (\C,d_{\Omega})$ is a quasi-isometric embedding which extends to a $\Gamma$-equivariant homeomorphism $\xi : \partial_{\infty}\Gamma\to\partiali\C$;
  \item\label{item:boundary-map-indep-of-orbit} if $\C$ contains $\Ccore_{\Omega}(\Gamma)$ (hence $\Gamma$ acts convex cocompactly on~$\Omega$, see Corollary~\ref{cor:ideal-bound-naive-cc}.\eqref{item:ideal-bound-cc}), then any orbit map $\Gamma \to (\Omega, d_{\Omega})$ is a quasi-isometric embedding and extends to a $\Gamma$-equivariant homeomorphism $\xi: \partial_{\infty} \Gamma \to \Lambdao_\Omega(\Gamma) = \partiali \C$ which is independent of the orbit.
\end{enumerate}
\end{lemma}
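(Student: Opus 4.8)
The plan is to establish the four parts in order, with part~\eqref{item:geod-non-strict-conv} carrying essentially all of the geometric content and the remaining parts following from general Gromov-hyperbolic machinery together with the \v{S}varc--Milnor lemma. Throughout I would use that $(\Omega,d_\Omega)$ is a proper, complete geodesic space on which $\Gamma$ acts properly discontinuously by isometries, that straight projective segments are geodesics, that $\C$ is closed and convex (so $(\C,d_\Omega)$ is itself proper geodesic with a cocompact isometric $\Gamma$-action), that every geodesic ray of $(\C,d_\Omega)$ has a well-defined endpoint in $\partiali\C$ (by \cite[Lem.\,2.6]{dgk-ccHpq}), and that $\partiali\C$ is closed in $\PP(V)$ by Lemma~\ref{lem:closed-ideal-boundary}.

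For part~\eqref{item:geod-non-strict-conv} I would argue by contradiction and compactness. If no uniform $R$ worked, there would be geodesic rays $\sigma_m$ and times $s_m$ with $d_\Omega(\sigma_m(s_m),I_m)\geq m$, where $I_m$ is the straight ray sharing the endpoints of $\sigma_m$; note $s_m\to+\infty$. Using cocompactness I would choose $\gamma_m\in\Gamma$ so that $p_m:=\gamma_m\cdot\sigma_m(s_m)$ stays in a fixed compact fundamental domain, and after reparametrizing $\tau_m(t):=\gamma_m\cdot\sigma_m(s_m+t)$ and applying Arzel\`a--Ascoli, extract a \emph{biinfinite} geodesic $\tau_\infty\colon\RR\to\C$ through a limit point $p=\lim p_m$, with distinct endpoints $a_\infty,b_\infty\in\partiali\C$. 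The translated straight rays $\gamma_m\cdot I_m$ then converge to the chord $[a_\infty,b_\infty]$. The key point is that if this chord met $\Omega$ in some point $q$, one could pick $q_m\in\gamma_m\cdot I_m$ with $q_m\to q$, giving $d_\Omega(p_m,\gamma_m\cdot I_m)\leq d_\Omega(p_m,q_m)\to d_\Omega(p,q)<\infty$, contradicting the lower bound $\geq m$. Hence $(a_\infty,b_\infty)\subset\partial\Omega$; since $a_\infty,b_\infty\in\overline\C$ and $\C\subset\Omega$ is open, this segment lies in $\overline\C\cap\partial\Omega\subset\partiali\C$, contradicting the no-segment hypothesis. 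This is the step I expect to be the main obstacle, the delicate points being the extraction of a genuinely biinfinite geodesic in $\C$ and the control of the moving finite endpoint of $\gamma_m\cdot I_m$ as it approaches $\partial\Omega$.

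Given part~\eqref{item:geod-non-strict-conv}, for part~\eqref{item:C-hyp} I would prove uniform thinness of geodesic triangles. Since each geodesic side lies within $R$ of the straight segment with the same endpoints, it suffices to bound the thinness of \emph{straight} triangles with vertices in $\overline\C$. If these were not uniformly thin, there would be straight triangles with a point $p_m$ on one side at distance $\geq m$ from the union of the other two sides; translating $p_m$ into a compact set and passing to a limit as above would produce a nondegenerate triangle $x_\infty y_\infty z_\infty$ with $p_\infty\in(x_\infty,y_\infty)\cap\Omega$ but with the two sides $[x_\infty,z_\infty]$ and $[y_\infty,z_\infty]$ forced into $\partial\Omega$ by the same finite-distance argument --- hence contained in $\partiali\C$, again contradicting the hypothesis (this is parallel to Lemma~\ref{lem:segments-imply-PETs}). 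Thus $(\C,d_\Omega)$ is Gromov hyperbolic; the identification of its Gromov boundary with $\partiali\C$ is then standard, sending the class of a geodesic ray to its endpoint, where part~\eqref{item:geod-non-strict-conv} guarantees the map is well defined and bounded geodesics to the same boundary point stay close, the no-segment condition guarantees injectivity, and surjectivity comes from \cite{fk05}. This map is a $\Gamma$-equivariant homeomorphism.

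Finally, part~\eqref{item:Gamma-hyp} is immediate from the \v{S}varc--Milnor lemma: $\Gamma$ acts properly discontinuously and cocompactly by isometries on the proper geodesic Gromov-hyperbolic space $(\C,d_\Omega)$, so it is finitely generated, word hyperbolic, and any orbit map is a quasi-isometric embedding inducing the equivariant boundary homeomorphism $\xi\colon\partial_\infty\Gamma\to\partiali\C$ of part~\eqref{item:C-hyp}. For part~\eqref{item:boundary-map-indep-of-orbit}, when $\C\supset\Ccore_\Omega(\Gamma)$ Lemma~\ref{lem:1-neighb-fib}.\eqref{item:C-core-filled-lim-set} gives $\partiali\C=\Lambdao_\Omega(\Gamma)$; since the inclusion $(\C,d_\Omega)\hookrightarrow(\Omega,d_\Omega)$ is isometric onto its image, any orbit map $\Gamma\to(\Omega,d_\Omega)$ (whose image lies in $\C$) is again a quasi-isometric embedding and extends to $\xi$. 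Independence of the orbit follows because two orbit maps $g\mapsto g\cdot y$ and $g\mapsto g\cdot y'$ satisfy $d_\Omega(g\cdot y,g\cdot y')=d_\Omega(y,y')$ for all $g$, so they remain at bounded distance and induce the same boundary map.
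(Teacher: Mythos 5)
Your proof follows essentially the same route as the paper's: a cocompactness-plus-extraction contradiction for parts \eqref{item:geod-non-strict-conv} and \eqref{item:C-hyp} (including the same delicate identification of the endpoints of the limiting biinfinite geodesic with the limits of the endpoints of the chords $\gamma_m\cdot I_m$, which the paper packages instead as the statement that the limiting geodesic has both endpoints equal to $a_\infty=b_\infty$ and then invokes the nonexistence of such geodesics), the \v{S}varc--Milnor lemma for \eqref{item:Gamma-hyp}, and a bounded-distance comparison of orbit maps for \eqref{item:boundary-map-indep-of-orbit}. The only slip is the parenthetical in \eqref{item:boundary-map-indep-of-orbit} that the orbit of $y$ lies in $\C$ --- false for a general $y\in\Omega$ --- but your observation that $d_\Omega(g\cdot y,g\cdot y')$ is constant in $g$ (or the paper's device of replacing $\C$ by a closed uniform neighborhood $\C_y\ni y$, which has the same ideal boundary by Lemma~\ref{lem:1-neighb-fib}) repairs this.
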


\begin{proof}
\eqref{item:geod-non-strict-conv} Suppose by contradiction that for any $m\in\NN$ there is a geodesic ray $\mathcal{G}_m$ of $(\C,d_{\Omega})$ with endpoints $a_m\in\C$ and $b_m\in\partiali\C$ and a point $y_m\in\C$ on that geodesic which lies at distance $\geq m$ from the projective interval $[a_m,b_m)$. 
By cocompactness of the action of $\Gamma$ on~$\C$, for any $m\in\NN$ there exists $\gamma_m\in\Gamma$ such that $\gamma_m\cdot y_m$ belongs to a fixed compact set of~$\C$.
Up to taking a subsequence, $(\gamma_m\cdot y_m)_m$ converges to some $y_{\infty}\in\C$, and $(\gamma_m\cdot a_m)_m$ and $(\gamma_m\cdot\nolinebreak b_m)_m$ converge respectively to some $a_{\infty}\in\overline{\C}$ and $b_{\infty}\in\partiali\C$.
Since the distance from $y_m$ to $[a_m,b_m)$ goes to infinity, we have $[a_{\infty},b_{\infty}]\subset\partiali\C$, hence $a_{\infty}=b_{\infty}$ since $\partiali\C$ does not contain a segment.
Therefore, up to extracting, the geodesics~$\mathcal{G}_m$ converge to a biinfinite geodesic of $(\Omega,d_{\Omega})$ with both endpoints equal.
But such a geodesic does not exist (see \cite[Th.\,3]{fk05} or \cite[Lem.\,2.6.(2)]{dgk-ccHpq}): contradiction.

\eqref{item:C-hyp} Suppose by contradiction that triangles of $(\C,d_{\Omega})$ are not uniformly thin.
By~\eqref{item:geod-non-strict-conv}, triangles of $(\C,d_{\Omega})$ whose sides are projective line segments are not uniformly thin: namely, there exist $a_m,b_m,c_m\in\C$ and $y_m\in [a_m,b_m]$ such that
\begin{equation} \label{eqn:u_m-far}
d_{\Omega}(y_m, [a_m,c_m]\cup [c_m,b_m]) \underset{m\to +\infty}{\longrightarrow} +\infty.
\end{equation}
By cocompactness, for any $m$ there exists $\gamma_m\in\Gamma$ such that $\gamma_m\cdot y_m$ belongs to a fixed compact set of~$\C$.
Up to taking a subsequence, $(\gamma_m\cdot y_m)_m$ converges to some $y_{\infty}\in\C$, and $(\gamma_m\cdot a_m)_m$ and $(\gamma_m\cdot b_m)_m$ and $(\gamma_m\cdot\nolinebreak c_m)_m$ converge respectively to some $a_{\infty},b_{\infty},c_{\infty}\in\overline{\C}$.
By \eqref{eqn:u_m-far} we have $[a_{\infty},c_{\infty}]\cup [c_{\infty},b_{\infty}]\subset\partiali\C$, hence $a_{\infty}=b_{\infty}=c_{\infty}$ since $\partiali\C$ does not contain any nontrivial projective line segment.
This contradicts the fact that $y_{\infty}\in (a_{\infty},b_{\infty})$.
Therefore $(\C,d_{\Omega})$ is Gromov hyperbolic.

Fix a basepoint $y\in\C$.
The Gromov boundary of $(\C,d_{\Omega})$ is the set of equivalence classes of infinite geodesic rays in~$\C$ starting at~$y$, for the equivalence relation ``to remain at bounded distance for $d_{\Omega}$''.
Consider the $\Gamma$-equivariant continuous map from $\partiali\C$ to this Gromov boundary sending $z\in\partiali\C$ to the class of the geodesic ray from $y$ to~$z$.
This map is clearly surjective, since any infinite geodesic ray in $\C$ terminates at the ideal boundary~$\partiali \C$.
Moreover, it is injective, since the nonexistence of line segments in $\partiali \C$ means that no two points of $\partiali \C$ lie in a common face of $\partial \Omega$, hence the Hilbert distance between rays going out to two different points of $\partiali \C$ goes to infinity (see Lemma~\ref{lem:unif-neighb-face}.\eqref{item:distance-goes-down}).

\eqref{item:Gamma-hyp} The group $\Gamma$ acts properly discontinuously and cocompactly, by isometries, on the proper, geodesic metric space $(\C,d_{\Omega})$, which is Gromov hyperbolic with Gromov boundary~$\partiali\C$ by~\eqref{item:C-hyp} above.
We apply the \v{S}varc--Milnor lemma.

\eqref{item:boundary-map-indep-of-orbit} Suppose $\C$ contains $\Ccore_{\Omega}(\Gamma)$.
Consider a point $y \in \Omega$.
It lies in a closed uniform neighborhood $\C_y$ of $\C$ in $(\Omega,d_{\Omega})$, which is a properly convex subset of~$\Omega$ on which the action of $\Gamma$ is also cocompact (Lemma~\ref{lem:naive-cc-nonempty-int}).
By Corollary~\ref{cor:ideal-bound-naive-cc}.\eqref{item:ideal-bound-cc}, we have $\partiali \C_y = \partiali \C = \Lambdao_\Omega(\Gamma)$, and this set does not contain any nontrivial segment by assumption.
By~\eqref{item:Gamma-hyp}, the orbit map $\Gamma \to (\C_y,d_{\Omega})$ associated with~$y$ is a quasi-isometry which extends to a $\Gamma$-equivariant homeomorphism $\partial_{\infty}\Gamma\to\partiali\C_y = \partiali \C$.
This extension is independent of $y$ since $\partiali \C_y = \partiali \C$ does not contain any nontrivial segment (argue as in the proof of Lemma~\ref{lem:strict-convex-Lambda-orb}).
\end{proof}

%%%%%%%%%%%%%%%%%%%%%%%%%
\subsection{Absence of segments in $\partiali \C$ and in $\Lambdao_\Omega(\Gamma)$} \label{subsec:ugly}

In this section we prove the equivalence \ref{item:ccc-noseg-some}~$\Leftrightarrow$~\ref{item:ccc-ugly}.

\begin{proof}[Proof of \ref{item:ccc-noseg-some}~$\Rightarrow$~\ref{item:ccc-ugly}]
Suppose that $\Gamma$ acts convex cocompactly on some nonempty properly convex open subset $\Omega$ of $\PP(V)$ and that $\Lambdao_{\Omega}(\Gamma)$ does not contain any nontrivial projective line segment.
The group $\Gamma$ acts cocompactly on the closed uniform $1$-neighborhood $\C$ of $\Ccore_\Omega(\Gamma)$ in $(\Omega,d_{\Omega})$, which is properly convex with nonempty interior (Lemma~\ref{lem:naive-cc-nonempty-int}).
By Corollary~\ref{cor:ideal-bound-naive-cc}.\eqref{item:ideal-bound-cc}, we have $\partiali\C=\Lambdao_{\Omega}(\Gamma)$.
\end{proof}

The proof of \ref{item:ccc-ugly}~$\Rightarrow$~\ref{item:ccc-noseg-some} relies on the following lemma, which is similar to \cite[Lem.\,4.3]{dgk-ccHpq}.

\begin{lemma} \label{lem:no-segment-contains-Ccore}
Let $\Gamma$ be an infinite discrete subgroup of $\PGL(V)$ preserving a properly convex open subset $\Omega$ of $\PP(V)$ and acting cocompactly on some closed convex subset $\C$ of~$\Omega$ with nonempty interior.
If $\partiali\C$ does not contain any nontrivial projective segment, then $\Lambdao_\Omega(\Gamma)\subset\partiali\C$ (hence $\Ccore_{\Omega}(\Gamma) \subset \C$).
\end{lemma}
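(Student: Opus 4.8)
\emph{Plan.} The plan is to prove the equivalent statement that $\Lambdao_\Omega(\Gamma)\subseteq\overline{\C}$. This suffices: since $\overline{\C}$ is closed and convex it then contains the convex hull of $\Lambdao_\Omega(\Gamma)$, so $\Ccore_\Omega(\Gamma)\subseteq\overline{\C}\cap\Omega=\C$, using that $\C$ is closed in $\Omega$ (whence $\overline{\C}\cap\Omega=\C$ and $\partiali\C=\overline{\C}\cap\partial\Omega$). As all accumulation points of $\Gamma$-orbits lie in $\partial\Omega$ by proper discontinuity, it is enough to show that for every $z\in\Omega$ every accumulation point of $\Gamma\cdot z$ lies in $\partiali\C$. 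First I would record the elementary fact about the Hilbert metric that if $y_m\to y$ and $z_m\to z$ in $\partial\Omega$ with $y\neq z$ while $d_\Omega(y_m,z_m)$ stays bounded, then $[y,z]\subseteq\partial\Omega$: otherwise the open segment $(y,z)$ would lie in $\Omega$, the chord through $y,z$ would meet $\partial\Omega$ exactly at its endpoints, and the cross-ratio defining $d_\Omega(y_m,z_m)$ would blow up.

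The first key step is a \emph{rigidity of collapse}: if $(\gamma_m)$ are pairwise distinct and $\gamma_m\cdot w_0\to\zeta\in\partial\Omega$ for some $w_0\in\C$, then $\gamma_m\cdot w\to\zeta$ for \emph{every} $w\in\C$, and $\zeta\in\partiali\C$. Indeed $d_\Omega(\gamma_m\cdot w,\gamma_m\cdot w_0)=d_\Omega(w,w_0)$ is bounded; any subsequential limit $\xi_w$ of $\gamma_m\cdot w$ lies in $\overline{\C}\cap\partial\Omega=\partiali\C$, and if $\xi_w\neq\zeta$ the bounded-distance fact forces the nontrivial segment $[\zeta,\xi_w]$ into $\partial\Omega$. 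Since $\zeta,\xi_w\in\partiali\C\subseteq\overline{\C}$ and $\overline{\C}$ is convex, this segment lies in $\overline{\C}\cap\partial\Omega=\partiali\C$, contradicting the hypothesis that $\partiali\C$ contains no nontrivial segment. Hence $\gamma_m\cdot w\to\zeta$ for all $w\in\C$, in particular on the nonempty open set $\Int(\C)$.

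The second step upgrades this collapse from $\C$ to all of $\Omega$ through a rank-one limit. Using Remark~\ref{rem:lift-Gamma}, I would lift to $\hat\Gamma\subseteq\SL^{\pm}(V)$ preserving a properly convex cone $\widetilde{\Omega}$, and pass to a subsequence along which $\hat\gamma_m/\Vert\hat\gamma_m\Vert\to\bar\gamma\neq 0$ in $\mathrm{End}(V)$. Since by Step~1 the images of the open set $\Int(\C)$ all converge to $\zeta$, the operator $\bar\gamma$ sends a spanning family of $V$ into the line $\RR v_\zeta$, where $[v_\zeta]=\zeta$, so $\bar\gamma=v_\zeta\otimes\ell$ has rank one. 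For any $x\in\widetilde{\Omega}$ we have $\hat\gamma_m x\in\widetilde{\Omega}$ and $\hat\gamma_m x/\Vert\hat\gamma_m\Vert\to\ell(x)\,v_\zeta\in\overline{\widetilde{\Omega}}$; as $v_\zeta\in\overline{\widetilde{\Omega}}$ and $\overline{\widetilde{\Omega}}$ contains no line, this forces $\ell(x)\geq 0$, and then $\ell>0$ on the open cone $\widetilde{\Omega}$, since a linear form nonnegative on an open set and vanishing at an interior point vanishes identically. Therefore $\ker\ell$ misses $\Omega$, and every $z\in\Omega$, lifting to $\tilde z\in\widetilde{\Omega}$, satisfies $\gamma_m\cdot z\to[\bar\gamma\tilde z]=\zeta$.

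Combining the two steps finishes the argument: given $z\in\Omega$ and an accumulation point $\xi_0=\lim\gamma_m\cdot z$, pass to a subsequence with $\gamma_m\cdot w_0\to\zeta\in\partiali\C$ for a fixed $w_0\in\C$, and Steps~1--2 give $\gamma_m\cdot z\to\zeta$, so $\xi_0=\zeta\in\partiali\C$. Hence $\Lambdao_\Omega(\Gamma)\subseteq\partiali\C\subseteq\overline{\C}$, as required. I expect the main obstacle to be exactly the control of orbits of points $z\in\Omega\smallsetminus\C$: a priori such an orbit could accumulate on a point of $\partial\Omega$ lying on a segment emanating from $\partiali\C$, which is precisely the phenomenon of Example~\ref{ex:triangle}.\eqref{ex:triangle:borderline}. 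The no-segment hypothesis rules this out, but only after the rank-one upgrade of Step~2 transfers the collapse of $\C$ to all of $\Omega$; securing that transfer---rather than invoking the hyperbolicity machinery of Lemma~\ref{lem:C-hyp}, which is not needed here---is the crux.
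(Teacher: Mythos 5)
Your proof is correct, and its second half takes a genuinely different route from the paper's. Both arguments share the same first move: the no-segment hypothesis on $\partiali\C$, combined with the standard ``bounded Hilbert distance forces a boundary segment'' fact, shows that the images of points of $\C$ under a divergent sequence $(\gamma_m)$ all collapse to a single point $\zeta\in\partiali\C$ (the paper does this only for the two points $y\in\Int(\C)$ and $w\in\partialn\C$ that it needs; your Step~1 does it for all of $\C$ at once). Where the two proofs diverge is in transferring this collapse to an arbitrary $z\in\Omega$. The paper stays one-dimensional: it places $y$, $w$, $z$ on a single chord $(a,b)$ of $\Omega$ and tracks cross-ratios of the image configurations, using $d_\Omega(\gamma_m\cdot y,\gamma_m\cdot w)=\mathrm{const}>0$ to force $a_\infty=y_\infty$ and then $d_\Omega(\gamma_m\cdot y,\gamma_m\cdot z)=\mathrm{const}$ to force $z_\infty=y_\infty$. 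You instead pass to the normalized lifts $\hat\gamma_m/\Vert\hat\gamma_m\Vert\to\bar\gamma$ and use the nonempty interior of $\C$ (a spanning open set whose image collapses to $\zeta$) to conclude that $\bar\gamma=v_\zeta\otimes\ell$ has rank one, with $\ell>0$ on the cone $\widetilde\Omega$ by proper convexity; this yields the strictly stronger statement that $\gamma_m\cdot z\to\zeta$ for \emph{every} $z\in\Omega$, from which the lemma is immediate. Your route is slightly longer but buys a cleaner dynamical picture (the full orbital limit set coincides with the accumulation set of a single orbit of $\C$), whereas the paper's cross-ratio computation is more elementary and local. Both proofs use the nonempty-interior hypothesis essentially (you to get a spanning family, the paper to produce the intermediate point $w\in\partialn\C\cap[y,z]$), and both invoke the no-segment hypothesis at exactly the same spot. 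The only point to tidy up is the sign normalization in $\bar\gamma=v_\zeta\otimes\ell$: one should fix $v_\zeta$ to be the lift of $\zeta$ lying in $\overline{\widetilde\Omega}$ (which is possible since $\bar\gamma\tilde w=\lim\hat\gamma_m\tilde w/\Vert\hat\gamma_m\Vert\in\overline{\widetilde\Omega}$ for $\tilde w$ lifting a point of $\Int(\C)$ outside $\ker\bar\gamma$), after which the positivity argument for $\ell$ goes through exactly as you wrote it.
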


\begin{proof}
Suppose that $\partiali\C$ does not contain any nontrivial projective segment.
Let us show that any point $z_{\infty} = \lim_m \gamma_m\cdot z \in\Lambdao_\Omega(\Gamma)$, where $(\gamma_m)\in\Gamma^{\NN}$ and $z \in \Omega$, belongs to $\partiali\C$.
For this, consider two distinct points $x, y \in \Int(\C)$ and $b \in \partial\Omega$ with $x,y,z,b$ aligned in this order.
Up to passing to a subsequence, we may assume that for any $w \in \{ x,y,b\}$, the sequence $(\gamma_m\cdot w)_{m\in\NN}$ converges to some $w_{\infty} \in \partial\Omega$, with $x_{\infty}, y_{\infty}, z_{\infty}, b_{\infty}$ aligned in this order (not necessarily distinct).
We have $[x_{\infty},y_{\infty}]\subset\partiali\C$, hence $x_{\infty} = y_{\infty}$ since $\partiali\C$ does not contain any nontrivial segment.
Since $\cro{\gamma_m\cdot x}{\gamma_m\cdot y}{\gamma_m\cdot z}{\gamma_m\cdot b} = \cro{x}{y}{z}{b} \in (1,+\infty)$ for all~$m$, and all segments $[\gamma_m \cdot x, \gamma_m \cdot b]$ and $[\gamma_\infty \cdot x, \gamma_\infty \cdot b]$ live in an affine chart containing~$\overline{\Omega}$, we must have $x_{\infty} = y_{\infty} = z_{\infty}$.
In particular, $z_{\infty} \in \partiali\C$.
\end{proof}

\begin{proof}[Proof of \ref{item:ccc-ugly}~$\Rightarrow$~\ref{item:ccc-noseg-some}]
If $\Gamma$ preserves a properly convex open subset $\Omega$ of $\PP(V)$ and acts cocompactly on some closed convex subset $\C$ of~$\Omega$ with nonempty interior such that $\partiali\C$ does not contain any nontrivial projective line segment, then $\C$ contains $\Ccore_{\Omega}(\Gamma)$ by Lemma~\ref{lem:no-segment-contains-Ccore}, and so Corollary~\ref{cor:ideal-bound-naive-cc}.\eqref{item:ideal-bound-cc} implies that $\Gamma$ acts convex cocompactly on~$\Omega$ and that $\Lambdao_{\Omega}(\Gamma) = \partiali\C$ does not contain any nontrivial projective line segment.
\end{proof}

%%%%%%%%%%%%%%%%%%%%%%%%%%%%%%%%%%%%%%%%%%%%%%%%%%%
\section{Convex cocompactness and no segment implies $P_1$-Anosov} \label{sec:ccc-implies-Anosov}

In this section we continue with the proof of Theorem~\ref{thm:main-noPETs}.
We have already established the equivalences \ref{item:ccc-hyp}~$\Leftrightarrow$~\ref{item:ccc-noseg-any}~$\Leftrightarrow$~\ref{item:ccc-noPETs-some}~$\Leftrightarrow$~\ref{item:ccc-ugly} in Section~\ref{sec:regularity-limit-set}.
On the other hand, the implication \ref{item:ccc-CM}~$\Rightarrow$~\ref{item:ccc-noseg-any} is trivial.

We now prove the implication \ref{item:ccc-hyp}~$\Rightarrow$~\ref{item:P1Anosov} in Theorem~\ref{thm:main-noPETs}.
By the above, this yields the implication \ref{item:ccc-CM}~$\Rightarrow$~\ref{item:P1Anosov} in Theorem~\ref{thm:main-noPETs}, which is also the implication \eqref{item:strong-proj-cc}~$\Rightarrow$~\eqref{item:Ano-PGL} in Theorem~\ref{thm:Ano-PGL}.
We build on Lemma~\ref{lem:C-hyp}.\eqref{item:Gamma-hyp}.

%%%%%%%%%%%%%%%%%%%%%%%%%
\subsection{Compatible, transverse, dynamics-preserving boundary maps}

Let $\Gamma$ be an infinite discrete subgroup of $\PGL(V)$.
Suppose $\Gamma$ is word hyperbolic and convex cocompact in $\PP(V)$.
By Proposition~\ref{prop:dual-precise}, there is a nonempty $\Gamma$-invariant properly convex open subset $\Omega$ of $\PP(V)$ such that the actions of $\Gamma$ on~$\Omega$ and on its dual~$\Omega^*$ are both convex cocompact.
Our goal is to show that the natural inclusion $\Gamma\hookrightarrow\PGL(V)$ is $P_1$-Anosov.

By the implication \ref{item:ccc-hyp}~$\Rightarrow$~\ref{item:ccc-noseg-any} in Theorem~\ref{thm:main-noPETs} (which we have proved in Section~\ref{sec:regularity-limit-set}) and Theorem~\ref{thm:properties}.\ref{item:dual} (which we have proved in Section~\ref{sec:bisat-dual}), the full orbital limit sets $\Lambdao_\Omega(\Gamma)\subset\PP(V)$ and $\Lambdao_{\Omega^*}(\Gamma)\subset\PP(V^*)$ do not contain any nontrivial projective line segment.
Let $\Ccore_{\Omega}\subset\PP(V)$ (\resp $\Ccore_{\Omega^*}\subset\PP(V^*)$) be the convex hull of $\Lambdao_\Omega(\Gamma)$ in~$\Omega$ (\resp of $\Lambdao_{\Omega^*}(\Gamma)$ in~$\Omega^*$).
By Lemma~\ref{lem:C-hyp}.\eqref{item:Gamma-hyp}, any orbit map $\Gamma \to (\Ccore_{\Omega},d_{\Omega})$ (\resp $\Gamma \to (\Ccore_{\Omega^*},d_{\Omega^*})$) is a quasi-isometry which extends to a $\Gamma$-equivariant homeomorphism
$$\xi : \partial_{\infty}\Gamma \longrightarrow \Lambdao_{\Omega}(\Gamma)\subset\PP(V)
\quad \quad (\text{resp. } ~ \xi^*: \partial_{\infty} \Gamma \longrightarrow \Lambdao_{\Omega^*}(\Gamma) \subset \PP(V^*)).$$
We see $\PP(V^*)$ as the space of projective hyperplanes of $\PP(V)$.
 
\begin{lemma} \label{lem:compatible-maps}
The boundary maps $\xi$ and~$\xi^*$ are compatible, \ie for any $\eta\in\partial_{\infty}\Gamma$ we have $\xi(\eta) \in \xi^*(\eta)$; more precisely, $\xi^*(\eta)$ is a supporting hyperplane of $\Omega$ at $\xi(\eta)$.
\end{lemma}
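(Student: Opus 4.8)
The plan is to lift the whole picture to the cones over $\PP(V)$ and $\PP(V^*)$ and to exploit the $\widehat\Gamma$-invariance of the tautological pairing $V^*\times V\to\RR$ together with the divergence estimate of Lemma~\ref{lem:vector-growth}. Fix the lift $\widehat\Gamma\subset\SL^{\pm}(V)$ preserving a properly convex open cone $\widetilde\Omega\subset V$ over~$\Omega$ (Remark~\ref{rem:lift-Gamma}); the contragredient action of $\widehat\Gamma$ on~$V^*$ then preserves the dual cone $\widetilde\Omega^*$ over~$\Omega^*$, and this action is again discrete. Choose $\tilde z_0\in\widetilde\Omega$ lifting a point $z_0\in\Omega$ and $\tilde\ell_0\in\widetilde\Omega^*$ lifting a hyperplane $H_0\in\Omega^*$. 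Since $\tilde\ell_0$ does not vanish on $\overline{\widetilde\Omega}\smallsetminus\{0\}$, the scalar $c_0:=\tilde\ell_0(\tilde z_0)$ is nonzero.

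Fix $\eta\in\partial_{\infty}\Gamma$ and a sequence $(\gamma_m)$ in~$\Gamma$ converging to~$\eta$, which we may take pairwise distinct. By Lemma~\ref{lem:C-hyp}.\eqref{item:boundary-map-indep-of-orbit}, applied to~$\Omega$ and to~$\Omega^*$, we have $[\widehat\gamma_m\tilde z_0]=\gamma_m\cdot z_0\to\xi(\eta)$ in $\PP(V)$ and $[\widehat\gamma_m\tilde\ell_0]=\gamma_m\cdot H_0\to\xi^*(\eta)$ in $\PP(V^*)$. The key algebraic observation is that the pairing is $\widehat\Gamma$-invariant:
\[
(\widehat\gamma_m\tilde\ell_0)(\widehat\gamma_m\tilde z_0)=\tilde\ell_0\big(\widehat\gamma_m^{-1}\widehat\gamma_m\tilde z_0\big)=c_0 \qquad\text{for every }m.
\]

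I would then normalize: set $u_m:=\widehat\gamma_m\tilde z_0/\Vert\widehat\gamma_m\tilde z_0\Vert$ and $\phi_m:=\widehat\gamma_m\tilde\ell_0/\Vert\widehat\gamma_m\tilde\ell_0\Vert$, and pass to a subsequence so that $u_m\to u_\infty$ and $\phi_m\to\phi_\infty$, with $[u_\infty]=\xi(\eta)$ and $[\phi_\infty]=\xi^*(\eta)$. Dividing the invariance identity by $\Vert\widehat\gamma_m\tilde z_0\Vert\,\Vert\widehat\gamma_m\tilde\ell_0\Vert$ gives
\[
\phi_m(u_m)=\frac{c_0}{\Vert\widehat\gamma_m\tilde z_0\Vert\;\Vert\widehat\gamma_m\tilde\ell_0\Vert}.
\]
Here Lemma~\ref{lem:vector-growth} enters twice: applied to the action of $\widehat\Gamma$ on $\widetilde\Omega$ it gives $\Vert\widehat\gamma_m\tilde z_0\Vert\to+\infty$, and applied to the contragredient action on $\widetilde\Omega^*$ it gives $\Vert\widehat\gamma_m\tilde\ell_0\Vert\to+\infty$. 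Hence the right-hand side tends to~$0$, so $\phi_\infty(u_\infty)=0$, which is exactly the incidence $\xi(\eta)\in\xi^*(\eta)$. Finally, since $\xi^*(\eta)\in\Lambdao_{\Omega^*}(\Gamma)\subset\partial\Omega^*$ is by construction a supporting hyperplane of~$\Omega$, and it now passes through the boundary point $\xi(\eta)\in\partial\Omega$, it is a supporting hyperplane of~$\Omega$ \emph{at}~$\xi(\eta)$.

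The argument is essentially forced once the pairing invariance is written down; the only substantive input is that \emph{both} vector norms genuinely diverge, which is precisely what Lemma~\ref{lem:vector-growth} supplies for a properly convex open cone and for its dual. The main (very mild) obstacle is bookkeeping the sign and normalization conventions for $\widetilde\Omega^*$, so that the two applications of Lemma~\ref{lem:vector-growth} on the $V$- and $V^*$-sides are both legitimate and the limiting directions $u_\infty,\phi_\infty$ really represent $\xi(\eta)$ and $\xi^*(\eta)$; no genuine difficulty arises beyond that.
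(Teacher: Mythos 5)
Your proposal is correct and follows essentially the same route as the paper's proof: lift to the invariant cones, use the $\widehat\Gamma$-invariance of the pairing $\varphi(x)$, apply Lemma~\ref{lem:vector-growth} to both the action on $\widetilde\Omega$ and the dual action on $\widetilde\Omega^*$ to see that both norms diverge, and pass to the limit to get the incidence $\xi(\eta)\in\xi^*(\eta)$, with the supporting-hyperplane statement following from $\xi^*(\eta)\in\partial\Omega^*$. No gaps.
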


\begin{proof}
Let $(\gamma_m)_{m\in\NN}$ be a quasi-geodesic ray in $\Gamma$ with limit $\eta \in \partial_{\infty} \Gamma$.
For any $x \in \Omega$ and any $H \in \Omega^*$, we have $\gamma_m\cdot x \to \xi(\eta)$ and $\gamma_m \cdot H \to \xi^*(\eta)$.
Lift $x$ to a vector $v \in V$ and lift $H$ to a linear form $\varphi \in V^*$.
Lift the sequence $\gamma_m$ to a sequence $\hat \gamma_m \in \SL^{\pm} (V)$.
By Lemma~\ref{lem:vector-growth}, the sequences $(\hat \gamma_m\cdot v)_{m\in\NN}$ and $(\hat \gamma_m\cdot\varphi)_{m\in\NN}$ go to infinity in $V$ and $V^*$ respectively.
Since $(\hat \gamma_m \cdot \varphi)(\hat \gamma_m \cdot v) = \varphi(v)$ is independent of~$m$, we obtain $\xi(\eta)\in\xi^*(\eta)$ by passing to the limit.
Since $\xi^*(\eta)$ belongs to $\partial\Omega^*$, it is a supporting hyperplane of~$\Omega$.
\end{proof}

\begin{lemma}
The maps $\xi$ and~$\xi^*$ are transverse, \ie for any $\eta\neq\eta'$ in $\partial_{\infty}\Gamma$ we have $\xi(\eta) \notin \xi^*(\eta')$.
\end{lemma}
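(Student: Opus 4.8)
The plan is to argue by contradiction, exploiting the compatibility just established together with the fact that $\Lambdao_{\Omega}(\Gamma)$ contains no nontrivial segment. So suppose there exist $\eta\neq\eta'$ in $\partial_{\infty}\Gamma$ with $\xi(\eta)\in\xi^*(\eta')$. By Lemma~\ref{lem:compatible-maps}, the hyperplane $H:=\xi^*(\eta')$ is a supporting hyperplane of $\Omega$ at the boundary point $\xi(\eta')\in\Lambdao_{\Omega}(\Gamma)\subset\partial\Omega$; in particular $\xi(\eta')\in H$ as well. Since $\xi$ is injective (it is a homeomorphism onto $\Lambdao_{\Omega}(\Gamma)$ by Lemma~\ref{lem:C-hyp}.\eqref{item:Gamma-hyp}), the two points $\xi(\eta)$ and $\xi(\eta')$ are distinct.

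The key step is to observe that the whole projective segment $[\xi(\eta),\xi(\eta')]$ must lie in $\partial\Omega$. Indeed, both endpoints lie in the hyperplane~$H$ and in $\overline{\Omega}$, so the segment joining them inside the properly convex set $\overline{\Omega}$ is contained in $H\cap\overline{\Omega}$ (a hyperplane is a projective subspace, hence contains the whole projective line through two of its points); and since $H$ supports~$\Omega$ we have $H\cap\Omega=\emptyset$, whence $H\cap\overline{\Omega}\subset\partial\Omega$. Thus $[\xi(\eta),\xi(\eta')]$ is a nontrivial segment contained in $\partial\Omega$.

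To reach a contradiction I would then locate this segment inside the full orbital limit set. Both endpoints lie in $\Lambdao_{\Omega}(\Gamma)=\partiali\Ccore_{\Omega}$, which by Lemma~\ref{lem:1-neighb-fib}.\eqref{item:C-core-filled-lim-set} is the ideal boundary of the convex core, with $\Ccore_{\Omega}$ closed in~$\Omega$, so that $\overline{\Ccore_{\Omega}}\smallsetminus\Omega=\partiali\Ccore_{\Omega}=\Lambdao_{\Omega}(\Gamma)$. By convexity of $\overline{\Ccore_{\Omega}}$ the segment lies in $\overline{\Ccore_{\Omega}}$, while by the previous paragraph it avoids~$\Omega$; hence it lies in $\overline{\Ccore_{\Omega}}\smallsetminus\Omega=\Lambdao_{\Omega}(\Gamma)$. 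This contradicts the fact, obtained through the implication \ref{item:ccc-hyp}~$\Rightarrow$~\ref{item:ccc-noseg-any} of Theorem~\ref{thm:main-noPETs}, that $\Lambdao_{\Omega}(\Gamma)$ contains no nontrivial projective line segment. The only genuinely delicate point is the second step, namely forcing the segment onto the boundary: this is exactly where the strengthened conclusion of Lemma~\ref{lem:compatible-maps} --- that $\xi^*(\eta')$ does not merely contain $\xi(\eta')$ but actually \emph{supports} $\Omega$ --- is indispensable, since it guarantees that $H$ meets $\overline{\Omega}$ only along $\partial\Omega$.
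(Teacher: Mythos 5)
Your proof is correct and follows essentially the same route as the paper's: both endpoints of $[\xi(\eta),\xi(\eta')]$ lie in the supporting hyperplane $\xi^*(\eta')$ by compatibility, the segment is then forced into $\Lambdao_{\Omega}(\Gamma)=\partiali\Ccore_{\Omega}(\Gamma)$ via Lemma~\ref{lem:1-neighb-fib}.\eqref{item:C-core-filled-lim-set}, and the absence of nontrivial segments in $\Lambdao_{\Omega}(\Gamma)$ (condition~\ref{item:ccc-noseg-any}) together with injectivity of~$\xi$ gives $\eta=\eta'$. The paper states this more tersely, but your added detail on why the segment avoids $\Omega$ is exactly the content it leaves implicit.
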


\begin{proof}
Consider $\eta,\eta'\in\partial_{\infty}\Gamma$ such that $\xi(\eta) \in \xi^*(\eta')$.
Let us check that $\eta=\eta'$.
By Corollary~\ref{cor:ideal-bound-naive-cc}.\eqref{item:ideal-bound-cc}, we have $\Lambdao_{\Omega}(\Gamma)=\partiali\Ccore_{\Omega}(\Gamma)$, and so the projective line segment $[\xi(\eta),\xi(\eta')]$, contained in the supporting hyperplane $\xi^*(\eta')$, is contained in $\Lambdao_{\Omega}(\Gamma)$.
Since $\Lambdao_\Omega(\Gamma)$ contains no nontrivial projective line segment by condition~\ref{item:ccc-noseg-any} of Theorem~\ref{thm:main-noPETs}, we deduce $\eta=\eta'$.
\end{proof}

For any $\gamma\in\Gamma$ of infinite order, we denote by $\eta_{\gamma}^+$ (\resp $\eta_{\gamma}^-$) the attracting (\resp repelling) fixed point of $\gamma$ in $\partial_{\infty}\Gamma$.

\begin{lemma} \label{lem:dyn-preserv}
The maps $\xi$ and~$\xi^*$ are dynamics-preserving.
\end{lemma}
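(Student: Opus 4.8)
The plan is to show that for each infinite-order $\gamma\in\Gamma$, with attracting and repelling fixed points $\eta_\gamma^\pm\in\partial_\infty\Gamma$, the element $\gamma$ is proximal in $\PP(V)$ with attracting fixed point $\xi(\eta_\gamma^+)$, and proximal in $\PP(V^*)$ with attracting fixed point $\xi^*(\eta_\gamma^+)$; this is exactly the assertion that $\xi,\xi^*$ are dynamics-preserving.

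First I would record the relevant dynamical convergence. Since $\Gamma$ is word hyperbolic and $\gamma$ has infinite order, $\gamma^m\to\eta_\gamma^+$ and $\gamma^{-m}\to\eta_\gamma^-$ in $\partial_\infty\Gamma$. By Lemma~\ref{lem:C-hyp}.\eqref{item:boundary-map-indep-of-orbit}, any orbit map extends continuously, independently of the basepoint, to $\xi$; hence $\gamma^m\cdot y\to\xi(\eta_\gamma^+)$ for \emph{every} $y\in\Omega$, and applying the same to the convex cocompact action on the dual $\Omega^*$ gives $\gamma^m\cdot H\to\xi^*(\eta_\gamma^+)$ for every $H\in\Omega^*$. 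By $\Gamma$-equivariance of $\xi$ and $\xi^*$, the points $p^\pm:=\xi(\eta_\gamma^\pm)$ are fixed by $\gamma$ in $\PP(V)$ and $H^\pm:=\xi^*(\eta_\gamma^\pm)$ are fixed by $\gamma$ in $\PP(V^*)$.

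The core of the argument is proximality. Since $p^+\notin H^-$ by transversality, the line $L^+\subset V$ over $p^+$ and the hyperplane $\widehat{H}^-\subset V$ over $H^-$ are both $\gamma$-invariant and satisfy $V=L^+\oplus\widehat{H}^-$; write the corresponding block decomposition $\gamma=\mu^+\oplus A$, where $\mu^+$ is the (necessarily real) scalar by which $\gamma$ acts on the line $L^+$. The goal is to prove that the spectral radius of $A$ is strictly smaller than $|\mu^+|$, which is precisely proximality of $\gamma$ in $\PP(V)$ with attracting fixed point $p^+$. To this end I would lift $\Omega$ to an open convex cone $\widetilde\Omega\subset V$; its projection to $\widehat{H}^-$ along $L^+$ is open, so $\widetilde\Omega$ lies in no proper subspace of $V$. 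If the spectral radius of $A$ were $\geq|\mu^+|$, I would choose $y\in\Omega$ whose lift $x=x^++x^-$ has nonzero component along the sum of generalized eigenspaces of $A$ of maximal modulus — possible because the vectors with vanishing such component form a proper subspace. Comparing $(\mu^+)^m x^+$ with $A^m x^-$ inside $\gamma^m x$ then shows that the $\widehat{H}^-$-component of $[\gamma^m\cdot x]$ does not become negligible, so $[\gamma^m\cdot x]$ cannot converge to $p^+=[L^+]$, contradicting $\gamma^m\cdot y\to p^+$. Thus the spectral radius of $A$ is $<|\mu^+|$, giving proximality in $\PP(V)$ with attracting fixed point $\xi(\eta_\gamma^+)$. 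Finally, because Proposition~\ref{prop:dual-precise} makes the whole situation symmetric in $(\Omega,\xi)$ and $(\Omega^*,\xi^*)$, and transversality in the dual sense gives $\xi^*(\eta_\gamma^+)\notin\xi(\eta_\gamma^-)$, the identical argument applied to the convex cocompact action on $\Omega^*$ yields proximality of $\gamma$ in $\PP(V^*)$ with attracting fixed point $\xi^*(\eta_\gamma^+)$.

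The main obstacle is the borderline case in which $A$ has an eigenvalue of modulus exactly $|\mu^+|$: merely ruling out a complex top eigenvalue or a top eigenspace of dimension $\geq 2$ is not enough, since a single Jordan block or a balanced modulus must also be excluded. Here I would split into the semisimple subcase, where $\|A^m x^-\|/|\mu^+|^m$ stays bounded below so the $\widehat{H}^-$-component of $[\gamma^m\cdot x]$ persists, and the Jordan subcase, where this quantity grows like $m^{j}|\mu^+|^m$ with $j\geq 1$ and the $\widehat{H}^-$-component dominates; in both, $[\gamma^m\cdot x]$ fails to converge to $p^+$. Excluding this equality case is exactly the step that genuinely uses the openness of $\Omega$ (through $\widetilde\Omega$ meeting the critical eigenspace), rather than just convergence of a single orbit.
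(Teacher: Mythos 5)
Your proposal is correct and follows essentially the same route as the paper: decompose $V=L^+\oplus H^-$ using transversality of $\xi(\eta_\gamma^+)$ and $\xi^*(\eta_\gamma^-)$, use openness of~$\Omega$ to choose a lift $x=\ell^++h^-$ whose $H^-$-component realizes the spectral radius growth of $\hat\gamma|_{H^-}$, and conclude from $\hat\gamma^m\cdot[x]\to\xi(\eta_\gamma^+)$ (Lemma~\ref{lem:C-hyp}.\eqref{item:boundary-map-indep-of-orbit}) that the top eigenvalue on $L^+$ strictly dominates; the dual case is handled symmetrically. The paper phrases the endgame slightly more directly — the lower bound $\Vert\hat\gamma^m\cdot h^-\Vert\geq\delta t^m\Vert h^-\Vert$ with $\delta>0$ fixed already forces $s>t$ from the ratio tending to $0$, subsuming your semisimple/Jordan case split — but this is a cosmetic difference.
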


\begin{proof}
We only prove it for~$\xi$; the argument is similar for~$\xi^*$.
We fix a norm $\Vert\cdot\Vert_{\scriptscriptstyle V}$ on~$V$.
Let $\gamma\in\nolinebreak\Gamma$ be an element of infinite order; we lift it to an element $\hat{\gamma}\in\SL^{\pm}(V)$ that preserves a properly convex cone of $V\smallsetminus\{0\}$ lifting~$\Omega$.
Let $L^+$ be the line of $V$ corresponding to $\xi(\eta_{\gamma}^+)$ and $H^-$ the hyperplane of $V$ corresponding to $\xi^*(\eta_{\gamma}^-)$.
By transversality, we have $V = L^+ \oplus H^-$, and this decomposition is preserved by~$\hat{\gamma}$.
Let $[v] \in \Omega$ and write $v = \ell^+ + h^-$ with $\ell^+ \in L^+$ and $h^- \in H^-$.
Since $\Omega$ is open, we may choose $v$ so that $\ell^+ \neq 0$ and $h^-$ satisfies $\Vert{\hat{\gamma}}^m\cdot h^- \Vert_{\scriptscriptstyle V} \geq \delta t^m \Vert h^- \Vert_{\scriptscriptstyle V} > 0$ for all $m\in\NN$, where $\delta > 0$ and where $t > 0$ is the spectral radius of the restriction of $\hat{\gamma}$ to~$H^-$.
On the other hand, ${\hat{\gamma}}^m \cdot \ell^+ = s^m \ell^+$ where $s$ is the eigenvalue of $\hat{\gamma}$ on~$L^+$.
By Lemma~\ref{lem:C-hyp}.\eqref{item:boundary-map-indep-of-orbit}, we have ${\hat{\gamma}}^m\cdot [v]\to\xi(\eta_{\gamma}^+)$ as $m\to +\infty$, hence
$$\frac{\Vert {\hat{\gamma}}^m\cdot h^-\Vert_{\scriptscriptstyle V}}{\Vert {\hat{\gamma}}^m\cdot\ell^+\Vert_{\scriptscriptstyle V}} \underset{m\to +\infty}{\longrightarrow} 0. $$
Necessarily $s>t$, and so $\xi(\eta_{\gamma}^+)$ is an attracting fixed point for the action of $\gamma$ on $\PP(V)$.
\end{proof}

As an immediate consequence of Lemmas~\ref{lem:C-hyp}.\eqref{item:Gamma-hyp} and~\ref{lem:dyn-preserv}, we obtain the following.

\begin{corollary} \label{cor:filled-lim-set-prox}
For any infinite-order element $\gamma\in\Gamma$, the element $\rho(\gamma)\in\PGL(V)$ is proximal in $\PP(V)$, and the full orbital limit set $\Lambdao_{\Omega}(\Gamma)$ is equal to the proximal limit set $\Lambda_\Gamma$ (see Definition~\ref{def:prox-lim-set}).
\end{corollary}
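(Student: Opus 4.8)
The plan is to deduce both assertions at once from the $\Gamma$-equivariant homeomorphism $\xi\colon\partial_\infty\Gamma\to\Lambdao_\Omega(\Gamma)$ provided by Lemma~\ref{lem:C-hyp}.\eqref{item:boundary-map-indep-of-orbit}, combined with the dynamics-preserving property established in Lemma~\ref{lem:dyn-preserv}. The key point is that $\xi$ transports the boundary dynamics of $\Gamma$ onto the projective dynamics of $\rho(\Gamma)$, so the statement reduces to a standard fact about $\partial_\infty\Gamma$.

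First, for the proximality claim, let $\gamma\in\Gamma$ have infinite order, with attracting fixed point $\eta_\gamma^+\in\partial_\infty\Gamma$. The computation carried out in the proof of Lemma~\ref{lem:dyn-preserv} shows that $\xi(\eta_\gamma^+)$ is an attracting fixed point for the action of $\rho(\gamma)$ on $\PP(V)$; in particular $\rho(\gamma)$ has a unique attracting fixed point and is therefore proximal in $\PP(V)$, with attracting fixed point exactly $\xi(\eta_\gamma^+)$.

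For the identification of $\Lambdao_\Omega(\Gamma)$, I would first recall that the attracting fixed points $\{\eta_\gamma^+ : \gamma\in\Gamma \text{ of infinite order}\}$ form a dense subset of $\partial_\infty\Gamma$: when $\Gamma$ is non-elementary this follows from the minimality of the $\Gamma$-action on $\partial_\infty\Gamma$ (together with the existence of at least one infinite-order element, which any infinite word hyperbolic group possesses), while in the elementary case $\partial_\infty\Gamma$ consists of the two endpoints of an infinite-order generator, both of which are attracting fixed points (of $\gamma$ and of $\gamma^{-1}$). Since $\xi$ is a homeomorphism between the compact spaces $\partial_\infty\Gamma$ and $\Lambdao_\Omega(\Gamma)$, it is in particular a closed map commuting with the closure operation. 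By the previous step, the set of attracting fixed points of the proximal elements $\rho(\gamma)$ (for $\gamma$ of infinite order) is exactly $\xi\big(\{\eta_\gamma^+\}\big)$, whose closure is therefore $\xi\big(\overline{\{\eta_\gamma^+\}}\big)=\xi(\partial_\infty\Gamma)=\Lambdao_\Omega(\Gamma)$, which is the desired equality.

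Since the argument is merely a translation through $\xi$, there is no serious obstacle; the only point requiring a little care is the density of attracting fixed points in $\partial_\infty\Gamma$, for which one separates the elementary and non-elementary cases as above. Alternatively, one may avoid density altogether by observing that the closure in $\PP(V)$ of the set of attracting fixed points is a nonempty closed $\Gamma$-invariant subset of $\Lambdao_\Omega(\Gamma)$, and then concluding by minimality of the $\Gamma$-action on $\Lambdao_\Omega(\Gamma)$ (transported from $\partial_\infty\Gamma$ via $\xi$) in the non-elementary case, treating the two-point boundary directly in the elementary case.
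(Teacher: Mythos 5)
Your argument is correct and is essentially the paper's own: the corollary is stated there as an immediate consequence of Lemma~\ref{lem:C-hyp}.\eqref{item:Gamma-hyp} and Lemma~\ref{lem:dyn-preserv}, and your proof simply spells out that deduction (proximality from the eigenvalue comparison $s>t$ in Lemma~\ref{lem:dyn-preserv}, and the identification of $\Lambdao_\Omega(\Gamma)$ by transporting the density of attracting fixed points in $\partial_\infty\Gamma$ through the homeomorphism $\xi$). The care you take with the elementary versus non-elementary cases is a reasonable filling-in of a detail the paper leaves implicit.
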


%%%%%%%%%%%%%%%%%%%%%%%%%
\subsection{The natural inclusion $\Gamma\hookrightarrow\PGL(V)$ is $P_1$-Anosov}

\begin{lemma} \label{lem:P1-div}
We have $(\mu_1-\mu_2)(\gamma)\to +\infty$ as $\gamma\to\infty$ in~$\Gamma$.
\end{lemma}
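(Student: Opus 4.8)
The plan is to argue by contradiction, extracting from a putative counterexample a rank-drop limit operator whose projectivized image produces a nontrivial segment in $\Lambdao_\Omega(\Gamma)$, contradicting the absence of segments established above (condition~\ref{item:ccc-noseg-any} of Theorem~\ref{thm:main-noPETs}). Suppose the conclusion fails. Then there exist $C>0$ and infinitely many pairwise distinct $\gamma_m\in\Gamma$ with $(\mu_1-\mu_2)(\gamma_m)\leq C$; being distinct elements of a discrete subgroup of $\PGL(V)$, they leave every finite set, so $\gamma_m\to\infty$. For each $m$ I would pick a linear representative $g_m\in\GL(V)$ of $\gamma_m$ normalized so that its largest singular value equals $1$ (\ie $\mu_1(g_m)=0$), and write a Cartan decomposition $g_m=k_m\exp(a_m)\ell_m$ with $k_m,\ell_m\in\OO(n)$ and $a_m=\mathrm{diag}\big(0,-(\mu_1-\mu_2)(\gamma_m),\dots,-(\mu_1-\mu_n)(\gamma_m)\big)$. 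After passing to a subsequence, $k_m\to k$ and $\ell_m\to\ell$ in the compact group $\OO(n)$, and each diagonal entry $e^{-(\mu_1-\mu_i)(\gamma_m)}\in[0,1]$ converges to some $\lambda_1\geq\dots\geq\lambda_n$; hence $g_m\to T:=k\,\mathrm{diag}(\lambda_1,\dots,\lambda_n)\,\ell$ in $\mathrm{End}(V)$.

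The key step is to control the rank $r$ of $T$, which equals $\#\{i:\lambda_i>0\}$. On the one hand $\lambda_1=1$, and since $(\mu_1-\mu_2)(\gamma_m)\leq C$ we get $\lambda_2\geq e^{-C}>0$, so $r\geq 2$. On the other hand, discreteness of $\Gamma$ together with \eqref{eqn:mu-1-n} forces $(\mu_1-\mu_n)(\gamma_m)\to+\infty$ as $\gamma_m\to\infty$: the sublevel sets $\{g:(\mu_1-\mu_n)(g)\leq R\}$ are compact in $\PGL(V)$, hence meet the discrete group $\Gamma$ in finitely many elements. Thus $\lambda_n=0$ and $r\leq n-1$. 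Consequently $2\leq r\leq n-1$, and the projective subspace $\PP(\mathrm{Im}\,T)=k\cdot\PP(\mathrm{span}(e_1,\dots,e_r))$ has dimension $r-1\geq 1$, while $\ker T=\ell^{-1}\cdot\mathrm{span}(e_{r+1},\dots,e_n)$ is a proper subspace.

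The last step is to exhibit the forbidden segment. The induced projective map $\bar T:\PP(V)\smallsetminus\PP(\ker T)\to\PP(\mathrm{Im}\,T)$ is open and surjective, and since $g_m\to T$ in $\mathrm{End}(V)$ and $Tu\neq 0$ for $[u]\notin\PP(\ker T)$, we have $\gamma_m\cdot[u]\to\bar T[u]$. As $\Omega$ is open and $\PP(\ker T)$ is a proper projective subspace, $\Omega\smallsetminus\PP(\ker T)$ is nonempty and open, so $\bar T(\Omega\smallsetminus\PP(\ker T))$ is a nonempty open subset of $\PP(\mathrm{Im}\,T)$; since $\dim\PP(\mathrm{Im}\,T)\geq 1$, it contains a nontrivial projective line segment. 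But each point $\bar T[u]=\lim_m\gamma_m\cdot[u]$ is an accumulation point of the orbit $\Gamma\cdot[u]\subset\Omega$, hence lies in $\partial\Omega$ (by proper discontinuity, as $\gamma_m\to\infty$) and therefore in the full orbital limit set $\Lambdao_\Omega(\Gamma)$. Thus $\Lambdao_\Omega(\Gamma)$ contains a nontrivial segment, contradicting the no-segment property, and the lemma follows.

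I expect the main obstacle to be the rank bookkeeping of the middle paragraph: one must be certain that the bounded gap $(\mu_1-\mu_2)$ genuinely forces $\mathrm{rank}\,T\geq 2$ while $\gamma_m\to\infty$ forces $\mathrm{rank}\,T\leq n-1$, so that $\PP(\mathrm{Im}\,T)$ is simultaneously positive-dimensional and proper. This is precisely the mechanism that converts a failure of the singular-value gap into a segment in the limit set, and keeping track of the operator-norm normalization and the openness of $\bar T$ is where care is needed.
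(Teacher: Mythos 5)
Your proof is correct, but it takes a genuinely different route from the paper's. The paper argues directly (not by contradiction): it fixes a sequence $\gamma_m\to\eta\in\partial_\infty\Gamma$, invokes the boundary map $\xi$ produced by Lemma~\ref{lem:C-hyp}.\eqref{item:boundary-map-indep-of-orbit} so that $\gamma_m\cdot y\to\xi(\eta)$ for \emph{every} $y\in\Omega$, writes $\gamma_m=k_ma_mk'_m$, and then chooses two points of $\Omega$ whose $k'$-images agree in the first coordinate but differ in the second; the only way both orbits can converge to the same limit $k^{-1}\cdot\xi(\eta)$ is if $a_{2,m}/a_{1,m}\to 0$. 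You instead run the standard ``bounded singular-value gap forces a positive-dimensional limit'' mechanism: normalizing representatives and extracting a limit endomorphism $T$ of rank $r$ with $2\leq r\leq n-1$ (the lower bound from the bounded gap, the upper bound from discreteness via the compactness of the sublevel sets of $\mu_1-\mu_n$), you get that $\bar T(\Omega\smallsetminus\PP(\ker T))$ is a nonempty open subset of the positive-dimensional $\PP(\mathrm{Im}\,T)$ consisting of orbit accumulation points, so $\Lambdao_\Omega(\Gamma)$ contains a segment, contradicting condition~\ref{item:ccc-noseg-any}. Your rank bookkeeping is right, and the argument is sound. The trade-off: your proof bypasses the boundary-map machinery of Lemma~\ref{lem:C-hyp} entirely and appeals only to proper discontinuity and the no-segment property of the full orbital limit set (so it would apply verbatim to any discrete group preserving $\Omega$ with segment-free $\Lambdao_\Omega(\Gamma)$), whereas the paper's computation is shorter given that the boundary maps have already been constructed for use in the surrounding lemmas, and it identifies the common limit as $\xi(\eta)$, which is needed elsewhere in Section~\ref{sec:ccc-implies-Anosov} anyway.
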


\begin{proof}
Consider a sequence $(\gamma_m)\in\Gamma^{\NN}$ going to infinity in~$\Gamma$.
Up to extracting we can assume that there exists $\eta\in\partial_{\infty}\Gamma$ such that $\gamma_m\to\eta$.
Then $\gamma_m\cdot x\to\xi(\eta)$ for all $x\in\Omega$ by Lemma~\ref{lem:C-hyp}.\eqref{item:Gamma-hyp}.
For any~$m$ we can write $\gamma_m = k_m a_m k'_m \in K \exp(\mathfrak{a}^+) K$ where $a_m=\mathrm{diag}(a_{i,m})_{1\leq i\leq n}$ with $a_{i,m}\geq a_{i+1,m}$ (see Section~\ref{subsec:Cartan}).
Up to extracting, we may assume that $(k_m)_{m\in\NN}$ converges to some $k\in K$ and $(k'_m)_{m\in\NN}$ to some $k'\in K$.
Let $(e_1,\dots,e_n)$ be the standard basis of $V = \RR^n$, orthonormal for the inner product preserved by $\hat{K} = \OO(n)$.
Since $\Omega$ is open, we can find points $x,y$ of $\Omega$ lifting respectively to $v,w\in V$ with
$$k'\cdot v = \sum_{i=1}^n s_i e_i \quad\quad \text{ and } \quad\quad k'\cdot w = \sum_{i=1}^n t_i e_i$$
such that $s_1=t_1$ but $s_2\neq t_2$.
For any~$m$, let us write
$$k'_m\cdot v = \sum_{i=1}^n s_{i,m} e_i \quad\quad \text{ and } \quad\quad k'_m\cdot w = \sum_{i=1}^n t_{i,m} e_i$$
where $s_{i,m}\to s_i$ and $t_{i,m}\to t_i$.
Then
$$a_m k'_m\cdot x = \left[s_{1,m} e_1 + \sum_{i=2}^n \frac{a_{i,m}}{a_{1,m}} \, s_{i,m} e_i\right], \quad a_m k'_m\cdot y = \left[t_{1,m} e_1 + \sum_{i=2}^n \frac{a_{i,m}}{a_{1,m}} \, t_{i,m} e_i\right].$$
The sequences $(\gamma_m\cdot x)_{m\in\NN}$ and $(\gamma_m\cdot y)_{m\in\NN}$ both converge to $\xi(\eta)$.
Hence the sequences $(a_mk'_m\cdot\nolinebreak x)_{m\in\NN}$ and $(a_mk'_m\cdot y)_{m\in\NN}$ both converge to the same point $k^{-1}\cdot\xi(\eta)\in\PP(V)$.
Since $s_1=t_1$ and $s_2\neq t_2$, we must have $a_{2,m}/a_{1,m}\to 0$, \ie $(\mu_1-\mu_2)(\gamma_m)\to +\infty$.
\end{proof}

By Fact~\ref{fact:charact-Ano}, the natural inclusion $\Gamma\hookrightarrow\PGL(V)$ is $P_1$-Anosov.
This completes the proof of the implication \ref{item:ccc-hyp}~$\Rightarrow$~\ref{item:P1Anosov} of Theorem~\ref{thm:main-noPETs}.

%%%%%%%%%%%%%%%%%%%%%%%%%%%%%%%%%%%%%%%%%%%%%%%%%%%
\section{$P_1$-Anosov implies convex cocompactness}\label{sec:Anosov-implies-ccc-noPETs}

In this section we prove the implications \ref{item:P1Anosov}~$\Leftrightarrow$~\ref{item:P1Anosov-bis}~$\Rightarrow$~\ref{item:ccc-hyp} in Theorem~\ref{thm:main-noPETs}.

\begin{proof}[Proof of \ref{item:P1Anosov}~$\Rightarrow$~\ref{item:P1Anosov-bis}]
Let $\Gamma$ be an infinite discrete subgroup of $\PGL(V)$.
Suppose that $\Gamma$ is word hyperbolic, that the inclusion $\Gamma\hookrightarrow\PGL(V)$ is $P_1$-Anosov, and that $\Gamma$ preserves some nonempty properly convex open subset $\mathcal{O}$ of $\PP(V)$.
Then the proximal limit set $\Lambda_\Gamma^*$ of $\Gamma$ in $\PP(V^*)$ is nonempty and, by Proposition~\ref{prop:max-inv-conv}, it lifts to a cone $\widetilde{\Lambda}_{\Gamma}^*$ of $V^*\smallsetminus\{0\}$ such that the convex open set
$$\Omega_{\max} := \PP(\{ v\in V ~|~ \varphi(v)>0\quad \forall\varphi\in\widetilde{\Lambda}_{\Gamma}^*\})$$
contains~$\mathcal{O}$ and is $\Gamma$-invariant.
The set $\Omega_{\max}$ is a connected component of $\PP(V) \smallsetminus \bigcup_{H\in\Lambda_{\Gamma}^*} H$ which contains $\mathcal O$.
\end{proof}

The implications \ref{item:P1Anosov-bis} $\Rightarrow $ \ref{item:ccc-hyp} and  \ref{item:P1Anosov-bis} $\Rightarrow$ \ref{item:P1Anosov} are contained in the next proposition.

\begin{proposition} \label{prop:precise-Ano-implies-ccc}
Let $\Gamma$ be an infinite discrete subgroup of $\PGL(V)$ which is word hyperbolic, such that the natural inclusion $\Gamma\hookrightarrow\PGL(V)$ is $P_1$-Anosov, with boundary maps $\xi : \partial_{\infty}\Gamma\to\PP(V)$ and $\xi^* : \partial_{\infty}\Gamma\to\PP(V^*)$.
Suppose that the proximal limit set $\Lambda_{\Gamma}^*=\xi^*(\partial_{\infty}\Gamma)$ lifts to a cone $\widetilde{\Lambda}_{\Gamma}^*$ of $V^*\smallsetminus\{0\}$ such that the convex open set
$$\Omega_{\max} := \PP(\{ v\in V ~|~ \varphi(v)>0\quad \forall\varphi\in\widetilde{\Lambda}_{\Gamma}^*\})$$
is nonempty and $\Gamma$-invariant.
Then $\Gamma$ acts convex cocompactly (Definition~\ref{def:cc-general}) on some nonempty properly convex open set $\Omega\subset\Omega_{\max}$ (which can be taken to be $\Omega_{\max}$ if $\Omega_{\max}$ is properly convex).
Moreover, the full orbital limit set $\Lambdao_\Omega(\Gamma)$ for any such~$\Omega$ is equal to the proximal limit set $\Lambda_{\Gamma}=\xi(\partial_{\infty}\Gamma)$.
\end{proposition}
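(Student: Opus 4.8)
The plan is to take $\mathcal{C}$ to be the convex hull of $\Lambda_\Gamma=\xi(\partial_\infty\Gamma)$ in $\PP(V)$ and to realize $\Ccore_\Omega(\Gamma)$ as $\mathcal{C}\cap\Omega$ for a well-chosen $\Omega$. First I would record that $\Lambda_\Gamma\subset\partial\Omega_{\max}$: compatibility $\xi(\eta)\in\xi^*(\eta)$ means a defining functional of $\Omega_{\max}$ vanishes at $\xi(\eta)$, while $\ell\geq 0$ on $\widetilde\Lambda_\Gamma$ for every $\ell\in\widetilde\Lambda_\Gamma^*$ (as in Proposition~\ref{prop:max-inv-conv}) shows $\xi(\eta)\in\overline{\Omega_{\max}}$. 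Transversality then gives that $\mathcal{C}$ is properly convex and, crucially, that each $\xi(\eta)$ is an \emph{exposed} point of $\overline{\mathcal{C}}$ with supporting hyperplane $\xi^*(\eta)$ meeting $\overline{\mathcal{C}}$ only at $\xi(\eta)$: indeed any supporting hyperplane of $\Omega_{\max}$ lies in $\Lambda_\Gamma^*$ (which is closed), so it has the form $\xi^*(\eta_0)$, and $\xi^*(\eta_0)\cap\Lambda_\Gamma=\{\xi(\eta_0)\}$ by transversality. I would take $\Omega=\Omega_{\max}$ when $\Omega_{\max}$ is properly convex, and otherwise any $\Gamma$-invariant properly convex open set with $\mathcal{C}\subset\Omega\subset\Omega_{\max}$ and $\overline{\mathcal{C}}\cap\partial\Omega=\Lambda_\Gamma$, which exists by proper convexity of $\mathcal{C}$. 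Proper discontinuity of the $\Gamma$-action on $\Omega$, hence on $\mathcal{C}\cap\Omega$, is automatic from discreteness in $\mathrm{Aut}(\Omega)$.

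Next I would identify the full orbital limit set. The inclusion $\Lambda_\Gamma\subset\Lambdao_\Omega(\Gamma)$ holds because attracting fixed points of proximal elements are orbit accumulation points. For the reverse, take any sequence $\gamma_m\to\infty$ in $\Gamma$ and pass to a subsequence with $\gamma_m\to\eta$ and $\gamma_m^{-1}\to\eta^-$ in $\partial_\infty\Gamma$. Writing $\gamma_m\in k_m\exp(a_m)K$, the Anosov gap $(\mu_1-\mu_2)(\gamma_m)\to\infty$ (Fact~\ref{fact:charact-Ano}) forces, via Lemma~\ref{lem:vector-growth} and Fact~\ref{fact:Im-xi-Cartan}, that $\gamma_m\cdot y\to\xi(\eta)$ for every $y\in\Omega$; here one uses that $y\notin\xi^*(\eta^-)$, which holds because $\Lambda_\Gamma^*\subset\partial\Omega^*$ (Proposition~\ref{prop:max-inv-conv}), so no $\xi^*(\eta')$ meets $\Omega$. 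Thus $\Lambdao_\Omega(\Gamma)\subset\Lambda_\Gamma$, giving equality for every admissible $\Omega$, and $\Ccore_\Omega(\Gamma)=\mathcal{C}\cap\Omega$.

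The core of the proof, and the main obstacle, is cocompactness of the $\Gamma$-action on $\Ccore_\Omega(\Gamma)$. I would argue by contradiction: if the quotient is noncompact then, since $(\Omega,d_\Omega)$ is proper, there is a sequence $z_m\in\Ccore_\Omega(\Gamma)$ with $d_\Omega(z_m,\Gamma\cdot z_0)\to\infty$ for a fixed $z_0\in\mathcal{C}$. Replacing $z_m$ by $\gamma_m^{-1}z_m$, where $\gamma_m z_0$ is a nearest orbit point (which exists by proper discontinuity and properness of $d_\Omega$), I obtain $w_m\in\Ccore_\Omega(\Gamma)$ with $d_\Omega(w_m,z_0)\to\infty$ and $z_0$ nearest; hence $w_m$ leaves every compact subset of $\Omega$ and, up to extraction, $w_m\to w_\infty\in\overline{\mathcal{C}}\cap\partial\Omega$. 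By the exposedness statement above, $\overline{\mathcal{C}}\cap\partial\Omega=\Lambda_\Gamma$, so $w_\infty=\xi(\zeta)$ for some $\zeta\in\partial_\infty\Gamma$. Choosing a geodesic ray $\gamma_k\to\zeta$ in $\Gamma$ gives orbit points $\gamma_k z_0\to\xi(\zeta)$ by the previous paragraph. It then suffices to show that $w_m$ stays within bounded $d_\Omega$-distance of the ray $\{\gamma_k z_0\}_k$, contradicting $d_\Omega(w_m,z_0)\to\infty$ together with minimality.

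This last fellow-traveling estimate is where the real work lies. I expect to establish it in two steps: (i) showing the orbit map $\gamma\mapsto\gamma z_0$ is a quasi-isometric embedding of $\Gamma$ into $(\Omega,d_\Omega)$, through the comparison of $d_\Omega(z_0,\gamma z_0)$ with the Cartan projection and the Anosov lower bound $(\mu_1-\mu_2)(\gamma)\geq c\,\ell_\Gamma(\gamma)-C$, so that the orbit ray to $\zeta$ is an honest quasigeodesic converging to $\xi(\zeta)$; and (ii) using that $\xi(\zeta)$ is exposed to see that the approach of $w_m\in\mathcal{C}$ to $\xi(\zeta)$ is \emph{radial} inside a fixed cone, forcing $w_m$ to fellow-travel the orbit quasigeodesic. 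The delicate point is that $\partial\Omega$ need not be strictly convex or $C^1$ away from $\xi(\zeta)$, so the conical control must be extracted from transversality and the uniform singular-value gaps of the Anosov representation rather than from regularity of $\partial\Omega$; this is precisely the step that replaces the strict-convexity arguments of the classical rank-one setting, and where the absence of an irreducibility hypothesis is accommodated by the dual bookkeeping around $\Lambda_\Gamma^*$ and $\Omega_{\max}$ set up above.
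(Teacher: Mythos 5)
The setup (the convex hull $\C_0$ of $\Lambda_\Gamma$ in $\Omega_{\max}$, the identification $\Lambdao_\Omega(\Gamma)=\Lambda_\Gamma$ via the Cartan-decomposition convergence lemma, and the choice of a properly convex $\Omega$ with $\C_0\subset\Omega\subset\Omega_{\max}$) matches the paper's proof, though two of your supporting claims are stated too loosely: the existence of a properly convex invariant $\Omega$ when $\Omega_{\max}$ is not properly convex is not automatic ``by proper convexity of $\mathcal{C}$'' --- the paper takes $\Omega=\bigcap_\gamma\gamma\cdot(\Omega_{\max}\cap\Delta)$ for a simplex $\Delta\supset\overline{\C_0}$ and must prove openness using the convergence of translated hyperplanes to points of $\Lambda_\Gamma^*$; and not every supporting hyperplane of $\Omega_{\max}$ lies in $\Lambda_\Gamma^*$ (think of a vertex of a polytope), so your exposedness argument should instead say that every point of $\partial\Omega_{\max}$ lies on \emph{some} $\xi^*(\eta)$ and that a minimal spanning subset of $\Lambda_\Gamma$ for a point of $\overline{\C_0}\cap\xi^*(\eta)$ must itself lie in $\xi^*(\eta)$, whence transversality applies.

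The genuine gap is in the cocompactness step, and it sits exactly where you say ``the real work lies.'' Your plan reduces to the claim that a sequence $w_m\in\C_0$ with $w_m\to\xi(\zeta)$ in $\PP(V)$ must stay within bounded Hilbert distance of the orbit quasigeodesic $\{\gamma_k\cdot z_0\}$ tending to $\xi(\zeta)$. This does not follow from $\xi(\zeta)$ being an exposed point of $\overline{\C_0}$, nor from the orbit map being a quasi-isometric embedding: convergence to a boundary point in the ambient projective topology is compatible with ``horocyclic'' escape, i.e.\ with $d_\Omega(w_m,[z_0,\xi(\zeta)))\to\infty$ (already in $\HH^2$ a horocyclic sequence converges to a boundary point while diverging from the radial geodesic). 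Ruling this out for points of the convex hull is essentially equivalent to the cocompactness you are trying to prove --- this is precisely what fails for geometrically finite groups with cusps --- so step (ii) of your sketch is not a proof but a restatement of the goal. The paper avoids any fellow-traveling estimate entirely: it invokes the expansion property of the $\Gamma$-action on $\PP(V)$ at each point of $\Lambda_\Gamma$ (Kapovich--Leeb--Porti, \cite[Th.\,1.7]{klp14}) and runs Sullivan's dynamical argument, choosing in each non-cocompact ``shell'' an orbit maximizing $d_{\PP}(\cdot,\Lambda_\Gamma)$ and contradicting the expansion constant $c>1$. If you want to complete your route instead, you would need to import some such uniform contraction/expansion input from the Anosov condition; transversality and the singular value gap alone, as currently deployed, do not yield the conical control you need.
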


The rest of the section is devoted to the proof of Proposition~\ref{prop:precise-Ano-implies-ccc}.

%%%%%%%%%%%%%%%%%%%%%%%%%
\subsection{Convergence for Anosov representations}

We first make the following general observation.

\begin{lemma} \label{lem:accumulation}
Let $\Gamma$ be an infinite discrete subgroup of $\PGL(V)$ which is word hyperbolic, such that the natural inclusion $\Gamma\hookrightarrow\PGL(V)$ is $P_1$-Anosov, with boundary maps $\xi : \partial_{\infty}\Gamma\to\PP(V)$ and $\xi^* : \partial_{\infty}\Gamma\to\PP(V^*)$.
Let $(\gamma_m)_{m\in\NN}$ be a sequence of elements of~$\Gamma$ converging to some $\eta\in\partial_{\infty}\Gamma$, such that $(\gamma_m^{-1})_{m\in\NN}$ converges to some $\eta'\in\partial_{\infty}\Gamma$.
Then
\begin{enumerate}
  \item\label{item:conv-xi} for any $x\in\PP(V)$ with $x\notin\xi^*(\eta')$ we have $\gamma_m\cdot x\to\xi(\eta)$;
  \item\label{item:conv-xi*} for any $x^*\in\PP(V^*)$ with $\xi(\eta')\notin x^*$ we have $\gamma_m\cdot x^*\to\xi^*(\eta)$.
\end{enumerate}
Moreover, convergence is uniform for $x, x^*$ ranging over compact sets.
\end{lemma}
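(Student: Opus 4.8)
The plan is to reduce everything to a direct computation with the Cartan decomposition, feeding off Fact~\ref{fact:Im-xi-Cartan} and the Anosov singular-value gap. I would write $\gamma_m = k_m a_m \ell_m$ with $k_m,\ell_m\in K=\PO(n)$ and $a_m = \mathrm{diag}(e^{\mu_1(\gamma_m)},\dots,e^{\mu_n(\gamma_m)})$ (up to scalar), so that $(\mu_1-\mu_2)(\gamma_m)\to+\infty$ by Fact~\ref{fact:charact-Ano} (condition (A3)''). Since $\gamma_m\to\eta$, Fact~\ref{fact:Im-xi-Cartan} gives $k_m\cdot[e_1]\to\xi(\eta)$ and $k_m\cdot[e_n^*]\to\xi^*(\eta)$, which will supply the targets of the two convergences.

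Next I would extract the behaviour of $\ell_m$ from the hypothesis $\gamma_m^{-1}\to\eta'$. Writing $\gamma_m^{-1} = \ell_m^{-1}a_m^{-1}k_m^{-1}$ and conjugating $a_m^{-1}$ by the longest Weyl element $w_0\in K$ (the permutation $e_i\mapsto e_{n+1-i}$) to reorder its entries into $\mathfrak{a}^+$, one obtains a Cartan decomposition of $\gamma_m^{-1}$ whose left $K$-factor is $\ell_m^{-1}w_0$. Applying Fact~\ref{fact:Im-xi-Cartan} to $\gamma_m^{-1}\to\eta'$, and using $w_0\cdot[e_1]=[e_n]$ together with $w_0\cdot[e_n^*]=\PP(\mathrm{span}(e_2,\dots,e_n))=:[e_1^*]$, then yields $\ell_m^{-1}\cdot[e_n]\to\xi(\eta')$ and $\ell_m^{-1}\cdot[e_1^*]\to\xi^*(\eta')$, the latter being the key relation.

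For statement~\eqref{item:conv-xi}, fix a $K$-invariant Euclidean norm on $V$ and a unit representative $\hat y$ of $y$. The hyperplane $\ell_m^{-1}\cdot[e_1^*] = \ker(e_1^*\circ\ell_m)$ converges to $\xi^*(\eta')$; since $y\notin\xi^*(\eta')$, for large $m$ the point $y$ stays at distance $\geq\delta>0$ from $\ell_m^{-1}\cdot[e_1^*]$, which forces the first coordinate $c_{1,m} := e_1^*(\ell_m\hat y)$ to be bounded below in modulus. Writing $\ell_m\hat y = \sum_i c_{i,m}e_i$ (so $\sum_i|c_{i,m}|^2 = 1$), I get $a_m\ell_m\hat y = e^{\mu_1(\gamma_m)}\big(c_{1,m}e_1 + r_m\big)$ with $\|r_m\|\leq e^{(\mu_2-\mu_1)(\gamma_m)}\to 0$; hence $a_m\ell_m\cdot y\to[e_1]$ because $|c_{1,m}|$ is bounded away from $0$. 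Applying the isometry $k_m$ and combining with $k_m\cdot[e_1]\to\xi(\eta)$ via the triangle inequality gives $\gamma_m\cdot y = k_m a_m\ell_m\cdot y\to\xi(\eta)$. As all estimates depend only on $\delta$ and on $(\mu_2-\mu_1)(\gamma_m)$, the convergence is uniform for $y$ in a compact set disjoint from $\xi^*(\eta')$.

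Finally, statement~\eqref{item:conv-xi*} follows by applying~\eqref{item:conv-xi} to the contragredient inclusion $\gamma\mapsto{}^t\gamma^{-1}$ on $V^*$: this representation is again $P_1$-Anosov (its gap condition $(\mu_{n-1}-\mu_n)(\gamma_m)\to+\infty$ is the gap condition for $\gamma_m^{-1}$, hence holds), with point boundary map $\xi^*$ and hyperplane boundary map given by $\xi$ under $\PP((V^*)^*)\cong\PP(V)$; the non-incidence hypothesis $\xi(\eta')\notin y^*$ is exactly the hypothesis of~\eqref{item:conv-xi} for the dual, and the conclusion becomes $\gamma_m\cdot y^*\to\xi^*(\eta)$. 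The main obstacle I anticipate is the bookkeeping in the middle step: correctly converting the projective non-incidence $y\notin\xi^*(\eta')$ into a \emph{uniform} lower bound on $|c_{1,m}|$ through the convergence $\ell_m^{-1}\cdot[e_1^*]\to\xi^*(\eta')$, and ensuring the final convergence survives passage through the (non-convergent) sequence $k_m$, which is precisely where $K$-invariance of the metric is used.
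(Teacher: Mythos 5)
Your proof is correct and follows essentially the same route as the paper's: the Cartan decomposition $\gamma_m=k_ma_m\ell_m$, the $w_0$-conjugation trick to read off $\xi^*(\eta')$ from the Cartan decomposition of $\gamma_m^{-1}$ via Fact~\ref{fact:Im-xi-Cartan}, the singular-value gap to collapse onto $[e_1]$, and duality for part~(2). The only cosmetic difference is that you avoid extracting convergent subsequences of $k_m,\ell_m$ by running quantitative estimates with the $K$-invariant metric, which if anything makes the uniformity statement slightly more transparent.
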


\begin{proof}
The two statements are dual to each other, so we only need to prove~\eqref{item:conv-xi}.
For any $m\in\NN$, we write $\gamma_m = k_m a_m k'_m \in K\exp(\mathfrak{a}^+)K$ (see Section~\ref{subsec:Cartan}).
Up to extracting, $(k_m)$ converges to some $k\in K$, and $(k'_m)$ converges to some $k'\in K$.
Note that
$$\gamma_m^{-1} = ({k'_m}^{-1} w_0) (w_0 a_m^{-1} w_0) (w_0 k_m^{-1}) \in KA^+K, $$
where $w_0\in\PGL(V)\simeq\PGL(\RR^n)$ is the image of the permutation matrix exchanging $e_i$ and~$e_{n+1-i}$ for all $1\leq i \leq n$.
Therefore, by Fact~\ref{fact:Im-xi-Cartan}, we have
$$\xi^*(\eta') = {k'}^{-1}w_0\cdot\PP(\mathrm{span}(e_1,\dots,e_{n-1})).$$
In particular, for any $x\in\PP(V)$ with $x\notin\xi^*(\eta')$ we have
$$k'\cdot x \notin w_0\cdot\PP(\mathrm{span}(e_1,\dots,e_{n-1})) = \PP(\mathrm{span}(e_2,\dots,e_n)). $$
On the other hand, writing $a_m = \mathrm{diag}(a_{m,i})_{1\leq i\leq n}$, we have $a_{m,1}/a_{m,2}=e^{(\mu_1-\mu_2)(\gamma_m)}\to +\infty$ by Fact~\ref{fact:charact-Ano}.
Therefore $a_mk'_m\cdot x\to [e_1]$, and so $\gamma_m\cdot x=k_ma_mk'_m\cdot x\to k\cdot [e_1]$.
By Fact~\ref{fact:Im-xi-Cartan}, we have $k\cdot [e_1]=\xi(\eta)$.
This proves~\eqref{item:conv-xi}. 
Uniformity follows from the fact that $k'\cdot x=[e_1 + \sum_{i=2}^n t_i e_i]$ with \emph{bounded} $t_i\in\RR$, when $x$ ranges over a compact set disjoint from the hyperplane $\xi^*(\eta')$.
\end{proof}

\begin{corollary} \label{cor:Ano-Lambda-orb-prox}
In the setting of Lemma~\ref{lem:accumulation}, for any nonempty $\Gamma$-invariant properly convex open subset $\Omega$ of $\PP(V)$, the full orbital limit set $\Lambdao_{\Omega}(\Gamma)$ is equal to the proximal limit set $\Lambda_{\Gamma}=\xi(\partial_{\infty}\Gamma)$.
\end{corollary}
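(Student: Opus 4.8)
The plan is to establish the two inclusions $\Lambda_{\Gamma}\subseteq\Lambdao_{\Omega}(\Gamma)$ and $\Lambdao_{\Omega}(\Gamma)\subseteq\Lambda_{\Gamma}$ separately, reducing each to a single application of Lemma~\ref{lem:accumulation}.\eqref{item:conv-xi}. The one point requiring care before invoking that lemma is that its hypothesis $y\notin\xi^*(\eta')$ must hold for the relevant base point $y\in\Omega$; so first I would check that \emph{every} hyperplane $\xi^*(\eta')$, $\eta'\in\partial_{\infty}\Gamma$, is disjoint from~$\Omega$.

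To see this, note that since $\Gamma$ is infinite and word hyperbolic it contains an infinite-order element, which is proximal in $\PP(V)$ by the dynamics-preserving property of the Anosov boundary maps; hence $\Lambda_{\Gamma},\Lambda_{\Gamma}^*\neq\emptyset$ and Proposition~\ref{prop:max-inv-conv} applies. By Proposition~\ref{prop:max-inv-conv}.\eqref{item:Lambda-prox-in-boundary}, $\Lambda_{\Gamma}^*=\xi^*(\partial_{\infty}\Gamma)\subseteq\partial\Omega^*$, so each $\xi^*(\eta')$ is a supporting hyperplane of the open convex set~$\Omega$ and therefore misses~$\Omega$. Equivalently, $\Omega\subseteq\Omega_{\max}=\PP(\{x\,|\,\ell(x)>0\ \forall\ell\in\widetilde{\Lambda}_{\Gamma}^*\})$ by Proposition~\ref{prop:max-inv-conv}.\eqref{item:Omega-max}, and the strict inequality defining $\Omega_{\max}$ excludes the zero locus of each $\ell\in\widetilde{\Lambda}_{\Gamma}^*$. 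Thus $y\notin\xi^*(\eta')$ for all $y\in\Omega$ and all $\eta'\in\partial_{\infty}\Gamma$.

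For $\Lambda_{\Gamma}\subseteq\Lambdao_{\Omega}(\Gamma)$, I would fix $\eta\in\partial_{\infty}\Gamma$, choose a sequence $(\gamma_m)$ in~$\Gamma$ with $\gamma_m\to\eta$ in the compactification $\Gamma\cup\partial_{\infty}\Gamma$, and extract so that $\gamma_m^{-1}\to\eta'$ for some $\eta'\in\partial_{\infty}\Gamma$ (using compactness of $\Gamma\cup\partial_{\infty}\Gamma$). Picking any $y\in\Omega$, we have $y\notin\xi^*(\eta')$ by the previous step, so Lemma~\ref{lem:accumulation}.\eqref{item:conv-xi} gives $\gamma_m\cdot y\to\xi(\eta)$; hence $\xi(\eta)$ is an accumulation point of the orbit $\Gamma\cdot y$ and lies in $\Lambdao_{\Omega}(\Gamma)$. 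For the reverse inclusion, I would take $w\in\Lambdao_{\Omega}(\Gamma)$, so $w=\lim_m\gamma_m\cdot y$ for some $y\in\Omega$ and some orbit points $\gamma_m\cdot y$; proper discontinuity of the action of~$\Gamma$ on~$\Omega$ forces the $\gamma_m$ to be eventually pairwise distinct, hence to leave every finite subset of~$\Gamma$, so after extraction $\gamma_m\to\eta$ and $\gamma_m^{-1}\to\eta'$ in $\partial_{\infty}\Gamma$. Again $y\notin\xi^*(\eta')$, so Lemma~\ref{lem:accumulation}.\eqref{item:conv-xi} yields $\gamma_m\cdot y\to\xi(\eta)$, whence $w=\xi(\eta)\in\Lambda_{\Gamma}$.

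I expect the only genuine obstacle to be the transversality claim $y\notin\xi^*(\eta')$: without it Lemma~\ref{lem:accumulation} cannot be applied, and it is exactly here that the hypothesis that $\Omega$ be $\Gamma$-invariant and properly convex is used, through Proposition~\ref{prop:max-inv-conv}. The remaining ingredients---existence of sequences in~$\Gamma$ converging to a prescribed boundary point, compactness of $\Gamma\cup\partial_{\infty}\Gamma$, and the fact that accumulation points of orbits lie on~$\partial\Omega$ by proper discontinuity---are standard, so once transversality is secured the corollary follows at once, uniformly over all admissible~$\Omega$.
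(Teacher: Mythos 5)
Your proposal is correct and follows the paper's own proof: both establish $y\notin\xi^*(\eta')$ for all $y\in\Omega$ via Proposition~\ref{prop:max-inv-conv} (using $\Lambda_{\Gamma}^*=\xi^*(\partial_{\infty}\Gamma)\subset\partial\Omega^*$) and then apply Lemma~\ref{lem:accumulation}.\eqref{item:conv-xi} to obtain both inclusions. You simply spell out in more detail the sequence extractions and the verification that $\Gamma$ contains a proximal element, which the paper leaves implicit.
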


\begin{proof}
By Proposition~\ref{prop:max-inv-conv}, we have $\Lambda_{\Gamma}^*=\xi^*(\partial_{\infty}\Gamma)\subset\partial\Omega^*$, hence $x\notin\xi^*(\eta')$ for all $x\in\Omega$ and $\eta'\in\partial_{\infty}\Gamma$.
We then apply Lemma~\ref{lem:accumulation}.\eqref{item:conv-xi} to get both $\Lambda_{\Gamma} \subset \Lambdao_{\Omega}(\Gamma)$ and $\Lambdao_{\Omega}(\Gamma) \subset \Lambda_{\Gamma}$.
\end{proof}

%%%%%%%%%%%%%%%%%%%%%%%%%
\subsection{Proof of Proposition~\ref{prop:precise-Ano-implies-ccc}} \label{subsec:Ano-constr-Omega'}

Suppose $\Gamma \subset \PGL(V)$, $\Lambda_\Gamma$, $\Lambda_\Gamma^*$, and $\Omega_{\max}$ are as in the statement of Proposition~\ref{prop:precise-Ano-implies-ccc}.
Let $\C_0$ be the convex hull of $\Lambda_{\Gamma}$ in~$\Omega_{\max}$.
This is clearly defined if $\Omega_{\max}$ is properly convex, and otherwise it is defined as follows.

The preimage of $\Omega_{\max}$ in $V\smallsetminus\{0\}$ has two connected components $\widetilde{\Omega}_{\max}$ and $-\widetilde{\Omega}_{\max}$.
The intersection of their closures in~$V$ is a linear subspace $W$ of~$V$ (which is reduced to $\{0\}$ if $\Omega_{\max}$ is properly convex, in which case $\PP(W) = \emptyset$).
For any complementary subspace $W'$ of $W$ in~$V$, we can write $\widetilde{\Omega}_{\max}$ as a product of the form $W \times \widetilde{\Omega}'$ for some properly convex open cone $\widetilde{\Omega}' \subset W'$.
Since $\widetilde{\Omega}'$ is properly convex, we can choose a supporting hyperplane of $\widetilde{\Omega}_{\max}$ in~$V$ whose intersection with $\partial\widetilde{\Omega}_{\max}$ is reduced to~$W$.
Then $\Omega_{\max}$ is contained, and convex, in the corresponding affine chart of $\PP(V)$, and escapes to infinity precisely in the directions of $\PP(W)$. 
In $\PP(V)$, every projective hyperplane missing $\Omega_{\max}$, and in particular every $x^*\in \Lambda_{\Gamma}^*$, contains $\PP(W)$.
Since $\Lambda_\Gamma$ and $\Lambda_\Gamma^*$ are transverse, this implies that $\Lambda_\Gamma$ is disjoint from $\PP(W)$, hence (by compactness) it is bounded in the chart. 
By definition, $\C_0$ is the convex hull of $\Lambda_\Gamma$ in the chart: it is independent of the choice of chart, as it can be lifted to a properly convex cone contained in the convex cone $\widetilde{\Omega}_{\max}$.

\begin{lemma} \label{lem:bound-C0-Ano}
We have $\partiali\C_0 = \Lambda_{\Gamma}$.
\end{lemma}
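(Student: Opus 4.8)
The plan is to prove the two inclusions $\Lambda_\Gamma \subseteq \partiali \C_0$ and $\partiali \C_0 \subseteq \Lambda_\Gamma$ directly, using only the convex geometry of $\Omega_{\max}$ together with the compatibility and transversality of the Anosov boundary maps. Throughout I work with the lift $\widetilde{\Omega}_{\max} = \{x\in V : \ell(x)>0\ \forall \ell\in\widetilde{\Lambda}_\Gamma^*\}$ and, as in Proposition~\ref{prop:max-inv-conv}.\eqref{item:Lambda-prox-lift-Omega}, with lifts chosen so that $\ell(x)\geq 0$ for all $x\in\widetilde{\Lambda}_\Gamma$ and $\ell\in\widetilde{\Lambda}_\Gamma^*$. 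The first step is to record the positivity dichotomy coming from the Anosov property: for each $\eta'\in\partial_\infty\Gamma$, any lift $\ell_{\eta'}$ of $\xi^*(\eta')$ satisfies $\ell_{\eta'}(\xi(\eta'))=0$ by compatibility~\ref{item:ano-comp}, while for every $\eta\neq\eta'$ we have $\ell_{\eta'}(\xi(\eta))>0$, since transversality~\ref{item:ano-trans} gives $\ell_{\eta'}(\xi(\eta))\neq 0$ and the containment $\Lambda_\Gamma\subset\overline{\Omega}_{\max}$ gives $\ell_{\eta'}(\xi(\eta))\geq 0$. Because $\Gamma$ is infinite, $\partial_\infty\Gamma$ has at least two points, so $\ell_{\eta'}$ is positive somewhere on $\Lambda_\Gamma$, hence does not vanish identically on $\overline{\C_0}$.

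For the inclusion $\partiali\C_0\subseteq\Lambda_\Gamma$: since $\C_0$ is properly convex, $\overline{\C_0}$ lies in an affine chart, and $\overline{\C_0}\subseteq\CH(\Lambda_\Gamma)$, the compact convex hull of $\Lambda_\Gamma$. By Carath\'eodory's theorem every point $z\in\overline{\C_0}$ admits a lift $\tilde z=\sum_{i=1}^{n} t_i x_i$ with $t_i\geq 0$, $\sum_i t_i=1$, and each $x_i\in\widetilde{\Lambda}_\Gamma$ lifting some $\xi(\eta_i)\in\Lambda_\Gamma$. If moreover $z\in\partiali\C_0=\overline{\C_0}\smallsetminus\C_0$, then $z\notin\Omega_{\max}$, so there is some $\ell=\ell_{\eta'}\in\widetilde{\Lambda}_\Gamma^*$ with $\ell(\tilde z)=0$. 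Expanding, $0=\sum_i t_i\,\ell(x_i)$ with every summand nonnegative forces $t_i\,\ell(x_i)=0$ for all $i$, hence $\ell(x_i)=0$, i.e.\ $\eta_i=\eta'$, for every $i$ with $t_i>0$. Therefore $z=\xi(\eta')\in\Lambda_\Gamma$.

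For the reverse inclusion $\Lambda_\Gamma\subseteq\partiali\C_0$: each $\xi(\eta)\in\Lambda_\Gamma$ lies in $\partial\Omega_{\max}$, hence not in $\C_0\subseteq\Omega_{\max}$, so it suffices to show $\xi(\eta)\in\overline{\C_0}$. Working inside the projective subspace spanned by $\Lambda_\Gamma$, a relative interior point $z$ of $\CH(\Lambda_\Gamma)$ satisfies $\ell(\tilde z)>0$ for every $\ell\in\widetilde{\Lambda}_\Gamma^*$ (each such $\ell$ is nonnegative and, by the positivity dichotomy, not identically zero on the hull). Thus the relative interior of $\CH(\Lambda_\Gamma)$ lies in $\Omega_{\max}$, hence in $\C_0$; as $\xi(\eta)$ is a limit of such relative interior points, $\xi(\eta)\in\overline{\C_0}\smallsetminus\C_0=\partiali\C_0$. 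Combining the two inclusions gives $\partiali\C_0=\Lambda_\Gamma$.

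I expect the only real subtlety to be bookkeeping rather than a conceptual obstacle: one must keep the lifts $x_i$ of the $\xi(\eta_i)$ consistently on the positive side of \emph{every} $\ell\in\widetilde{\Lambda}_\Gamma^*$ so that the Carath\'eodory sum has all nonnegative terms, and one must separately handle the degenerate case in which $\CH(\Lambda_\Gamma)$ has empty interior by passing to its affine span (where the relative interior argument still applies). Both points dissolve once the dichotomy ``$\ell_{\eta'}(\xi(\eta))=0$ if $\eta=\eta'$, and $>0$ otherwise'' is established from compatibility and transversality.
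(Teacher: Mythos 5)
Your proof is correct and follows essentially the same route as the paper: the inclusion $\partiali\C_0\subseteq\Lambda_\Gamma$ is the paper's ``minimal subset $S$'' argument in Carath\'eodory form (a nonnegative functional $\ell_{\eta'}$ vanishing on a convex combination must vanish on each term, and transversality then collapses the combination to the single point $\xi(\eta')$), while the reverse inclusion is what the paper dismisses as ``by construction.'' Your explicit relative-interior argument for that second inclusion, and the positivity dichotomy you isolate at the start, are just careful expansions of the same steps.
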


\begin{proof}
By construction, $\partiali\C_0 = \overline{\C_0}\cap\partial\Omega_{\max} \supset \Lambda_{\Gamma}$.
Suppose a point $z \in \overline{\C_0}$ lies in $\partial\Omega_{\max}$.
Then $z$ lies in a hyperplane $\xi^*(\eta)$ for some $\eta \in \partial_{\infty} \Gamma$.
Any minimal subset $S$ of $\Lambda_{\Gamma}$ for which $z$ lies in the convex hull of $S$ must also be contained in $\xi^*(\eta)$.
By the transversality of $\xi$ and~$\xi^*$, we must have $S = \{ \xi(\eta)\}$, hence $z = \xi(\eta) \in \Lambda_{\Gamma}$.
\end{proof}

\begin{lemma} \label{lem:Omega'}
There exists a $\Gamma$-invariant properly convex open subset $\Omega \subset \Omega_{\max}$ containing~$\C_0$.
\end{lemma}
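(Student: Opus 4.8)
The plan is to realize $\Omega$ as an explicit intersection of open positive half-spaces indexed by a compact, $\Gamma$-invariant family of hyperplanes, obtained by saturating finitely many well-chosen hyperplanes under $\Gamma$ and passing to the closure. First I would record that $\overline{\C_0}=\C_0\cup\Lambda_{\Gamma}$ is a compact properly convex subset of $\PP(V)$ (using Lemma~\ref{lem:bound-C0-Ano} for $\partiali\C_0=\Lambda_\Gamma$), that it is $\Gamma$-invariant, and that $\Lambda_\Gamma\subset\overline{\C_0}$. Since $\overline{\C_0}$ is compact and properly convex, the hyperplanes of $\PP(V)$ missing $\overline{\C_0}$ span $\PP(V^*)$, so I can choose finitely many linear forms $\varphi_1,\dots,\varphi_N\in V^*$, each strictly positive on a fixed $\hat\Gamma$-invariant lift $\widetilde{\C_0}$ of the cone over~$\C_0$ (cf. Remark~\ref{rem:lift-Gamma}), whose kernels satisfy $\bigcap_i\ker\varphi_i=\{0\}$. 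This last condition is precisely what will force proper convexity at the end.

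Next I would control the $\Gamma$-orbit of these forms using the Anosov convergence dynamics. Since each hyperplane $\ker\varphi_i$ misses $\Lambda_\Gamma=\xi(\partial_\infty\Gamma)\subset\overline{\C_0}$, we have $\xi(\eta')\notin\ker\varphi_i$ for every $\eta'\in\partial_\infty\Gamma$, so Lemma~\ref{lem:accumulation}.\eqref{item:conv-xi*} applies: along any sequence $\gamma_m\to\eta$ with $\gamma_m^{-1}\to\eta'$ one has $\gamma_m\cdot\ker\varphi_i\to\xi^*(\eta)$, uniformly. Writing $S^*\subset\PP(V^*)$ for the $\Gamma$-orbit of $\{[\varphi_1],\dots,[\varphi_N]\}$, it follows that every accumulation point of $S^*$ lies in $\Lambda_\Gamma^*=\xi^*(\partial_\infty\Gamma)$, so $\overline{S^*}\subset S^*\cup\Lambda_\Gamma^*$ is compact. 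With a consistent normalization of lifts in $V^*$, all forms in $\overline{S^*}$ are nonnegative on $\widetilde{\C_0}$: the orbit forms satisfy $(\hat\gamma\cdot\varphi_i)(x)=\varphi_i(\hat\gamma^{-1}x)>0$ for $x\in\widetilde{\C_0}$ (by $\Gamma$-invariance of $\widetilde{\C_0}$), while the limit forms lie in $\widetilde{\Lambda}_{\Gamma}^*$, which is nonnegative on $\widetilde{\Omega}_{\max}\supset\widetilde{\C_0}$. Moreover $\bigcap_{\psi\in\overline{S^*}}\ker\psi\subset\bigcap_i\ker\varphi_i=\{0\}$.

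Finally I would set $\widetilde{\Omega}:=\{x\in V\,:\,\psi(x)>0\ \forall\psi\in\overline{S^*}\}$ and $\Omega:=\PP(\widetilde{\Omega})$, and verify the required properties. Compactness of $\overline{S^*}$ makes $\widetilde{\Omega}$ an open convex cone; it is $\Gamma$-invariant because $\hat\gamma\cdot\overline{S^*}=\overline{S^*}$ up to positive scalars (as $\Gamma$ permutes $S^*$ and preserves $\Lambda_\Gamma^*$). It contains $\C_0$: for a lift $x$ of a point of $\C_0$, every $\psi\in\overline{S^*}$ satisfies $\psi(x)>0$ — strictly even for the limit forms, since $[x]\in\Omega_{\max}$ — and compactness yields a uniform positive lower bound. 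The crucial point, proper convexity, follows from salience of $\widetilde{\Omega}$: if $x\in\overline{\widetilde{\Omega}}\cap(-\overline{\widetilde{\Omega}})$ then $\psi(x)=0$ for all $\psi\in\overline{S^*}$, whence $x\in\bigcap_\psi\ker\psi=\{0\}$, so $\overline{\Omega}$ contains no projective line and $\Omega$ is properly convex. If one additionally wants $\Omega\subset\Omega_{\max}$ as in Proposition~\ref{prop:precise-Ano-implies-ccc}, replacing $\Omega$ by $\Omega\cap\Omega_{\max}$ preserves all of these properties. I expect the main obstacle to be the simultaneous management of the orbit closure $\overline{S^*}$ — keeping it compact and its defining forms co-oriented while guaranteeing a trivial common kernel — which is exactly where the Anosov convergence dynamics of Lemma~\ref{lem:accumulation} and the careful choice of the seed $\varphi_1,\dots,\varphi_N$ (via transversality of $\xi$ and $\xi^*$) come into play.
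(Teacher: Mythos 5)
Your construction is correct and is essentially the paper's argument in dual form: the paper intersects the $\Gamma$-translates of $\Omega_{\max}\cap\Delta$ for a simplex $\Delta$ containing $\overline{\C_0}$ and derives openness by contradiction from Lemma~\ref{lem:accumulation}.\eqref{item:conv-xi*}, while you intersect the half-spaces indexed by the compact orbit closure $\overline{S^*}$ and get openness directly from compactness --- but both proofs hinge on exactly the same mechanism, namely that translated hyperplanes missing $\overline{\C_0}$ can only accumulate on $\Lambda_\Gamma^*$, whose elements stay away from $\C_0\subset\Omega_{\max}$. The one point to state explicitly is that the seed forms $\varphi_1,\dots,\varphi_N$ must be chosen positive on $\overline{\widetilde{\C_0}}\smallsetminus\{0\}$ rather than merely on $\widetilde{\C_0}$, so that their kernels genuinely miss $\Lambda_\Gamma=\partiali\C_0$, which is the hypothesis Lemma~\ref{lem:accumulation} requires.
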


\begin{proof}
If $\Omega_{\max}$ is properly convex, then we take $\Omega = \Omega_{\max}$.
Suppose that $\Omega_{\max}$ fails to be properly convex (see Example~\ref{exa:aff-hp}).
Choose projective hyperplanes $H_1, \ldots, H_n$ bounding an open simplex $\Delta$ containing~$\overline{\C_0}$.
Define $\mathcal{V}:=\Omega_{\max}\cap \Delta$ and
$$\Omega := \bigcap_{\gamma \in \Gamma} \gamma \cdot \mathcal{V}.$$
Then $\Omega$ contains $\C_0$ and is properly convex.
We must show $\Omega$ is open.
Suppose not and let $z \in \partialn \Omega$.
Then there is a sequence $(\gamma_m) \in \Gamma^{\NN}$ and $i \in\{1, \ldots, n\}$ such that $\gamma_m\cdot H_i$ converges to a hyperplane $H_{\infty}$ containing~$z$.
By Lemma~\ref{lem:accumulation}.\eqref{item:conv-xi*}, the hyperplane $H_{\infty}$ lies in $\Lambda_{\Gamma}^*$, contradicting the fact that the hyperplanes of $\Lambda_{\Gamma}^*$ are disjoint from~$\Omega_{\max}$. 
\end{proof}

\begin{lemma}\label{lem:limit-sets-equal}
For any $\Gamma$-invariant properly convex open subset $\Omega$ of $\PP(V)$ containing~$\C_0$, we have $\Ccore_\Omega(\Gamma) = \C_0$.
\end{lemma}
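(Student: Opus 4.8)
Looking at this statement, I need to show that the convex core of $\Gamma$ in any $\Gamma$-invariant properly convex open $\Omega \supset \C_0$ equals $\C_0$ itself.

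The plan is to establish $\Ccore_\Omega(\Gamma) = \C_0$ by showing both that $\C_0$ is the convex hull of the full orbital limit set in $\Omega$, and that this limit set is precisely $\Lambda_\Gamma$. I would argue as follows.

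\medskip

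First, by Corollary~\ref{cor:Ano-Lambda-orb-prox}, for \emph{any} nonempty $\Gamma$-invariant properly convex open subset $\Omega$ of $\PP(V)$ the full orbital limit set satisfies $\Lambdao_\Omega(\Gamma) = \Lambda_\Gamma = \xi(\partial_\infty\Gamma)$. Note that Corollary~\ref{cor:Ano-Lambda-orb-prox} applies here: its hypotheses (word hyperbolicity, $P_1$-Anosov, and the preservation of a nonempty properly convex open set) are exactly what we have assumed, and the conclusion that $\Lambda_\Gamma^* \subset \partial\Omega^*$ follows from Proposition~\ref{prop:max-inv-conv}.\eqref{item:Lambda-prox-in-boundary}, so that no point $y\in\Omega$ lies on any hyperplane $\xi^*(\eta')$. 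Thus the full orbital limit set is canonically $\Lambda_\Gamma$, independent of the choice of $\Omega$.

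\medskip

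Second, by Definition~\ref{def:cc-general}, the convex core $\Ccore_\Omega(\Gamma)$ is precisely the convex hull of $\Lambdao_\Omega(\Gamma) = \Lambda_\Gamma$ inside $\Omega$. Now $\C_0$ was defined as the convex hull of $\Lambda_\Gamma$ in $\Omega_{\max}$, and by hypothesis $\C_0 \subset \Omega \subset \Omega_{\max}$. I would observe that the convex hull of $\Lambda_\Gamma$ computed in $\Omega$ and the convex hull computed in $\Omega_{\max}$ agree: the convex hull of a set of points is an intrinsic projective-geometric notion (the intersection of $\Omega$ with the convex hull of $\Lambda_\Gamma$ in $\overline{\Omega}$), and since $\C_0\subset\Omega$ already contains all of $\Lambda_\Gamma$ and is convex, it must equal the convex hull of $\Lambda_\Gamma$ taken in the smaller open set $\Omega$ as well. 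Concretely, $\Ccore_\Omega(\Gamma) = \Omega \cap \mathrm{Conv}(\Lambda_\Gamma)$ where $\mathrm{Conv}(\Lambda_\Gamma)$ is the convex hull in a fixed affine chart, and the same formula with $\Omega_{\max}$ in place of $\Omega$ defines $\C_0$; since $\C_0 \subset \Omega$, intersecting with the smaller set $\Omega$ changes nothing, giving $\Ccore_\Omega(\Gamma) = \C_0$.

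\medskip

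The only point requiring care is to confirm that $\C_0$ is genuinely contained in $\Omega$ so that the intersection does not cut it down: this is exactly the content of the hypothesis that $\Omega \supset \C_0$, so there is no real obstacle. The main subtlety I would guard against is the distinction between the convex hull of $\Lambda_\Gamma$ \emph{in} the open set versus its closure $\overline{\Omega}$; using Lemma~\ref{lem:bound-C0-Ano}, which gives $\partiali\C_0 = \Lambda_\Gamma$, one sees that $\C_0$ meets $\partial\Omega_{\max}$ exactly along $\Lambda_\Gamma$, confirming that $\C_0$ is the closed-in-$\Omega_{\max}$ convex hull and that restricting attention to $\Omega$ recovers the same set. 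I expect no serious difficulty here; the result is essentially a bookkeeping consequence of Corollary~\ref{cor:Ano-Lambda-orb-prox} together with the definitions.
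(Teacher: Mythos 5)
Your proof is correct and follows essentially the same route as the paper: invoke Corollary~\ref{cor:Ano-Lambda-orb-prox} to identify $\Lambdao_\Omega(\Gamma)$ with $\Lambda_\Gamma$, so that $\Ccore_\Omega(\Gamma)$ is the convex hull of $\Lambda_\Gamma$ in $\Omega$, i.e.\ $\C_0\cap\Omega$, which equals $\C_0$ since $\C_0\subset\Omega$ by hypothesis. The extra remarks on the intrinsic nature of the convex hull and on $\partiali\C_0=\Lambda_\Gamma$ are fine but not needed beyond what the paper's two-line argument already contains.
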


\begin{proof}
By Corollary~\ref{cor:Ano-Lambda-orb-prox}, we have $\Lambdao_\Omega(\Gamma) = \Lambda_\Gamma$, hence $\Ccore_\Omega(\Gamma)$ is the convex hull of $\Lambda_\Gamma$ in~$\Omega$, \ie $\Ccore_\Omega(\Gamma)=\C_0\cap\Omega$.
But $\C_0$ is contained in~$\Omega$ by construction (see Lemma~\ref{lem:Omega'}), hence $\Ccore_\Omega(\Gamma) = \C_0$.
\end{proof}

In order to conclude the proof of Proposition~\ref{prop:precise-Ano-implies-ccc}, it only remains to check the following.

\begin{lemma} \label{lem:cocompact}
The action of $\Gamma$ on $\C_0$ is cocompact.
\end{lemma}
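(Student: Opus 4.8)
\textbf{Proof plan for Lemma~\ref{lem:cocompact}.}
The plan is to argue by contradiction: if $\Gamma$ did not act cocompactly on $\C_0$, I would produce, for a far-out point of $\C_0$, an orbit point strictly closer than the presumed nearest one, a contradiction. First I would record the two structural facts I will use. By Lemma~\ref{lem:bound-C0-Ano} we have $\partiali\C_0=\Lambda_{\Gamma}=\xi(\partial_{\infty}\Gamma)$, and this set contains no nontrivial projective line segment: indeed $\xi$ is injective and, by Lemma~\ref{lem:compatible-maps}, each $\xi^*(\zeta)$ is a supporting hyperplane of $\Omega_{\max}$ at $\xi(\zeta)$, so a segment $[\xi(\zeta_1),\xi(\zeta_2)]\subset\Lambda_{\Gamma}\subset\partial\Omega_{\max}$ would force the supporting hyperplane $\xi^*(\zeta_3)$ at an interior point $\xi(\zeta_3)$ to contain the whole segment, hence to contain $\xi(\zeta_1)$ with $\zeta_3\neq\zeta_1$, violating transversality. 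I then fix a properly convex open set $\Omega\supset\C_0$ as in Lemma~\ref{lem:Omega'} and a basepoint $y_0\in\C_0$; the $\Gamma$-action on $\C_0\subset\Omega$ is properly discontinuous (Section~\ref{subsec:prop-conv-proj}). Assuming the action is not cocompact, there is a sequence $(w'_m)$ in $\C_0$ with $d_{\Omega}(w'_m,\Gamma\cdot y_0)\to+\infty$; choosing $\gamma_m\in\Gamma$ realizing the minimum and setting $w_m:=\gamma_m^{-1}\cdot w'_m\in\C_0$, I obtain $d_{\Omega}(y_0,w_m)\to+\infty$ together with the minimality property $d_{\Omega}(w_m,\gamma\cdot y_0)\geq d_{\Omega}(w_m,y_0)$ for all $\gamma\in\Gamma$. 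Since $\overline{\C_0}\cap\partial\Omega=\partiali\C_0=\Lambda_{\Gamma}$, up to extraction $w_m\to\xi(\eta)$ for some $\eta\in\partial_{\infty}\Gamma$.

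Next I would produce a competing orbit point. Choose a geodesic ray $(\delta_j)_{j\in\NN}$ in the Cayley graph of $\Gamma$ with $\delta_j\to\eta$ and $|\delta_j|_{\Gamma}=j$, and extract so that $\delta_j^{-1}\to\eta'\in\partial_{\infty}\Gamma$. On one hand $\delta_j\cdot y_0\to\xi(\eta)$ by Lemma~\ref{lem:accumulation}.\eqref{item:conv-xi}, because $y_0\in\Omega\subset\Omega_{\max}$ avoids every hyperplane $\xi^*(\zeta)$, so in particular $y_0\notin\xi^*(\eta')$; since $\xi(\eta)\in\partial\Omega$ and the Hilbert metric is proper and complete, $d_{\Omega}(y_0,\delta_j\cdot y_0)\to+\infty$. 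On the other hand $d_{\Omega}(\delta_j\cdot y_0,\delta_{j+1}\cdot y_0)\leq\max_{s}d_{\Omega}(y_0,s\cdot y_0)$, a bound over the finite generating set. To upgrade this bounded-jump escaping path to a genuine quasigeodesic ray landing \emph{radially} at $\xi(\eta)$, I would invoke that the orbit map $\gamma\mapsto\gamma\cdot y_0$ is a quasi-isometric embedding of $\Gamma$ into $(\C_0,d_{\Omega})$, a consequence of the Anosov condition $(\mu_1-\mu_2)(\gamma)\to+\infty$ linearly (Fact~\ref{fact:charact-Ano}) combined with the divergence estimates of Lemmas~\ref{lem:vector-growth} and~\ref{lem:accumulation}, following the analogous step in \cite{dgk-ccHpq}.

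Finally I would close with a horofunction estimate at $\xi(\eta)$. Let $\beta$ be the horofunction based at $y_0$ determined by the sequence $(w_m)$ (equivalently, by a limit ray $r=[y_0,\xi(\eta))$ of the segments $[y_0,w_m]$, whose ideal endpoint is well defined by \cite{fk05}). For fixed $j$, since $w_m\to\xi(\eta)$ with $[y_0,w_m]\to r$, I get $d_{\Omega}(w_m,\delta_j\cdot y_0)-d_{\Omega}(w_m,y_0)\to\beta(\delta_j\cdot y_0)$ as $m\to+\infty$; and because $(\delta_j\cdot y_0)_j$ approaches $\xi(\eta)$ radially, $\beta(\delta_j\cdot y_0)\to-\infty$ as $j\to+\infty$. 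Choosing $j$ with $\beta(\delta_j\cdot y_0)<0$ then yields $d_{\Omega}(w_m,\delta_j\cdot y_0)<d_{\Omega}(w_m,y_0)$ for all large $m$, contradicting minimality. Hence $\Gamma$ acts cocompactly on $\C_0$, which, together with Lemmas~\ref{lem:Omega'} and~\ref{lem:limit-sets-equal}, completes the proof of Proposition~\ref{prop:precise-Ano-implies-ccc} and of the implication \ref{item:P1Anosov}~$\Rightarrow$~\ref{item:ccc-hyp} in Theorem~\ref{thm:main-noPETs}.

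I expect the main obstacle to be precisely the two \emph{radiality} claims in the non-strictly-convex setting: establishing that the Anosov orbit map is a genuine quasi-isometric embedding into $(\C_0,d_{\Omega})$ \emph{without} already assuming cocompactness, and showing that $\delta_j\cdot y_0$ approaches $\xi(\eta)$ radially enough to force $\beta(\delta_j\cdot y_0)\to-\infty$, even though $\xi(\eta)$ may lie on a non-$C^1$ or non-strictly-convex part of $\partial\Omega$ where geodesics to $\xi(\eta)$ need not be unique. Controlling this is exactly where the no-segment property of $\Lambda_{\Gamma}$ and the uniform-convergence form of Lemma~\ref{lem:accumulation} must be deployed with care.
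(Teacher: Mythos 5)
Your reduction is cleanly set up (the normalization of $w_m$ by nearest orbit points, the identification $\partiali\C_0=\Lambda_{\Gamma}$ with no segments, and the convergence $\delta_j\cdot y_0\to\xi(\eta)$ via Lemma~\ref{lem:accumulation} are all fine), but the two ``radiality'' claims you flag at the end are not loose ends to be checked: they are the entire content of the lemma, and as stated your plan for them is circular. Both the fact that orbit maps $\Gamma\to(\C_0,d_{\Omega})$ are quasi-isometric embeddings and the fact that orbit quasi-geodesics fellow-travel the projective ray $[y_0,\xi(\eta))$ are obtained in this paper only \emph{after} cocompactness is known, via the \v{S}varc--Milnor lemma and Lemma~\ref{lem:C-hyp}.\eqref{item:geod-non-strict-conv}--\eqref{item:Gamma-hyp}; and Proposition~\ref{prop:d-Omega-mu} gives an inequality in the wrong direction (an upper bound on $d_{\Omega}(y_0,\gamma\cdot y_0)$ in terms of singular values), so the Anosov divergence of $\mu_1-\mu_2$ does not by itself bound $d_{\Omega}(y_0,\delta_j\cdot y_0)$ from below, let alone locate $\delta_j\cdot y_0$ near the projective ray.

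These claims are genuinely needed for your horofunction step. The estimate $\beta\to-\infty$ does hold along the projective ray $r=[y_0,\xi(\eta))$ itself (by convergence of the chords $[a_m,b_m]$ one gets $\beta(r(t))\leq -t+o(1)$), but to transfer it to the points $\delta_j\cdot y_0$ you need $d_{\Omega}(\delta_j\cdot y_0,r)$ bounded. Mere convergence $\delta_j\cdot y_0\to\xi(\eta)$ in $\PP(V)$ does not give this: only $\Lambda_{\Gamma}$, not $\partial\Omega$, is known at this stage to be segment-free, so the open stratum of $\partial\Omega$ at $\xi(\eta)$ may be nontrivial (this already happens for $\Omega=\Omega_{\max}$ in Figure~\ref{fig:Omega-min-max}), and two sequences of $\Omega$ converging to $\xi(\eta)$ can then drift apart linearly in $d_{\Omega}$; nor is $(\C_0,d_{\Omega})$ yet known to be Gromov hyperbolic, so no thin-triangle or Gromov-product shortcut is available. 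This is precisely why the paper abandons the Hilbert metric here and instead runs Sullivan's argument in the spherical metric $d_{\PP}$: it takes points $z_m\in\C_0$ maximizing $d_{\PP}(\cdot,\Lambda_{\Gamma})$ over their orbit with $d_{\PP}(z_m,\Lambda_{\Gamma})\to 0$ and contradicts the expansion of the $\Gamma$-action at points of $\Lambda_{\Gamma}$ \cite[Th.\,1.7]{klp14}. Your route could in principle be salvaged by proving a Morse/fellow-traveling property of Anosov orbits in $(\Omega,d_\Omega)$ directly from singular-value estimates, but that is a substantial independent argument, not a deployment of lemmas already available in the paper.
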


\begin{proof}
We endow $\PP(V)$ with the spherical metric $d_\SS(\cdot, \cdot)$ induced by some Euclidean norm on $V$.
By \cite[Th.\,1.7]{klp14} (see also \cite[Rem.\,5.15]{ggkw17}), the action of $\Gamma$ on $\PP(V)$ at any point $z\in\Lambda_{\Gamma}$ is expanding: there exist an element $\gamma\in\Gamma$, a neighborhood $\mathcal{U}$ of $z$ in $\PP(V)$, and a constant $c>1$ such that $\gamma$ is $c$-expanding on $\mathcal{U}$ for $d_{\SS}$.
We now use a version of the argument of \cite[Prop.\,2.5]{klp14}, inspired by Sullivan's dynamical characterization \cite{sul85} of convex cocompactness in the real hyperbolic space.
(The argument in \cite{klp14} is a little more technical because it deals with bundles, whereas we work directly in $\PP(V)$.)

Suppose by contradiction that the action of $\Gamma$ on $\C_0$ is \emph{not} cocompact, and let $(\varepsilon_m)_{m\in\NN}$ be a sequence of positive reals going to~$0$.
For any~$m$, the set $K_m := \{ x\in\C_0 \,|\, d_{\SS}(x,\Lambda_{\Gamma}) \geq\nolinebreak \varepsilon_m\}$ is compact (Lemma~\ref{lem:bound-C0-Ano}), hence there exists a $\Gamma$-orbit contained in $\C_0\smallsetminus K_m$. 
By proper discontinuity of the action on $\C_0$, the supremum of $d_{\SS}(\cdot,\Lambda_{\Gamma})$ on this orbit is achieved at some point $x_m\in\C_0$, and by construction $0 < d_{\mathbb{S}}(x_m,\Lambda_{\Gamma}) \leq \varepsilon_m$.
Then, for all $\gamma\in\Gamma$,
$$d_{\SS}(\gamma\cdot x_m,\Lambda_{\Gamma}) \leq d_{\SS}(x_m,\Lambda_{\Gamma}). $$
Up to extracting, we may assume that \((x_m)_{m\in\NN}\) converges to some $z\in\Lambda_{\Gamma}$.
Consider an element $\gamma\in\Gamma$, a neighborhood $\mathcal{U}$ of $z$ in $\PP(V)$, and a constant $c>1$ such that $\gamma$ is $c$-expanding on~$\mathcal{U}$.
For any $m\in\NN$, there exists $z_m\in\Lambda_{\Gamma}$ such that $d_{\SS}(\gamma\cdot x_m,\Lambda_{\Gamma}) = d_{\SS}(\gamma\cdot x_m,\gamma\cdot z_m)$.
For large enough~$m$ we have $x_m,z_m\in\mathcal{U}$, and so
$$d_{\SS}(\gamma\cdot x_m,\Lambda_{\Gamma}) \geq c\, d_{\SS}( x_m, z_m) \geq c\, d_{\SS}(x_m,\Lambda_{\Gamma}) \geq c\, d_{\SS}(\gamma\cdot x_m,\Lambda_{\Gamma})>0.$$
This is impossible since $c>1$.
\end{proof}

This shows that the word hyperbolic group $\Gamma$ acts convex cocompactly on any $\Gamma$-invariant properly convex open subset $\Omega$ of $\PP(V)$ containing~$\C_0$, hence condition~\ref{item:ccc-hyp} of Theorem~\ref{thm:main-noPETs} is satisfied.

%%%%%%%%%%%%%%%%%%%%%%%%%
\subsection{The case of groups with connected boundary}

Here is an immediate consequence of Propositions \ref{prop:max-inv-conv} and~\ref{prop:precise-Ano-implies-ccc} and Lemma~\ref{lem:Omega-max-connected}.

\begin{corollary} \label{cor:Ano->cc-connected}
Let $\Gamma$ be an infinite discrete subgroup of $\PGL(V)$ which is word hyperbolic with \emph{connected} boundary $\partial_{\infty}\Gamma$, which preserves a properly convex open subset of $\PP(V)$, and such that the natural inclusion $\Gamma\hookrightarrow\PGL(V)$ is $P_1$-Anosov.
Then the set
$$\Omega_{\max} := \PP(V) \smallsetminus \bigcup_{z^*\in\Lambda_{\Gamma}^*} z^*$$
is a nonempty $\Gamma$-invariant convex open subset of $\PP(V)$, containing all other such sets.
If $\Omega_{\max}$ is properly convex (\eg if the action of $\Gamma$ on $\PP(V)$ is irreducible), then $\Gamma$ acts convex cocompactly on $\Omega_{\max}$.
\end{corollary}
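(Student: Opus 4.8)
The plan is to assemble the three cited results, checking only that their hypotheses hold in our setting, since the corollary is essentially a concatenation of them. First I would verify that $\Gamma$ contains a proximal element in $\PP(V^*)$, so that $\Lambda_{\Gamma}^*$ is nonempty: because the inclusion $\Gamma\hookrightarrow\PGL(V)$ is $P_1$-Anosov, every infinite-order element of $\Gamma$ is proximal in $\PP(V^*)$ (as recorded in Section~\ref{subsec:Anosov}), and $\Gamma$ is infinite, so $\Lambda_{\Gamma}^*=\xi^*(\partial_{\infty}\Gamma)$ is nonempty. Since $\partial_{\infty}\Gamma$ is \emph{connected} and $\xi^*$ is continuous, $\Lambda_{\Gamma}^*$ is the continuous image of a compact connected set, hence itself compact and connected.

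Next I would apply Lemma~\ref{lem:Omega-max-connected}, whose two hypotheses are now in place: $\Gamma$ preserves a nonempty properly convex open subset of $\PP(V)$ by assumption, and $\Lambda_{\Gamma}^*$ is connected by the previous paragraph. The lemma identifies $\PP(V)\smallsetminus\bigcup_{z^*\in\Lambda_{\Gamma}^*} z^*$ with the set $\Omega_{\max}$ of Proposition~\ref{prop:max-inv-conv} and states that it is a nonempty, $\Gamma$-invariant, convex open subset of $\PP(V)$ containing every $\Gamma$-invariant properly convex open subset. This is exactly the first assertion of the corollary.

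For the final assertion I would invoke Proposition~\ref{prop:precise-Ano-implies-ccc}. By Proposition~\ref{prop:max-inv-conv}.\eqref{item:Lambda-prox-lift-Omega}, the preservation of a properly convex open set furnishes a lift $\widetilde{\Lambda}_{\Gamma}^*$ of $\Lambda_{\Gamma}^*$ to a cone of $V^*\smallsetminus\{0\}$ for which $\Omega_{\max}=\PP(\{x\in V\mid \ell(x)>0\ \forall\ell\in\widetilde{\Lambda}_{\Gamma}^*\})$ is nonempty and $\Gamma$-invariant; together with word hyperbolicity and the $P_1$-Anosov property, these are precisely the standing hypotheses of Proposition~\ref{prop:precise-Ano-implies-ccc}. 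Assuming $\Omega_{\max}$ is properly convex, that proposition allows one to take $\Omega=\Omega_{\max}$, so $\Gamma$ acts convex cocompactly on $\Omega_{\max}$. The parenthetical case of irreducible $\Gamma$ then follows because Fact~\ref{fact:benoist-irred-Gamma} guarantees that $\Omega_{\max}$ is properly convex whenever $\Gamma$ is irreducible.

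The only point requiring genuine care — and the closest thing to an obstacle — is the bookkeeping step of confirming that the three a priori distinct descriptions of $\Omega_{\max}$ (the connected \emph{component} in Proposition~\ref{prop:max-inv-conv}, the \emph{full} complement $\PP(V)\smallsetminus\bigcup_{z^*\in\Lambda_{\Gamma}^*} z^*$ in the corollary and Lemma~\ref{lem:Omega-max-connected}, and the dual-cone description in Proposition~\ref{prop:precise-Ano-implies-ccc}) genuinely coincide here. This coincidence is exactly what the connectedness of $\Lambda_{\Gamma}^*$ buys us, via Lemma~\ref{lem:Omega-max-connected}: when $\Lambda_{\Gamma}^*$ is connected the complement has a single component, so the component of Proposition~\ref{prop:max-inv-conv} \emph{is} the full complement. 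No new estimate is needed, and the result is a direct application of the three cited statements.
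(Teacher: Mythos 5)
Your proof is correct and follows exactly the route the paper intends: the paper itself presents this corollary as an "immediate consequence" of Propositions~\ref{prop:max-inv-conv} and~\ref{prop:precise-Ano-implies-ccc} and Lemma~\ref{lem:Omega-max-connected}, and your write-up is a careful verification of the hypotheses of those three results (nonemptiness and connectedness of $\Lambda_{\Gamma}^*$ via the Anosov boundary map, then the two cited statements, then Fact~\ref{fact:benoist-irred-Gamma} for the irreducible case). Nothing is missing.
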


%%%%%%%%%%%%%%%%%%%%%%%%%%%%%%%%%%%%%%%%%%%%%%%%%%%
\section{Smoothing out the nonideal boundary} \label{sec:smooth}

We now make the connection with strong projective convex cocompactness and prove the remaining implications of Theorems \ref{thm:main-noPETs} and~\ref{thm:main-general}.

Concerning Theorem~\ref{thm:main-noPETs}, the equivalences \ref{item:ccc-hyp}~$\Leftrightarrow$~\ref{item:ccc-noseg-any}~$\Leftrightarrow$~\ref{item:ccc-noPETs-some}~$\Leftrightarrow$~\ref{item:ccc-ugly} have been proved in Section~\ref{sec:regularity-limit-set}, the implication \ref{item:ccc-hyp}~$\Rightarrow$~\ref{item:P1Anosov} in Section~\ref{sec:ccc-implies-Anosov}, and the implications~\ref{item:P1Anosov}~$\Leftrightarrow$~\ref{item:P1Anosov-bis}~$\Rightarrow$~\ref{item:ccc-hyp}  in Section~\ref{sec:Anosov-implies-ccc-noPETs}.
The implication \ref{item:ccc-CM}~$\Rightarrow$~\ref{item:ccc-noseg-any} is trivial.
We shall prove \ref{item:ccc-noseg-any}~$\Rightarrow$~\ref{item:ccc-CM} in Section~\ref{subsec:ccc-implies-strict-C1}, which will complete the proof of Theorem~\ref{thm:main-noPETs}.
It will also complete the proof of Theorem~\ref{thm:Ano-PGL}, since the implication \eqref{item:Ano-PGL}~$\Rightarrow$~\eqref{item:strong-proj-cc} of Theorem~\ref{thm:Ano-PGL} is the implication \ref{item:P1Anosov}~$\Rightarrow$~\ref{item:ccc-CM} of Theorem~\ref{thm:main-noPETs}.

Concerning Theorem~\ref{thm:main-general}, the equivalence \eqref{item:ccc-limit-set}~$\Leftrightarrow$~\eqref{item:ccc-bisat} has been proved in Section~\ref{sec:bisat-dual}.
The implication \eqref{item:ccc-strict-C1}~$\Rightarrow$~\eqref{item:ccc-strict} is immediate.
We shall prove the implication \eqref{item:ccc-limit-set}~$\Rightarrow$~\eqref{item:ccc-strict-C1} in Section~\ref{subsec:ccc-implies-strict-C1}.
The implication \eqref{item:ccc-strict}~$\Rightarrow$~\eqref{item:ccc-bisat} is an immediate consequence of Lemma~\ref{lem:closed-ideal-boundary} and of the following lemma, and completes the proof of Theorem~\ref{thm:main-general}. 
We refer to Definition~\ref{def:boundaries} for the various notions of boundary regularity used throughout this section.

\begin{lemma} \label{lem:strict-conv-implies-bisat}
Let $\C_{\mathsf{strict}}$ be a nonempty convex subset of $\PP(V)$ with strictly convex nonideal boundary $\partialn\C_{\mathsf{strict}}$ and whose ideal boundary $\partiali\C_{\mathsf{strict}}$ is closed in $\PP(V)$.
Then $\C_{\mathsf{strict}}$ has bisaturated boundary.
\end{lemma}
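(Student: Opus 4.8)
The plan is to argue by contradiction, showing that no supporting hyperplane of $\C_{\mathsf{strict}}$ can meet both the ideal and the nonideal boundary. Write $\C:=\C_{\mathsf{strict}}$ for brevity and work throughout in an affine chart of $\PP(V)$ containing $\overline{\C}$ (such a chart exists since $\C$ is properly convex), so that line segments and extreme points have their usual affine meaning. First I would record the elementary fact that for \emph{any} supporting hyperplane $H$ of~$\C$ one has $H\cap\overline{\C}\subseteq\Fr(\C)$: if $p\in\Int(\C)\cap H$, then a full neighborhood of~$p$ lies in $\C$ and hence in the closed halfspace bounded by~$H$, yet $p\in H$ forces that neighborhood to contain points strictly outside the halfspace, a contradiction; thus $\Int(\C)\cap H=\emptyset$. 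Since $\Fr(\C)$ is the disjoint union $\partiali\C\sqcup\partialn\C$, establishing the bisaturated boundary condition amounts exactly to showing that $H\cap\overline{\C}$ never contains a point of each of the two pieces.

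So suppose for contradiction that some supporting hyperplane $H$ meets $\partialn\C$ in a point~$w$ and $\partiali\C$ in a point~$y$. By convexity of $H\cap\overline{\C}$, the entire segment $[w,y]$ lies in $H\cap\overline{\C}\subseteq\Fr(\C)=\partialn\C\sqcup\partiali\C$. I would then split according to the behaviour on the open segment $(w,y)$. If some point $z\in(w,y)$ lies in $\partialn\C$, then $z$ is an interior point of the segment $[w,y]\subseteq\overline{\C}$, hence not an extreme point of~$\overline{\C}$; this contradicts the hypothesis that $\C$ has strictly convex nonideal boundary, i.e.\ that every point of $\partialn\C$ is extreme in~$\overline{\C}$. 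Otherwise $(w,y)\cap\partialn\C=\emptyset$, so $(w,y)\subseteq\partiali\C$; since $\partiali\C$ is closed and $w=\lim_{n}\big((1-\tfrac1n)w+\tfrac1n y\big)$ is a limit of points of $(w,y)$, we get $w\in\overline{\partiali\C}=\partiali\C$, contradicting $w\in\partialn\C$ together with $\partialn\C\cap\partiali\C=\emptyset$. Either way we reach a contradiction, so $H\cap\overline{\C}$ is contained entirely in $\partiali\C$ or entirely in $\partialn\C$, which is precisely the bisaturated boundary condition.

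The two hypotheses enter in a clean, complementary way: strict convexity of the nonideal boundary rules out a segment sitting inside $\partialn\C$ (the first case), while closedness of $\partiali\C$ forbids $\partialn\C$ from accumulating a one-sided arc of $\partiali\C$ (the second case), so each hypothesis disposes of exactly one alternative. The only point requiring a little care—and the closest thing to an obstacle—is the bookkeeping that $H\cap\overline{\C}\subseteq\Fr(\C)$ and $\Fr(\C)=\partiali\C\sqcup\partialn\C$, which is what lets me conclude that ``not meeting both pieces'' is genuinely equivalent to ``contained in one piece''. Beyond that there is no analytic content: the whole argument is a one-dimensional convexity dichotomy along the single segment $[w,y]$, so I expect no further difficulty.
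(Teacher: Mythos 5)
Your proof is correct and follows essentially the same route as the paper: a segment from a point of $\partialn\C_{\mathsf{strict}}$ to a point of $\partiali\C_{\mathsf{strict}}$ inside a supporting hyperplane is ruled out by playing the closedness of $\partiali\C_{\mathsf{strict}}$ against the extremality of points of $\partialn\C_{\mathsf{strict}}$. Your explicit two-case dichotomy along $[w,y]$ is just a slightly more spelled-out version of the paper's one-line argument that such a hyperplane would have to contain a nontrivial segment of $\partialn\C_{\mathsf{strict}}$.
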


\begin{proof}
Let $H$ be a supporting hyperplane to $\C_{\mathsf{strict}}$.
Then $H \cap \Fr(\C_{\mathsf{strict}}) = H \cap \overline{\C_{\mathsf{strict}}}$ is a closed, convex subset of the frontier $\Fr(\C_{\mathsf{strict}}) = \partialn\C_{\mathsf{strict}} \cup \partiali\C_{\mathsf{strict}}$.
Since $\partiali \C_{\mathsf{strict}}$ is closed, so is $H \cap \partiali \C_{\mathsf{strict}}$, hence $H \cap \partialn \C_{\mathsf{strict}}$ is open in $H \cap \Fr(\C_{\mathsf{strict}})$.
However, $\partialn\C_{\mathsf{strict}}$ is strictly convex.
Therefore, either $H \cap \partialn\C_{\mathsf{strict}}$ is empty, or $H \cap \partialn\C_{\mathsf{strict}} = H \cap \Fr(\C_{\mathsf{strict}})$ is a single point.
In particular, $H \cap \Fr(\C_{\mathsf{strict}})$ is entirely contained in $\partiali\C_{\mathsf{strict}}$ or entirely contained in $\partialn\C_{\mathsf{strict}}$.
This shows that $\C_{\mathsf{strict}}$ has bisaturated boundary.
\end{proof}

%%%%%%%%%%%%%%%%%%%%%%%%%
\subsection{Smoothing out the nonideal boundary}

Here is the main result of Section~\ref{sec:smooth}.

\begin{lemma}\label{lem:strict-C1-domain}
Let $\Gamma$ be an infinite discrete subgroup of $\PGL(V)$ and $\Omega$ a nonempty $\Gamma$-invariant properly convex open subset of $\PP(V)$.
Suppose $\Gamma$ acts convex cocompactly on~$\Omega$.
Fix a uniform neighborhood $\C_{\mathsf{unif}}$ of $\Ccore_{\Omega}(\Gamma)$ in $(\Omega,d_{\Omega})$.
Then the convex core $\Ccore_{\Omega}(\Gamma)$ admits a $\Gamma$-invariant, properly convex, closed neighborhood $\C\subset\C_{\mathsf{unif}}$ in~$\Omega$ which has $C^1$, strictly convex nonideal boundary.
\end{lemma}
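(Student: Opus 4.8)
The goal is to produce a $\Gamma$-invariant properly convex closed neighborhood $\C$ of $\Ccore_\Omega(\Gamma)$, contained in the given uniform neighborhood $\C_u$, whose nonideal boundary $\partialn\C$ is both strictly convex and $C^1$. The natural strategy is to average a suitable convex function over the cocompact quotient and take a sublevel set. First I would work in the Hilbert metric $d_\Omega$, where $\Gamma$ acts by isometries on $\Omega$ and cocompactly on $\Ccore_\Omega(\Gamma)$. The function $f(z) := d_\Omega(z, \Ccore_\Omega(\Gamma))$ is $\Gamma$-invariant and convex along projective segments (convexity of the distance-to-a-convex-set for Hilbert geometries, e.g.\ via the Buseman reference \cite{bus55} already cited), and $\C_u = \{f \le R\}$ for some $R>0$. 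The issue is that $f$, hence $\{f\le R\}$, need not be strictly convex or $C^1$ at the nonideal boundary. So the real content is a smoothing/regularization step.

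\emph{Smoothing via convolution.} The cleanest approach I would take is to replace $f$ by a genuinely smooth, strictly convex, $\Gamma$-invariant exhaustion of a collar of $\Ccore_\Omega(\Gamma)$. Concretely: fix a compact fundamental domain $\mathcal D\subset\Ccore_\Omega(\Gamma)$ for the $\Gamma$-action, choose an auxiliary smooth strictly convex function on $\Omega$ (for instance, build one locally from the Hilbert metric and a partition of unity, or use the fact that any properly convex $\Omega$ carries a $\mathrm{Aut}(\Omega)$-invariant smooth strictly convex function coming from the Vinberg/characteristic function of the cone lifting $\Omega$). I would then form a $\Gamma$-invariant strictly convex smooth function $g$ by summing translates of a bump-weighted characteristic-function-type potential over the $\Gamma$-orbit; local finiteness of the sum is guaranteed by the divergence estimate in Lemma~\ref{lem:vector-growth} together with proper discontinuity, so $g$ is well defined, smooth, and strictly convex. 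Taking $\C := \{z \in \Omega : g(z) \le c\}$ for a regular value $c$ chosen large enough that $\C \supset \Ccore_\Omega(\Gamma)$ and small enough that $\C \subset \C_u$, the boundary $\partialn\C = \{g = c\}$ is a smooth ($C^1$) hypersurface by the implicit function theorem (since $c$ is a regular value) and strictly convex because $g$ is strictly convex. The set $\C$ is $\Gamma$-invariant and properly convex (it sits inside the properly convex $\C_u$), and the $\Gamma$-action on it is cocompact because $\C$ lies within bounded Hilbert distance of $\Ccore_\Omega(\Gamma)$, on which $\Gamma$ acts cocompactly.

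\emph{The main obstacle.} The hard part is not the logical structure but producing the invariant smooth strictly convex potential $g$ with control near the ideal boundary. There are two competing requirements: $g$ must be strictly convex (forcing strict convexity of $\partialn\C$) and its gradient must be nonvanishing transverse to level sets near $\partialn\C$ (forcing $C^1$-ness), yet the sublevel set must not ``leak'' out to $\partiali\C = \Lambdao_\Omega(\Gamma)$. Because $\partialn\C$ and $\partiali\C$ are separated---the supporting hyperplanes at points of $\partialn\C$ can be taken disjoint from $\partiali\C$, which is the bisaturated-boundary picture established in Corollary~\ref{coro:ccc-limit-set->fib} and Lemma~\ref{lem:elementary-fib}.\eqref{item:closed-conv-hull}---one can localize the smoothing to a collar of $\partialn\Ccore_\Omega(\Gamma)$ bounded away from $\Lambdao_\Omega(\Gamma)$, so the convolution only needs to regularize the nonideal part and the level set $\{g=c\}$ stays in the interior $\Omega$ and does not accumulate on $\partiali\C$. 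I would carry out the estimate by fixing the collar width in terms of the $d_\Omega$-distance to $\Ccore_\Omega(\Gamma)$ and invoking proper convexity of $\C_u$ (via \cite{bus55}) to keep everything inside $\Omega$. Once $g$ is in hand, verifying strict convexity and $C^1$-ness of $\{g=c\}$, $\Gamma$-invariance, and cocompactness is routine, completing the proof.
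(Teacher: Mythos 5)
There is a genuine gap, and it sits exactly where the paper has to work hardest: the production of the $\Gamma$-invariant smooth strictly convex potential $g$. You treat this as a routine convolution/partition-of-unity step, but neither tool is available here. A partition of unity does not preserve convexity: if $f_1,f_2$ are convex and $\chi_1+\chi_2=1$, then $\chi_1f_1+\chi_2f_2$ is in general not convex, so one cannot ``build one locally \ldots{} and a partition of unity.'' Convolution in an affine chart does preserve convexity, but it destroys $\Gamma$-invariance (the action is projective, not by translations), and re-averaging the convolved function over the infinite group $\Gamma$ is not a locally finite or convexity-preserving operation. The Vinberg characteristic function is also a red herring: $\log\phi$ is strictly convex and $\SL^{\pm}$-invariant on the \emph{cone} lifting $\Omega$, but it is homogeneous, so it does not descend to a function on $\Omega$ whose sublevel sets are neighborhoods of $\Ccore_\Omega(\Gamma)$. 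Worse, a globally $\Gamma$-invariant strictly convex function on a neighborhood of the core cannot exist in general: in Example~\ref{ex:Zn} the quotient of the core is a closed manifold, and a convex function with an interior local maximum is locally constant, so no invariant strictly convex function descends to the compact quotient. (That case is vacuous for the lemma since $\partialn\C=\emptyset$, but it shows your existence claim for $g$ is not a soft fact and must be localized and proved.) Your observation that the level set can be kept away from $\partiali\C=\Lambdao_\Omega(\Gamma)$ using the bisaturated-boundary picture is correct, but it is the easy part.

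For comparison, the paper's proof spends all its effort on exactly the step you skip, in three stages: (1) a $C^1$ (not yet strictly convex) invariant neighborhood $\C_{\scriptscriptstyle\clubsuit}$ obtained as the convex hull of the $\Gamma$-translates of a compact set with $C^1$ frontier, where $C^1$-ness comes from duality (the convex hull of finitely many $C^1$ pieces is dual to an intersection of strictly convex pieces); (2) explicit local ``pushing'' deformations $\varphi_y^i$ along hyperplanes that destroy the flat pieces of $\partialn\C_{\scriptscriptstyle\clubsuit}$, carried out equivariantly only for a torsion-free finite-index subgroup $\Gamma_0$ (Selberg), because one needs the boundary strata $F_y$ to be disjoint from their translates; and (3) an averaging over the finitely many $\Gamma_0$-cosets via $H=\sum_i h\circ d_{\Omega^i}(\cdot,\C')$, where the convexity and strict convexity of each summand is \emph{inherited} from the strict convexity of the domain $\Omega^i=\Int(\gamma_i\cdot\C_{\scriptscriptstyle\diamondsuit})$ built in stage (2), not assumed. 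Your final sublevel-set step resembles stage (3), but without stages (1) and (2) there is no strictly convex ambient domain from which to inherit strict convexity, and the argument does not close.
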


Constructing a neighborhood~$\C$ as in the lemma clearly involves arbitrary choices; here is one of many possible constructions, taken from \cite[Lem.\,6.4]{dgk-ccHpq}.
Cooper--Long--Tillmann~\cite[Prop.\,8.3]{clt18} give a different construction yielding, in the case that $\Gamma$ is torsion-free, a convex set $\C$ as in the lemma whose nonideal boundary has the slightly stronger property that it is locally the graph of a smooth function with positive definite Hessian.

\begin{proof}[Proof of Lemma~\ref{lem:strict-C1-domain}]
In this proof, we fix a finite-index subgroup $\Gamma_0$ of~$\Gamma$ which is torsion-free; such a subgroup exists by the Selberg lemma \cite[Lem.\,8]{sel60}.

We proceed in three steps.
Firstly, we construct a $\Gamma$-invariant closed neighborhood $\C_{\scriptscriptstyle\clubsuit}$ of $\Ccore_{\Omega}(\Gamma)$ in~$\Omega$ which is contained in~$\C_{\mathsf{unif}}$ and whose nonideal boundary is $C^1$ but not necessarily strictly convex.
Secondly, we construct a small deformation $\C_{\scriptscriptstyle\diamondsuit}\subset\C_{\mathsf{unif}}$ of~$\C_{\scriptscriptstyle\clubsuit}$ which has $C^1$ and strictly convex nonideal boundary, but which is only $\Gamma_0$-invariant, not necessarily $\Gamma$-invariant.
Finally, we use an averaging procedure over translates $\gamma\cdot\C_{\scriptscriptstyle\diamondsuit}$ of~$\C_{\scriptscriptstyle\diamondsuit}$, for $\gamma\Gamma_0$ ranging over the $\Gamma_0$-cosets of~$\Gamma$, to construct a $\Gamma$-invariant closed neighborhood $\C\subset\C_{\mathsf{unif}}$ of~$\Ccore_{\Omega}(\Gamma)$ in~$\Omega$ which has $C^1$ and strictly convex nonideal boundary.

\smallskip
\noindent
$\bullet$ \textbf{Construction of~$\C_{\scriptscriptstyle\clubsuit}$:}
Consider a compact fundamental domain $\mathcal{D}$ for the action of~$\Gamma$ on~$\Ccore_{\Omega}(\Gamma)$.
The convex hull of $\mathcal{D}$ in~$\Omega$ is still contained in~$\Ccore_{\Omega}(\Gamma)$.
Let $\mathcal{D}'\subset\C_{\mathsf{unif}}$ be a closed neighborhood of this convex hull in~$\Omega$ which has frontier of class $C^1$, and let $\C_{\scriptscriptstyle\clubsuit}\subset\C_{\mathsf{unif}}$ be the closure in~$\Omega$ of the convex hull of $\Gamma\cdot\mathcal{D}'$.
By Corollary~\ref{cor:ideal-bound-naive-cc}.\eqref{item:ideal-bound-cc} we have $\partiali\C_{\scriptscriptstyle\clubsuit}=\Lambdao_{\Omega}(\Gamma)$, and by Lemma~\ref{lem:1-neighb-bisat} the set $\C_{\scriptscriptstyle\clubsuit}$ has bisaturated boundary.

Let us check that $\C_{\scriptscriptstyle\clubsuit}$ has $C^1$ nonideal boundary.
We first observe that any supporting hyperplane $\Pi_x$ of $\C_{\scriptscriptstyle\clubsuit}$ at a point $x\in\partialn\C_{\scriptscriptstyle\clubsuit}$ stays away from $\partiali\C_{\scriptscriptstyle\clubsuit}$ since $\C_{\scriptscriptstyle\clubsuit}$ has bisaturated boundary.
On the other hand, since the action of $\Gamma$ on~$\C_{\scriptscriptstyle\clubsuit}$ is properly discontinuous, for any neighborhood $\mathcal{N}$ of $\partiali\C_{\scriptscriptstyle\clubsuit}$ in $\PP(V)$ and any infinite sequence of distinct elements $\gamma_j\in\Gamma$, the translates $\gamma_j \cdot \mathcal{D}'$ are eventually all contained in~$\mathcal{N}$.
Therefore, in a neighborhood of~$x$, the hypersurface $\Fr(\C_{\scriptscriptstyle\clubsuit})$ coincides with the convex hull of a \emph{finite} union of translates $\bigcup_{i=1}^m \gamma_i\cdot\mathcal{D}'$, and so it is locally~$C^1$: indeed that convex hull is dual to $\bigcap_{i=1}^m (\gamma_i\cdot\mathcal{D}')^*$, which has \emph{strictly} convex frontier because $(\mathcal{D}')^*$ does (a convex set has~$C^1$ frontier if and only if its dual has strictly convex frontier).

\smallskip
\noindent
$\bullet$ \textbf{Construction of~$\C_{\scriptscriptstyle\diamondsuit}$:}
For any $x\in\partialn\C_{\scriptscriptstyle\clubsuit}$, let $F_x$ be the open face of $\Fr(\C_{\scriptscriptstyle\clubsuit})$ at~$x$, namely the intersection of $\overline{\C_{\scriptscriptstyle\clubsuit}}$ with the unique supporting hyperplane $\Pi_x$ at~$x$.
Since $\C_{\scriptscriptstyle\clubsuit}$ has bisaturated boundary, $F_x$ is a compact convex subset of $\partialn\C_{\scriptscriptstyle\clubsuit}$.

We claim that $F_x$ is disjoint from $\gamma\cdot F_x=F_{\gamma\cdot x}$ for all $\gamma\in\Gamma_0\smallsetminus\{\mathrm{Id}\}$.
Indeed, if there existed $y\in F_x\cap F_{\gamma\cdot x}$, then by uniqueness the supporting hyperplanes would satisfy $\Pi_x=\Pi_y=\Pi_{\gamma\cdot x}$, hence $F_x = F_y = F_{\gamma\cdot x}=\gamma\cdot F_x$.
This would imply $F_x = \gamma^m\cdot F_x$ for all $m\in\NN$, hence $\gamma^m\cdot x\in F_x$.
Using the fact that the action of $\Gamma_0$ on $\C_{\scriptscriptstyle\clubsuit}$ is properly discontinuous and taking a limit, we see that $F_x$ would contain a point of~$\partiali\C_{\scriptscriptstyle\clubsuit}$, which we have seen is not true.
Therefore $F_x$ is disjoint from $\gamma\cdot F_x$ for all $\gamma\in\Gamma_0\smallsetminus\{\mathrm{Id}\}$.

For any $x\in\partialn\C_{\scriptscriptstyle\clubsuit}$, the subset of $\PP(V^*)$ consisting of those projective hyperplanes near the supporting hyperplane $\Pi_x$ that separate $F_x$ from $\partiali \C_{\scriptscriptstyle\clubsuit}$ is open and nonempty, hence $(n-1)$-dimensional where $\dim(V)=n$.
Choose $n-1$ such hyperplanes $\Pi_x^1,\dots, \Pi_x^{n-1}$ in generic position, with $\Pi_x^i$ cutting off a compact region $\mathcal{Q}_x^i\supset F_x$ from~$\C_{\scriptscriptstyle\clubsuit}$.
One may imagine each $\Pi_x^i$ is obtained by pushing $\Pi_x$ normally into $\C_{\scriptscriptstyle\clubsuit}$ and then tilting slightly in one of $n-1$ independent directions.
The intersection $\bigcap_{i=1}^{n-1} \Pi_x^i \subset \PP(V)$ is reduced to a singleton.
By taking each hyperplane $\Pi_x^i$ very close to $\Pi_x$, we may assume that the union $\mathcal{Q}_x:=\bigcup_{i=1}^{n-1} \mathcal{Q}_x^i$ is disjoint from all its $\gamma$-translates for $\gamma \in \Gamma_0 \smallsetminus \{\mathrm{Id}\}$.
In addition, we ensure that $F_x$ has a neighborhood $\mathcal{Q}'_x$ contained in $\bigcap_{i=1}^{n-1} \mathcal{Q}_x^i$.

Since the action of $\Gamma_0$ on $\partialn\C_{\scriptscriptstyle\clubsuit}$ is cocompact, there exist finitely many points $x_1,\dots,x_m\in\partialn\C_{\scriptscriptstyle\clubsuit}$ such that $\partialn\C_{\scriptscriptstyle\clubsuit}\subset \Gamma_0\cdot (\mathcal{Q}'_{x_1}\cup\dots\cup \mathcal{Q}'_{x_m})$.
 
We now explain, for any $x\in\partialn\C_{\scriptscriptstyle\clubsuit}$, how to deform $\C_{\scriptscriptstyle\clubsuit}$ into a new, smaller properly convex $\Gamma_0$-invariant closed neighborhood of~$\Ccore_{\Omega}(\Gamma)$ in~$\Omega$ with $C^1$ nonideal boundary, in a way that destroys all segments in $\mathcal{Q}'_x$.
Repeating for $x=x_1, \dots, x_m$, this will produce a properly convex $\Gamma_0$-invariant closed neighborhood $\C_{\scriptscriptstyle\diamondsuit}$ of~$\Ccore_{\Omega}(\Gamma)$ in~$\Omega$ with $C^1$ and strictly convex nonideal boundary.

Choose an affine chart containing~$\Omega$, and an auxiliary Euclidean metric $g$ on this chart.
We may assume that for every $1\leq i \leq n-1$, the $g$-orthogonal projection $\pi_x^i$ onto $\Pi_x^i$ defines a homeomorphism $\mathcal{Q}_x^i \cap \partialn \C_{\scriptscriptstyle\clubsuit} \to \C_{\scriptscriptstyle\clubsuit} \cap \Pi_x^i$. Define a map $\varphi_x^i: \mathcal{Q}_x^i \rightarrow \mathcal{Q}_x^i$ by the property that $\varphi_x^i$ preserves each fiber $(\pi_x^i)^{-1}(y) \cap \mathcal{Q}_x^i$ (a segment), taking the point at distance $t$ from $y$ to the point at distance~$\tanh(t)$. 

We claim that $\varphi_x^i(\mathcal Q_x^i) \cup (\C_{\scriptscriptstyle\clubsuit} \smallsetminus \mathcal Q_x^i)$ is convex with boundary of class~$C^1$ and that, further, any segment in $\varphi_x^i(\partialn \C_{\scriptscriptstyle\clubsuit} \cap \mathcal{Q}_x^i)$ is the image of a segment of $\partialn \C_{\scriptscriptstyle\clubsuit} \cap \mathcal{Q}_x^i$ which is parallel to~$\Pi_x^i$.
To see this, decompose the affine chart orthogonally as $\Pi_x^i \oplus \RR$, so that $\mathcal{Q}_x^i \cap \partialn \C_{\scriptscriptstyle\clubsuit}$ is the graph of a concave function $f_x^i: \C_{\scriptscriptstyle\clubsuit} \cap \Pi_x^i \to \RR$ of class~$C^1$, vanishing on the boundary $\partialn  \C_{\scriptscriptstyle\clubsuit}\cap \Pi_x^i$.
 Replacing $\mathcal{Q}_x^i$ with $\varphi_x^i(\mathcal Q_x^i)$ amounts to replacing $f_x^i$ with $\tanh \circ f_x^i$. The claim follows from the fact that $\tanh$ is strictly concave, monotonic, smooth, and tangent to the identity at~$0$ (any other function with these properties may be used in place of $\tanh$).

Extending $\varphi_x^i$ by the identity on $\mathcal{Q}_x\smallsetminus \mathcal{Q}_x^i$ and repeating with varying $i$, we find that the composition $\varphi_x:=\varphi_x^1\circ\dots\circ\varphi_x^{n-1}$, defined on $\mathcal{Q}_x$, takes $\mathcal{Q}'_x\cap \partialn \C_{\scriptscriptstyle\clubsuit}$ to a strictly convex hypersurface. Indeed, a segment of $\mathcal{Q}'_x\cap \partialn \C_{\scriptscriptstyle\clubsuit}$ cannot be parallel to all of $\Pi_x^1, \ldots, \Pi_x^{n-1}$ since these hyperplanes were chosen to intersect in a singleton.
We can extend $\varphi_x$ in a $\Gamma_0$-equivariant fashion to $\Gamma_0\cdot\mathcal{Q}_x$, and extend it further by the identity on the rest of~$\C_{\scriptscriptstyle\clubsuit}$: the set $\varphi_x(\C_{\scriptscriptstyle\clubsuit})$ is still a $\Gamma_0$-invariant closed neighborhood of $\Ccore_{\Omega}(\Gamma)$ in~$\Omega$, contained in~$\C_{\mathsf{unif}}$, with $C^1$ nonideal boundary.

Repeating with finitely many points $x_1, \dots, x_m$ as above, we obtain a $\Gamma_0$-invariant properly convex closed neighborhood $\C_{\scriptscriptstyle\diamondsuit}\subset\C_{\mathsf{unif}}$ of~$\Ccore_{\Omega}(\Gamma)$ in~$\Omega$ with $C^1$ and strictly convex boundary.

\smallskip
\noindent
$\bullet$ \textbf{Construction of~$\C$:}
Consider the finitely many $\Gamma_0$-cosets $\gamma_1\Gamma_0, \dots, \gamma_k\Gamma_0$ of~$\Gamma$ and the corresponding translates $\C_{\scriptscriptstyle\diamondsuit}^i:=\gamma_i\cdot\C_{\scriptscriptstyle\diamondsuit}$; we denote by $\Omega^i$ the interior of~$\C_{\scriptscriptstyle\diamondsuit}^i$.
Let $\C'$ be a $\Gamma$-invariant properly convex closed neighborhood of~$\Ccore_{\Omega}(\Gamma)$ in $\Omega$ which has $C^1$ (but not necessarily strictly convex) nonideal boundary and is contained in all $\C_{\scriptscriptstyle\diamondsuit}^i$, $1\leq i\leq k$.
(Such a neighborhood $\C'$ can be constructed for instance by the same method as $\C_{\scriptscriptstyle\clubsuit}$ above.)
Since $\C_{\scriptscriptstyle\diamondsuit}^i$ has strictly convex nonideal boundary, uniform neighborhoods of $\C'$ in $(\Omega^i,d_{\Omega^i})$ have strictly convex nonideal boundary \cite[(18.12)]{bus55}.
Therefore, by cocompactness, if $h : [0,1]\to [0,1]$ is a convex function with sufficiently fast growth (\eg $h(t)=t^{\alpha}$ for large enough $\alpha>0$), then the $\Gamma_0$-invariant function $H_i := h\circ d_{\Omega^i}(\cdot, \C')$ is convex on the convex region $H_i^{-1}([0,1])$, and in fact smooth and strictly convex near every point outside~$\C'$.
The function $H:=\sum_{i=1}^k H_i$ is $\Gamma$-invariant and its sublevel set $\C:=H^{-1}([0,1])$ is a $\Gamma$-invariant closed neighborhood of $\Ccore_{\Omega}(\Gamma)$ in~$\Omega$ which has $C^1$, strictly convex nonideal boundary.
Moreover, $\C\subset \C_{\scriptscriptstyle\diamondsuit}\subset \C_{\scriptscriptstyle\clubsuit}\subset \C_{\mathsf{unif}}$ by construction.
\end{proof}

%%%%%%%%%%%%%%%%%%%%%%%%%
\subsection{Proof of the remaining implications of Theorems \ref{thm:main-noPETs} and~\ref{thm:main-general}} \label{subsec:ccc-implies-strict-C1}

\begin{proof}[Proof of \eqref{item:ccc-limit-set}~$\Rightarrow$~\eqref{item:ccc-strict-C1} in Theorem~\ref{thm:main-general}]
Suppose $\Gamma$ is convex cocompact in $\PP(V)$, \ie it preserves a properly convex open set $\Omega \subset \PP(V)$ and acts cocompactly on the convex core $\Ccore_{\Omega}(\Gamma)$.
Let $\C_{\mathsf{unif}}$ be the closed uniform $1$-neighborhood of $\Ccore_{\Omega}(\Gamma)$ in $(\Omega,d_{\Omega})$.
By Lemma~\ref{lem:naive-cc-nonempty-int}, the set $\C_{\mathsf{unif}}$ is properly convex and the action of $\Gamma$ on~$\C_{\mathsf{unif}}$ is properly discontinuous and cocompact.
By Lemma~\ref{lem:strict-C1-domain}, the set $\Ccore_{\Omega}(\Gamma)$ admits a $\Gamma$-invariant, properly convex, closed neighborhood $\C_{\mathsf{smooth}}\subset\C_{\mathsf{unif}}$ in~$\Omega$ which has $C^1$, strictly convex nonideal boundary.
The action of $\Gamma$ on $\C_{\mathsf{smooth}}$ is still properly discontinuous and cocompact, and $\partiali\C_{\mathsf{smooth}} = \Lambdao_{\Omega}(\Gamma)$ by Corollary~\ref{cor:ideal-bound-naive-cc}.\eqref{item:ideal-bound-cc}.
This proves the implication \eqref{item:ccc-limit-set}~$\Rightarrow$~\eqref{item:ccc-strict-C1} in Theorem~\ref{thm:main-general}.
\end{proof}

\begin{proof}[Proof of \ref{item:ccc-noseg-any}~$\Rightarrow$~\ref{item:ccc-CM} in Theorem~\ref{thm:main-noPETs}]
Suppose  that $\Gamma$ acts convex cocompactly on a properly convex open set $\Omega$ and that the full orbital limit set $\Lambdao_\Omega(\Gamma)$ contains no nontrivial segment.
As in the proof of \eqref{item:ccc-limit-set}~$\Rightarrow$~\eqref{item:ccc-strict-C1} in Theorem~\ref{thm:main-general} just above, $\Gamma$ acts properly discontinuously and cocompactly on some nonempty closed properly convex subset $\C_{\mathsf{smooth}}$ of~$\Omega$ which has strictly convex and $C^1$ nonideal boundary and whose interior $\Omega_{\mathsf{smooth}} := \Int(\C_{\mathsf{smooth}})$ contains $\Ccore_{\Omega}(\Gamma)$.
The set $\C_{\mathsf{smooth}}$ has bisaturated boundary (Lemma~\ref{lem:strict-conv-implies-bisat}), hence the action of $\Gamma$ on $\Omega_{\mathsf{smooth}}$ is convex cocompact by Corollary~\ref{cor:bisat-interior}.

By Corollary~\ref{cor:ideal-bound-naive-cc}.\eqref{item:ideal-bound-cc}, the ideal boundary $\partiali\C_{\mathsf{smooth}}$ is equal to $\Lambdao_{\Omega}(\Gamma)$.
Since $\Lambdao_{\Omega}(\Gamma)$ contains no nontrivial segment by assumption, and since it is a union of faces of $\partial\Omega$ (Corollaries \ref{cor:ideal-bound-naive-cc}.\eqref{item:Lambda-con-naive-cc} and~\ref{cor:Lambda-con-faces}), we deduce that any $z \in \partiali\C_{\mathsf{smooth}}$ is an extreme point of~$\partial \Omega$.
Thus the full boundary of $\Omega_{\mathsf{smooth}}$ is strictly convex.

Consider the dual convex set $\C_{\mathsf{smooth}}^* \subset \PP(V^*)$ (Definition~\ref{def:dual-C}); it is properly convex.
By Lemma~\ref{lem:dual}, the set $\C_{\mathsf{smooth}}^*$ has bisaturated boundary and does not contain any PET (since $\C_{\mathsf{smooth}}$ itself does not contain any PET).
By Proposition~\ref{prop:cc-duality}, the action of $\Gamma$ on $\C_{\mathsf{smooth}}^*$ is properly discontinuous and cocompact.
It follows from Lemma~\ref{lem:segments-imply-PETs} that $\partiali \C_{\mathsf{smooth}}^*$ contains no nontrivial segment, hence each point of $\partiali \C_{\mathsf{smooth}}^*$ is an extreme point. 
Hence there is exactly one hyperplane supporting $\C_{\mathsf{smooth}}$ at any given point of $\partiali\C_{\mathsf{smooth}}$.
This is also true at any given point of $\partialn\C_{\mathsf{smooth}}$ by assumption.
Thus the boundary of $\Omega_{\mathsf{smooth}}$ is of class~$C^1$.
\end{proof}

%%%%%%%%%%%%%%%%%%%%%%%%%%%%%%%%%%%%%%%%%%%%%%%%%%%
\section{Properties of convex cocompact groups} \label{sec:other-properties}

In this section we prove Theorem~\ref{thm:properties}.
Property~\ref{item:dual} has already been established in Section~\ref{subsec:proof-dual}; we now establish the other properties.

%%%%%%%%%%%%%%%%%%%%%%%%%
\subsection{\ref{item:cc-QI}: Quasi-isometric embedding}

We first prove the following very general result, using the notation $\mu_1-\mu_n$ from \eqref{eqn:mu-i-j}.
The fact that a statement of this flavor should exist was suggested to us by Yves Benoist and Pierre-Louis Blayac.

\begin{proposition} \label{prop:d-Omega-mu}
For any properly convex open subset $\Omega$ of $\PP(V) = \PP(\RR^n)$ and any $x\in\Omega$, there exists $\kappa_{\Omega,x}\geq 0$ such that for any $g\in\mathrm{Aut}(\Omega)$,
\begin{equation} \label{eqn:hilball}
\bigg| d_{\Omega}(x,g\cdot x) - \frac{1}{2} (\mu_1-\mu_n)(g) \bigg| \leq \kappa_{\Omega,x}.
\end{equation}
Moreover, $\kappa_{\Omega,x}$ can be taken to be uniform as $(\Omega,x)$ varies in a compact subset of the set of pointed properly convex open subsets of $\PP(V)$, endowed with the pointed Hausdorff topology.
\end{proposition}

Note that we can take $\kappa_{\Omega,x} = 0$ in the following examples, where $(e_1,\dots,e_n)$ is the standard basis of $V=\RR^n$:
\begin{itemize}
  \item $\Omega = \HH^{n-1} = \{ [v] \in \PP(\RR^n) \,|\, v_1^2 + \dots + v_{n-1}^2 - v_n^2 < 0\}$ (see Example~\ref{ex:H-p}) and $x = [e_n]$;
  \item $\Omega = \PP(\RR^+\text{-span}(e_1,\dots,e_n))$ and $x = [e_1 + \dots + e_n]$.
\end{itemize}

Here is an easy consequence of the inequality $(\mu_1-\mu_n)(g)\geq 2(d_\Omega(x,g\cdot x) -\kappa_{\Omega,x})$ of Proposition~\ref{prop:d-Omega-mu}.

\begin{corollary} \label{cor:QI-embed}
Let $\Gamma$ be a discrete subgroup of $G:=\PGL(V)$.
\begin{enumerate}
  \item\label{item:naive-cc-QI} If $\Gamma$ is naively convex cocompact in $\PP(V)$ (Definition~\ref{def:cc-naive}), then it is finitely generated and the natural inclusion $\Gamma\hookrightarrow G$ is a quasi-isometric embedding.
  \item\label{item:naive-cc-QI-subgroup} In particular, for any subgroup $\Gamma'$ of~$\Gamma$, if $\Gamma'$ is naively convex cocompact in $\PP(V)$, then it is finitely generated and the natural inclusion $\Gamma'\hookrightarrow\Gamma$ is a quasi-isometric embedding.
\end{enumerate}
\end{corollary}

Here the finitely generated groups $\Gamma$ and~$\Gamma'$ are endowed with the word metric with respect to some fixed finite generating subset.
The group $G$ is endowed with any $G$-invariant Riemannian metric~$d_G$.

\begin{proof}[Proof of Corollary~\ref{cor:QI-embed} assuming Proposition~\ref{prop:d-Omega-mu}]
Let $K=\PO(n)$ be the maximal compact subgroup of $G=\PGL(V)$ from Section~\ref{subsec:Cartan}.
Let $p:=\mathrm{Id} K\in G/K$.
There is a $G$-invariant Finsler metric $d$ on $G/K$ such that $d(p,g\cdot p) = (\mu_1-\mu_n)(g)$ for all $g\in G$.

\eqref{item:naive-cc-QI} Suppose $\Gamma$ is naively convex cocompact in $\PP(V)$: it preserves a properly convex open subset $\Omega$ of $\PP(V)$ and acts cocompactly on some nonempty $\Gamma$-invariant closed convex subset $\C$ of~$\Omega$.
By the \v{S}varc--Milnor lemma, $\Gamma$ is finitely generated and any orbital map $\Gamma\to (\C,d_{\Omega})$ is a quasi-isometry.
Proposition~\ref{prop:d-Omega-mu} then implies that any orbital map $\Gamma\to (G/K,d)$ is a quasi-isometric embedding.
Since $K$ is compact, the natural inclusion $\Gamma\hookrightarrow G$ is a quasi-isometric embedding.

\eqref{item:naive-cc-QI-subgroup} Let $\mathrm{length}_{\Gamma} : \Gamma\to\NN$ and $\mathrm{length}_{\Gamma'} : \Gamma'\to\NN$ be the word length functions of $\Gamma$ and~$\Gamma'$ for our fixed finite generating subsets.
By the triangle inequality, there exists $\kappa>0$ such that $d_G(\gamma,\mathrm{Id}) \leq \kappa\,\mathrm{length}_{\Gamma}(\gamma)$ for all $\gamma\in\Gamma$.
By~\eqref{item:naive-cc-QI}, if $\Gamma'$ is naively convex cocompact in $\PP(V)$, then there exist $\kappa_1,\kappa_2>0$ such that $d_G(\gamma',\mathrm{Id}) \geq \kappa_1\,\mathrm{length}_{\Gamma'}(\gamma') - \kappa_2$ for all $\gamma'\in\Gamma$, hence $\mathrm{length}_{\Gamma}({\gamma'}) \geq \kappa_1\kappa^{-1}\,\mathrm{length}_{\Gamma'}(\gamma') - \kappa_2\kappa^{-1}$ for all $\gamma'\in{\Gamma'}$ and $\Gamma'\hookrightarrow\Gamma$ is a quasi-isometric embedding.
\end{proof}

\begin{proof}[Proof of Proposition~\ref{prop:d-Omega-mu}]
We endow $\mathbb{P}(V)$ with the spherical metric $d_\mathbb{S}$, for which any projective line is geodesic, of length~$\pi$.
By the spherical sine law, two lines $\ell, \ell' \subset \mathbb{P}(V)$ intersecting in a point $b$ at an angle $\alpha \in [0,\pi/2]$ get no further than $\alpha$ apart, and for all $y\in \ell$,
\begin{equation} \label{eqn:spherical}
\frac{\sin d_\mathbb{S}(y,\ell')}{\sin \alpha} = \frac{\sin d_\mathbb{S}(y,b)}{\sin \frac{\pi}{2}} = \sin d_\mathbb{S}(y,b).
\end{equation}
Any tangent vector $v\in T\PP(V)$ can be written as $v = \frac{\mathrm{d}}{\mathrm{d}t}\big|_{t=0} \, [u+tw]$ where $u,w\in\RR^n$ are orthogonal for the standard Euclidean norm $\Vert\cdot\Vert_{\mathrm{Euc}}$ on~$\RR^n$ preserved by $\hat{K} = \OO(n)$, and $u\neq 0$; the norm $\Vert v\Vert_{\SS}$ of~$v$ for the spherical metric is then $\Vert w\Vert_{\mathrm{Euc}}/\Vert u\Vert_{\mathrm{Euc}}$.

We claim that any element $g\in\PGL(V)$, seen as a homeomorphism of $\PP(V) = \PP(\RR^n)$, is $e^{(\mu_1-\mu_n)(g)}$-bi-Lipschitz for~$d_\mathbb{S}$.
Indeed, since $(\mu_1-\mu_n)(g^{-1}) = (\mu_1-\mu_n)(g)$, it is enough to prove the Lipschitz direction. 
Since $d_\mathbb{S}$ is a geodesic metric, it suffices to check that $\Vert Dg (v) \Vert_\SS \leq e^{(\mu_1-\mu_n)(g)} \Vert v\Vert_{\SS}$ for any tangent vector $v\in T\PP(V)$.
We can lift $g$ to an element $\hat{g}$ of $\GL(V)$, with singular values $e^{\mu_1(\hat{g})} \geq \dots \geq e^{\mu_n(\hat{g})}$ satisfying $\mu_1(\hat{g})- \mu_n(\hat{g}) = (\mu_1-\mu_n)(g)$. 
Writing $v = \frac{\mathrm{d}}{\mathrm{d}t}\big|_{t=0} \, [u+tw]$ where $u,w\in\RR^n$ are orthogonal for $\Vert\cdot\Vert_{\mathrm{Euc}}$ and $u\neq 0$, we have $Dg(v) = \frac{\mathrm{d}}{\mathrm{d}t}\big|_{t=0} \, [\hat{g}\cdot u+t\,w']$ where $w'$ is the orthogonal projection of $\hat{g}\cdot w$ to the orthogonal of~$\hat{g}\cdot u$.
The bounds $\Vert \hat{g}\cdot u \Vert_{\mathrm{Euc}} \geq e^{\mu_n(\hat{g})} \Vert u\Vert_{\mathrm{Euc}}$ and $\Vert w'\Vert_{\mathrm{Euc}} \leq \Vert \hat{g}\cdot w \Vert_{\mathrm{Euc}} \leq e^{\mu_1(\hat{g})} \Vert w\Vert_{\mathrm{Euc}}$ yield
$$\Vert Dg(v)\Vert_\SS 
= \frac{\Vert w'\Vert_{\mathrm{Euc}}}{\Vert \hat{g}\cdot u\Vert_{\mathrm{Euc}}} 
\leq e^{\mu_1(\hat{g})-\mu_n(\hat{g})} \frac{\Vert w \Vert_{\mathrm{Euc}}}{\Vert u \Vert_{\mathrm{Euc}}}
= e^{(\mu_1-\mu_n)(g)} \Vert v \Vert_{\SS},$$
proving the claim.

For any $x\in\Omega$ and any $\varepsilon>0$, we denote by $\mathbb{B}_{\mathbb{S}}(x,\varepsilon)$ (\resp $\mathbb{B}_{\Omega}(x,\varepsilon)$) the open ball centered at $x\in\Omega$ with radius $\varepsilon>0$ in $(\PP(V),d_{\mathbb{S}})$ (\resp in $(\Omega,d_{\Omega})$).
Since $\Omega$ is properly convex, we can choose $0< \varepsilon < \pi/4$, depending only on $(\Omega, x)$, such that
\begin{enumerate}[label=(\roman*)]
  \item\label{xi} $\mathbb{B}_\mathbb{S}(x,\varepsilon) \subset \mathbb{B}_\Omega(x,1)$;
  \item\label{xii} $\mathbb{B}_\mathbb{S}(x,2\varepsilon)\subset \Omega$;
  \item\label{xiii} every segment of $\Omega$ has $d_\mathbb{S}$-length at most $\pi-2\varepsilon$.
\end{enumerate}

Fix $g \in \mathrm{Aut}(\Omega)$.
We claim that there exists $x' \in \overline{\mathbb{B}_\mathbb{S}(x,\varepsilon)}$ such that
\begin{equation} \label{eqn:squish}
e^{-(\mu_1-\mu_n)(g)} \sin \varepsilon \leq \sin d_\mathbb{S}(g\cdot x', \partial \Omega) \leq e^{-(\mu_1-\mu_n)(g)} (\sin \varepsilon)^{-3}.
\end{equation}
Indeed, as above, lift $g$ to $\hat{g}\in\GL(V)$ with singular values $e^{\mu_1(\hat{g})} \geq \dots \geq e^{\mu_n(\hat{g})}$ satisfying $\mu_1(\hat{g})- \mu_n(\hat{g}) = (\mu_1-\mu_n)(g)$.
We can write $\hat{g} = k a k'$ where $k,k' \in \hat{K} = \OO(n)$ and $a = \mathrm{diag}(e^{\mu_1(\hat{g})}, \dots, e^{\mu_n(\hat{g})}) \in \exp(\hat{\mathfrak{a}}^+)$ (see Section~\ref{subsec:Cartan}).
Let $(e_1, \dots, e_n)$ be the standard basis of $V = \RR^n$, orthonormal for $\Vert\cdot\Vert_{\mathrm{Euc}}$.
Set $v_i := {k'}^{-1}\cdot e_i$ and consider the projective hyperplane $H := \PP(\mathrm{span}\{v_2,\dots, v_n\})$.
There exists $x'\in \overline{\mathbb{B}_\mathbb{S}(x, \varepsilon)}$ such that $d_\mathbb{S}(x', H)\geq \nolinebreak \varepsilon$.
Note that any segment realizing $d_\mathbb{S}(g\cdot x', \partial \Omega)$ gets mapped by $g^{-1}$ to a segment of length $\geq \varepsilon$, by~\ref{xii}. 
Since $g^{-1}$ is an $e^{(\mu_1-\mu_n)(g)}$-Lipschitz homeomorphism of $(\PP(V), d_\mathbb{S})$, concavity of $\sin$ yields the lower bound in \eqref{eqn:squish}.
We now check the upper bound.
For this, consider $\delta<\varepsilon$ such that $\sin \delta = (\sin\varepsilon)^2$.
By~\eqref{eqn:spherical}, since $d_\SS(x', H)\geq \varepsilon$,  the line through $[v_n]$ and $x'$ forms an angle $\geq \varepsilon$ with $H$.
Again by~\eqref{eqn:spherical}, this line therefore contains a segment of length $\geq \pi-2\varepsilon$ that stays outside the $\delta$-neighborhood $\mathcal{N}$ of~$H$. 
By~\ref{xiii}, that segment must exit $\Omega$ at some $z \in \partial \Omega \smallsetminus \mathcal{N}$.
We have $d_\mathbb{S}(g\cdot x', \partial \Omega) \leq d_\mathbb{S}(g\cdot x', g\cdot z)$, hence it is enough to bound $d_\mathbb{S}(g\cdot x', g\cdot z)$ from above.
Since $z\in \PP(V)\smallsetminus \mathcal{N}$, a unit lift $\widehat{z}$ of~$z$ satisfies $|\langle \widehat{z}, v_1 \rangle | \geq \sin \delta = (\sin \varepsilon)^2$, \ie $|\langle k' \cdot \widehat{z}, e_1 \rangle | \geq (\sin \varepsilon)^2$, so that 
\begin{equation} \label{eqn:intermed}
\Vert \hat{g} \cdot \widehat z \Vert_{\mathrm{Euc}} = \Vert ak' \cdot \widehat z \Vert_{\mathrm{Euc}} \geq e^{\mu_1(\hat{g})} (\sin \varepsilon)^2.
\end{equation}
Since $x', [v_n],z \in \PP(V)$ are collinear, $x'$ has a unique lift of the form $\widehat{x}' = v_n+t \widehat z \in V$, and $|t|\geq \sin \varepsilon$ since $d_\mathbb{S}(x', [v_n]) \geq d_\mathbb{S}(x', H)\geq \varepsilon$.
Then $\Vert \hat{g} \cdot \widehat z - \hat{g} \cdot (\frac{1}{t}\widehat{x}') \Vert_{\mathrm{Euc}} = e^{\mu_n(\hat{g})}/|t| \leq e^{\mu_n(\hat{g})}/\sin \varepsilon$.
Using~\eqref{eqn:intermed}, we deduce
\begin{equation} \label{eqn:squish-bis}
\sin d_\mathbb{S}(g\cdot x', \partial \Omega) \leq \sin d_\mathbb{S}(g\cdot x', g\cdot z) \leq e^{\mu_n(\hat{g})-\mu_1(\hat{g})} (\sin \varepsilon)^{-3},
\end{equation}
which completes the proof of~\eqref{eqn:squish}. 
 
The set $\mathcal{K}_\varepsilon := \bigcup_{u\in \partial \Omega} \big( [x,u) \smallsetminus \mathbb{B}_\mathbb{S}(u, \varepsilon) \big)$ is compact in~$\Omega$. 
Since $\mathrm{Aut}(\Omega)$ acts properly on~$\Omega$, the set
$$F_\varepsilon := \{h\in\mathrm{Aut}(\Omega)~|~ \mathcal{K}_\varepsilon \cap h \cdot \overline{\mathbb{B}_\mathbb{S}(x,\varepsilon)} \neq \emptyset \} $$
is compact in $\PGL(V)$, hence $\kappa':=\max_{h\in F_\varepsilon}  \left | d_{\Omega}(x,h\cdot x) - \frac{1}{2} (\mu_1-\mu_n)(h) \right |$ is finite. 

Suppose that our given $g\in \mathrm{Aut}(\Omega)$ lies outside $F_\varepsilon$. 
Let $\ell$ be the projective line through $x$ and $g\cdot x'$, crossing $\partial \Omega$ at points $a$ and~$b$ with $a,x,g\cdot x',b$ aligned in this order (see Figure~\ref{fig:d-Omega-mu}).
\begin{figure}[h!]
\centering
\labellist
\small\hair 2pt
\pinlabel $a$ [u] at 177 146
\pinlabel $x$ [u] at 370 136
\pinlabel $g\cdot x'$ [u] at 630 115
\pinlabel $b$ [u] at 718 112
\pinlabel $b'$ [u] at 785 150
\pinlabel $g\cdot z$ [u] at 727 217
\pinlabel $\ell$ [u] at 90 150
\pinlabel $\ell'$ [u] at 820 50
\pinlabel $\Omega$ [u] at 430 60
\endlabellist
\includegraphics[width=10cm]{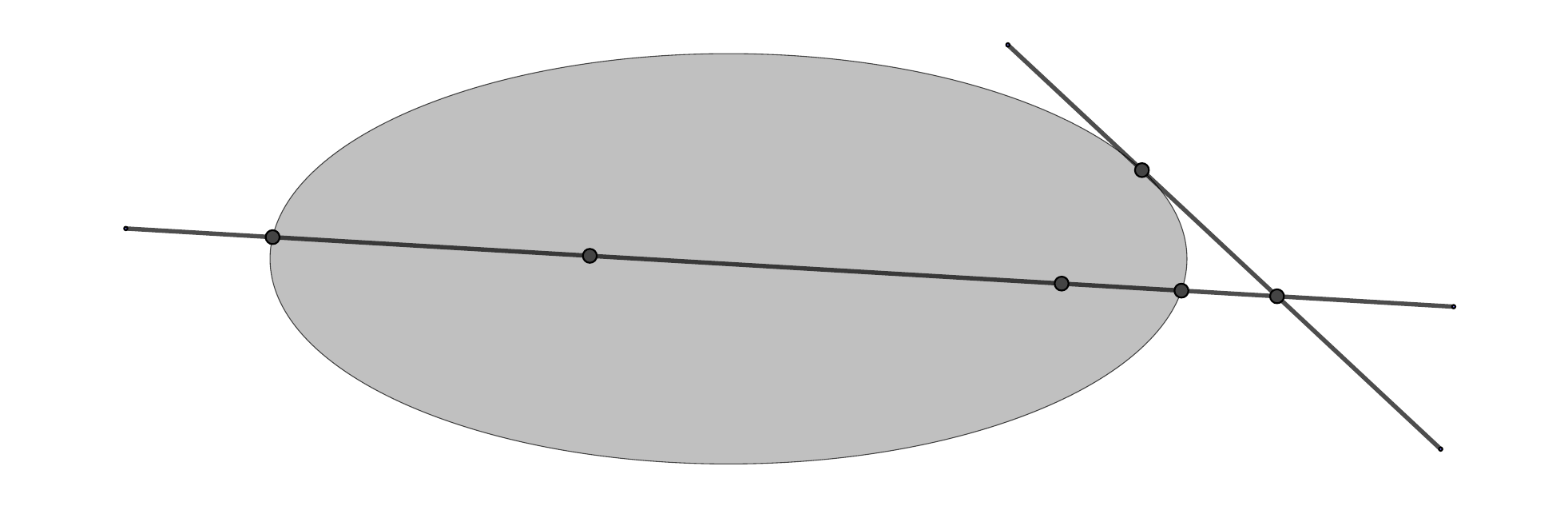}
\vspace{-0.3cm}
\caption{Illustration of the proof of Proposition~\ref{prop:d-Omega-mu}}
\label{fig:d-Omega-mu}
\end{figure}
Since $g \notin F_\varepsilon$ and $x'\in \overline{\mathbb{B}_\mathbb{S}(x,\varepsilon)}$, we have $\varepsilon' := d_\mathbb{S}(g\cdot x',b) \leq \varepsilon$.
Let $y \mapsto \theta_y \in \RR/ \pi \mathbb Z$ be the $d_\mathbb{S}$-arclength parametrization of the projective line $\ell$ such that $\theta_a = 0$ and $\theta_y \in [0, \pi - 2 \varepsilon]$ for every $y \in \overline{\Omega} \cap \ell$.
Then $y \mapsto \tan\theta_y$ is a projective identification between $\ell$ and $\PP^1(\RR) = \RR\cup\{\infty\}$, and so
\begin{align*}
\cro{a}{x}{g \cdot x'}{b} & = \cro{0}{\tan \theta_x}{\tan \theta_{g\cdot x'}}{\tan \theta_b} = \frac{\tan \theta_{g\cdot x'} \, (\tan \theta_b - \tan \theta_x)}{\tan \theta_x \, (\tan \theta_b - \tan \theta_{g\cdot x'})}\\
& = \frac{\sin \theta_{g\cdot x'} \, \sin (\theta_b-\theta_x)}{\sin \theta_x \, \sin (\theta_b-\theta_{g\cdot x'})} = \frac{\sin \theta_{g\cdot x'} \, \sin (\theta_b-\theta_x)}{\sin \theta_x \,\sin \varepsilon'}.
\end{align*}
Each of $\theta_x$, $\theta_b-\theta_x$, and $\theta_{g\cdot x'}$ lies in $[\varepsilon, \pi-2\varepsilon]$ by \ref{xi} and~\ref{xiii}, hence their sines lie in $[\sin \varepsilon, 1]$, and so
\begin{equation} \label{eqn:a-x-g.x'-b}
\cro{a}{x}{g \cdot x'}{b} \in (\sin\varepsilon')^{-1} \, [(\sin\varepsilon)^2, (\sin\varepsilon)^{-1}].
\end{equation}
On the other hand, we claim that
\begin{equation} \label{eqn:sin-eps'}
\sin \varepsilon' \in e^{(\mu_n-\mu_1)(g)} \, [\sin \varepsilon, (\sin \varepsilon)^{-4}].
\end{equation}
Indeed, the lower bound comes from~\eqref{eqn:squish}. 
For the upper bound, consider the point $z \in \partial \Omega$ introduced above, satisfying \eqref{eqn:squish-bis}.
There is a line $\ell'$ tangent to $\partial \Omega$ at $g\cdot z$ and intersecting $\ell$ in some point $b'\in \PP(V)\smallsetminus \Omega$ at some angle $\alpha$; the lines $\ell$ and~$\ell'$ get at least $\varepsilon$ apart for $d_{\mathbb{S}}$ by~\ref{xi}, hence $\alpha \geq \varepsilon$ by \eqref{eqn:spherical}.
On the other hand, $\ell \cap \mathbb{B}_\mathbb{S}(g\cdot x', \varepsilon') \subset \Omega$, using $\varepsilon'\leq \varepsilon$ and~\ref{xii}, hence $\varepsilon' = d_{\mathbb{S}} (g\cdot x',b) \leq d_{\mathbb{S}}(g\cdot x',b')$.
Using \eqref{eqn:spherical} again, we obtain
$$\sin \varepsilon' \leq \sin d_{\mathbb{S}} (g\cdot x',b')  = \frac{\sin d_\mathbb{S} (g\cdot x', \ell')}{\sin \alpha} \leq \frac{\sin d_\mathbb{S} (g\cdot x', g\cdot z)}{\sin \varepsilon}.$$
Together with \eqref{eqn:squish-bis}, this implies \eqref{eqn:sin-eps'}.

Combining \eqref{eqn:a-x-g.x'-b} and \eqref{eqn:sin-eps'}, we find that $e^{2 d_{\Omega}(x,g\cdot x')} = \cro{a}{x}{g\cdot x'}{b}$ lies in $e^{(\mu_1-\mu_n)(g)} [(\sin \varepsilon)^6, (\sin \varepsilon)^{-2}]$.
Since $d_{\Omega}(g\cdot x,g\cdot x')\leq 1$  by~\ref{xi}, the triangle inequality yields 
$\big | d_{\Omega}(x,g\cdot x) - \frac{1}{2} (\mu_1-\mu_n)(g) \big | \leq 3 \, |\mathrm{log} \sin \varepsilon|+1 =:\kappa''$.
Thus \eqref{eqn:hilball} holds with $\kappa_{\Omega, x}=\max\{\kappa', \kappa''\}$.

Finally, let us address (local) uniformity in terms of $(\Omega, x)$.
The number $\varepsilon>0$ subject to \ref{xi}--\ref{xii}--\ref{xiii} can be chosen uniformly.
The compact set $\mathcal{K}_\varepsilon \subset \Omega$ in the above proof then stays a uniform $d_{\mathbb{S}}$-distance away from $\partial \Omega$. 
It follows that $d_\Omega(x, g\cdot x)$ is uniformly bounded for all $g\in F_\varepsilon$.
In fact, the property $g\in F_\varepsilon$ imposes some uniform lower bound on $d_\mathbb{S}(g\cdot x', \partial \Omega)$ for all $x'\in \overline{\mathbb{B}_\mathbb{S}(x,\varepsilon)}$.
The upper bound of~\eqref{eqn:squish} then gives a uniform bound on $(\mu_1-\mu_n)(F_\varepsilon)$, \ie $F_\varepsilon$ stays within a uniform compact subset of $\PGL(V)$.
Therefore $\kappa'$ is also (locally) uniform, and so is $\kappa_{\Omega, x}$.
\end{proof}

%%%%%%%%%%%%%%%%%%%%%%%%%
\subsection{\ref{item:cc-no-unipotent}: No unipotent elements} \label{subsec:cc-no-unip}

For any element $g\in\PGL(V)$ preserving a properly convex open subset $\Omega$ of~$\PP(V)$, we define the \emph{translation length of $g$ in~$\Omega$} to be
$$\mathrm{length}_{\Omega}(g) := \inf_{x\in\Omega} d_{\Omega}(x,g\cdot x) \geq 0 ;$$
we say that $g$ \emph{achieves its translation length in~$\Omega$} if this infimum is a minimum.
It is easy to check (see \eg \cite[Prop.\,2.1]{clt15}) that $\mathrm{length}_{\Omega}(g) = (\lambda_1-\lambda_n)(g)/2$ (see \eqref{eqn:lambda-i-j}).

Property~\ref{item:cc-no-unipotent} of Theorem~\ref{thm:properties} is contained in the following more general statement.

\begin{proposition} \label{prop:cc-no-unipotent}
Let $\Gamma$ be a discrete subgroup of $\PGL(V)$ which is naively convex cocompact in $\PP(V)$ (Definition~\ref{def:cc-naive}), \ie $\Gamma$ preserves a properly convex open subset $\Omega$ of $\PP(V)$ and acts cocompactly on some nonempty closed convex subset $\C$ of~$\Omega$.
Then any element $\gamma\in\Gamma$ achieves its translation length in~$\Omega$.
In particular (see \cite[Prop.\,2.13]{clt15}), any infinite-order element of~$\Gamma$ lifts to an element of $\SL^{\pm}(V)$ with an eigenvalue of modulus $>1$, and so $\Gamma$ does not contain any unipotent element.
\end{proposition}

\begin{proof}
By Lemma~\ref{lem:naive-cc-nonempty-int}, up to replacing $\C$ by some closed uniform neighborhood in $(\Omega,d_{\Omega})$, we may assume that $\C$ has nonempty interior.
Let $\gamma\in\Gamma$.
We have
$$\mathrm{length}_{\Omega}(\gamma) \leq \inf_{x\in\C} d_{\Omega}(x,\gamma\cdot x) \leq \inf_{x\in\Int(\C)} d_{\Omega}(x,\gamma\cdot x) \leq \mathrm{length}_{\Int(\C)}(\gamma),$$
where the last inequality follows from the definition \eqref{eqn:d-Omega} of the Hilbert metric.
In fact all these inequalities are equalities since $\mathrm{length}_{\Omega}(\gamma)$ and $\mathrm{length}_{\Int(\C)}(\gamma)$ are both equal to $(\lambda_1-\lambda_n)(\gamma)$.
Thus we only need to check that the infimum $R := \inf_{x\in\C} d_{\Omega}(x,\gamma\cdot x)$ is a minimum.

Consider $(x_m)\in\C^{\NN}$ such that $d_{\Omega}(x_m,\gamma\cdot x_m)\to R$.
For any~$m$, there exists $\gamma_m\in\Gamma$ such that $\gamma_m\cdot x_m$ belongs to some fixed compact fundamental domain for the action of $\Gamma$ on~$\C$.
Up to passing to a subsequence, $(\gamma_m\cdot x_m)_{m\in\NN}$ converges to some $x\in\C$, and $d_{\Omega}(x_m,\gamma\cdot x_m)\leq R+1$ for all~$m$, which means that $\gamma_m\gamma\gamma_m^{-1}\in\Gamma$ sends $\gamma_m\cdot x_m\in\C$ at distance $\leq R+1$ from itself.
Since the action of $\Gamma$ on~$\C$ is properly discontinuous, we deduce that the discrete sequence $(\gamma_m\gamma\gamma_m^{-1})_{m\in\NN}$ is bounded; up to passing to a subsequence, we may therefore assume that it is constant, equal to $\gamma_{\infty}\gamma\gamma_{\infty}^{-1}$ for some $\gamma_{\infty}\in\Gamma$.
We then have
\begin{align*}
R & = \lim_m d_{\Omega}(x_m, \gamma\cdot x_m) \hspace{-2.3cm} && = \lim_m d_{\Omega}(\gamma_m\cdot x_m, (\gamma_m\gamma\gamma_m^{-1})\cdot (\gamma_m \cdot x_m))\\
& = d_{\Omega}(x, \gamma_{\infty}\gamma\gamma_{\infty}^{-1}\cdot x) \hspace{-2.3cm} && = d_{\Omega}(y, \gamma\cdot y),
\end{align*}
where $y:=\gamma_{\infty}^{-1}\cdot x$.
\end{proof}

%%%%%%%%%%%%%%%%%%%%%%%%%
\subsection{\ref{item:stable}: Stability}

Property~\ref{item:stable} of Theorem~\ref{thm:properties} follows from the equivalence \eqref{item:ccc-limit-set}~$\Leftrightarrow$~\eqref{item:ccc-strict-C1} of Theorem~\ref{thm:main-general} and from the work of Cooper--Long--Tillmann (namely \cite[Th.\,0.1]{clt18} with empty collection~$\mathcal{V}$ of generalized cusps).
Indeed, the equivalence \eqref{item:ccc-limit-set}~$\Leftrightarrow$~\eqref{item:ccc-strict-C1} of Theorem~\ref{thm:main-general} states that $\Gamma$ is convex cocompact in $\PP(V)$ if and only if it acts properly discontinuously and cocompactly on a properly convex subset $\C$ of $\PP(V)$ with strictly convex nonideal boundary, and this condition is stable under small perturbations by \cite{clt18}.

Let us give a brief sketch of why this is true.
The ideas are taken from~\cite{clt18} and go back to Koszul~\cite{kos68}, who proved this in the case that $\partialn \C = \emptyset$.
We assume $\Gamma$ is torsion-free for simplicity.
The quotient $M = \Gamma \backslash \C$ is a compact convex projective manifold with boundary, whose boundary $\partial M = \Gamma \backslash \partialn \C$ is locally strictly convex.
Let $\rho: \Gamma \to \PGL(V)$ be a small deformation of the inclusion map.
By the Ehresmann--Thurston principle (see~\cite{thu80}), if $\rho$ is a sufficiently small deformation, then it is still the holonomy representation of a projective structure on~$M$.
Further, since $\partial M$ is compact, one easily arranges that $\partial M$ remains locally strictly convex in the new projective structure.
To see that this new projective structure is properly convex uses the following observation:
A real projective structure on a manifold $M$ with locally strictly convex boundary is properly convex if and only if the tautological line bundle $\xi M \to M$, naturally an affine manifold, admits a strictly convex section.
We have such a strictly convex section before deformation, and since $M$ is compact, this section can be controlled if the deformation~$\rho$ is small enough.
This gives the desired stability result.

%%%%%%%%%%%%%%%%%%%%%%%%%%
\subsection{\ref{item:include}: Inclusion into a larger space} \label{subsec:cc-include}

The following proposition implies property~\ref{item:include} of Theorem~\ref{thm:properties}, and also provides a converse to it.

\begin{proposition} \label{prop:include-cc-equiv}
Let $V'=\RR^{n'}$ and let $i : \SL^{\pm}(V) \hookrightarrow \SL^{\pm}(V \oplus V')$ be the natural inclusion, whose image acts trivially on the second factor.
Let $\hat{\Gamma}$ be a discrete subgroup of $\SL^{\pm}(V)$.
Then $\hat{\Gamma}$ is convex cocompact in $\PP(V)$ if and only if $i(\hat{\Gamma})$ is convex cocompact in $\PP(V\oplus V')$.
\end{proposition}

The proof of Proposition~\ref{prop:include-cc-equiv} builds on the following consequence of Lemma~\ref{lem:vector-growth} and Corollary~\ref{cor:compare-Hilb-Eucl}.

\begin{lemma} \label{lem:limit-hyperplanes}
Let $V'=\RR^{n'}$, and let $i : \SL^{\pm}(V) \hookrightarrow \SL^{\pm}(V \oplus V')$  and $j : V \hookrightarrow V\oplus V'$, and $j^* : V^* \hookrightarrow (V\oplus V')^* \simeq V^* \oplus (V')^*$ be the natural inclusions.
Let $\hat{\Gamma}$ be a discrete subgroup of $\SL^{\pm}(V)$ preserving a nonempty properly convex open subset $\Omega$ of $\PP(V)$.
\begin{enumerate}
  \item\label{item:include-accum} Let $\mathcal{K}$ be a compact subset of $\PP(V\oplus V')$ that does not meet any projective hyperplane $z^*\in j^*(\overline{\Omega^*})$.
  Then any accumulation point in $\PP(V\oplus V')$ of the $i(\hat\Gamma)$-orbit of~$\mathcal{K}$ is contained in $j(\Lambdao_{\Omega}(\Gamma))$.
  \item\label{item:include-accum-dual} Let $\mathcal{K}^*$ be a compact subset of $\PP((V\oplus V')^*)$ whose elements correspond to projective hyperplanes disjoint from $j(\overline{\Omega})$ in $\PP(V\oplus V')$.
  Then any accumulation point in $\PP((V\oplus V')^*)$ of the $i(\hat\Gamma)$-orbit of~$\mathcal{K}^*$ is contained in $j^*(\Lambdao_{\Omega^*}(\Gamma))$.
\end{enumerate}
\end{lemma}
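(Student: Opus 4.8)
The plan is to prove both statements simultaneously, since they are exactly dual to each other under the identification $(V\oplus V')^*\simeq V^*\oplus(V')^*$; once I set up statement~\eqref{item:include-accum} carefully, statement~\eqref{item:include-accum-dual} will follow by interchanging the roles of $\Omega$ and $\Omega^*$, $V$ and $V^*$, and using that $\Gamma$ acts convex cocompactly on both $\Omega$ and $\Omega^*$ by our choice coming from Proposition~\ref{prop:dual-precise}. So I focus on \eqref{item:include-accum}.

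The key point is to understand the singular-value dynamics of $i(\hat\gamma_m)$ acting on $V\oplus V'$ when $\gamma_m\to\infty$ in $\Gamma$. First I would pass to a subsequence so that $\gamma_m\to\eta\in\partial_\infty\Gamma$ and $\gamma_m^{-1}\to\eta'$; here I am implicitly using that $\Gamma$ is word hyperbolic and the inclusion is $P_1$-Anosov, which holds whenever $\Gamma$ is convex cocompact and word hyperbolic, but for a general convex cocompact $\Gamma$ I must instead argue directly from the convex geometry. The cleaner route, avoiding any hyperbolicity assumption, is to use Lemma~\ref{lem:vector-growth} together with the structure of the Cartan decomposition of $\hat\gamma_m$ in $\SL^{\pm}(V)$. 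Writing $\hat\gamma_m = k_m a_m k'_m$ with $a_m=\mathrm{diag}(a_{m,1},\dots,a_{m,n})$ in $\SL^{\pm}(V)$, the inclusion $i$ sends this to the block matrix acting as $a_m$ on $V$ and as the identity on $V'$. The essential observation is that a point $[w]\in\PP(V\oplus V')$ missing every hyperplane $z^*\in j^*(\overline{\Omega^*})$ has a nonzero $V$-component $w_V$ that is ``not too aligned'' with the directions where $a_m$ contracts; more precisely, its projection to $\widetilde\Omega$-related directions is controlled. I would make this quantitative: the hypothesis on $\mathcal K$ guarantees a uniform lower bound, over $[w]\in\mathcal K$, on the component of $w$ in the top singular direction of $\hat\gamma_m$, so that $i(\hat\gamma_m)\cdot[w]$ is drawn into the $V$-factor and its limit lies in $\PP(V)$.

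The heart of the argument, then, is to show that any accumulation point lies not just in $\PP(V)=j(\PP(V))$ but in $j(\Lambdao_\Omega(\Gamma))$. For this I would fix a point $y_0\in\Omega$ and compare the orbit $i(\hat\gamma_m)\cdot[w]$ with the orbit $\gamma_m\cdot y_0$ inside $\Omega$. Because the $V'$-component of $w$ is fixed (up to scale) by $i(\hat\gamma_m)$ while the $V$-component grows like the top singular value of $\hat\gamma_m$ by Lemma~\ref{lem:vector-growth} applied in $V$, the $V'$-component becomes projectively negligible, and the $V$-direction of $i(\hat\gamma_m)\cdot[w]$ approaches the same limit in $\partial\Omega$ as a suitable orbit point $\gamma_m\cdot y$ with $y\in\Omega$. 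Any such limit lies in $\Lambdao_\Omega(\Gamma)$ by the very definition of the full orbital limit set. The uniformity over the compact set $\mathcal K$ is exactly the uniform divergence in Lemma~\ref{lem:vector-growth}, so the accumulation is controlled uniformly.

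I expect the main obstacle to be the bookkeeping that converts the hypothesis ``$\mathcal K$ misses every hyperplane in $j^*(\overline{\Omega^*})$'' into a usable quantitative non-degeneracy of the $V$-component under the contracting dynamics of $a_m$: one must rule out that the $V$-component of $w$ gets crushed by the small singular values of $\hat\gamma_m$ faster than the (fixed) $V'$-component can be overtaken. The cleanest way to handle this is to observe that the hyperplanes $z^*\in\overline{\Omega^*}$ are precisely the supporting hyperplanes and missing hyperplanes of $\overline{\widetilde\Omega}$, so missing all of them forces $w_V$ to have a definite positive pairing with the lift of $\Omega^*$, which by the duality between $\widetilde\Omega$ and $\widetilde\Omega^*$ places $w_V$ in a cone on which Lemma~\ref{lem:vector-growth} gives uniform growth. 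Once this cone membership is established, the remaining convergence is routine and the duality packaging for \eqref{item:include-accum-dual} is immediate.
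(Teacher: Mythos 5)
Your proposal is correct and follows essentially the same route as the paper: reduce to \eqref{item:include-accum} by duality, observe that the hyperplane-avoidance hypothesis forces the projection of $\mathcal{K}$ to the $V$-factor to land in a compact subset of $\Omega$ (equivalently, its lifts lie in the cone $\widetilde{\Omega}$), and then apply Lemma~\ref{lem:vector-growth} to see that the $V$-component of the lifted vectors diverges uniformly while the $V'$-component is fixed, so that accumulation points coincide with accumulation points of $\Gamma$-orbits of points of $\Omega$. The only things to tighten are that the Cartan-decomposition discussion is unnecessary (cone membership plus Lemma~\ref{lem:vector-growth} already suffice), and that to place $\lim_m \gamma_m\cdot\pi(z_m)$ (with $z_m$ varying in $\mathcal{K}$) in $\Lambdao_{\Omega}(\Gamma)$ you should pass to a subsequence with $\pi(z_m)\to y\in\Omega$ and use $d_{\Omega}(\gamma_m\cdot\pi(z_m),\gamma_m\cdot y)=d_{\Omega}(\pi(z_m),y)\to 0$ together with Corollary~\ref{cor:compare-Hilb-Eucl}, rather than comparing with an arbitrary fixed basepoint $y_0$, since the full orbital limit set is defined via accumulation points of single orbits.
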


\begin{proof}
The two statements are dual to each other, so we only need to prove~\eqref{item:include-accum}.
Let $\pi : \PP(V\oplus V') \smallsetminus \PP(V') \to\PP(V)$ be the map induced by the projection onto the first factor of $V\oplus V'$, so that $\pi\circ j$ is the identity of $\PP(V)$.
Note that $V'$ is contained in every projective hyperplane of $\PP(V\oplus V')$ corresponding to an element of $j^*(\overline{\Omega}^*)$, hence $\pi$ is defined on~$\mathcal{K}$.
Consider a sequence $(x_m)_{m\in\NN}$ of points of~$\mathcal{K}$ and a sequence $(\gamma_m)_{m\in\NN}$ of pairwise disjoint elements of~$\hat\Gamma$ such that $(i(\gamma_m)\cdot x_m)_{m\in\NN}$ converges in $\PP(V\oplus V')$.
By construction, for any~$m$, the point $x_m$ does not belong to any hyperplane $z^*\in j^*(\overline{\Omega^*})$, hence $\pi(x_m)$ does not belong to any hyperplane in $\overline{\Omega^*}$, \ie $\pi(x_m)\in\Omega$.
Since $\hat\Gamma$ acts properly discontinuously on~$\Omega$, the point $z := \lim_m \gamma_m\cdot\pi(x_m)$ is contained in $\partial \Omega$.
Up to passing to a subsequence, we may assume that $(\pi(x_m))_{m\in\NN}\subset\pi(\mathcal{K})$ converges to some $y \in \Omega$.
Then $d_{\Omega}(\gamma_m\cdot\pi(x_m),\gamma_m\cdot y) = d_{\Omega}(\pi(x_m),y) \to 0$, and so $\gamma_m\cdot y\to z$ by Corollary~\ref{cor:compare-Hilb-Eucl}.
In particular, $z \in \Lambdao_{\Omega}(\Gamma)$.
Now lift $x_m$ to a vector $v_m + v'_m \in V \oplus V'$, with $(v_m)_{m\in\NN}\subset V\oplus\{0\}$ and $(v'_m)_{m\in\NN}\subset \{0\}\oplus V'$ bounded.
The image of $v_m$ in $\PP(V)$ is $\pi(x_m)$.
By Lemma~\ref{lem:vector-growth}, the sequence $(i(\gamma_m)\cdot v_m)_{m\in\NN}$ tends to infinity in~$V$.
On the other hand, $i(\gamma_m)\cdot v'_m = v'_m$ remains bounded in~$V'$.
Therefore,
$$\lim_m i(\gamma_m)\cdot x_m = \lim_m i(\gamma_m)\cdot j\circ\pi(x_m) = \lim_m j(\gamma_m\cdot\pi(x_m)) \in j(\Lambdao_{\Omega}(\Gamma)). \hfill\qedhere$$
\end{proof}

\begin{lemma} \label{lem:cc-include-preserve-O}
In the setting of Lemma~\ref{lem:limit-hyperplanes}, if $\Gamma$ acts convex cocompactly on $\Omega \subset \PP(V)$, then the set $j(\Omega)$ is contained in a $\Gamma$-invariant properly convex open subset $\mathcal{O}$ of $\PP(V\oplus V')$.
\end{lemma}

\begin{proof}
We argue similarly to the proof of Lemma~\ref{lem:Omega'}.
The set $j(\Omega)$ is contained in the $\Gamma$-invariant open subset
$$\mathcal{O}_{\max} := \PP(V\oplus V') \smallsetminus \bigcup_{z^*\in j^*(\overline{\Omega^*})} z^*$$
of $\PP(V\oplus V')$, which is convex but not properly convex.
This set $\mathcal{O}_{\max}$ is the union of all projective lines of $\PP(V\oplus V')$ intersecting both $j(\Omega)$ and $\PP(\{0\}\times V')$.
Choose projective hyperplanes $H_1, \ldots, H_N$ of $\PP(V\oplus V')$ bounding an open simplex $\Delta$ containing $j(\overline{\Omega})$.
Let $\mathcal{U} := \Delta \cap \mathcal{O}_{\max}$.
Define
$$\mathcal{O} := \bigcap_{\gamma \in \hat\Gamma} i(\gamma)\cdot\mathcal{U}.$$
Then $j(\Omega)\subset\mathcal{O} \subset\mathcal{O}_{\max}$ and $\mathcal{O}$ is properly convex.
We claim that $\mathcal{O}$ is open.
Indeed, suppose by contradiction that there exists a point $x \in \partialn\mathcal{O}$.
Then there exist $(\gamma_m) \in {\hat\Gamma}^{\NN}$ and $i \in\{1, \ldots, N\}$ such that $\gamma_m\cdot H_i$ converges to a hyperplane $H_{\infty}$ containing~$x$.
This is impossible since, by Lemma~\ref{lem:limit-hyperplanes}.\eqref{item:include-accum-dual}, the hyperplane $H_{\infty} \in j^*(\Lambdao_{\Omega}(\Gamma))$ supports the open set~$\mathcal{O}_{\max}$ which contains~$x$.
\end{proof}

\begin{proof}[Proof of Proposition~\ref{prop:include-cc-equiv}]
Suppose that $\hat{\Gamma}$ acts convex cocompactly on some nonempty properly convex open subset $\Omega$ of $\PP(V)$.
The set $j(\Omega)$ is contained in some $\Gamma$-invariant properly convex open subset $\mathcal{O}$ of $\PP(V\oplus V')$ by Lemma~\ref{lem:cc-include-preserve-O}, and $\Lambdao_{\mathcal{O}}(i(\hat \Gamma)) = j(\Lambdao_{\Omega}(\Gamma))$ by Lemma~\ref{lem:limit-hyperplanes}.\eqref{item:include-accum}.
Since the action of $\hat\Gamma$ on $\Ccore_\Omega(\Gamma)$ is cocompact, so is the action of $i(\hat\Gamma)$ on $\Ccore_{\mathcal{O}}(i(\hat \Gamma)) = j(\Ccore_\Omega(\Gamma))$.
Therefore $i(\hat \Gamma)$ acts convex cocompactly on~$\mathcal{O}$, and so $i(\hat \Gamma)$ is convex cocompact in $\PP(V \oplus V')$.

Conversely, suppose that $i(\hat{\Gamma})$ acts convex cocompactly on some nonempty properly convex open subset $\Omega'$ of $\PP(V\oplus V')$.
For any $\gamma \in \hat{\Gamma}$, applying $i(\gamma)$ to the direct sum $V\oplus V'$ only changes the $V$ factor.
Therefore, for any sequence $(\gamma_m)_{m\in\NN}$ of pairwise distinct elements of~$\hat{\Gamma}$, and any $v \in V$ and $v' \in V'$ such that $[v + v'] \in \Omega'$, Lemma~\ref{lem:vector-growth} implies that the limit of $[i(\gamma_m)(v + v')]$, if it exists, must lie in $\PP(V)$.
This shows that $\Lambdao_{\Omega'}(i(\hat{\Gamma})) \subset \PP(V)$, hence $\Ccore_{\Omega'}(i(\hat{\Gamma})) \subset \PP(V)$.
It follows that $\Omega := \Omega' \cap \PP(V)$ is a nonempty, $\hat{\Gamma}$-invariant, open, properly convex subset of $\PP(V)$ on which the action of $\hat{\Gamma}$ is convex cocompact.
\end{proof}

%%%%%%%%%%%%%%%%%%%%%%%%%
\subsection{\ref{item:cc-ss}: Semisimplification} \label{subsec:cc-ss}

We start with a general observation.

\begin{lemma} \label{lem:semidirect-prod}
Let $L$ and~$U$ be two subgroups of $G=\PGL(V)$ such that
\begin{itemize}
  \item $L\cap U=\{\mathrm{Id}\}$ and $L$ normalizes~$U$, so that the subgroup of $\PGL(V)$ generated by $L$ and~$U$ is a semidirect product $L\ltimes U$;
  \item there is a sequence $(g_m)_{m\in\NN}$ in the centralizer of $L$ in $\PGL(V)$ such that $g_m u g_m^{-1} \to \mathrm{Id}$ for all $u\in U$.
\end{itemize}
Let $\pi : L\ltimes U\to L$ be the natural projection.
For any discrete group $\Gamma$ and any representation $\rho : \Gamma\to L\ltimes U$, if $\pi\circ\rho : \Gamma\to L$ is injective and if its image is convex cocompact in $\PP(V)$, then the same holds for~$\rho$.
\end{lemma}

\begin{proof}
If $\pi\circ\rho$ is injective, then so is~$\rho$.
The group $\pi\circ\rho(\Gamma)$ is a limit of $\PGL(V)$-conjugates of $\rho(\Gamma)$ and convex cocompactness is open (property~\ref{item:stable} above) and invariant under conjugation.
Therefore, if $\pi\circ\rho(\Gamma)$ is convex cocompact in $\PP(V)$, then so is $\rho(\Gamma)$.
\end{proof}

Let $\Gamma$ be a discrete group and $\rho : \Gamma\to\PGL(V)$ a representation.
The Zariski closure $H$ of $\rho(\Gamma)$ in $\PGL(V)$ admits a Levi decomposition $H=L\ltimes R_u(H)$ where $R_u(H)$ is the unipotent radical of~$H$ and $L$ is a reductive subgroup called a \emph{Levi factor}.
This decomposition is unique up to conjugation by $R_u(H)$.
We shall use the following terminology.

\begin{definition} \label{def:reduc-part}
The \emph{semisimplification} of~$\rho$ is the composition $\rho^{ss}$ of~$\rho$ with the projection onto~$L$.
\end{definition}

The semisimplification $\rho^{ss} : \Gamma\to\PGL(V)=G$ does not depend, up to conjugation by $R_u(H)$, on the choice of the Levi factor~$L$.
There is a sequence $(g_m)_{m\in\NN}$ in the centralizer of $L$ in~$H$ such that $g_m u g_m^{-1} \to \mathrm{Id}$ for all $u\in R_u(H)$, and so $\rho^{ss}$ is a limit of $H$-conjugates of~$\rho$.
If $\rho(\Gamma)$ is discrete in $\PGL(V)$, then so is $\rho^{ss}(\Gamma)$ (see \cite[Th.\,8.24]{rag72}).
The group $\rho^{ss}(\Gamma)$, like the Levi factor containing it, acts projectively on~$V$ in a semisimple way: namely, any invariant linear subspace of~$V$ admits an invariant complementary subspace.

If the semisimplification $\rho^{ss} : \Gamma\to\PGL(V)$ is injective and its image $\rho^{ss}(\Gamma)$ is convex cocompact in $\PP(V)$, then the same holds for~$\rho$ by Lemma~\ref{lem:semidirect-prod}.
This proves~\ref{item:cc-ss}, the last remaining property of Theorem~\ref{thm:properties}.

In the rest of this section, we study further operations that preserve convex cocompactness, strengthening properties~\ref{item:include} and~\ref{item:cc-ss} of Theorem~\ref{thm:properties}.

%%%%%%%%%%%%%%%%%%%%%%%%%
\subsection{Convex cocompactness for cocycle deformations} \label{subsec:cocycles}

Let $V = V_1 \oplus V_2$, let $\Gamma$ be a discrete group, and let $\rho: \Gamma \to \SL^\pm(V_1)$ be a representation.
A \emph{$\rho$-cocycle} is a map $u: \Gamma \to \Hom(V_2, V_1)$ satisfying the cocycle condition
$$u(\gamma_1\gamma_2) = u(\gamma_1) + \rho(\gamma_1)\circ u(\gamma_2)$$
for all $\gamma_1,\gamma_2\in\Gamma$.
Any $\rho$-cocycle $u$ 
defines a representation $\rho^u: \Gamma \to \SL^\pm(V) = \SL^{\pm}(V_1\oplus\nolinebreak V_2)$ by the formula:
$$\rho^u(\gamma)(v_1 \oplus v_2) := (\rho(\gamma)\cdot v_1 + u(\gamma)(v_2)) \oplus v_2 \ \ \ \ \ \forall v_1 \in V_1, v_2 \in V_2$$
or in matrix notation: 
\begin{align*}\label{eqn:rhou-matrix}
\rho^u(\gamma) = \begin{pmatrix} \rho(\gamma) & u(\gamma) \\ 0 & \mathrm{Id}_{V_2} \end{pmatrix}.
\end{align*}
In the case that $u = 0$ is the zero cocycle, the corresponding representation $\rho^0$ is simply the composition of $\rho$ with the inclusion $V_1 \hookrightarrow V$.
The representation $\rho^u$ is conjugate to $\rho^0$ if and only if $u : \Gamma\to\Hom(V_2,V_1)$ is a $\rho$-\emph{coboundary}: there exists $\Phi\in\Hom(V_2,V_1)$ such that $u(\gamma) = \Phi - \rho(\gamma)\circ\Phi$ for all $\gamma\in\Gamma$.
If $u$ is not a $\rho$-coboundary, then the representation $\rho^u$ is not semisimple (\ie not conjugate to its semisimplification from Definition~\ref{def:reduc-part}), even if $\rho$ is.
Cocycles which are not coboundaries always exist \eg if $\Gamma$ is a free group.
See also Example~\ref{exa:aff-hp} below. 

\begin{proposition} \label{prop:cc-block}
Let $V = V_1 \oplus V_2$, let $\Gamma$ be a discrete group, let $\rho: \Gamma \to \SL^\pm(V_1)$ be a representation, let $u: \Gamma \to \Hom(V_2, V_1)$ be a $\rho$-cocycle, and let $\rho^u: \Gamma \to \SL^\pm(V)$ be the associated representation.
Then the following are equivalent:
\begin{enumerate}
  \item\label{item:cc-block-Gamma} $\rho$ is injective and $\rho(\Gamma)$ is convex cocompact in $\PP(V_1)$,
  \item\label{item:cc-block-rhou} $\rho^u$ is injective and $\rho^u(\Gamma)$ is convex cocompact in $\PP(V)$.
\end{enumerate}
\end{proposition}

\begin{proof}
We first check the implication \eqref{item:cc-block-Gamma}~$\Rightarrow$~\eqref{item:cc-block-rhou}.
Suppose $\rho$ is injective and $\rho(\Gamma)$ is convex cocompact in $\PP(V_1)$.
Then $\rho^u$ is clearly injective.
The composition of $\rho$ with the inclusion of $V_1$ into $V = V_1 \oplus V_2$,
$$\gamma \longmapsto \begin{pmatrix} \rho(\gamma) & 0 \\ 0 & \mathrm{Id}_{V_2} \end{pmatrix} \in \SL^{\pm}(V)$$
is injective and by property~\ref{item:include} of Theorem~\ref{thm:properties} (proved in Section~\ref{subsec:cc-include} above), its image is convex cocompact in $\PP(V)$. 
It then follows that $\rho^u(\Gamma)$ is convex cocompact in $\PP(V)$ by applying Lemma~\ref{lem:semidirect-prod} with $L$ the block diagonal subgroup, $U$ the block upper triangular subgroup, $(g_m)_{m\in\NN}$ a sequence of block scalar matrices which contract $V_1$ relative to~$V_2$.

We now check the implication \eqref{item:cc-block-rhou}~$\Rightarrow$~\eqref{item:cc-block-Gamma}.
Suppose $\rho^u$ is injective and $\rho^u(\Gamma)$ is convex cocompact in $\PP(V)$.
By property~\ref{item:cc-no-unipotent} of Theorem~\ref{thm:properties} (proved in Section~\ref{subsec:cc-no-unip} above), $\rho^u(\Gamma)$ does not contain any unipotent element, hence $\rho$ is also injective.
Let $\Omega \subset \PP(V)$ be a nonempty properly convex open set on which $\rho^u(\Gamma)$ acts convex cocompactly.
For any $\gamma \in \Gamma$, applying $\rho^u(\gamma)$ to the direct sum $V = V_1 \oplus V_2$ only changes the $V_1$ factor.
Therefore, for any sequence $(\gamma_m)_{m\in\NN}$ of pairwise distinct elements of~$\Gamma$, and any $v_1 \in V_1$, $v_2 \in V_2$ such that $[v_1 \oplus v_2] \in \Omega$, Lemma~\ref{lem:vector-growth} implies that the limit of $[\rho(\gamma_m)(v_1 \oplus v_2)]$, if it exists, must lie in $\PP(V_1)$.
This shows that $\Lambdao_\Omega(\rho^u(\Gamma)) \subset \PP(V_1)$, hence $\Ccore_\Omega(\rho^u(\Gamma)) \subset \PP(V_1)$.
It follows that $\Omega_1 := \Omega \cap \PP(V_1)$ is a nonempty, $\rho^u(\Gamma)$-invariant, open, properly convex subset of $\PP(V_1)$ on which the restricted action, namely $\rho$, of $\Gamma$ is convex cocompact.
\end{proof}

\begin{example}\label{exa:aff-hp}
We briefly examine the special case where $V_1 = \RR^3$, where $V_2 = \RR^1$, where $\Gamma$ is isomorphic to the fundamental group of a closed orientable surface $S_g$ of genus $g \geq 2$, and where $\rho: \Gamma \to \SL(V_1)$ is discrete, injective, with image in a special orthogonal group $\SO(2,1)$ of signature $(2,1)$.  
Then $\rho$ represents a point of the Teichm\"uller space $\mathcal T(S_g)$ and the space of cohomology classes of cocycles $u : \Gamma\to\Hom(V_2,V_1)\simeq V$ identifies with the tangent space to $\mathcal T(S_g)$ at $\Gamma$ (see \eg \cite[\S\,2.3]{dgk16}) and has dimension $6g-6$. 

(1) By the implication \eqref{item:cc-block-Gamma}~$\Rightarrow$~\eqref{item:cc-block-rhou} of Proposition~\ref{prop:cc-block}, for any $\rho$-cocycle $u$, the associated representation $\rho^u$ is injective and $\rho^u(\Gamma)$ preserves and acts convex cocompactly on a nonempty properly convex open subset $\Omega$ of $\PP(\RR^3 \oplus \RR^1)$.
The $\rho^u(\Gamma)$-invariant convex open set $\Omega_{\max} \supset \Omega$ given by Proposition~\ref{prop:max-inv-conv}.\eqref{item:Omega-max} turns out to be properly convex as soon as $u$ is not a coboundary.
It can be described as follows.
The group $\rho^u(\Gamma)$ preserves an affine chart $U \simeq \RR^3$ of $\PP(\RR^3 \oplus \RR^1)$ and a flat Lorentzian metric on~$U$.
The action on $U$ is not properly discontinuous.
However, Mess~\cite{mes90} described two maximal globally hyperbolic domains of discontinuity, called \emph{domains of dependence}, one oriented to the future and one oriented to the past.
The domain $\Omega_{\max}$ is the union of the two domains of dependence plus a copy of the hyperbolic plane $\HH^2$ in $\PP(\RR^3\oplus 0)$.
We have that $\Lambdao_{\Omega_{\max}}(\rho^u(\Gamma)) \subset \partial \HH^2 \subset \PP(\RR^3 \oplus 0)$ and $\Ccore_{\Omega_{\max}}(\rho^u(\Gamma)) \subset \HH^2$.
Note that $\Omega_{\max}$ depends on~$u$ but~$\Ccore_{\Omega_{\max}}(\rho^u(\Gamma))$ does not.

(2) By Theorem~\ref{thm:properties}.\ref{item:dual}, the group $\rho^u(\Gamma)$ also acts convex cocompactly on a nonempty properly convex open subset $\Omega^*$ of the dual projective space $\PP((\RR^3 \oplus \RR)^*)$.
In this case the $\rho^u(\Gamma)$-invariant convex open set $(\Omega^*)_{\max} \supset \Omega^*$, given by Proposition~\ref{prop:max-inv-conv}.\eqref{item:Omega-max} applied to $\Omega^*$, is \emph{not} properly convex: it is the suspension $\mathrm{HP}^3$ of the dual copy $(\HH^2)^* \subset \PP((\RR^3)^*)$ of the hyperbolic plane with the point $\PP((\RR^1)^*)$.
The convex set $\mathrm{HP}^3 \subset \PP((\RR^3 \oplus \RR)^*)$ is the projective model for \emph{half-pipe geometry}, a transitional geometry lying between hyperbolic geometry and anti-de Sitter geometry (see~\cite{dan13}).
Note that while $(\Omega^*)_{\max} = \mathrm{HP}^3$ does not depend on~$u$, the convex core $\Ccore_{\Omega^*}(\rho^u(\Gamma))$ does depend on~$u$.
The full orbital limit set $\Lambdao_{\Omega^*}(\rho^u(\Gamma)) = \overline{\Omega^*} \cap \partial \mathrm{HP}^3$ is \emph{not} contained in a hyperplane if $u$ is not a coboundary.
Indeed, $\Ccore_{\Omega^*}(\rho^u(\Gamma))$ may be thought of as a rescaled limit of the collapsing convex cores for quasi-Fuchsian subgroups of $\PO(3,1)$ (or of $\PO(2,2)$) which converge to the Fuchsian group~$\rho(\Gamma)$.
This situation was described in some detail by Kerckhoff in various lectures during the period 2010-2012 about the work \cite{dk-future} (still in preparation); this included a lecture at the 2010 workshop on Geometry, topology, and dynamics of character varieties at the National University of Singapore.
\end{example}

%%%%%%%%%%%%%%%%%%%%%%%%%
\subsection{Convex cocompactness on quotient spaces} \label{subsec:cc-quotient}

Here is a consequence of Proposition~\ref{prop:cc-block} and Property~\ref{item:dual} of Theorem~\ref{thm:properties}.

\begin{proposition} \label{prop:cc-quotient}
Let $\Gamma$ be a discrete subgroup of\ $\SL^{\pm}(V)$ acting trivially on some linear subspace $V_0$ of~$V$.
Then the following are equivalent:
\begin{enumerate}
  \item\label{item:cc-quotient-V} $\Gamma$ is convex cocompact in $\PP(V)$,
  \item\label{item:cc-quotient-V/V_0} the induced representation $\Gamma\to\SL^{\pm}(V/V_0)$ is injective and its image is convex cocompact in $\PP(V/V_0)$.
\end{enumerate}
\end{proposition}

\begin{proof}
Fix a complement $W$ to $V_0$ in $V$ so that $V = W \oplus V_0$.
In a basis $\mathscr B$ respecting this decomposition, each element $\gamma \in \Gamma$ has block matrix form
$$[\gamma]_{V_0 \oplus W} = \begin{bmatrix} A_\gamma & 0 \\ B_
\gamma & \mathrm{Id}_{V_0} \end{bmatrix}.$$
Note that the induced representation $\Gamma \to \SL^\pm(V/V_0)$ is equivalent to the representation $\gamma \mapsto A_\gamma$ on $W_0$. 
Consider the dual action of $\Gamma$ on $\PP(V^*)$.
There is a dual splitting $V^* = W^* \oplus V_0^*$, where $W^*$ and $V_0^*$ are the subspaces of~$V^*$ consisting of the linear functionals that vanish on $V_0$ and $W$, respectively.
The basis $\mathscr B^*$ dual to the basis $\mathscr B$ above respects this splitting. In this basis, the action of $\Gamma$ on~$V^*$ has block matrix form
$$[\gamma ]_{W^* \oplus V_0^*} = \begin{bmatrix} ^t\!A_{\gamma} & ^t\!B_{\gamma}\\ 0 & \mathrm{Id}_{V_0^*} \end{bmatrix}.$$ Hence the dual action on $V^*$ is of the form $\rho^u$ where $\rho: \Gamma \to \SL^\pm(V^*)$ is the dual action and $u: \Gamma \to \Hom(V_0^*, W^*)$ is the $\rho$-cocycle such that $u(\gamma)$ has matrix $^t\!B_{\gamma}$. 

By Property~\ref{item:dual} of Theorem~\ref{thm:properties}, the group $\Gamma$ is convex cocompact in $\PP(V)$ if and only if it is convex cocompact in $\PP(V^*)$.
By Proposition~\ref{prop:cc-block}, this is equivalent to the representation $\gamma \mapsto {}^t\!A_{\gamma}$ being injective with convex cocompact image in $\PP(W^*)$.
By Property~\ref{item:dual} again, this is equivalent to the representation $\gamma \mapsto A_\gamma$ being injective with convex cocompact image in $\PP(W)$.
Since the representation $\gamma \mapsto A_\gamma$ on $W$ is equivalent to the induced representation $\Gamma \to \SL^\pm(V/V_0)$, the proof is complete.
\end{proof}

%%%%%%%%%%%%%%%%%%%%%%%%%%%%%%%%%%%%%%%%%%%%%%%%%%%
\section{Convex cocompactness in $\HH^{p,q-1}$} \label{sec:Hpq-cc}

Fix $p,q\in\NN^*$.
Recall from Section~\ref{subsec:intro-cc-POpq} that the projective space $\PP(\RR^{p+q})$ is the disjoint union of
$$\HH^{p,q-1} = \big\{[v]\in\PP(\RR^{p,q})\,|\,\langle v,v\rangle_{p,q}<0\big\},$$
of $\SS^{p-1,q} = \{[v]\in\PP(\RR^{p,q})\,|\,\langle v,v\rangle_{p,q}>0\}$, and of
$$\partial\HH^{p,q-1} = \partial\SS^{p-1,q} = \big\{[v]\in\PP(\RR^{p,q})\,|\,\langle v,v\rangle_{p,q}=0\big\}.$$
For instance, Figure~\ref{fig:H123} shows
$$\PP(\RR^4) = \HH^{3,0} \sqcup \big(\partial\HH^{3,0} = \partial\SS^{2,1}\big) \sqcup \SS^{2,1}$$
and
$$\PP(\RR^4) = \HH^{2,1} \sqcup \big(\partial\HH^{2,1} = \partial\SS^{1,2}\big) \sqcup \SS^{1,2}.$$
\begin{figure}[h!]
\centering
\labellist
\small\hair 2pt
\pinlabel $\SS^{1,2}$ [u] at 160 61
\pinlabel $\SS^{2,1}$ [u] at -20 61
\pinlabel $\HH^{3,0}$ [u] at 50 35
\pinlabel $\HH^{2,1}$ [u] at 207 35
\pinlabel $\ell$ [u] at 50 87
\pinlabel $\ell_2$ [u] at 207 52
\pinlabel $\ell_0$ [u] at 219 90 
\pinlabel $\ell_1$ [u] at 197 90
\endlabellist
\includegraphics[scale=1]{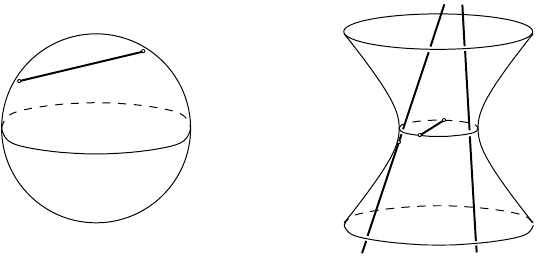}
\caption{Left: $\HH^{3,0}$ with a geodesic line $\ell$ (necessarily spacelike), and $\SS^{2,1}$. Right: $\HH^{2,1}$ with three geodesic lines $\ell_2$ (spacelike), $\ell_1$ (lightlike), and $\ell_0$ (timelike), and $\SS^{1,2}$.}
\label{fig:H123}
\end{figure}
As explained in \eg \cite[\S\,2.4]{wolf} or \cite[\S\,2.1]{dgk-ccHpq}, the space $\HH^{p,q-1}$ has a natural $\PO(p,q)$-invariant pseudo-Riemannian structure of constant negative curvature, for which the geodesic lines are the intersections of $\HH^{p,q-1}$ with projective lines in $\PP(\RR^{p+q})$.
Such a line is called \emph{spacelike} (\resp \emph{lightlike}, \resp \emph{timelike}) if it meets $\partial_{\scriptscriptstyle\PP}\HH^{p,q-1}$ in two (\resp one, \resp zero) points: see Figure~\ref{fig:H123}.

\begin{remark} \label{rem:prox-lim-set-POpq}
An element $g\in\PO(p,q)$ is proximal in $\PP(\RR^{p+q})$ (Definition~\ref{def:prox}) if and only if it admits a unique attracting fixed point $\xi_g^+$ in $\partial\HH^{p,q-1}$, in which case we shall say that $g$ is proximal in $\partial\HH^{p,q-1}$.
In particular, for a discrete subgroup $\Gamma$ of $\PO(p,q)\subset\PGL(\RR^{p+q})$, the proximal limit set $\Lambda_{\Gamma}$ of $\Gamma$ in $\PP(\RR^{p+q})$ (Definition~\ref{def:prox-lim-set}) is contained in $\partial\HH^{p,q-1}$, and called the proximal limit set of $\Gamma$ in $\partial\HH^{p,q-1}$.
\end{remark}

In this section we prove Theorems \ref{thm:main-POpq-reducible} and~\ref{thm:Hpq-general}.

For Theorem~\ref{thm:main-POpq-reducible}, the equivalences \ref{item:ccc-POpq-pos}~$\Leftrightarrow$~\ref{item:ccc-CM-Hpq-pos}~$\Leftrightarrow$~\ref{item:Hqp-ccc}~$\Leftrightarrow$~\ref{item:ccc-CM-Hpq-in-Hqp}~$\Leftrightarrow$~\ref{item:Anosov-pos} follow from the equivalences \ref{item:ccc-POpq-neg}~$\Leftrightarrow$~\ref{item:ccc-CM-Hpq-neg}~$\Leftrightarrow$~\ref{item:Hpq-ccc}~$\Leftrightarrow$~\ref{item:ccc-CM-Hpq-in-Hpq}~$\Leftrightarrow$~\ref{item:Anosov-neg} by replacing the symmetric bilinear form $\langle\cdot,\cdot\rangle_{p,q}$ by $-\langle\cdot,\cdot\rangle_{p,q}\simeq\langle\cdot,\cdot\rangle_{q,p}$: see \cite[Rem.\,4.1]{dgk-ccHpq}.
The implications \eqref{item:ccc-CM-Hpq}~$\Rightarrow$~\eqref{item:ccc-POpq}, \ref{item:ccc-CM-Hpq-pos}~$\Rightarrow$~\ref{item:ccc-POpq-pos}, and \ref{item:ccc-CM-Hpq-neg}~$\Rightarrow$~\ref{item:ccc-POpq-neg} hold by definition.
The implication \ref{item:ccc-CM-Hpq-neg}~$\Rightarrow$~\ref{item:Anosov-neg} is contained in the forward implication of Theorem~\ref{thm:Ano-PGL}, which has been established in Section~\ref{sec:ccc-implies-Anosov}.
Here we shall prove the implications \eqref{item:ccc-POpq}~$\Rightarrow$~\eqref{item:ccc-CM-Hpq}~$\Rightarrow$~\eqref{item:ccc-Hpq-Hqp} and \ref{item:Anosov-neg}~$\Rightarrow$~\ref{item:ccc-CM-Hpq-in-Hpq}~$\Rightarrow$~\ref{item:Hpq-ccc}~$\Rightarrow$~\ref{item:ccc-CM-Hpq-neg}.
We note that \eqref{item:ccc-Hpq-Hqp}~$\Rightarrow$~\eqref{item:ccc-CM-Hpq} is contained in \ref{item:Hpq-ccc}~$\Rightarrow$~\ref{item:ccc-CM-Hpq-neg} and \ref{item:Hqp-ccc}~$\Rightarrow$~\ref{item:ccc-CM-Hpq-pos}, and that \eqref{item:ccc-POpq}~$\Rightarrow$~\eqref{item:ccc-CM-Hpq} implies \ref{item:ccc-POpq-pos}~$\Rightarrow$~\ref{item:ccc-CM-Hpq-pos} and \ref{item:ccc-POpq-neg}~$\Rightarrow$~\ref{item:ccc-CM-Hpq-neg}.

%%%%%%%%%%%%%%%%%%%%%%%%%
\subsection{Proof of the implication \eqref{item:ccc-POpq}~$\Rightarrow$~\eqref{item:ccc-CM-Hpq} in Theorem~\ref{thm:main-POpq-reducible}}
\label{subsec:ccimpliesscc}

We start with the following general lemma.

\begin{lemma} \label{lem:extreme-pt-supp-hyp-PO}
Let $\Gamma$ be a discrete subgroup of $\PO(p,q)$ preserving a nonempty properly convex open subset $\Omega$ of $\PP(\RR^{p+q})$.
If $z\in\Lambdao_{\Omega}$ is an extreme point of $\partial\Omega$, then $z^{\perp}$ is a supporting hyperplane to $\Omega$ at~$z$.
In particular (Corollary~\ref{cor:naive-cc-Lambdaorb-full}), if the action of $\Gamma$ on~$\Omega$ is convex cocompact and if $z\in\Lambdao_{\Omega}$ is an extreme point of $\overline{\Ccore_{\Gamma}(\Omega)}$, then $z^{\perp}$ is a supporting hyperplane to $\Omega$ at~$z$.
\end{lemma}

\begin{proof}[Proof of Lemma~\ref{lem:extreme-pt-supp-hyp-PO}]
Since $z\in\Lambdao_{\Omega}$, there exist $x\in\Omega$ and $(\gamma_m)\in\Gamma^{\NN}$ such that $\gamma_m\cdot x\to z$.
If $z$ is an extreme point of $\partial\Omega$, then we actually have $\gamma_m\cdot x\to z$ for all $x\in\Omega$, by Lemma~\ref{lem:unif-neighb-face}.\eqref{item:distance-goes-down}.

Up to replacing the symmetric bilinear form $\langle\cdot,\cdot\rangle_{p,q}$ by its negative, we may assume that $p\geq q$, so that $\PO(p,q)$ has real rank~$q$.
Choose a basis $(e_1,\dots,e_{p+q})$ of $\RR^{p+q}$ in which the quadratic form $\langle\cdot,\cdot\rangle_{p,q}$ takes the form $2 v_1 v_{p+q} + \cdots + 2 v_q v_{p+1} + v_{q+1}^2 + \cdots + v_p^2$.
Similarly to Section~\ref{subsec:Cartan}, the Cartan decomposition 
$\PO(p,q) = \underline{K} \exp(\underline{\mathfrak{a}}^+) \underline{K}$ holds, where 
$\underline{K} \simeq \mathrm{P}(\OO(p)\times\OO(q))$ is the stabilizer of 
$\mathrm{span}(e_1+e_{p+q},\dots,e_q+e_{p+1},e_{q+1},\dots,e_p) \simeq \RR^{p,0}$ and 
$\mathrm{span}(e_1-e_{p+q},\dots,e_p-e_{q+1}) \simeq \RR^{0,q}$
and where
$$\underline{\mathfrak{a}}^+ := \left\{ 
\begin{array}{ll}
\{ \mathrm{diag}(t_1,\dots,t_q,0,\dots,0,-t_q,\dots,-t_1) ~|~ t_1\geq\dots\geq t_q\geq 0\} & \text{if }p>q;\\
\{ \mathrm{diag}(t_1,\dots,t_q,0,\dots,0,-t_q,\dots,-t_1) ~|~ t_1\geq\dots\geq t_{q-1}\geq |t_q|\} & \text{if }p=q.
\end{array}
\right.$$
This means that any $g\in\PO(p,q)$ may be written $g = k\exp(a)k'$ for some $k,k'\in\underline{K}$ and a unique $a=\mathrm{diag}(t_1,\dots,t_{p+q})\in\underline{\mathfrak{a}}^+$ with $t_i = -t_{p+q-i}$ for all~$i$; the real number $t_i$ is the logarithm $\mu_i(g)$ of the $i$-th largest \emph{singular value} of (any representative in $\OO(p,q)$ of)~$g$.

For any~$m$ we can write $\gamma_m = k_m a_m k'_m \in \underline{K} \exp(\underline{\mathfrak{a}}^+) \underline{K}$.
Up to extracting, we may assume that $(k_m)_{m\in\NN}$ converges to some $k\in\underline{K}$ and $(k'_m)_{m\in\NN}$ to some $k'\in\underline{K}$.
Arguing exactly as in the proof of Lemma~\ref{lem:P1-div}, the fact that $\gamma_m\cdot x\to z$ for all $x\in\Omega$ implies that $z = k\cdot [e_1]$ and that $(\mu_1-\mu_2)(\gamma_m)\to +\infty$.
Therefore $(\mu_{p+q-1}-\mu_{p+q})(\gamma_m) = (\mu_1-\mu_2)(\gamma_m)\to +\infty$.
As in the proof of Lemma~\ref{lem:accumulation}.\eqref{item:conv-xi*}, we then obtain that $\gamma_m\cdot x^*\to k\cdot\PP(\mathrm{span}(e_1,\dots,e_{p+q-1}) = z^{\perp}$ for all projective hyperplanes $x^*$ of $\PP(\RR^{p+q})$ that do not contain ${k'}^{-1}\cdot [e_{p+q}]$.
In particular, $\gamma_m\cdot x^*\to z^{\perp}$ for all $x^*$ in a dense open subset of~$\Omega^*$, hence $z^{\perp} \in \Lambdao_{\Omega^*}(\Gamma) \subset \partial\Omega^*$.
This implies that $z^{\perp}$ is a supporting hyperplane to $\Omega$ at~$z$.
\end{proof}

\begin{proof}[Proof of the implication \eqref{item:ccc-POpq}~$\Rightarrow$~\eqref{item:ccc-CM-Hpq} in Theorem~\ref{thm:main-POpq-reducible}]
Suppose $\Gamma$ acts convex cocompactly on some nonempty properly convex open subset $\Omega$ of $\PP(\RR^{p+q})$.
Let us prove that $\Gamma$ is strongly convex cocompact in $\PP(\RR^{p+q})$.

It is sufficient to check that $\Ccore_{\Omega}(\Gamma)$ is contained in $\HH^{p,q-1}$ or in $\PP(\RR^{p+q}) \smallsetminus \overline{\HH^{p,q-1}} \simeq \HH^{q,p-1}$.
Indeed, suppose this is the case.
By Corollaries \ref{cor:Lambdao-Hpq} and~\ref{cor:ideal-bound-naive-cc}.\eqref{item:ideal-bound-cc}, we have $\partiali\Ccore_{\Omega}(\Gamma) = \Lambdao_{\Omega}(\Gamma) \subset \partial\HH^{p,q-1}$.
Therefore, if $\Ccore_{\Omega}(\Gamma)$ contained a PET, with vertices $[a], [b], [c]$, then the edges of the PET would lie in $\partial\HH^{p,q-1}$ and we would have $\langle a,b\rangle_{p,q} = \langle b,c\rangle_{p,q} = \langle c,a\rangle_{p,q} = 0$.
Therefore the projective plane $\PP(\mathrm{span}(a,b,c))$ would be entirely contained in $\partial\HH^{p,q-1}$: contradiction since the PET is contained in this projective plane and in $\Ccore_{\Omega}(\Gamma)$.
This shows that if $\Ccore_{\Omega}(\Gamma)$ is contained in $\HH^{p,q-1}$ or in $\PP(\RR^{p+q}) \smallsetminus \overline{\HH^{p,q-1}} \simeq \HH^{q,p-1}$, then $\Ccore_{\Omega}(\Gamma)$ does not contain any PET, and so $\Gamma$ is strongly convex cocompact in $\PP(\RR^{p+q})$ by Theorem~\ref{thm:main-noPETs}.

Let us now show that $\Ccore_{\Omega}(\Gamma)$ is contained in $\HH^{p,q-1}$ or in $\PP(\RR^{p+q}) \smallsetminus \overline{\HH^{p,q-1}} \simeq \HH^{q,p-1}$.

Up to replacing $\Gamma$ by an index-two subgroup, we may assume that it lifts to a subgroup of $\OO(p,q)$, still denoted by~$\Gamma$.
Then the $\Gamma$-invariant properly convex open subset $\Omega$ of $\PP(\RR^{p+q})$ lifts to a $\Gamma$-invariant properly convex open cone $\widetilde{\Omega}$ of $\RR^{p+q}$, and $\Lambdao_{\Omega}(\Gamma)$ lifts to a $\Gamma$-invariant closed cone $\widetilde{\Lambda}^{\mathsf{orb}}_{\Omega}(\Gamma)$ of $\RR^{p+q}$ contained in the boundary of~$\widetilde{\Omega}$.

Let $z\in\Lambdao_{\Omega}(\Gamma)$ be an extreme point of $\overline{\Ccore_{\Gamma}(\Omega)}$.
Let $\Lambda$ be the closure of the $\Gamma$-orbit of~$z$ in $\partial\Omega$, and let $\widetilde{\Lambda}$ be the preimage of $\Lambda$ in $\widetilde{\Lambda}^{\mathsf{orb}}_{\Omega}(\Gamma)$.

Firstly, we claim that the set
$$S := \{ \langle\tilde{y},\tilde{y}'\rangle_{p,q} ~|~ \tilde{y}\in\widetilde{\Lambda},\ \tilde{y}'\in\widetilde{\Lambda}^{\mathsf{orb}}_{\Omega}(\Gamma)\}$$
is contained in $\RR_{\geq 0}$ or in $\RR_{\leq 0}$, and cannot be reduced to $\{0\}$.
Indeed, we know from Lemma~\ref{lem:extreme-pt-supp-hyp-PO} that $z^{\perp}$ is a supporting hyperplane to $\Omega$ at~$z$.
Therefore the set $\{ \langle\tilde{z},\tilde{y}'\rangle_{p,q} \,|\linebreak \tilde{y}'\in\nolinebreak\widetilde{\Lambda}^{\mathsf{orb}}_{\Omega}(\Gamma)\}$ (and more generally the set $\{ \langle\tilde{z},\tilde{y}'\rangle_{p,q} \,|\, \tilde{y}'\in\overline{\widetilde{\Omega}}\}$) is contained in $\RR_{\geq 0}$ or in~$\RR_{\leq 0}$.
This set is equal to $\{ \langle\tilde{y},\tilde{y}'\rangle_{p,q} \,|\, \tilde{y}'\in\widetilde{\Lambda}^{\mathsf{orb}}_{\Omega}(\Gamma)\}$ for any $\tilde{y}\in\Gamma\cdot\tilde{z}$, hence for any $\tilde{y}\in\widetilde{\Lambda}$.
Therefore it is equal to~$S$.
This set is not $\{0\}$, for otherwise $\Lambdao_{\Omega}(\Gamma)$, hence $\Ccore_{\Omega}(\Gamma)$, would be contained in~$z^{\perp}$, contradicting the fact that $z^{\perp}$ is a supporting hyperplane to $\Omega$ at~$z$.

Since $S\neq\{0\}$, we can find $z'\in\Lambdao_{\Omega}(\Gamma)$ which is an extreme point of $\overline{\Ccore_{\Gamma}(\Omega)}$ and which lifts to $\tilde{z}'\in\widetilde{\Lambda}^{\mathsf{orb}}_{\Omega}(\Gamma)$ with $\langle\tilde{z},\tilde{z}'\rangle_{p,q} \neq 0$.
Let $\Lambda'$ be the closure of the $\Gamma$-orbit of~$z'$ in $\partial\Omega$, and let $\C$ be the convex hull of $\Lambda\cup\Lambda'$ inside~$\Omega$: it is a $\Gamma$-invariant closed convex subset of~$\Omega$.

We claim that $\C$ is nonempty.
Indeed, otherwise the convex hull of $\Lambda\cup\Lambda'$ in $\overline{\Omega}$ would be contained in $\partial\Omega$, hence in $\partiali \Ccore_\Omega(\Gamma) = \Lambdao_{\Omega}(\Gamma)$, hence in $\partial\HH^{p,q-1}$ by Corollary~\ref{cor:Lambdao-Hpq}.
This would contradict the fact that $\langle\tilde{z},\tilde{z}'\rangle_{p,q} \neq 0$.
Therefore $\C$ is nonempty.

We claim that $\C$ does not meet $\partial\HH^{p,q-1}$.
Indeed, consider $x \in (\overline{\C} \cap \partial\HH^{p,q-1}) \smallsetminus (\Lambda\cup\Lambda')$.
Let $\widetilde{\Lambda}'$ be the preimage of $\Lambda'$ in $\widetilde{\Lambda}^{\mathsf{orb}}_{\Omega}$.
Since $x \in \overline{\C} \smallsetminus (\Lambda\cup\Lambda')$, we can lift it to $\tilde{x} = \sum_{i=1}^m t_i \tilde{y}_i + \sum_{i=1}^{m'} t'_i \tilde{y}'_i$, where $1\leq m,m'\leq p+q$ and where $t_i,t'_i>0$ and $\tilde{y}_i\in\widetilde{\Lambda}$ and $\tilde{y}'_i\in\widetilde{\Lambda}'$ for all~$i$.
Since $x \in \partial\HH^{p,q-1}$ and $\Lambda,\Lambda' \subset \partial\HH^{p,q-1}$, we have
$$0 = \langle\tilde{x},\tilde{x}\rangle_{p,q} = \sum_{1\leq i\leq m,\ 1\leq j\leq m'} t_i t'_j \, \langle\tilde{y}_i,\tilde{y}'_j\rangle_{p,q}.$$
Moreover, all the $\langle\tilde{y}_i,\tilde{y}'_j\rangle_{p,q}$ belong to $S$, which is contained in $\RR_{\geq 0}$ or in $\RR_{\leq 0}$.
Therefore we must have $\langle\tilde{y}_i,\tilde{y}'_j\rangle_{p,q} = 0$ for all $i,j$.
In particular, $x \in y_1^{\perp}$, where $y_1\in\Lambda$ is the projection of~$\tilde{y}_1\in\widetilde{\Lambda}$.
But we have seen that $y_1^{\perp}$ is a supporting hyperplane to $\Omega$ at~$y_1$ (by Lemma~\ref{lem:extreme-pt-supp-hyp-PO}).
Therefore $x$ cannot be contained in $\C \subset \Omega$.
This shows that $\C$ does not meet $\partial\HH^{p,q-1}$.

We deduce that $\C$ is contained in $\HH^{p,q-1}$ or in $\PP(\RR^{p+q}) \smallsetminus \overline{\HH^{p,q-1}} \simeq \HH^{q,p-1}$.

By minimality of $\Ccore_{\Omega}(\Gamma)$ (Lemma~\ref{lem:Ccore-min}), we have $\C = \Ccore_{\Omega}(\Gamma)$.
Thus $\Ccore_{\Omega}(\Gamma)$ is contained in $\HH^{p,q-1}$ or in $\PP(\RR^{p+q}) \smallsetminus \overline{\HH^{p,q-1}} \simeq \HH^{q,p-1}$, which completes the proof.
\end{proof}

%%%%%%%%%%%%%%%%%%%%%%%%%
\subsection{Proof of the implication \ref{item:Anosov-neg}~$\Rightarrow$~\ref{item:ccc-CM-Hpq-in-Hpq} in Theorem~\ref{thm:main-POpq-reducible}} \label{subsec:proof-POpq-6->4}

Suppose that $\Gamma$ is word hyperbolic, that the natural inclusion $\Gamma\hookrightarrow\PO(p,q)$ is $P_1^{p,q}$-Anosov, and that the proximal limit set $\Lambda_{\Gamma}\subset\partial\HH^{p,q-1}$ is negative.

By definition of negativity, the set $\Lambda_{\Gamma}$ lifts to a cone $\widetilde{\Lambda}_{\Gamma}$ of $\RR^{p,q}\smallsetminus\{ 0\}$ on which all inner products $\langle\cdot,\cdot\rangle_{p,q}$ of noncollinear points are negative. 
The set
$$\Omega_{\max} := \PP\big(\big\{ v\in\RR^{p,q} ~|~ \langle v,v'\rangle_{p,q}<0\quad \forall v'\in\widetilde{\Lambda}_{\Gamma}\big\}\big)$$
is a connected component of $\PP(\RR^{p,q}) \smallsetminus \bigcup_{z\in\Lambda_{\Gamma}} z^{\perp}$ which is open and convex.
It is nonempty because it contains $[v_1+v_2]$ for any noncollinear $v_1,v_2\in\widetilde{\Lambda}_{\Gamma}$.
Note that if $\widehat{\Lambda}_\Gamma$ is another cone of $\RR^{p,q}\smallsetminus\{0\}$ lifting $\Lambda_{\Gamma}$ on which all inner products $\langle\cdot,\cdot\rangle_{p,q}$ of noncollinear points are negative, then either $\widehat{\Lambda}_\Gamma = \widetilde{\Lambda}_\Gamma$ or $\widehat{\Lambda}_\Gamma = -\widetilde{\Lambda}_\Gamma$ (using negativity).
Thus $\Omega_{\max}$ is well defined independently of the lift $\widetilde{\Lambda}_\Gamma$, and so $\Omega_{\max}$ is $\Gamma$-invariant because $\Lambda_\Gamma$ is.

By Proposition~\ref{prop:precise-Ano-implies-ccc}, the group $\Gamma$ acts convex cocompactly on some non\-empty, properly convex open subset $\Omega\subset\Omega_{\max}$, and $\Lambdao_{\Omega}(\Gamma)=\Lambda_{\Gamma}$.
In particular, the convex hull $\Ccore_{\Omega}(\Gamma)$ of $\Lambda_{\Gamma}$ in~$\Omega$ has compact quotient by~$\Gamma$.

As in \cite[Lem.\,3.6.(1)]{dgk-ccHpq}, this convex hull $\Ccore_{\Omega}(\Gamma)$ is contained in $\HH^{p,q-1}$: indeed, any point of $\Ccore_{\Omega}(\Gamma)$ lifts to a vector of the form $v = \sum_{i=1}^k t_i v_i$ with $k\geq 2$, where $v_1, \ldots, v_k \in \widetilde{\Lambda}_\Gamma$ are distinct and $t_1, \ldots, t_k >0$; for all $i\neq j$ we have $\langle v_i,v_i\rangle_{p,q}=0$ and $\langle v_i,v_j\rangle_{p,q}<0$ by negativity of~$\Lambda_\Gamma$, hence $\langle v,v\rangle_{p,q}<0$.

Since $\Ccore_{\Omega}(\Gamma)$ has compact quotient by~$\Gamma$, it is easy to check that for small enough $r > 0$ the open uniform $r$-neighborhood $\mathcal{U}$ of $\Ccore_{\Omega}(\Gamma)$ in $(\Omega,d_{\Omega})$ is contained in $\HH^{p,q-1}$ and properly convex (see \cite[Lem.\,6.3]{dgk-ccHpq}).
The full orbital limit set $\Lambdao_{\mathcal{U}}(\Gamma)$ is a nonempty closed $\Gamma$-invariant subset of $\Lambdao_{\Omega}(\Gamma)$, hence its convex hull $\Ccore_{\mathcal{U}}(\Gamma)$ in~$\mathcal{U}$ is a nonempty closed $\Gamma$-invariant subset of $\Ccore_{\Omega}(\Gamma)$.
(In fact $\Ccore_{\mathcal{U}}(\Gamma)=\Ccore_{\Omega}(\Gamma)$ by Lemma~\ref{lem:Ccore-min}.)
The nonempty set $\Ccore_{\mathcal{U}}(\Gamma)$ has compact quotient by~$\Gamma$.
Thus $\Gamma$ acts convex cocompactly on $\mathcal{U}\subset\HH^{p,q-1}$, which completes the proof.

%%%%%%%%%%%%%%%%%%%%%%%%%
\subsection{Proof of the implication \ref{item:ccc-CM-Hpq-in-Hpq}~$\Rightarrow$~\ref{item:Hpq-ccc} in Theorem~\ref{thm:main-POpq-reducible}} \label{subsec:proof-POpq-4->5}

Suppose that $\Gamma$ acts convex cocompactly on some nonempty properly convex open subset $\Omega$ of $\HH^{p,q-1}$.
By the implication \eqref{item:ccc-POpq}~$\Rightarrow$~\eqref{item:ccc-CM-Hpq} of Theorem~\ref{thm:main-POpq-reducible} (proved in Section~\ref{subsec:ccimpliesscc} above), $\Gamma$ is strongly convex cocompact, and so $\Lambda_\Gamma$ is transverse by the forward implication of Theorem~\ref{thm:Ano-PGL} (proved in Section~\ref{sec:ccc-implies-Anosov} above).

The set $\Lambda_{\Gamma}$ is contained in $\Lambdao_\Omega(\Gamma)$.
By Lemma~\ref{lem:Ccore-min} and Corollary~\ref{cor:ideal-bound-naive-cc}.\eqref{item:ideal-bound-cc}, the convex hull of $\Lambda_{\Gamma}$ in~$\Omega$ is the convex hull $\Ccore_\Omega(\Gamma)$ of $\Lambdao_{\Omega}(\Gamma)$ in~$\Omega$, and the ideal boundary $\partiali \Ccore_\Omega(\Gamma)$ is equal to $\Lambdao_{\Omega}(\Gamma)$.
By Corollary~\ref{cor:Lambdao-Hpq}, this set is contained in~$\partial \HH^{p,q-1}$.
It follows that $\Lambda_\Gamma = \Lambdao_{\Omega}(\Gamma)$: indeed, otherwise $\Lambdao_{\Omega}(\Gamma)$ would contain a nontrivial segment between two points of~$\Lambda_{\Gamma}$, this segment would be contained in $\partial \HH^{p,q-1}$, and this would contradict that $\Lambda_\Gamma$ is transverse.
Thus $\Ccore_\Omega(\Gamma)$ is a closed properly convex subset of $\HH^{p,q-1}$ on which $\Gamma$ acts properly discontinuously and cocompactly, and whose ideal boundary does not contain any nontrivial projective line segment.

It could be the case that $\Ccore_\Omega(\Gamma)$ has empty interior.
However, for any $r > 0$ the closed uniform $r$-neighborhood $\C_r$ of $\Ccore_{\Omega}(\Gamma)$ in $(\Omega,d_{\Omega})$ has nonempty interior, and is still properly convex with compact quotient by~$\Gamma$.
By Corollary~\ref{cor:ideal-bound-naive-cc}.\eqref{item:ideal-bound-cc}, we have $\partiali \C_r = \Lambdao_{\Omega}(\Gamma)$, hence $\partiali \C_r$ does not contain any nontrivial projective line segment.
This shows that $\Gamma$ is $\HH^{p,q-1}$-convex cocompact, which completes the proof.

%%%%%%%%%%%%%%%%%%%%%%%%%
\subsection{Proof of the implication \eqref{item:ccc-CM-Hpq}~$\Rightarrow$~\eqref{item:ccc-Hpq-Hqp} in Theorem~\ref{thm:main-POpq-reducible}}

The proof relies on the following proposition, which is stated in \cite[Prop.\,3.7]{dgk-ccHpq} for $\Gamma$ acting irreducibly on $\PP(\RR^{p+q})$.
The proof for general~$\Gamma$ is literally the same; we recall it for the reader's convenience.

\begin{proposition} \label{prop:limit-set-POpq-min}
For $p,q\in\NN^*$, let $\Gamma$ be a discrete subgroup of $\PO(p,q)$ preserving a nonempty properly convex open subset $\Omega$ of $\PP(\RR^{p,q})$.
Let $\Lambda_{\Gamma} \subset \partial\HH^{p,q-1}$ be the proximal limit set of~$\Gamma$ (Definition~\ref{def:prox-lim-set} and Remark~\ref{rem:prox-lim-set-POpq}).
If $\Lambda_{\Gamma}$ contains at least two points and is transverse, and if the action of $\Gamma$ on~$\Lambda_{\Gamma}$ is minimal (\ie every orbit is dense), then $\Lambda_{\Gamma}$ is negative or positive.
\end{proposition}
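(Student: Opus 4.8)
The goal is to show that a transverse proximal limit set $\Lambda_\Gamma \subset \partial\HH^{p,q-1}$ with at least two points, on which $\Gamma$ acts minimally, is either negative or positive. The key invariant to track is the sign of the inner product $\langle \cdot, \cdot \rangle_{p,q}$ between noncollinear points of a suitable lift $\widetilde\Lambda_\Gamma$. First I would fix a point $z_0 \in \Lambda_\Gamma$ and a lift $\widetilde z_0 \in \RR^{p,q}\smallsetminus\{0\}$. By transversality, for any $z \in \Lambda_\Gamma$ distinct from $z_0$ we have $z \notin z_0^\perp$, so any lift $\widetilde z$ satisfies $\langle \widetilde z, \widetilde z_0 \rangle_{p,q} \neq 0$; this lets me normalize the lift of each such $z$ by the requirement $\langle \widetilde z, \widetilde z_0\rangle_{p,q} = \epsilon$ for a fixed sign $\epsilon \in \{+1,-1\}$, and the choice of $\epsilon$ (the same for all $z$, determined by continuity/connectedness considerations on a single orbit) is what will eventually produce a coherent cone. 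The plan is to build $\widetilde\Lambda_\Gamma$ as the union of $\RR_{>0}\widetilde z_0$ and these normalized lifts, and to show that the pairwise inner products $\langle \widetilde z, \widetilde z'\rangle_{p,q}$ for noncollinear $z, z'$ all share a single sign.

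The central step is a \emph{sign-constancy argument via minimality}. I would consider the function on pairs of distinct points of $\Lambda_\Gamma$ given by the sign of $\langle \widetilde z, \widetilde z' \rangle_{p,q}$ (for the chosen normalized lifts). Transversality guarantees this is never zero, so it is a continuous, hence locally constant, $\{+,-\}$-valued function on the space of ordered pairs of distinct points. The plan is to use the minimality of the $\Gamma$-action — together with the fact that $\Gamma$ acts by $\PO(p,q)$, which \emph{preserves} the inner product and hence the sign once the lifts are chosen equivariantly — to propagate the sign from one pair to all pairs: given any two pairs, I would approximate one by a $\Gamma$-translate of a point near the other using density of orbits, and invoke continuity of the sign function to conclude the signs agree. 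I would also need to verify that the equivariant choice of lifts is consistent, i.e. that $\gamma$ acting on a normalized lift gives (up to a positive scalar) the normalized lift of $\gamma \cdot z$; this follows because $\gamma$ preserves $\langle\cdot,\cdot\rangle_{p,q}$ and permutes the points of $\Lambda_\Gamma$, so the normalization condition is $\gamma$-equivariant after fixing the sign on the orbit closure.

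The main obstacle I anticipate is the \textbf{global coherence of the lift and the sign}: a priori the sign function on pairs could be locally constant yet take both values on a disconnected configuration space, and the orbit-approximation argument must be set up so that the continuity and the minimality genuinely force one global sign rather than merely a locally constant one. The delicate point is that the space of ordered pairs of distinct points of $\Lambda_\Gamma$ need not be connected, so I cannot simply invoke connectedness; instead I must exploit minimality to move between components. Concretely, given pairs $(z_1,z_1')$ and $(z_2,z_2')$, I would choose a sequence $\gamma_m \in \Gamma$ with $\gamma_m \cdot z_1 \to z_2$ and argue that $\gamma_m \cdot z_1'$ accumulates at a point $z'$ with $(z_2, z')$ in the same component (or same sign class) as $(z_2, z_2')$ by a second application of minimality, carefully handling the degenerate case where $\gamma_m \cdot z_1'$ might approach $z_2$. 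Managing these limits — ensuring the approximating points stay distinct and transverse so the sign function remains defined and continuous along the way — is where the real work lies. Once sign-constancy is established, negativity (resp. positivity) of $\Lambda_\Gamma$ follows directly from Definition of negative/positive as recalled in Section~\ref{subsec:intro-cc-POpq}, and the equivalence with the signature-$(2,1)$ (resp. $(1,2)$) condition on triples completes the statement.
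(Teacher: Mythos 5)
Your outline never uses the hypothesis that $\Gamma$ preserves a nonempty properly convex open set $\Omega$, and that hypothesis is exactly what makes the statement true: transversality plus minimality alone are not enough (there are $P_1$-Anosov subgroups of $\PO(p,q)$ with transverse, minimal limit set which preserve no properly convex open set and whose limit set is neither negative nor positive; compare Remark~\ref{rem:Ano-no-conv} and Theorem~\ref{thm:main-POpq-reducible}). The role of $\Omega$ is to supply, via Proposition~\ref{prop:max-inv-conv}.\eqref{item:Lambda-prox-lift-Omega}, a globally coherent \emph{and equivariant} lift: $\Omega$ lifts to a properly convex cone $\widetilde{\Omega}$ preserved by a lift $\hat\Gamma\subset\OO(p,q)$ of~$\Gamma$, and $\widetilde{\Lambda}_\Gamma$ is taken inside the boundary of $\widetilde{\Omega}$. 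Your substitute --- normalizing each lift by $\langle\widetilde{z},\widetilde{z}_0\rangle_{p,q}=\epsilon$ against a basepoint $z_0$ --- is \emph{not} $\Gamma$-equivariant: $\hat\gamma\cdot\widetilde{z}$ is normalized against $\hat\gamma\cdot\widetilde{z}_0$, not against $\widetilde{z}_0$, so it may differ from the normalized lift of $\gamma\cdot z$ by a sign, and the claimed equivariance is false. Consequently the sign function on pairs is not $\Gamma$-invariant and the minimality/continuity propagation you describe does not get started. The ``global coherence'' issue you flag as the main obstacle is not a technical nuisance to be managed at the end; it is the entire content of the proposition, and your outline contains no idea that overcomes it. (An intrinsic, lift-independent and $\Gamma$-invariant quantity does exist, namely the triple product $\langle\widetilde{z}_1,\widetilde{z}_2\rangle\langle\widetilde{z}_2,\widetilde{z}_3\rangle\langle\widetilde{z}_3,\widetilde{z}_1\rangle$, but its constancy over the possibly disconnected space of transverse triples still has to come from somewhere, and minimality of the action on \emph{points} does not obviously yield the needed transitivity on triples.)

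For comparison, the paper's proof takes the equivariant lifts $\widetilde{\Lambda}_\Gamma$ and $\widetilde{\Lambda}_\Gamma^*$ coming from $\widetilde{\Omega}$ and its dual cone, observes that $\psi(x):=\langle x,\cdot\rangle_{p,q}$ sends each $x\in\widetilde{\Lambda}_\Gamma$ into $\widetilde{\Lambda}_\Gamma^*\cup(-\widetilde{\Lambda}_\Gamma^*)$, and splits $\widetilde{\Lambda}_\Gamma=F^+\sqcup F^-$ accordingly. Both pieces are closed $\hat\Gamma$-invariant subcones, disjoint by transversality and the two-point hypothesis, so minimality --- invoked once, in the form ``$\Lambda_\Gamma$ is the smallest nonempty closed invariant subset'' --- forces one of them to be empty; nonnegativity (resp.\ nonpositivity) plus transversality then gives positivity (resp.\ negativity). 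If you want to salvage your approach, replace the basepoint normalization by the cone lift from Proposition~\ref{prop:max-inv-conv}; at that point you will essentially be reproducing the paper's argument.
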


Recall from Remark~\ref{rem:irred-prox-lim-set} that if the action of $\Gamma$ on $\PP(V)$ is irreducible and $\Lambda_{\Gamma}\neq\emptyset$, then the action of $\Gamma$ on $\Lambda_{\Gamma}$ is always minimal.

\begin{proof}
Suppose that $\Lambda_{\Gamma}$ contains at least two points and is transverse, and that the action of $\Gamma$ on~$\Lambda_{\Gamma}$ is minimal.
By Proposition~\ref{prop:max-inv-conv}.\eqref{item:Lambda-prox-lift-Omega}, the sets $\Omega$ and $\Lambda_{\Gamma}$ lift to cones $\widetilde{\Omega}$ and $\widetilde{\Lambda}_\Gamma$ of $V\smallsetminus\{0\}$ with $\widetilde{\Omega}$ properly convex containing $\widetilde{\Lambda}_{\Gamma}$ in its boundary, and $\Omega^*$ and $\Lambda_{\Gamma}^*$ lift to cones $\widetilde{\Omega}^*$ and $\widetilde{\Lambda}_\Gamma^*$ of $V^*\smallsetminus\{0\}$ with $\widetilde{\Omega}^*$ properly convex containing $\widetilde{\Lambda}_{\Gamma}^*$ in its boundary, such that $\varphi(v)\geq 0$ for all $v\in\widetilde{\Lambda}_{\Gamma}$ and $\varphi\in\widetilde{\Lambda}_{\Gamma}^*$.
By Remark~\ref{rem:lift-Gamma}, the group $\Gamma$ lifts to a discrete subgroup $\hat\Gamma$ of $\OO(p,q)$ preserving~$\widetilde{\Omega}$ (hence also $\widetilde{\Lambda}_\Gamma$, $\widetilde{\Omega}^*$, and $\widetilde{\Lambda}_\Gamma^*$).
Note that the map $\psi : v\mapsto\langle v,\cdot\rangle_{p,q}$ from $\RR^{p,q}$ to $(\RR^{p,q})^*$ induces a homeomorphism $\Lambda_{\Gamma}\simeq\Lambda_{\Gamma}^*$.
For any $v\in\widetilde{\Lambda}_{\Gamma}$ we have $\psi(v)\in\widetilde{\Lambda}_{\Gamma}^*\cup -\widetilde{\Lambda}_{\Gamma}^*$.
Let $F^+$ (\resp $F^-$) be the subcone of $\widetilde{\Lambda}_{\Gamma}$ consisting of those vectors $v$ such that $\psi(v)\in\widetilde{\Lambda}_{\Gamma}^*$ (\resp $\psi(v)\in -\widetilde{\Lambda}_{\Gamma}^*$).
By construction, we have $v\in F^+$ if and only if $\langle v,v'\rangle_{p,q}\geq 0$ for all $v'\in\widetilde{\Lambda}_{\Gamma}$; in particular, $F^+$ is closed in~$\widetilde{\Lambda}_{\Gamma}$ and $\hat\Gamma$-invariant.
Similarly, $F^-$ is closed and $\hat\Gamma$-invariant.
The sets $F^+$ and~$F^-$ are disjoint since $\Lambda_{\Gamma}$ contains at least two points and is transverse.
Thus $F^+$ and~$F^-$ are disjoint, $\hat\Gamma$-invariant, closed subcones of~$\widetilde{\Lambda}_{\Gamma}$, whose projections to $\PP(\RR^{p,q})$ are disjoint, $\Gamma$-invariant, closed subsets of~$\Lambda_{\Gamma}$.
Since the action of $\Gamma$ on~$\Lambda_{\Gamma}$ is minimal, $\Lambda_{\Gamma}$ is the smallest nonempty $\Gamma$-invariant closed subset of $\PP(\RR^{p,q})$, and so $\{F^+, F^-\}=\{\widetilde{\Lambda}_{\Gamma}, \emptyset\}$.
If $\widetilde{\Lambda}_{\Gamma}=F^+$ then $\Lambda_{\Gamma}$ is nonnegative, hence positive by transversality. Similarly, if $\widetilde{\Lambda}_{\Gamma}=F^-$ then $\Lambda_{\Gamma}$ is negative.
\end{proof}

\begin{proof}[Proof of the implication \eqref{item:ccc-CM-Hpq}~$\Rightarrow$~\eqref{item:ccc-Hpq-Hqp} in Theorem~\ref{thm:main-POpq-reducible}]
Suppose $\Gamma$ is strongly convex cocompact in $\PP(\RR^{p+q})$.
By the implication \ref{item:ccc-CM}~$\Rightarrow$~\ref{item:P1Anosov} of Theorem~\ref{thm:main-noPETs}, the group $\Gamma$ is word hyperbolic and the natural inclusion $\Gamma\hookrightarrow\PO(p,q)$ is $P_1^{p,q}$-Anosov.
If $\#\Lambda_{\Gamma}> 2$, then the action of $\Gamma$ on $\partial_{\infty}{\Gamma}$, hence on $\Lambda_{\Gamma}$, is minimal, and so Proposition~\ref{prop:limit-set-POpq-min} implies that the set $\Lambda_{\Gamma}$ is negative or positive.
This last conclusion also holds, vacuously, if $\#\Lambda_{\Gamma}\leq 2$.
Thus the implications \ref{item:Anosov-neg}~$\Rightarrow$~\ref{item:Hpq-ccc} and \ref{item:Anosov-pos}~$\Rightarrow$~\ref{item:Hqp-ccc} of Theorem~\ref{thm:main-POpq-reducible} (proved in Sections \ref{subsec:proof-POpq-6->4} and~\ref{subsec:proof-POpq-4->5} just above) show that $\Gamma$ is $\HH^{p,q-1}$-convex cocompact or $\HH^{q,p-1}$-convex cocompact.
\end{proof}

%%%%%%%%%%%%%%%%%%%%%%%%%
\subsection{Proof of the implication \ref{item:Hpq-ccc}~$\Rightarrow$~\ref{item:ccc-CM-Hpq-neg} in Theorem~\ref{thm:main-POpq-reducible}} \label{subsec:Hpq-bis}

Suppose $\Gamma \subset \PO(p,q)$ is $\HH^{p,q-1}$-convex cocompact, \ie it acts properly discontinuously and cocompactly on a closed convex subset $\C$ of $\HH^{p,q-1}$ such that $\C$ has nonempty interior and $\partiali \C$ does not contain any nontrivial projective line segment.
We shall first show that $\Gamma$ satisfies condition~\ref{item:ccc-ugly} of Theorem~\ref{thm:main-noPETs}: namely, $\Gamma$ preserves a nonempty properly convex open subset $\Omega$ of $\PP(\RR^{p,q})$ and acts cocompactly on a closed convex subset $\C'$ of~$\Omega$ with nonempty interior, such that $\partiali \C'$ does not contain any nontrivial segment. 

One naive idea would be to take $\Omega=\Int(\C)$ and $\C'$ to be the convex hull $\C_0$ of $\partiali\C$ in~$\C$ (or some small thickening), but $\C_0$ might not be contained in $\Int(\C)$ (\eg if $\C=\C_0$).
So we must find a larger open set~$\Omega$.
Lemma~\ref{lem:no-degenerate-faces} below implies that the $\Gamma$-invariant convex open subset $\Omega_{\max} \supset \Int(\C)$ of Proposition~\ref{prop:max-inv-conv} contains~$\C$.
If $\Omega_{\max}$ is properly convex (\eg if the action of $\Gamma$ on $\PP(V)$ is irreducible), then we may take $\Omega=\Omega_{\max}$ and $\C'=\C$.
However, $\Omega_{\max}$ might not be properly convex; we shall show (Lemma~\ref{lem:nonzero}) that the $\Gamma$-invariant properly convex open set $\Omega = \Int(\C)^*$ (realized in the same projective space via the quadratic form) contains $\C_0$ (though possibly not $\C$) and we shall take $\C'$ to be the intersection of $\C$ with a neighborhood of~$\C_0$ in~$\Omega$.

The following key observation is similar to \cite[Lem.\,4.2]{dgk-ccHpq}.

\begin{lemma} \label{lem:no-degenerate-faces}
For $p,q\in\NN^*$, let $\Gamma$ be a discrete subgroup of $\PO(p,q)$ acting properly discontinuously and cocompactly on a nonempty properly convex closed subset $\C$ of~$\HH^{p,q-1}$.
Then $\C$ does not meet any hyperplane $z^{\perp}$ with $z\in\partiali\C$.
In other words, any point of~$\C$ sees any point of $\partiali\C$ in a spacelike direction.
\end{lemma}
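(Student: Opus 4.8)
The plan is to argue by contradiction: suppose some $y\in\C$ and $z\in\partiali\C$ satisfy $\langle\tilde y,\tilde z\rangle_{p,q}=0$ (for suitable lifts), and produce a configuration forbidden either by the proper convexity of $\C$ or by the proper discontinuity of the action. Throughout I normalize lifts so that $\langle\tilde y,\tilde y\rangle_{p,q}=-1$ and $\langle\tilde z,\tilde z\rangle_{p,q}=0$, so the standing hypothesis reads $\langle\tilde y,\tilde z\rangle_{p,q}=0$.

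First I would record that the hypothesis forces an entire lightlike ray toward $z$ to lie in $\C$. Setting $R(t):=[\tilde y+t\tilde z]$ for $t\ge 0$, the computation $\langle \tilde y+t\tilde z,\tilde y+t\tilde z\rangle_{p,q}=-1$ shows $R(t)\in\HH^{p,q-1}$ for all $t$, and the projective segment $[y,z)$ is exactly $\{R(t):t\ge 0\}$, with $R(t)\to z$. Since $[y,z]\subset\overline{\C}$ by convexity and $\overline{\C}\cap\HH^{p,q-1}=\C$ (because $\partiali\C\subset\partial\HH^{p,q-1}$ by Remark~\ref{rem:convex-in-Hpq}, while $\C$ is closed in $\HH^{p,q-1}$), I obtain $R([0,\infty))\subset\C$: a complete lightlike geodesic ray inside $\C$ limiting to $z$.

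Next, cocompactness feeds this escaping ray into a recurrence. Choosing $\gamma_m\in\Gamma$ with $\gamma_m R(m)$ in a fixed compact fundamental domain, I may assume $\gamma_m R(m)\to y_*\in\C$; lifting by $\hat\gamma_m\in\OO(p,q)$, the vectors $v_m:=\hat\gamma_m(\tilde y+m\tilde z)$ satisfy $\langle v_m,v_m\rangle_{p,q}=-1$ and converge, up to sign, to a lift $v_*$ of $y_*$. Writing $\tilde w_m:=\hat\gamma_m\tilde z$ and $\hat w_m:=\tilde w_m/\Vert\tilde w_m\Vert$, invariance of the form gives $\langle \hat w_m,\hat w_m\rangle_{p,q}=0$ and $\langle v_m,\hat w_m\rangle_{p,q}=0$, so after extraction $\hat w_m\to\hat w_*$ with $\langle\hat w_*,\hat w_*\rangle_{p,q}=0$ and $\langle v_*,\hat w_*\rangle_{p,q}=0$; thus $v_*$ and $\hat w_*$ are independent and span a lightlike projective line $L_*$ through $y_*$, tangent to $\partial\HH^{p,q-1}$ at $z_*:=[\hat w_*]$. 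The crucial observation is that $[v_m+s\hat w_m]=\gamma_m R\!\left(m+s/\Vert\tilde w_m\Vert\right)$ lies in $\C$ whenever its parameter is nonnegative, which lets me transfer membership in $\C$ to the limit line.

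Here is where the main obstacle lies, resolved by a dichotomy on the quantity $m\Vert\tilde w_m\Vert$. If $m\Vert\tilde w_m\Vert\to\infty$, then for every fixed $s\in\RR$ the parameter $m+s/\Vert\tilde w_m\Vert$ is eventually nonnegative, and since $\langle v_*+s\hat w_*,v_*+s\hat w_*\rangle_{p,q}=-1$ we get $[v_*+s\hat w_*]\in\overline{\C}\cap\HH^{p,q-1}=\C$ for all $s$; hence $\C$ contains the whole affine line $L_*\smallsetminus\{z_*\}$, contradicting proper convexity (a properly convex set is bounded in an affine chart and so cannot contain a full affine line). If instead $m\Vert\tilde w_m\Vert$ stays bounded along a subsequence, then $\hat\gamma_m\tilde y=v_m-m\tilde w_m$ stays bounded with $\langle\hat\gamma_m\tilde y,\hat\gamma_m\tilde y\rangle_{p,q}=-1$, so $\gamma_m y$ converges to an interior point of $\C$; proper discontinuity then forces $\gamma_m$ to be eventually constant, equal to some $\gamma\in\Gamma$, whence $\gamma z=\lim_m\gamma_m R(m)=y_*\in\C$, contradicting $\gamma z\in\partiali\C\subset\partial\HH^{p,q-1}$. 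Either way one reaches a contradiction, proving that $\langle y,z\rangle_{p,q}\neq 0$, i.e.\ every point of $\C$ sees every point of $\partiali\C$ in a spacelike direction. The only delicate bookkeeping is the sign normalization of the lifts $\hat\gamma_m$, which is handled by passing to a subsequence.
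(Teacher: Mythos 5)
Your proof is correct and follows essentially the same strategy as the paper's: assume $\C$ meets $z^\perp$, observe that the resulting lightlike ray $[y,z)$ lies entirely in $\C$, renormalize points escaping along it by cocompactness, and derive either a full line in $\overline{\C}$ (contradicting proper convexity) or a violation of proper discontinuity. The paper organizes the same dichotomy projectively --- it shows the limits of $\gamma_m\cdot y$ and $\gamma_m\cdot z$ must both lie in $\partiali\C$ and must coincide, so the limit segments degenerate to a full projective line --- whereas you track explicit lifts and split on the growth of $m\,\Vert\hat\gamma_m\tilde z\Vert$; the content is the same.
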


\begin{proof}
Suppose by contradiction that $\C$ meets $z^{\perp}$ for some $z\in\partiali\C$.
Then $z^{\perp}$ contains a ray $[y,z) \subset \partialn\C$.
Let $(x_m)_{m\in\NN}$ be a sequence of points of $[y,z)$ converging to~$z$ (see Figure~\ref{fig:CinOhm}).
Since $\Gamma$ acts cocompactly on~$\C$, for any $m$ there exists $\gamma_m\in\Gamma$ such that $\gamma_m\cdot x_m$ belongs to a fixed compact subset of $\partialn\C$.
Up to taking a subsequence, the sequences $(\gamma_m\cdot x_m)_m$ and $(\gamma_m\cdot y)_m$ and $(\gamma_m\cdot z)_m$ converge respectively to some points $x_{\infty}, y_{\infty}, z_{\infty}$ in $\PP(\RR^{p,q})$.
We have $x_{\infty}\in\partialn\C$ and $y_{\infty}\in\partiali\C$ (because the action of $\Gamma$ on~$\C$ is properly discontinuous) and $z_{\infty}\in\partiali\C$ (because $\partiali\C=\Fr(\C)\cap\partial\HH^{p,q-1}$ is closed in $\PP(\RR^{p,q})$).
The segment $[y_{\infty},z_{\infty}]$ is contained in $z_{\infty}^{\perp}$, hence its intersection with $\HH^{p,q-1}$ is contained in a lightlike geodesic and can meet $\partial\HH^{p,q-1}$ only at $z_{\infty}$.
Therefore $y_{\infty}=z_{\infty}$ and the closure of $\C$ in $\PP(\RR^{p,q})$ contains a full projective line, contradicting the proper convexity of~$\C$.
\end{proof}
 
\begin{figure}[h]
\centering
\labellist
\small\hair 2pt
\pinlabel {$z$} [u] at 37 49
\pinlabel {$x_m$} [u] at 48 54
\pinlabel {$y$} [u] at 44 71
\pinlabel {$\gamma_m \! \cdot\! z$} [u] at 74 31
\pinlabel {$\gamma_m \!\cdot\! x_m$} [u] at 72 53
\pinlabel {$\gamma_m \!\cdot\! y$} [u] at 85 79
\pinlabel {$\C$} [u] at 100 65
\pinlabel {$\partiali\C$} [u] at 106.5 33
\pinlabel {$\partial\HH^{p,q-1}$} [u] at 10 86
\endlabellist
\includegraphics[scale=1.5]{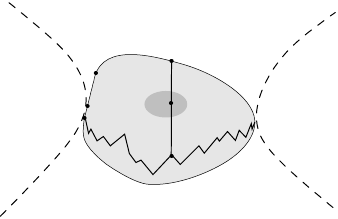}
\caption{Illustration for the proof of Lemma~\ref{lem:no-degenerate-faces}}
\label{fig:CinOhm}
\end{figure}

\begin{lemma} \label{lem:nonzero}
In the setting of Lemma~\ref{lem:no-degenerate-faces}, suppose $\C$ has nonempty interior.
Then the convex hull $\C_0$ of $\partiali\C$ in~$\C$ is contained in the $\Gamma$-invariant properly convex open set
$$\Omega := \big\{ x\in\PP(\RR^{p,q}) ~|~ x^\perp \cap \overline{\C} = \emptyset\big\}.$$
\end{lemma}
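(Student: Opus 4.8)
The plan is to prove two things: that $\Omega$ is a $\Gamma$-invariant properly convex open set, and that $\C_0\subseteq\Omega$. For the first part I would identify $\Omega$ with a dual convex set. Let $\psi:\RR^{p,q}\to(\RR^{p,q})^*$ be the isomorphism $x\mapsto\langle x,\cdot\rangle_{p,q}$; since the form is nondegenerate it induces a projective isomorphism $\PP(\RR^{p,q})\to\PP((\RR^{p,q})^*)$, still denoted $\psi$, sending a point $y$ to the hyperplane $y^\perp$. Under this identification the condition $y^\perp\cap\overline{\C}=\emptyset$ means exactly that $\psi(y)$ is a hyperplane missing $\overline{\C}$, i.e. $\psi(y)\in\Int(\C^*)$ in the notation of Definition~\ref{def:dual-C} and Remark~\ref{rem:explicit-dual}. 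Thus $\Omega=\psi^{-1}(\Int(\C^*))$. As $\C$ is properly convex with nonempty interior, $\C^*$ is properly convex and $\Int(\C^*)$ is open and nonempty, so $\Omega$ is open and properly convex; it is $\Gamma$-invariant because $\Gamma\subset\PO(p,q)$ preserves both $\overline{\C}$ and the form, hence commutes with $\psi$.

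The heart of the matter is the inclusion $\C_0\subseteq\Omega$. Fix a properly convex cone $\widetilde{\C}\subset\RR^{p,q}$ lifting $\C$, with closure $\overline{\widetilde{\C}}$, and let $\widetilde{\partiali\C}\subset\partial\widetilde{\C}$ be the induced lift of $\partiali\C$; recall that points of $\C$ lift to vectors $x$ with $\langle x,x\rangle_{p,q}<0$, while points of $\partiali\C\subset\partial\HH^{p,q-1}$ lift to null vectors. The key step is the sign estimate: for any $x\in\overline{\widetilde{\C}}$ lifting a point of $\C$ and any $x'\in\widetilde{\partiali\C}$, one has $\langle x,x'\rangle_{p,q}<0$. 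To see this, note that the segment $[x,x']$ lies in the convex cone $\overline{\widetilde{\C}}$ and projects into $\overline{\C}\subset\overline{\HH^{p,q-1}}$, so $\langle v,v\rangle_{p,q}\le 0$ along it. Writing $v_s=(1-s)x+sx'$ and using $\langle x',x'\rangle_{p,q}=0$ gives $\langle v_s,v_s\rangle_{p,q}=(1-s)^2\langle x,x\rangle_{p,q}+2s(1-s)\langle x,x'\rangle_{p,q}\le 0$; dividing by $s(1-s)$ and letting $s\to 1^-$ yields $\langle x,x'\rangle_{p,q}\le 0$, and strict inequality then follows from Lemma~\ref{lem:no-degenerate-faces}, which guarantees $\langle x,x'\rangle_{p,q}\neq 0$.

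With the sign estimate in hand I would conclude as follows. Let $w\in\C_0$; since $\C_0\subset\C$ is the convex hull of $\partiali\C$ in $\C$, its lift can be written $\tilde{w}=\sum_i t_i x_i'$ with $t_i>0$ and $x_i'\in\widetilde{\partiali\C}$. I must show $w^\perp\cap\overline{\C}=\emptyset$, i.e. $\langle\tilde{w},\tilde{z}\rangle_{p,q}\neq 0$ for every $z\in\overline{\C}=\C\sqcup\partiali\C$. If $z\in\C$, then $\langle\tilde{w},\tilde{z}\rangle_{p,q}=\sum_i t_i\langle x_i',\tilde{z}\rangle_{p,q}<0$ by the sign estimate applied to the $\C$-point $z$ and the $\partiali\C$-points $[x_i']$. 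If instead $z\in\partiali\C$, then $w\in\C$ and $z\in\partiali\C$, so $\langle\tilde{w},\tilde{z}\rangle_{p,q}\neq 0$ is exactly the content of Lemma~\ref{lem:no-degenerate-faces}. In both cases $z\notin w^\perp$, hence $w\in\Omega$, proving $\C_0\subseteq\Omega$.

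I expect the main obstacle to be the case $z\in\partiali\C$ in the genuinely pseudo-Riemannian setting: there $w^\perp$ is an indefinite hyperplane and could a priori meet $\partial\HH^{p,q-1}$ inside $\overline{\C}$, so the conclusion is not automatic (unlike the Riemannian case $q=1$, where $w^\perp$ is positive definite). The point is that Lemma~\ref{lem:no-degenerate-faces} is precisely designed to exclude this, so the whole argument hinges on that lemma being available first. A secondary technical point to handle with care is the bookkeeping of lifts: all vectors must be kept in a single cone $\overline{\widetilde{\C}}$ so that the sign estimate and the expression $\tilde{w}=\sum_i t_i x_i'$ are compatible. Finally, one should note the degenerate cases where $\partiali\C$ is empty or a single point, in which $\C_0=\emptyset$ and the statement holds vacuously.
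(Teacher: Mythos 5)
Your proof is correct and follows essentially the same route as the paper's: lift to a properly convex cone, establish $\langle x,x'\rangle_{p,q}\leq 0$ for $x\in\overline{\widetilde{\C}}$ and $x'\in\widetilde{\partiali\C}$ by a degenerating-segment computation, upgrade to strict inequality via Lemma~\ref{lem:no-degenerate-faces}, and expand points of $\C_0$ as positive combinations of lifts of ideal points; your identification of $\Omega$ with the dual of $\Int(\C)$ via $\langle\cdot,\cdot\rangle_{p,q}$ is also exactly the paper's remark. The only (immaterial) difference is that you split the verification $w^\perp\cap\overline{\C}=\emptyset$ into the cases $z\in\C$ and $z\in\partiali\C$, while the paper handles the ideal case implicitly through the same strict inequality.
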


Note that $\Omega$ is the dual of $\Int(\C)$ realized in $\RR^{p,q}$ (rather than $(\RR^{p,q})^*$) via the symmetric bilinear form $\langle\cdot,\cdot\rangle_{p,q}$.

\begin{proof}
The properly convex set $\C \subset \HH^{p,q-1}$ lifts to a properly convex cone $\widetilde{\C}$ of $\RR^{p,q}\smallsetminus\{0\}$ such that $\langle v,v\rangle_{p,q}<0$ for all $v\in\widetilde{\C}$.
We denote by $\widetilde{\C_0} \subset \widetilde{\C}$ the preimage of~$\C_0$.
The ideal boundary $\partiali\C$ lifts to the intersection $\widetilde{\partiali\C} \subset \RR^{p,q} \smallsetminus \{0\}$ of the closure $\overline{\widetilde{\C}}$ of $\widetilde{\C}$ with the null cone of $\langle\cdot,\cdot\rangle_{p,q}$ minus~$\{0\}$.
For any $v \in \overline{\widetilde{\C}}$ and $v' \in \widetilde{\partiali\C}$, we have $\langle v,v' \rangle_{p,q} \leq 0$: indeed, this is easily seen by considering $tv+v' \in \RR^{p,q}$, which for small $t>0$ must belong to $\overline{\widetilde{\C}}$ hence have nonpositive norm; see also \cite[Lem.\,3.6.(1)]{dgk-ccHpq}.
By Lemma~\ref{lem:no-degenerate-faces} we have in fact $\langle v, v'\rangle_{p,q} < 0$ for all $v\in\widetilde{\C}$ and $v'\in\widetilde{\partiali\C}$.
In particular, this holds for $v\in\widetilde{\C_0}$.
Now consider $v\in\widetilde{\C_0}$ and $v' \in \widetilde{\C}$.
Since $\C_0$ is the convex hull of $\partiali\C$ in~$\C$, we may write $v = \sum_{i=1}^k t_i v_i$ where $v_1, \ldots, v_k \in \widetilde{\partiali\C}$ and $t_1, \ldots, t_k >0$.
By Lemma~\ref{lem:no-degenerate-faces} we have $\langle v_i,v' \rangle_{p,q} < 0$ for all~$i$, hence $\langle v, v'\rangle_{p,q} < 0$.
This proves that $\C_0$ is contained in~$\Omega$.
The open set $\Omega$ is properly convex because $\C$ has nonempty interior.
\end{proof}

\begin{corollary} \label{cor:Hpq-cc-implies-v}
In the setting of Lemma~\ref{lem:nonzero}, the group $\Gamma$ acts cocompactly on some closed properly convex subset $\C'$ of~$\Omega$ with nonempty interior which is contained in $\C\subset\HH^{p,q-1}$, with $\partiali\C' = \partiali\C$.
\end{corollary}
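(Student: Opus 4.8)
The plan is to realize the required set as $\C':=\C\cap\mathcal{N}$, where $\mathcal{N}$ is a closed uniform neighborhood, for the Hilbert metric $d_{\Omega}$, of the convex hull $\C_0$ of $\partiali\C$ inside the properly convex open set $\Omega$ furnished by Lemma~\ref{lem:nonzero}. Before constructing $\mathcal{N}$ I would record some elementary facts about $\C_0$. Since $\partiali\C$ is closed in $\PP(\RR^{p,q})$ (Lemma~\ref{lem:closed-ideal-boundary}), hence compact, its convex hull in an affine chart containing $\overline{\C}$ is compact, so $\C_0$ is closed in $\C$; and because $\partiali\C\subset\partial\HH^{p,q-1}$ while $\C_0\subset\HH^{p,q-1}$, one checks that $\partiali\C_0=\partiali\C$. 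Moreover $\partiali\C$ is disjoint from $\Omega$: any $z\in\partiali\C$ is isotropic, so $z\in z^{\perp}\cap\overline{\C}$ and thus $z\notin\Omega$. Together with $\C_0\subset\Omega$ (Lemma~\ref{lem:nonzero}), and $\overline{\C_0}=\C_0\cup\partiali\C$, this shows $\C_0$ is closed in $\Omega$. (Note $\partiali\C\neq\emptyset$, since otherwise $\C$ would be compact and $\Gamma$ finite, so $\C_0$, $\mathcal{N}$, and $\C'$ will all be nonempty.)

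The substantive-looking step is cocompactness, and it reduces to the general principle that a properly discontinuous cocompact action restricts cocompactly to any closed invariant subset. Concretely, if $\mathcal{D}\subset\C$ is a compact fundamental domain for the action on $\C$, then every $\Gamma$-orbit in $\C_0$ meets the compact set $\mathcal{D}\cap\C_0$, so $\Gamma$ acts cocompactly on $\C_0$. I would then take $\mathcal{N}$ to be the closed uniform $1$-neighborhood of $\C_0$ in $(\Omega,d_{\Omega})$; it is properly convex by \cite[(18.12)]{bus55}, $\Gamma$-invariant, closed in $\Omega$, and still $\Gamma$-cocompact, because $\Gamma$ acts on $\Omega$ by $d_{\Omega}$-isometries and each point of $\mathcal{N}$ lies within distance $1$ of a $\Gamma$-translate of a fundamental domain for $\C_0$. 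Setting $\C':=\C\cap\mathcal{N}$, I claim $\C'$ is closed in $\mathcal{N}$: indeed $\overline{\C'}\subset\overline{\C}=\C\cup\partiali\C$ and $\partiali\C\cap\Omega=\emptyset$, while $\mathcal{N}\subset\Omega$, so $\overline{\C'}\cap\mathcal{N}\subset\C\cap\mathcal{N}=\C'$. Being a closed invariant subset of the cocompact set $\mathcal{N}$, the set $\C'$ is again $\Gamma$-cocompact; and since $\mathcal{N}$ is closed in $\Omega$, so is $\C'$.

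It then remains to verify the remaining properties of $\C'$, all of which should be routine. It is properly convex, being a convex subset of $\mathcal{N}$, and satisfies $\C'\subset\C\cap\Omega\subset\HH^{p,q-1}$ together with $\C'\subset\Omega$. For the ideal boundary, $\C'\subset\C$ gives $\partiali\C'\subset\partiali\C$, while $\C_0\subset\C\cap\mathcal{N}=\C'$ gives $\partiali\C=\partiali\C_0\subset\overline{\C'}$; since $\partiali\C\subset\partial\HH^{p,q-1}$ is disjoint from $\C'\subset\HH^{p,q-1}$, we conclude $\partiali\C\subset\partiali\C'$, hence $\partiali\C'=\partiali\C$. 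Finally, $\C'$ has nonempty interior: any $x\in\C_0$ is a limit of points $x_n\in\Int(\C)$, and as $x\in\Omega$ is an interior point we have $d_{\Omega}(x_n,\C_0)\to 0$, so eventually $x_n\in\Int(\C)\cap\Int(\mathcal{N})\subset\Int(\C')$.

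The point requiring care, rather than a deep obstacle, is topological: $\C_0$ and hence $\C'$ may meet the nonideal boundary $\partialn\C$, so $\C'$ need \emph{not} be closed in $\C$. This is exactly why the neighborhood $\mathcal{N}$ must be taken inside the dual set $\Omega$ of Lemma~\ref{lem:nonzero} rather than inside $\Int(\C)$, and why closedness and cocompactness of $\C'$ are established relative to $\mathcal{N}\subset\Omega$ (using $\partiali\C\cap\Omega=\emptyset$) rather than relative to $\C$.
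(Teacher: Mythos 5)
Your construction is essentially the paper's: the paper also takes $\C'=\C\cap\C_r$ where $\C_r$ is a closed uniform $r$-neighborhood of $\C_0$ in $(\Omega,d_\Omega)$, and your verifications of proper convexity, cocompactness via closedness in $\mathcal{N}$, and nonempty interior are correct (indeed more detailed than the paper's). But there is one genuine gap, precisely at the step ``$\C'\subset\C$ gives $\partiali\C'\subset\partiali\C$.'' The operation $\partiali X=\overline{X}\smallsetminus X$ is not monotone under inclusion (compare $\partiali\Int(\C)=\Fr(\C)\not\subset\partiali\C$), and here the two ideal boundaries are even computed relative to different ambient open sets: since $\C'$ is closed in $\Omega$ you have $\partiali\C'=\overline{\C'}\cap\partial\Omega$, whereas $\partiali\C=\overline{\C}\cap\partial\HH^{p,q-1}$. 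So the inclusion you want amounts to the claim that $\overline{\C'}\cap\partial\Omega$ avoids $\HH^{p,q-1}$, i.e.\ that $\C'$ is closed in $\HH^{p,q-1}$ --- which is exactly the statement you decline to assert in your last paragraph (``$\C'$ need not be closed in $\C$''). A priori a point of $\C\cap\partial\Omega$ (which can be nonempty, since $\Omega=\{y\mid y^\perp\cap\overline{\C}=\emptyset\}$ need not contain $\C$) could lie in $\overline{\mathcal{N}}$, and then it would belong to $\partiali\C'$ but not to $\partiali\C$. This is the one nontrivial point of the corollary, and the paper's proof is organized around it: $r$ is chosen small enough that $\C_r\subset\HH^{p,q-1}$ (citing \cite[Lem.\,6.3]{dgk-ccHpq}), so that $\C'$ is closed in $\HH^{p,q-1}$ and both ideal boundaries become traces on $\partial\HH^{p,q-1}$.

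The claim is true and your argument can be repaired without changing the construction, but it requires input you have not used. One route: you already show $\partiali\C'\subset\partiali\mathcal{N}$; by Lemma~\ref{lem:1-neighb-fib}.\eqref{item:C-core-filled-lim-set} one has $\partiali\mathcal{N}\subset\Lambdao_\Omega(\Gamma)$, and the proof of Corollary~\ref{cor:Lambdao-Hpq} (via Lemma~\ref{lem:vector-growth}: lifts of orbits go to infinity while $\langle\cdot,\cdot\rangle_{p,q}$ is preserved) shows $\Lambdao_\Omega(\Gamma)\subset\partial\HH^{p,q-1}$ even though $\Omega\not\subset\HH^{p,q-1}$; hence $\partiali\C'\subset\overline{\C}\cap\partial\HH^{p,q-1}=\partiali\C$. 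Alternatively one can argue directly that a point $z\in\overline{\mathcal{N}}\cap\partial\Omega\cap\C$ would lie on a segment of $\partial\Omega$ ending at some $v\in\partiali\C$ (bounded Hilbert distance to $\C_0$ plus cocompactness), and a supporting hyperplane of $\Omega$ containing that segment is of the form ${x'}^\perp$ with $x'\in\Fr(\C)$; Lemma~\ref{lem:no-degenerate-faces} applied at $v$ forces $x'\in\partiali\C$, and applied at $x'$ it then forbids $z\in\C$. Either way, Lemma~\ref{lem:no-degenerate-faces} or the invariance of the form must enter; as written, your deduction of $\partiali\C'\subset\partiali\C$ is unjustified.
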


\begin{proof}
Since the action of $\Gamma$ on $\C_0$ is cocompact, it is easy to check that for any small enough $r > 0$ the closed uniform $r$-neighborhood $\C_r$ of $\C_0$ in $(\Omega, d_{\Omega})$ is contained in $\HH^{p,q-1}$ (see \cite[Lem.\,6.3]{dgk-ccHpq}).
The set $\C' := \C_r \cap \C$ is then a closed properly convex subset of~$\Omega$ with nonempty interior, and $\Gamma$ acts properly discontinuously and cocompactly on~$\C'$.
Since $\C'$ is also a closed subset of $\HH^{p,q-1}$, we have $\partiali \C' = \overline{\C'} \cap \partial\HH^{p,q-1} = \partiali \C_r \cap \partiali \C = \partiali \C$.
\end{proof}

If $\partiali\C$ does not contain any nontrivial segment, then neither does $\partiali \C'$ for $\C'$ as in Lemma~\ref{cor:Hpq-cc-implies-v}.
This proves that if $\Gamma$ is $\HH^{p,q-1}$-convex cocompact (\ie satisfies \ref{item:Hpq-ccc} of Theorem~\ref{thm:main-POpq-reducible}, \ie \eqref{item:ccc-Hpq-Hqp} up to switching $p$ and~$q$), then it satisfies condition~\ref{item:ccc-ugly} of Theorem~\ref{thm:main-noPETs}, as announced.

Now, Theorem~\ref{thm:main-noPETs} states that \ref{item:ccc-ugly} is equivalent to strong convex cocompactness: this yields the implication \eqref{item:ccc-Hpq-Hqp}~$\Rightarrow$~\eqref{item:ccc-CM-Hpq} of Theorem~\ref{thm:main-POpq-reducible}.
Condition~\ref{item:P1Anosov} of Theorem~\ref{thm:main-noPETs}, equivalent to~\ref{item:ccc-ugly}, also says that the group $\Gamma$ is word hyperbolic and that the natural inclusion $\Gamma \hookrightarrow \PO(p,q) $ is $P_1^{p,q}$-Anosov.
Since in addition the proximal limit set $\Lambda_{\Gamma}$ is the ideal boundary $\partiali \C'$, we find that $\Lambda_{\Gamma}$ is negative. 

This completes the proof of the implication \ref{item:Hpq-ccc}~$\Rightarrow$~\ref{item:ccc-CM-Hpq-neg} in Theorem~\ref{thm:main-POpq-reducible}.

%%%%%%%%%%%%%%%%%%%%%%%%%
\subsection{Proof of Theorem~\ref{thm:Hpq-general}}

The implications \eqref{item:Hpq-strict-C1}~$\Rightarrow$~\eqref{item:Hpq-strict}~$\Rightarrow$~\eqref{item:Hpq-bisat} of Theorem~\ref{thm:Hpq-general} hold trivially.
We now prove \eqref{item:Hpq-cc}~$\Leftrightarrow$~\eqref{item:Hpq-bisat}~$\Rightarrow$~\eqref{item:Hpq-strict-C1}.
We start with the following observation.

\begin{lemma} \label{lem:bisat-Hpq}
For $p,q\in\NN^*$, let $\Gamma$ be an infinite discrete subgroup of $\PO(p,q)$ acting properly discontinuously and cocompactly on a closed convex subset  $\C$ of $\HH^{p,q-1}$ with nonempty interior.
Then $\C$ has bisaturated boundary if and only if $\partialn\C$ does not contain any infinite geodesic line of~$\HH^{p,q-1}$.
\end{lemma}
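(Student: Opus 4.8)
The plan is to prove both implications by contraposition, exploiting the dictionary between the abstract notion of bisaturated boundary (Definition~\ref{def:boundaries}) and the pseudo-Riemannian geometry of $\HH^{p,q-1}$. Recall that a supporting hyperplane $H$ of $\C$ at a boundary point meets $\overline{\C}$ in the set $H\cap\overline{\C}$, and that $\C$ has bisaturated boundary exactly when no such intersection straddles both $\partiali\C$ and $\partialn\C$. The key geometric fact, already used in Remark~\ref{rem:convex-in-Hpq} and in Lemma~\ref{lem:no-degenerate-faces}, is that a projective segment inside $\HH^{p,q-1}\cup\partial\HH^{p,q-1}$ that touches $\partial\HH^{p,q-1}$ in two points is a spacelike geodesic, touches it in one point is lightlike, and misses it is timelike; and a geodesic of $\HH^{p,q-1}$ \emph{extends to a full geodesic line} precisely when its endpoints in $\PP(\RR^{p,q})$ lie on $\partial\HH^{p,q-1}$, i.e.\ in $\partiali\C$.

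First I would prove the contrapositive of the forward direction: assume $\partialn\C$ contains an infinite geodesic line $\ell$ of $\HH^{p,q-1}$, and produce a supporting hyperplane violating bisaturation. Since $\ell$ is a complete geodesic lying in the frontier $\partialn\C\subset\HH^{p,q-1}$, its two endpoints $a,b$ lie on $\partial\HH^{p,q-1}$, hence in $\overline{\C}\cap\partial\HH^{p,q-1}=\partiali\C$ (using that $\partiali\C$ is closed by Lemma~\ref{lem:closed-ideal-boundary}). The closed segment $[a,b]=\overline{\ell}$ thus meets both $\partialn\C$ (its interior, which is $\ell$) and $\partiali\C$ (its endpoints $a,b$). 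A supporting hyperplane $H$ of $\C$ containing $\ell$ — which exists because $\ell\subset\partialn\C=\C\smallsetminus\Int(\C)$ is a face in the frontier — then satisfies that $H\cap\overline{\C}$ contains points of both $\partiali\C$ and $\partialn\C$, so $\C$ does not have bisaturated boundary.

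For the converse, I would assume $\C$ does \emph{not} have bisaturated boundary and produce a full geodesic line in $\partialn\C$. By definition there is a supporting hyperplane $H$ with $H\cap\overline{\C}$ containing a point $w\in\partialn\C\subset\HH^{p,q-1}$ and a point $y\in\partiali\C\subset\partial\HH^{p,q-1}$. Since $H\cap\overline{\C}$ is convex, the segment $[w,y]$ lies in it, hence in $\Fr(\C)$; its interior is a geodesic ray of $\HH^{p,q-1}$ emanating from $w$ toward the ideal point $y$, and it lies in $\partialn\C$ (as $y$ is the only point of the segment on $\partial\HH^{p,q-1}$, the rest stays in $\HH^{p,q-1}\cap\Fr(\C)=\partialn\C$). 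This is a \emph{half} geodesic line, so the remaining work is to extend it on the other side of $w$. Here I would invoke cocompactness exactly as in the proofs of Lemmas~\ref{lem:1-neighb-fib} and~\ref{lem:no-degenerate-faces}: take a sequence $w_m\to y$ along the ray, translate by $\gamma_m\in\Gamma$ so that $\gamma_m\cdot w_m$ stays in a compact fundamental domain of $\partialn\C$, and pass to a limit. The translated rays converge to a geodesic of $\HH^{p,q-1}$ lying in $\partialn\C$ and of \emph{infinite length in both directions}, because the Hilbert/pseudo-Riemannian length from $\gamma_m\cdot w_m$ back to $\gamma_m\cdot w$ tends to infinity as $w_m\to y\in\partial\HH^{p,q-1}$.

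The main obstacle is the extension step in the converse: turning the one-sided geodesic ray $[w,y)\subset\partialn\C$ into a genuine bi-infinite geodesic line inside $\partialn\C$. The subtlety is that $w$ might be an "end" of the geodesic within $\partialn\C$, so one cannot simply prolong the segment and stay in the frontier. The renormalization-by-cocompactness argument resolves this by using the fact that the rescaled limit geodesic, sitting in the compact quotient $\Gamma\backslash\partialn\C$, must be complete on both sides; I would need to check carefully that the limit stays in $\partialn\C$ (not escaping to $\partiali\C$), which follows because $\gamma_m\cdot w_m$ remains in a compact subset of $\HH^{p,q-1}$ while the endpoints converge, and that the limiting object is an honest nondegenerate geodesic line rather than a point, which follows from the divergence of lengths. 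Once the geometric dictionary of Remark~\ref{rem:convex-in-Hpq} is in place, the rest is a routine compactness-and-limit argument in the style already pervasive in Section~\ref{sec:conv-hull}.
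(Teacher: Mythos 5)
Your proof follows the paper's own argument in both directions: the forward implication via a supporting hyperplane that must contain the complete geodesic together with its ideal endpoint(s), and the converse via a ray $[w,y)\subset\partialn\C$ terminating at an ideal point, renormalized by cocompactness exactly as in the proof of Lemma~\ref{lem:no-degenerate-faces}. The only imprecision is your claim that a complete geodesic of $\HH^{p,q-1}$ always has two ideal endpoints --- a timelike geodesic has none (it is a full projective line) and a lightlike one has only one --- but proper convexity of $\C$ (Remark~\ref{rem:convex-in-Hpq}) rules out the timelike case and a single ideal endpoint suffices for your argument, which is precisely how the paper handles it.
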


\begin{proof}
Suppose $\partialn\C$ contains an infinite geodesic line of~$\HH^{p,q-1}$.
Since $\C$ is properly convex and closed in~$\HH^{p,q-1}$, this line must meet $\partial\HH^{p,q-1}$ in a point of $\partiali\C$, and so $\C$ does not have bisaturated boundary.

Conversely, suppose $\C$ does not have bisaturated boundary.
Since $\partiali \C=\Fr(\C)\cap\partial\HH^{p,q-1}$ is closed in $\PP(\RR^{p,q})$, there exists a ray $[y,z) \subset \partialn \C$ terminating at a point $z \in \partiali \C$.
Let $(a_m)_{m\in\NN}$ be a sequence of points of $[y,z)$ converging to~$z$ (see Figure~\ref{fig:CinOhm}).
Since $\Gamma$ acts cocompactly on~$\C$, for any $m$ there exists $\gamma_m\in\Gamma$ such that $\gamma_m\cdot a_m$ belongs to a fixed compact subset of~$\partialn\C$.
Up to taking a subsequence, the sequences $(\gamma_m\cdot a_m)_m$ and $(\gamma_m\cdot y)_m$ and $(\gamma_m\cdot z)_m$ converge respectively to some points $a_{\infty}, y_{\infty}, z_{\infty}$ in $\PP(V)$.
We have $a_{\infty}\in\partialn\C$ and $y_{\infty}\in\partiali\C$ (because the action of $\Gamma$ on~$\C$ is properly discontinuous) and $z_{\infty}\in\partiali\C$ (because $\partiali\C$ is closed).
Moreover, $a_{\infty}\in (y_{\infty},z_{\infty})$.
Thus $(y_{\infty},z_{\infty})$ is an infinite geodesic line of~$\HH^{p,q-1}$ contained in $\partialn \C$.
\end{proof}

Suppose condition \eqref{item:Hpq-cc} of Theorem~\ref{thm:Hpq-general} holds, \ie $\Gamma<\PO(p,q)$ is $\HH^{p,q-1}$-convex cocompact.
By Corollary~\ref{cor:Hpq-cc-implies-v}, the group $\Gamma$ preserves a properly convex open subset $\Omega$ of $\PP(\RR^{p,q})$ and acts cocompactly on a closed convex subset $\C'$ of~$\Omega$ with nonempty interior which is contained in $\HH^{p,q-1}$, and whose ideal boundary $\partiali \C'$ does not contain any nontrivial segment.
As in the proof of Corollary~\ref{cor:Hpq-cc-implies-v}, for small enough $r > 0$ the closed uniform $r$-neighborhood $\C'_r$ of $\C'$ in $(\Omega, d_{\Omega})$ is contained in $\HH^{p,q-1}$.
By Lemma~\ref{lem:naive-cc-nonempty-int} and Corollary~\ref{cor:ideal-bound-naive-cc}.\eqref{item:ideal-bound-cc}, we have $\partiali \C'_r = \partiali \C' = \Lambdao_\Omega(\Gamma)$, hence $\C'_r$ has bisaturated boundary by Lemma~\ref{lem:1-neighb-bisat}.
In particular, $\partialn \C'_r$ does not contain any infinite geodesic line of~$\HH^{p,q-1}$ by Lemma~\ref{lem:bisat-Hpq}.
Thus condition \eqref{item:Hpq-bisat} of Theorem~\ref{thm:Hpq-general} holds, with $\C_{\mathsf{bisat}} = \C'_r$.

Conversely, suppose condition~\eqref{item:Hpq-bisat} of Theorem~\ref{thm:Hpq-general} holds, \ie $\Gamma$ acts properly discontinuously and cocompactly on a closed convex subset $\C_{\mathsf{bisat}}$ of $\HH^{p,q-1}$ such that $\partialn \C_{\mathsf{bisat}}$ does not contain any infinite geodesic line of~$\HH^{p,q-1}$.
By Lemma~\ref{lem:bisat-Hpq}, the set $\C_{\mathsf{bisat}}$ has bisaturated boundary.
By Corollary~\ref{cor:bisat-interior}, the group $\Gamma$ acts convex cocompactly on $\Omega := \Int(\C_{\mathsf{bisat}})$ and $\Lambdao_{\Omega}(\Gamma) = \partiali \C_{\mathsf{bisat}}$.
By Lemma~\ref{lem:strict-C1-domain}, the group $\Gamma$ acts cocompactly on a closed convex subset $\C_{\mathsf{smooth}} \supset \Ccore_{\Omega}(\Gamma)$ of~$\Omega$ whose nonideal boundary is strictly convex and~$C^1$, and $\partiali \C_{\mathsf{smooth}} = \Lambdao_\Omega(\Gamma) \subset \partial \HH^{p,q-1}$ (see Corollary~\ref{cor:ideal-bound-naive-cc}.\eqref{item:ideal-bound-cc}).
In particular, $\C_{\mathsf{smooth}}$ is closed in $\HH^{p,q-1}$ and condition~\eqref{item:Hpq-strict-C1} of Theorem~\ref{thm:Hpq-general} holds.
Since $\C_{\mathsf{bisat}}$ has bisaturated boundary, any inextendable segment in $\partiali \C_{\mathsf{bisat}}$ is inextendable in $\partial \Omega = \partiali \C_{\mathsf{bisat}} \cup \partialn \C_{\mathsf{bisat}}$.
By Lemma~\ref{lem:segments-imply-PETs}, if $\partiali \C_{\mathsf{smooth}} = \partiali \C_{\mathsf{bisat}}$ contained a nontrivial segment, then $\C_{\mathsf{smooth}}$ would contain a PET, but that is not possible since closed subsets of $\HH^{p,q-1}$ do not contain PETs; see Remark~\ref{rem:Hpq-no-PET} below.
Therefore $\partiali \C_{\mathsf{smooth}} = \partiali \C_{\mathsf{bisat}}$ does not contain any nontrivial segment, and so $\Gamma$ is $\HH^{p,q-1}$-convex cocompact, \ie condition~\eqref{item:Hpq-cc} of Theorem~\ref{thm:Hpq-general} holds.
This concludes the proof of Theorem~\ref{thm:Hpq-general}.

In the proof we have used the following elementary observation.

\begin{remark} \label{rem:Hpq-no-PET}
For $p,q\in\NN^*$ with $p+q\geq 3$, a closed subset of $\HH^{p,q-1}$ cannot contain a PET.
Indeed, any triangle of $\PP(\RR^{p,q})$ whose edges lie in $\partial \HH^{p,q-1}$ must have interior in $\partial \HH^{p,q-1}$, because in this case the symmetric bilinear form is zero on the projective span.
\end{remark}

%%%%%%%%%%%%%%%%%%%%%%%%%
\subsection{$\HH^{p,q-1}$-convex cocompact groups whose boundary is a $(p-1)$-sphere}

In what follows we use the term \emph{standard $(p-1)$-sphere in $\partial \HH^{p,q-1}$} to mean the intersection with $\partial \HH^{p,q-1}$ of the projectivization of a $(p+1)$-dimensional subspace of $\RR^{p+q}$ of signature $(p,1)$; by an \emph{embedding} we mean a map which is a homeomorphism onto its image.

\begin{lemma} \label{lem:Hpq-cc-sphere}
For $p,q\geq 1$, let $\Gamma$ be an infinite discrete subgroup of $\PO(p,q)$ which is $\HH^{p,q-1}$-convex cocompact, acting properly discontinuously and cocompactly on some closed convex subset $\C$ of $\HH^{p,q-1}$ with nonempty interior whose ideal boundary $\partiali \C \subset \partial \HH^{p,q-1}$ does not contain any nontrivial projective line segment.
Let $\partial_{\infty}\Gamma$ be the Gromov boundary of~$\Gamma$, and let $\mathcal{S}$ be any standard $(p-1)$-sphere in $\partial \HH^{p,q-1}$.
Then
\begin{enumerate}
  \item\label{item:Hpq-sphere-1} the $\Gamma$-equivariant boundary map $\xi : \partial_{\infty}\Gamma \to \partial \HH^{p,q-1}$, which is an embedding with image~$\partiali\C$, is homotopic to an embedding of $\partial_{\infty}\Gamma$ whose image is contained in the $(p-1)$-sphere~$\mathcal{S}$.
\end{enumerate}
Suppose $\partial_{\infty} \Gamma$ is homeomorphic to a $(p-1)$-dimensional sphere and $p\geq q$.
Then
\begin{enumerate}[start = 2]
  \item\label{item:Hpq-sphere-2} the unique maximal $\Gamma$-invariant convex open subset $\Omega_{\max}$ of $\PP(\RR^{p,q})$ containing~$\C$ (see Proposition~\ref{prop:max-inv-conv}) is contained in $\HH^{p,q-1}$;
  \item\label{item:Hpq-sphere-3} any supporting hyperplane of~$\C$ at a point of $\partialn \C$ is the projectivization of a linear hyperplane of $\RR^{p,q}$ of signature $(p,q-1)$.
\end{enumerate}
\end{lemma}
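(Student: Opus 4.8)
The plan is to work in the double--cover sphere model of $\partial\HH^{p,q-1}$ and to exploit the negativity of the limit set. First I would record the inputs from earlier in the section: since $\Gamma$ is $\HH^{p,q-1}$--convex cocompact with $\partiali\C$ containing no nontrivial segment, the implication \eqref{item:Hpq-ccc}$\Rightarrow$\eqref{item:ccc-CM-Hpq-neg} of Theorem~\ref{thm:main-POpq-reducible} shows that $\Gamma$ is word hyperbolic, that the boundary map $\xi$ is a homeomorphism onto $\partiali\C=\Lambda_{\Gamma}$ (Lemma~\ref{lem:C-hyp}), and that $\Lambda_{\Gamma}$ is \emph{negative}. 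Fix an orthogonal splitting $\RR^{p,q}=V_+\oplus V_-$ with $V_+$ positive definite of dimension $p$ and $V_-$ negative definite of dimension $q$; normalizing null vectors to unit length in each factor identifies $\partial\HH^{p,q-1}$ with $(S^{p-1}\times S^{q-1})/\{\pm 1\}$. The negative lift $\widetilde{\Lambda}_\Gamma$ selects, for each point of $\Lambda_{\Gamma}$, a representative $(u,v)$ with $|u|=|v|=1$, defining a continuous section $\sigma(P)=(u(P),v(P))$ with $u(P)\cdot u(P')<v(P)\cdot v(P')$ for $P\neq P'$. The crucial elementary observation is that $u(P)=u(P')$ would force $1=u(P)\cdot u(P')<v(P)\cdot v(P')\leq 1$, so $P\mapsto u(P)$ is a continuous injection $\Lambda_{\Gamma}\to S^{p-1}$.

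For \eqref{item:Hpq-sphere-1}: by transitivity of $\PO(p,q)$ on standard $(p-1)$--spheres we may take $\mathcal S=\{[(u,e)]:u\in S^{p-1}\}$ for a fixed unit $e\in V_-$. The map $P\mapsto[(u(P),e)]$ is a continuous injection into $\mathcal S$, hence, by compactness, an embedding, and I would exhibit the homotopy $F_s(P)=[(u(P),\gamma_s(v(P)))]$, where $\gamma_s:S^{q-1}\to S^{q-1}$ runs from the identity to the constant map $e$ (available since $S^{q-1}$ is connected; the case $q=1$ is trivial, as then $\mathcal S=\partial\HH^{p,q-1}$). Each $F_s(P)$ is null since $|u(P)|=|\gamma_s(v(P))|=1$, so this is a homotopy inside $\partial\HH^{p,q-1}$ from $\xi$ to the embedding. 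The dimension bound then follows because $\partial_{\infty}\Gamma\cong\Lambda_{\Gamma}$ embeds in $S^{p-1}$.

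For \eqref{item:Hpq-sphere-2}: when $\partial_{\infty}\Gamma\cong S^{p-1}$, the injection $u:\Lambda_{\Gamma}\to S^{p-1}$ is a continuous injection of $S^{p-1}$ into itself, so by invariance of domain it is open; being also compact it is onto. Thus $\Lambda_{\Gamma}$ is the graph $\{(u,v(u)):u\in S^{p-1}\}$ of a continuous map $v:S^{p-1}\to S^{q-1}$, where the hypothesis $p\geq q$ guarantees we are in the spacelike (negative) regime, so that $\Omega_{\max}$ lies on the $\HH^{p,q-1}$ side rather than its $\SS^{p-1,q}$ counterpart. By the negative form of Proposition~\ref{prop:max-inv-conv} (see Section~\ref{subsec:proof-POpq-6->4}), $\Omega_{\max}=\PP(\{x:\langle x,x'\rangle_{p,q}<0 \ \forall x'\in\widetilde{\Lambda}_\Gamma\})$. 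Writing $x=(a,b)\in V_+\oplus V_-$ and testing against $x'=(a/|a|,v(a/|a|))\in\widetilde{\Lambda}_\Gamma$ gives $|a|=a\cdot(a/|a|)<b\cdot v(a/|a|)\leq|b|$, whence $\langle x,x\rangle_{p,q}=|a|^2-|b|^2<0$; the case $a=0$ is immediate. Hence $\Omega_{\max}\subset\HH^{p,q-1}$.

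For \eqref{item:Hpq-sphere-3}: let $z^{\perp}$ support $\C$ at $y\in\partialn\C$, with $z=(c,d)$ normalized so that $\langle z,x\rangle_{p,q}\leq 0$ on $\overline{\widetilde{\C}}$; in particular $c\cdot u\leq d\cdot v(u)$ for all $u\in S^{p-1}$. Testing $u=c/|c|$ gives $|c|\leq|d|$, ruling out $z$ spacelike. If $z$ were null then $|c|=|d|$ forces equality $d\cdot v(c/|c|)=|d|$, i.e. $v(c/|c|)=d/|d|$, so $[z]\in\Lambda_{\Gamma}=\partiali\C$; but then $y\in\C\cap z^{\perp}$ contradicts Lemma~\ref{lem:no-degenerate-faces}. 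Therefore $z$ is timelike and $z^{\perp}$ has signature $(p,q-1)$. I expect the main obstacle to be the conceptual setup of the first paragraph --- recognizing that negativity is precisely what makes the first--factor projection injective, which underlies all three parts --- together with the delicate exclusion of a null supporting direction in \eqref{item:Hpq-sphere-3}, where Lemma~\ref{lem:no-degenerate-faces} is indispensable.
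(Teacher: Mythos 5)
Your identification of negativity of $\Lambda_\Gamma$ (via Theorem~\ref{thm:main-POpq-reducible}) as the engine behind all three parts is sound, and the injectivity of the first-factor projection $u$ is essentially the same mechanism as the paper's Cauchy--Schwarz argument. However, the homotopy in part~\eqref{item:Hpq-sphere-1} is broken as written: there is \emph{no} homotopy $\gamma_s\colon \mathbb{S}^{q-1}\to\mathbb{S}^{q-1}$ from the identity to a constant map when $q\geq 2$. Connectedness of $\mathbb{S}^{q-1}$ gives nothing here; what you would need is contractibility, and the non-contractibility of $\mathbb{S}^{q-1}$ is exactly the fact the paper exploits to prove part~\eqref{item:Hpq-sphere-2}. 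The correct statement is that the restriction of $v$ to $\Lambda_{\Gamma}$ is null-homotopic, and to get this you must use that $\overline{\C}$ lies in an affine chart whose complementary hyperplane is $z^{\perp}$ for a \emph{timelike} $z$ (such a $z$ exists by Lemma~\ref{lem:nonzero}); in adapted coordinates $v(\Lambda_{\Gamma})$ then lies in an open hemisphere of $\mathbb{S}^{q-1}$, which is contractible, and the straight-line-then-renormalize homotopy to $e$ works. This is precisely what the paper's explicit homotopy $f_t$ does: it is defined only on $\mathbb{A}\cap\partial\HH^{p,q-1}$, not on all of $\partial\HH^{p,q-1}$. So part~\eqref{item:Hpq-sphere-1} has a genuine, though localized and repairable, gap; note that the embedding of $\partial_\infty\Gamma$ into $\mathbb{S}^{p-1}$, hence the dimension bound, survives unscathed.

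Parts~\eqref{item:Hpq-sphere-2} and~\eqref{item:Hpq-sphere-3} are correct and take a genuinely different route from the paper. For~\eqref{item:Hpq-sphere-2} the paper shows that $\Lambda_{\Gamma}$ meets every maximal totally isotropic subspace by a homotopy-of-sphere-maps argument built on the splitting $\RR^{p,q}=\RR^{p-q,0}\oplus\RR^{q,0}\oplus\RR^{0,q}$ (which is where $p\geq q$ enters); you instead get surjectivity of $u$ from invariance of domain, realize $\widetilde{\Lambda}_{\Gamma}$ as the graph $u\mapsto(u,v(u))$ over all of $\mathbb{S}^{p-1}$, and conclude $|a|<|b|$ for $[(a,b)]\in\Omega_{\max}$ by a single pairing. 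This is shorter, and your argument appears not to use the hypothesis $p\geq q$ at all. For~\eqref{item:Hpq-sphere-3} the paper dualizes and quotes~\eqref{item:Hpq-sphere-2}; your direct computation, with Lemma~\ref{lem:no-degenerate-faces} excluding a null supporting direction via the equality case of Cauchy--Schwarz, is valid and independent of~\eqref{item:Hpq-sphere-2}. Once the homotopy in~\eqref{item:Hpq-sphere-1} is repaired as above, the whole proof goes through.
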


In particular, if $q = 2$ and $\partial_{\infty} \Gamma$ is homeomorphic to a $(p-1)$-dimensional sphere with $p\geq 2$, then any hyperplane tangent to $\partialn \C$ is spacelike.

Lemma~\ref{lem:Hpq-cc-sphere}.\eqref{item:Hpq-sphere-1} has the following consequences.

\begin{corollary}\label{cor:easy}
Let $p,q\geq 1$.
\begin{enumerate}
  \item\label{item:vcd-leq-p} Let $\Gamma$ be an infinite discrete subgroup of $\PO(p,q)$ which is $\HH^{p,q-1}$-convex cocompact.
  Then $\Gamma$ is Gromov hyperbolic and has virtual cohomological dimension $\leq p$.
  Moreover, the virtual cohomological dimension of $\Gamma$ is~$p$ if and only if the Gromov boundary $\partial_{\infty}\Gamma$ is homeomorphic to a $(p-1)$-dimensional sphere.
  \item\label{item:vcd-under-Anosov} Let $\Gamma$ be a word hyperbolic group with connected Gromov boundary $\partial_{\infty}\Gamma$ and virtual cohomological dimension $>q$.
  If there is a $P_1^{p,q}$-Anosov representation $\rho : \Gamma\to\PO(p,q)$, then $p>q$ and the group $\rho(\Gamma)$ is $\HH^{p,q-1}$-convex cocompact.
\end{enumerate}
\end{corollary}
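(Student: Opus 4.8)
The plan is to derive both statements from Lemma~\ref{lem:Hpq-cc-sphere}.\eqref{item:Hpq-sphere-1}, which bounds the topological dimension of the Gromov boundary, combined with the Bestvina--Mess identification of virtual cohomological dimension with the dimension of the boundary of a word hyperbolic group.

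For part~\eqref{item:vcd-leq-p}, let $\Gamma < \PO(p,q)$ be infinite and $\HH^{p,q-1}$-convex cocompact. First I would invoke Theorem~\ref{thm:main-POpq-reducible} (implication \eqref{item:Hpq-ccc}~$\Rightarrow$~\eqref{item:Anosov-neg}) to conclude that $\Gamma$ is word hyperbolic. By the Selberg lemma \cite{sel60}, $\Gamma$ admits a torsion-free finite-index subgroup $\Gamma_0$, which is again word hyperbolic with Gromov boundary $\partial_{\infty}\Gamma_0$ homeomorphic to $\partial_{\infty}\Gamma$. Lemma~\ref{lem:Hpq-cc-sphere}.\eqref{item:Hpq-sphere-1} gives that $\partial_{\infty}\Gamma$, hence $\partial_{\infty}\Gamma_0$, has Lebesgue covering dimension $\leq p-1$. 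The Bestvina--Mess theorem then yields $\mathrm{cd}(\Gamma_0) = \dim(\partial_{\infty}\Gamma_0) + 1 \leq p$, so that $\mathrm{vcd}(\Gamma) = \mathrm{cd}(\Gamma_0) \leq p$.

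For part~\eqref{item:vcd-under-Anosov}, let $\Gamma$ be word hyperbolic with connected boundary $\partial_{\infty}\Gamma$ and $\mathrm{vcd}(\Gamma) > q$, and let $\rho : \Gamma \to \PO(p,q)$ be $P_1^{p,q}$-Anosov. Since $\partial_{\infty}\Gamma$ is connected, Corollary~\ref{cor:pqqp-intro} gives the dichotomy that $\rho(\Gamma)$ is either $\HH^{p,q-1}$-convex cocompact or $\HH^{q,p-1}$-convex cocompact. I would rule out the second alternative: an Anosov representation has finite kernel, so $\mathrm{vcd}(\rho(\Gamma)) = \mathrm{vcd}(\Gamma) > q$; but part~\eqref{item:vcd-leq-p}, applied with the roles of $p$ and~$q$ interchanged (viewing $\rho(\Gamma) < \PO(q,p)$ acting on $\HH^{q,p-1}$), would force $\mathrm{vcd}(\rho(\Gamma)) \leq q$, a contradiction. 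Hence $\rho(\Gamma)$ is $\HH^{p,q-1}$-convex cocompact. Applying part~\eqref{item:vcd-leq-p} once more then gives $\mathrm{vcd}(\Gamma) = \mathrm{vcd}(\rho(\Gamma)) \leq p$, and combined with $\mathrm{vcd}(\Gamma) > q$ this yields $p > q$.

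The routine inputs (word hyperbolicity via Theorem~\ref{thm:main-POpq-reducible}, Selberg, finiteness of the Anosov kernel, and the finite-kernel invariance of vcd) are all either established earlier or standard. The single substantive external ingredient --- and the step I would flag as the main point to get right --- is the Bestvina--Mess equality $\mathrm{cd}(\Gamma_0) = \dim(\partial_{\infty}\Gamma_0) + 1$ for torsion-free word hyperbolic groups; everything else is a short deduction from Lemma~\ref{lem:Hpq-cc-sphere}.\eqref{item:Hpq-sphere-1} and the convex-cocompactness dichotomy of Corollary~\ref{cor:pqqp-intro}.
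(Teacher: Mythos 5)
Your proof is correct and follows essentially the same route as the paper: part~(1) is Lemma~\ref{lem:Hpq-cc-sphere}.\eqref{item:Hpq-sphere-1} combined with the Bestvina--Mess identity $\dim(\partial_\infty\Gamma)=\mathrm{vcd}(\Gamma)-1$, and part~(2) is the dichotomy of Corollary~\ref{cor:pqqp-intro} with the second alternative excluded by part~(1) applied to $\PO(q,p)$. The extra details you supply (word hyperbolicity via Theorem~\ref{thm:main-POpq-reducible}, Selberg, finiteness of the Anosov kernel) are exactly the routine steps the paper leaves implicit.
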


\begin{proof}[Proof of Corollary~\ref{cor:easy} assuming Lemma~\ref{lem:Hpq-cc-sphere}]
\eqref{item:vcd-leq-p} By Theorem~\ref{thm:main-POpq-reducible}, the group $\Gamma$ is Gromov hyperbolic.
By Lemma~\ref{lem:Hpq-cc-sphere}.\eqref{item:Hpq-sphere-1}, the Gromov boundary $\partial_{\infty}\Gamma$ embeds into $\mathbb{S}^{p-1}$; in particular, it has Lebesgue covering dimension $\leq p-1$.
This Lebesgue covering dimension is equal to the virtual cohomological dimension of~$\Gamma$ minus one by \cite{bm91}.
If the Lebesgue covering dimension of $\partial_{\infty}\Gamma$ is $p-1$, then the small inductive dimension (or Menger--Urysohn dimension) of $\partial_{\infty}\Gamma$ is also $p-1$ by Urysohn's theorem (see \eg \cite[Th.\,1.7.7]{eng78}), and so an embedding of $\partial_{\infty}\Gamma$ in $\mathbb S^{p-1}$ must have nonempty interior (see \eg \cite[Th.\,1.8.10]{eng78}).
Therefore $\partial_{\infty}\Gamma$ is homeomorphic to $\mathbb{S}^{p-1}$ by \cite[Th.\,4.4]{bk02}.

\eqref{item:vcd-under-Anosov} By Corollary~\ref{cor:pqqp-intro}, since $\partial_{\infty} \Gamma$ is connected, the group $\rho(\Gamma)$ is $\HH^{p,q-1}$-convex cocompact or $\HH^{q,p-1}$-convex cocompact.
We conclude using~\eqref{item:vcd-leq-p}.
\end{proof}

\begin{proof}[Proof of Lemma~\ref{lem:Hpq-cc-sphere}]
We work in an affine chart $\mathbb A$ containing $\overline{\C}$, and choose coordinates $(v_1, \ldots, v_{p+q})$ on $\RR^{p,q}$ so that the quadratic form $\langle\cdot,\cdot\rangle_{p,q}$ takes the usual form $v_1^2 + \cdots + v_p^2 - v_{p+1}^2 - \cdots - v_{p+q}^2$, the affine chart $\mathbb A$ is defined by $v_{p+q} \neq 0$, and our chosen standard $(p-1)$-sphere in $\partial \HH^{p,q-1}$ is
$$\mathcal{S} = \big\{ [(v_1, \ldots,  v_p, 0, \ldots, 0, v_{p+q})] \in \PP(\RR^{p,q}) ~|~ v_1^2 + \cdots + v_p^2 = v_{p+q}^2\big\}.$$

For any $0\leq t\leq 1$, consider the map $f_t : \mathbb A \to \mathbb A$ sending $[(v_1, \ldots, v_{p+q})]$~to
$$\left[\left(v_1, \ldots, v_p, \sqrt{1-t}\,v_{p+1}, \ldots, \sqrt{1-t}\,v_{p+q-1}, \sqrt{1 + t\,\alpha_{(v_{p+1},\dots,v_{p+q})}} \, v_{p+q}\right)\right],$$
where $\alpha_{(v_{p+1},\dots,v_{p+q})} = (v_{p+1}^2 + \cdots + v_{p+q-1}^2)/v_{p+q}^2$.
Then $(f_t)_{0\leq t\leq 1}$ restricts to a homotopy of maps $\mathbb{A} \cap \partial \HH^{p,q-1} \to \mathbb{A} \cap \partial \HH^{p,q-1}$ from the identity map to the map $f_1|_{\mathbb{A} \cap \partial \HH^{p,q-1}}$, which has image $\mathcal S \subset \PP(\RR^{p,1})$.
Thus $(f_t\circ\xi)_{0\leq t\leq 1}$ defines a homotopy between $\xi : \partial_{\infty}\Gamma \to \partial \HH^{p,q-1}$ and the continuous map $f_1\circ\xi : \partial_{\infty}\Gamma \to \partial \HH^{p,q-1}$ whose image lies in $\mathcal{S}$.
By the Cauchy--Schwarz inequality for Euclidean inner products, two points of $\mathbb{A} \cap \partial \HH^{p,q-1}$ which have the same image under~$f_1$ are connected by a line segment in~$\mathbb{A}$ whose interior lies outside of $\overline{\HH^{p,q-1}}$.
Since $\C$ is contained in~$\HH^{p,q-1}$, we deduce that the restriction of $f_1$ to $\partiali\C$ is injective.
Since $\partial_{\infty}\Gamma$ is compact, we obtain that $f_1\circ\xi : \partial_{\infty}\Gamma \to \mathcal{S}$ is an embedding.
This proves~\eqref{item:Hpq-sphere-1}.

Henceforth we assume that $\partial_{\infty} \Gamma$ is homeomorphic to a $(p-1)$-dimensional sphere and that $p\geq q$.
Let $\Lambda_{\Gamma} = \xi(\partial_{\infty}\Gamma) \subset \partial \HH^{p,q-1}$ be the proximal limit set of $\Gamma$ in $\PP(\RR^{p+q})$.
The map $f_1 : \Lambda_\Gamma\to\mathcal{S}$ is then an embedding of a compact $(p-1)$-manifold into a connected $(p-1)$-manifold, and hence by the Invariance of Domain Theorem of Brouwer, it is a homeomorphism.

Let us prove \eqref{item:Hpq-sphere-2}.
By definition of $\Omega_{\max}$ (see Proposition~\ref{prop:max-inv-conv}), it is sufficient to prove that any point of $\partial \HH^{p,q-1}$ is contained in $z^{\perp}$ for some $z\in\Lambda_{\Gamma}$.
Therefore it is sufficient to prove that $\Lambda_{\Gamma}$ intersects $\PP(W)$ for every maximal totally isotropic subspace $W$ of $\RR^{p,q}$.
Let $W$ be such a subspace.
We work in the coordinates from above.
Up to changing the first $p$ coordinates by applying an element of $\OO(p)$, we may assume that in the splitting
$$\RR^{p,q} = \RR^{p-q,0} \oplus \RR^{q, 0} \oplus \RR^{0,q}$$
defined by these coordinates, the space $W$ is $\{(0,v',-v') \,|\, v' \in \RR^q\}$.
Consider the map $\varphi : \mathbb{S}^{p-1} \to \mathbb{S}^{q-1}$ sending a unit vector $(v,v') \in \RR^{p,0} = \RR^{p-q,0} \oplus \RR^{q,0}$ to
$$\varphi(v,v') = \pi_q \circ (f_1|_{\Lambda_\Gamma})^{-1}(v,v',a),$$
where $a = (0,\ldots,0,1) \in \RR^{0,q}$ and $\pi_q: \RR^{p,q} \to \RR^{0,q}$ is the projection onto the last $q$ coordinates.
Then $\varphi$ is homotopic to a constant map (namely the map sending all points to $a$) via the homotopy $\varphi_t = \pi_q \circ f_t \circ (f_1|_{\Lambda_\Gamma})^{-1}$.
Now consider the restriction $\psi: \mathbb S^{q-1} \to \mathbb S^{q-1}$ of $\varphi$ to the unit sphere in $\RR^{q,0}$, which is also homotopic to a constant map via the restriction of the homotopy $\varphi_t$.
If we had $\Lambda_\Gamma \cap \PP(W) = \emptyset$, then we would have $\psi(v') = \varphi(0,v') \neq -v'$ for all $v' \in \mathbb{S}^{q-1} \subset \RR^q$, and
$$\psi_t(v') = \frac{ (1-t)\,\psi(v') + tv'}{\|(1-t)\,\psi(v') + tv'\|}$$
would define a homotopy from $\psi$ to the identity map on $\mathbb{S}^{q-1}$, showing that a constant map is homotopic to the identity map: contradiction since $\mathbb{S}^{q-1}$ is not contractible. 
This completes the proof of \eqref{item:Hpq-sphere-2}: namely, $\Omega_{\max} \subset \HH^{p,q-1}$.
We note that $\partial \Omega_{\max} \cap \partial \HH^{p,q-1} = \Lambda_{\Gamma}$, since we saw that $f_1$ maps $\partial \Omega_{\max} \cap \partial \HH^{p,q-1}$ injectively to~$\mathcal{S}$.

Finally, we prove~\eqref{item:Hpq-sphere-3}.
The dual convex $\Int(\C)^*$ to $\Int(\C)$ naturally identifies, via $\langle\cdot,\cdot\rangle_{p,q}$, with a properly convex subset of $\PP(\RR^{p+q})$ which must be contained in $\Omega_{\max} \subset \HH^{p,q-1}$.
By Lemma~\ref{lem:no-degenerate-faces}, we have $\C \cap z^\perp = \emptyset$ for any $z \in \partiali \C = \Lambda_\Gamma$, hence $\C \subset \Omega_{\max}$. 
Hence, a projective hyperplane $y^\perp$ supporting $\C$ at some point of $\partialn \C$ is dual to a point $y \in \overline{\Omega_{\max}} \smallsetminus \Lambda_\Gamma$, which is contained in $\HH^{p,q-1}$ by the previous paragraph.
Hence $\langle y, y \rangle_{p,q} < 0$ and so $y^\perp$ is the projectivization of a linear hyperplane of signature $(p,q-1)$.
\end{proof}

%%%%%%%%%%%%%%%%%%%%%%%%%
\subsection{$\HH^{p,1}$-convex cocompactness and global hyperbolicity}

Recall that a Lorentzian manifold $M$ is said to be \emph{globally hyperbolic} if it is causal (\ie contains no timelike loop) and for any two points $x,x'\in M$, the intersection $J^+(x)\cap J^-(x')$ is compact (possibly empty).
Here we denote by $J^+(x)$ (\resp $J^-(x)$) the set of points of~$M$ which are seen from $x$ by a future-pointing (\resp past-pointing) timelike or lightlike geodesic.
Equivalently \cite{cg69}, the Lorentzian manifold~$M$ admits a \emph{Cauchy hypersurface}, \ie an achronal subset which intersects every inextendible timelike curve in exactly one point.

We make the following observation; it extends \cite[\S\,4.2]{bm12}, which focused on the case that $\Gamma \subset \PO(p,2)$ is isomorphic to a uniform lattice of $\PO(p,1)$.
The case considered here is a bit more general, see \cite{lm19}.
We argue in an elementary way, without using the notion of CT-regularity.

\begin{proposition}\label{prop:AdS-GH}
For $p\geq 2$, let $\Gamma$ be a torsion-free infinite discrete subgroup of $\PO(p,2)$ which is $\HH^{p,1}$-convex cocompact and whose Gromov boundary $\partial_{\infty}\Gamma$ is homeomorphic to a $(p-1)$-dimensional sphere.
For any nonempty properly convex open subset $\Omega$ of $\HH^{p,1}$ on which $\Gamma$ acts convex cocompactly, the quotient $M = \Gamma\backslash\Omega$ is a globally hyperbolic Lorentzian manifold.
\end{proposition}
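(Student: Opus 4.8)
The plan is to produce a compact Cauchy hypersurface in $M$, since by the equivalence recalled just above (\cite{cg69}) a Lorentzian manifold that admits a Cauchy hypersurface is globally hyperbolic; this single construction will simultaneously yield causality. First I would record the geometric input coming from Lemma~\ref{lem:Hpq-cc-sphere}, applied with $q=2$ and $p\geq q$: since $\partial_{\infty}\Gamma$ is a $(p-1)$-sphere, the maximal $\Gamma$-invariant convex open set $\Omega_{\max}\supseteq\Omega$ of Proposition~\ref{prop:max-inv-conv} satisfies $\Omega_{\max}\subseteq\HH^{p,1}$; the proximal limit set $\Lambda_{\Gamma}=\partiali\Ccore_\Omega(\Gamma)$ is a negative (hence acausal), transverse topological $(p-1)$-sphere; and every supporting hyperplane of the convex core at a nonideal boundary point is spacelike. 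These are exactly the facts that make the Lorentzian causal structure on $\Omega$ compatible with the convex geometry.

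Next I would construct the candidate Cauchy hypersurface from the convex core. The set $\Ccore:=\Ccore_{\Omega}(\Gamma)$ (the convex hull of $\Lambda_{\Gamma}$ in $\Omega$) is a closed convex subset of $\Omega$ contained in $\HH^{p,1}$, with $\partiali\Ccore=\Lambda_{\Gamma}$ and with compact quotient by $\Gamma$ (Lemma~\ref{lem:1-neighb-fib}). Because the supporting hyperplanes along $\partialn\Ccore$ are spacelike and $\Lambda_{\Gamma}$ is a bounding sphere, I expect $\partialn\Ccore$ to split into two connected complete spacelike hypersurfaces, a future part $\Sigma^{+}$ and a past part $\Sigma^{-}$, each asymptotic to $\Lambda_{\Gamma}$ and each preserved by $\Gamma$ with compact quotient; the lift of $\Gamma$ to the properly convex cone in $\RR^{p,2}$ fixes a time orientation, so $\Gamma$ does not interchange $\Sigma^{+}$ and $\Sigma^{-}$. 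I then set $\Sigma:=\Sigma^{+}\subset\Omega$ and aim to prove that $\Gamma\backslash\Sigma$ is a compact Cauchy hypersurface of $M$.

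The reduction to $\Omega$ is clean: an inextendible timelike curve in $M$ lifts to an inextendible timelike curve in $\Omega$, and since $\Sigma$ is $\Gamma$-invariant, ``meets $\Gamma\backslash\Sigma$ exactly once in $M$'' follows from ``meets $\Sigma$ exactly once in $\Omega$''. The \emph{at most once} direction is the achronality of $\Sigma$: as a complete spacelike hypersurface asymptotic to the acausal sphere $\Lambda_{\Gamma}$, it is achronal in $\Omega_{\max}$ and a fortiori in the smaller $\Omega$. For the \emph{at least once} direction I would first establish, after Mess \cite{mes90}, that $\Omega_{\max}$ is globally hyperbolic and that $\Sigma$ is a Cauchy hypersurface of $\Omega_{\max}$, spanning the domain of dependence of $\Lambda_{\Gamma}$ and separating $\Omega_{\max}$ into $I^{+}(\Sigma)$ and $I^{-}(\Sigma)$; consequently every timelike curve inextendible in $\Omega_{\max}$ crosses $\Sigma$.

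The main obstacle will be the passage from $\Omega_{\max}$ to the possibly strictly smaller $\Omega$: a timelike curve inextendible in $\Omega$ need not be inextendible in $\Omega_{\max}$, so I must rule out that it leaves $\Omega$ through a point of $\partial\Omega\cap\Omega_{\max}$ lying in the strict future of $\Sigma^{+}$ (or past of $\Sigma^{-}$) \emph{before} meeting $\Sigma$. The leverage here is that $\Omega$ is properly convex and contains the whole core $\Ccore$, hence straddles both $\Sigma^{-}$ and $\Sigma^{+}$, together with the pinching of $I^{+}(\Sigma^{+})\cap\Omega$ toward $\Lambda_{\Gamma}$ forced by the acausality of the limit sphere. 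Combining proper convexity of $\Omega$ with this domain-of-dependence structure should show that no such premature escape is possible, so that the crossing point (which lies in $\Sigma^{+}\subset\Ccore\subseteq\Omega$) occurs on the $\Omega$-portion of the curve. I expect this step --- controlling arbitrary, non-geodesic timelike curves against the non-geodesically rigid boundary $\partial\Omega$ near the ideal boundary, where acausality of $\Lambda_{\Gamma}$ is indispensable --- to be the delicate heart of the proof. Once $\Sigma^{+}$ is shown to be Cauchy for $\Omega$, the compact hypersurface $\Gamma\backslash\Sigma^{+}$ is a Cauchy hypersurface of $M$, and global hyperbolicity of $M$ follows.
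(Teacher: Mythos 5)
Your strategy --- producing a compact Cauchy hypersurface and invoking \cite{cg69} --- is genuinely different from the paper's, which never constructs a Cauchy hypersurface but instead verifies global hyperbolicity directly by showing that the causal diamonds $J^+(x)\cap J^-(x')$ are compact. Unfortunately, your version has a gap exactly at the step you yourself flag as ``the delicate heart'': you never prove that every inextendible timelike curve of $\Omega$ meets $\Sigma^{+}$. Concretely, you must rule out that a future-directed timelike curve starting in the past component of $\Omega\smallsetminus\Ccore_\Omega(\Gamma)$ is inextendible in $\Omega$ while avoiding the core, either (a) by escaping through a point of $\partial\Omega\cap\HH^{p,1}$ in the past region --- and note that away from $\Lambdao_\Omega(\Gamma)$ the hypersurface $\partial\Omega$ is an arbitrary convex hypersurface with no a priori causal character, since Lemma~\ref{lem:Hpq-cc-sphere}.\eqref{item:Hpq-sphere-3} controls only supporting hyperplanes of the \emph{core} (or a cocompact convex set), not of $\Omega$ --- or (b) by accumulating on the acausal sphere $\Lambdao_\Omega(\Gamma)$ without converging along a geodesic. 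Handling arbitrary non-geodesic timelike curves here is a genuine piece of work that the proposal replaces by ``should show''. A secondary, fixable issue: $\partialn\Ccore_\Omega(\Gamma)$ need not split into two hypersurfaces $\Sigma^{\pm}$; in the Fuchsian case $\Gamma\subset\PO(p,1)\subset\PO(p,2)$ the convex core is a totally geodesic copy of $\HH^p$ with empty interior, so its nonideal boundary is the whole core and is connected. You must first thicken, as the paper does with a uniform $R$-neighborhood $\C$.

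The paper's argument sidesteps the inextendible-curve analysis entirely. Given $x,x'\in M$, it lifts them to points $y,y'$ of a uniform neighborhood $\C$ of the core; by Lemma~\ref{lem:Hpq-cc-sphere}.\eqref{item:Hpq-sphere-3} the nonideal boundary $\partialn\C$ is spacelike and splits into $\partialn^{+}\C\sqcup\partialn^{-}\C$, so a future-pointing causal geodesic from $y$ can leave $\C$ only through $\partialn^{+}\C$ into the future component of $\Omega\smallsetminus\C$, and dually for $J^-(y')$; hence $J^+(y)\cap J^-(y')\subset\C$. Compactness of $J^{\pm}(\cdot)\cap\C$ then follows because $\partiali\C=\Lambdao_\Omega(\Gamma)$ is seen from every point in spacelike directions only (Lemma~\ref{lem:no-degenerate-faces}), so these sets cannot accumulate on the ideal boundary. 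If you wish to salvage the Cauchy-hypersurface route, the missing crossing statement will require essentially the same two ingredients (spacelike $\partialn^{\pm}\C$ and spacelike visibility of the limit set), applied now to arbitrary timelike curves rather than to geodesics; as written, the proposal does not establish it.
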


\begin{proof}
Consider two points $x,x'\in M = \Gamma\backslash\Omega$.
There exists $R>0$ such that any lifts $y,y'\in\Omega$ of~$x, x'$ belong to the uniform $R$-neighborhood $\C$ of $\Ccore_{\Omega}(\Gamma)$ in~$\Omega$ for the Hilbert metric $d_{\Omega}$.
Let $J^+(y)$ (\resp $J^-(y)$) be the set of points of~$\Omega$ which are seen from $y$ by a future-pointing (\resp past-pointing) timelike or lightlike geodesic.
By Lemma~\ref{lem:Hpq-cc-sphere}.\eqref{item:Hpq-sphere-3}, all supporting hyperplanes of $\C$ at points of $\partialn \C$ are spacelike.
We may decompose $\partialn \C$ into two disjoint open subsets, namely the subset $\partialn^+ \C$ of points for which the outward pointing normal to a supporting plane is future pointing, and the subset $\partialn^-\C$ of points for which it is past pointing.
Indeed, $\partialn^+ \C$ and $\partialn^- \C$ are the two path connected components of the complement $\partialn \C$ of the embedded $(p-1)$-sphere $\Lambda_\Gamma$ in the $p$-sphere $\Fr(\C)$. The set $\Omega \smallsetminus \C$ similarly has two components, a component to the future of $\partialn^+ \C$ and a component to the past of $\partialn^- \C$. 
Any point of $J^+(y) \cap (\Omega \smallsetminus \C)$ lies in the future component of $\Omega \smallsetminus \C$ and similarly, any point of $J^-(y') \cap (\Omega \smallsetminus \C)$ lies in the past component of $\Omega \smallsetminus \C$. 
By proper discontinuity of the $\Gamma$-action on $\C$, in order to check that $J^+(x)\cap J^-(x')$ is compact in~$M$, it is therefore enough to check that $J^+(y)\cap\C$ and $J^-(y')\cap\C$ are compact in~$\Omega$.
This follows from the fact that the ideal boundary of~$\C$ is $\Lambdao_{\Omega}(\Gamma)$ (Corollary~\ref{cor:ideal-bound-naive-cc}.\eqref{item:ideal-bound-cc}) and that any point of~$\Omega$ sees any point of $\Lambdao_{\Omega}(\Gamma)$ in a spacelike direction (Lemma~\ref{lem:no-degenerate-faces}).
\end{proof}

%%%%%%%%%%%%%%%%%%%%%%%%%
\subsection{Examples of $\HH^{p,q-1}$-convex cocompact groups} \label{subsec:ex-Hpq-cc}

%%%%%%%%
\subsubsection{Quasi-Fuchsian $\HH^{p,q-1}$-convex cocompact groups} \label{subsubsec:ex-Hpq-QF}

Let $H$ be a real semisimple Lie group of real rank~$1$ and $\tau : H\to\PO(p,q)$ a representation whose image contains an element which is proximal in $\partial\HH^{p,q-1}$ (see Remark~\ref{rem:prox-lim-set-POpq}).
By \cite[Prop.\,7.1]{dgk-ccHpq}, for any word hyperbolic group $\Gamma$ and any (classical) convex cocompact representation $\sigma_0 : \Gamma\to H$,
\begin{enumerate}
  \item\label{item:qF1} the composition $\rho_0 := \tau \circ \sigma_0 : \Gamma \rightarrow \PO(p,q)$ is $P_1^{p,q}$-Anosov and the proximal limit set $\Lambda_{\rho_0(\Gamma)}\subset\partial\HH^{p,q-1}$ is negative or positive;
  \item\label{item:qF2} the connected component $\mathcal{T}_{\rho_0}$ of~$\rho_0$ in the space of $P_1^{p,q}$-Anosov representations from $\Gamma$ to $\PO(p,q)$ is a neighborhood of $\rho_0$ in $\Hom(\Gamma,\PO(p,q))$ consisting entirely of representations $\rho$ with negative proximal limit set $\Lambda_{\rho(\Gamma)}\subset\partial\HH^{p,q-1}$ or entirely of representations $\rho$ with positive proximal limit set $\Lambda_{\rho(\Gamma)}\subset\partial\HH^{p,q-1}$.
\end{enumerate}
Here is an immediate consequence of this result and of Theorem~\ref{thm:main-POpq-reducible}.

\begin{corollary}\label{cor:pqqp}
In this setting, either $\rho(\Gamma)$ is $\HH^{p,q-1}$-convex cocompact for all $\rho\in\mathcal{T}_{\rho_0}$, or $\rho(\Gamma)$ is $\HH^{q,p-1}$-convex cocompact (after identifying $\PO(p,q)$ with $\PO(q,p)$) for all $\rho\in\mathcal{T}_{\rho_0}$.
\end{corollary}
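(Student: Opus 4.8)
The plan is to combine the two cited inputs---the result from \cite[Prop.\,7.1]{dgk-ccHpq} stated just above (properties \eqref{item:qF1} and~\eqref{item:qF2}) and the already-established Theorem~\ref{thm:main-POpq-reducible}---with the connectedness of the parameter space $\mathcal{T}_{\rho_0}$. The key point is that the dichotomy ``negative versus positive proximal limit set'' is locally constant, and hence globally constant on the connected set $\mathcal{T}_{\rho_0}$; once we know which alternative holds, Theorem~\ref{thm:main-POpq-reducible} converts it directly into the corresponding flavor of $\HH^{p,q-1}$- or $\HH^{q,p-1}$-convex cocompactness.

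First I would record that $\mathcal{T}_{\rho_0}$, being by definition the connected component of~$\rho_0$ in the (open) space of $P_1^{p,q}$-Anosov representations, is connected and consists entirely of $P_1^{p,q}$-Anosov representations. By property~\eqref{item:qF2}, $\mathcal{T}_{\rho_0}$ consists either entirely of representations $\rho$ with $\Lambda_{\rho(\Gamma)}\subset\partial\HH^{p,q-1}$ negative, or entirely of representations with $\Lambda_{\rho(\Gamma)}$ positive. (Alternatively, one could rederive this from \cite[Prop.\,3.5]{dgk-ccHpq}, as already invoked for Corollary~\ref{cor:Hpq-cc-for-all}, using that negativity is open and closed along a connected family of Anosov representations; but here it is handed to us directly.) This is the crux of the argument, and it is not really an obstacle since it is quoted verbatim from the cited proposition.

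Next I would feed each alternative into Theorem~\ref{thm:main-POpq-reducible}. In the first case, every $\rho\in\mathcal{T}_{\rho_0}$ is word hyperbolic (its domain $\Gamma$ is fixed and word hyperbolic), the natural inclusion $\rho(\Gamma)\hookrightarrow\PO(p,q)$ is $P_1^{p,q}$-Anosov, and $\Lambda_{\rho(\Gamma)}$ is negative; this is exactly condition~\eqref{item:Anosov-neg} of Theorem~\ref{thm:main-POpq-reducible}, which is equivalent to condition~\eqref{item:Hpq-ccc}, namely that $\rho(\Gamma)$ is $\HH^{p,q-1}$-convex cocompact. In the second case, positivity of $\Lambda_{\rho(\Gamma)}$ gives condition~\eqref{item:Anosov-pos}, equivalent to~\eqref{item:Hqp-ccc}, so $\rho(\Gamma)$ is $\HH^{q,p-1}$-convex cocompact after identifying $\PO(p,q)$ with $\PO(q,p)$. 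Applying this uniformly over all $\rho\in\mathcal{T}_{\rho_0}$ yields precisely the stated dichotomy.

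I do not anticipate any serious obstacle: the proof is a short assembly of results proved or quoted earlier in the paper, and the only subtlety is to make sure the group in question is $\rho(\Gamma)$ (not merely an abstract isomorph of~$\Gamma$) and that $\rho$ remains injective/discrete so that $\rho(\Gamma)$ is genuinely a discrete subgroup to which Theorem~\ref{thm:main-POpq-reducible} applies---this follows since Anosov representations are injective with discrete image. I would write the argument in a few lines as a direct corollary, emphasizing the locally-constant-sign mechanism as the load-bearing step.

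\begin{proof}[Proof of Corollary~\ref{cor:pqqp}]
By definition, $\mathcal{T}_{\rho_0}$ is connected and consists entirely of $P_1^{p,q}$-Anosov representations $\rho : \Gamma\to\PO(p,q)$; in particular each such $\rho$ is injective and discrete, so $\rho(\Gamma)$ is an infinite discrete subgroup of $\PO(p,q)$ isomorphic to the word hyperbolic group~$\Gamma$. By property~\eqref{item:qF2} above, exactly one of the following occurs: either $\Lambda_{\rho(\Gamma)}\subset\partial\HH^{p,q-1}$ is negative for all $\rho\in\mathcal{T}_{\rho_0}$, or $\Lambda_{\rho(\Gamma)}\subset\partial\HH^{p,q-1}$ is positive for all $\rho\in\mathcal{T}_{\rho_0}$.

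Suppose first that $\Lambda_{\rho(\Gamma)}$ is negative for all $\rho\in\mathcal{T}_{\rho_0}$. Then for each such~$\rho$, the group $\rho(\Gamma)$ is word hyperbolic, the natural inclusion $\rho(\Gamma)\hookrightarrow\PO(p,q)$ is $P_1^{p,q}$-Anosov, and $\Lambda_{\rho(\Gamma)}$ is negative; that is, $\rho(\Gamma)$ satisfies condition~\eqref{item:Anosov-neg} of Theorem~\ref{thm:main-POpq-reducible}. By the equivalence \eqref{item:Anosov-neg}~$\Leftrightarrow$~\eqref{item:Hpq-ccc} of that theorem, $\rho(\Gamma)$ is $\HH^{p,q-1}$-convex cocompact for all $\rho\in\mathcal{T}_{\rho_0}$.

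Suppose instead that $\Lambda_{\rho(\Gamma)}$ is positive for all $\rho\in\mathcal{T}_{\rho_0}$. Then each $\rho(\Gamma)$ satisfies condition~\eqref{item:Anosov-pos} of Theorem~\ref{thm:main-POpq-reducible}, and by the equivalence \eqref{item:Anosov-pos}~$\Leftrightarrow$~\eqref{item:Hqp-ccc} of that theorem, $\rho(\Gamma)$ is $\HH^{q,p-1}$-convex cocompact (after identifying $\PO(p,q)$ with $\PO(q,p)$) for all $\rho\in\mathcal{T}_{\rho_0}$. This establishes the dichotomy.
\end{proof}
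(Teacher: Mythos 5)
Your proof is correct and follows exactly the route the paper intends: the paper presents this corollary as an immediate consequence of properties \eqref{item:qF1}--\eqref{item:qF2} quoted from \cite[Prop.\,7.1]{dgk-ccHpq} together with the equivalences \eqref{item:Anosov-neg}~$\Leftrightarrow$~\eqref{item:Hpq-ccc} and \eqref{item:Anosov-pos}~$\Leftrightarrow$~\eqref{item:Hqp-ccc} of Theorem~\ref{thm:main-POpq-reducible}, which is precisely your assembly. Your added remark that Anosov representations are injective and discrete, so that $\rho(\Gamma)$ is genuinely a discrete subgroup to which the theorem applies, is a harmless and correct precision.
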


This improves \cite[Cor.\,7.2]{dgk-ccHpq}, which assumed $\rho$ to be irreducible.

Here are two examples.
We refer to \cite[\S\,7]{dgk-ccHpq} for more details.

\begin{example} \label{ex:Hpq-quasi-Fuchsian}
Let $\Gamma$ be the fundamental group of a convex cocompact (\eg closed) hyperbolic manifold $M$ of dimension $m\geq 2$, with holonomy $\sigma_0 : \Gamma\to\PO(m,1)=\mathrm{Isom}(\HH^m)$.
The representation $\sigma_0$ lifts to a representation $\widetilde{\sigma}_0 : \Gamma\to H:=\OO(m,1)$.
For $p,q\in\NN^*$ with $p\geq m, q \geq 1$, let $\tau : \OO(m,1)\to\PO(p,q)$ be induced by the natural embedding $\RR^{m,1}\hookrightarrow\RR^{p,q}$.
Then $\rho_0 := \tau\circ\widetilde{\sigma}_0 : \Gamma \to \PO(p,q)$ is $P_1^{p,q}$-Anosov, and one checks that the set $\Lambda_{\rho_0(\Gamma)}\subset\partial\HH^{p,q-1}$ is negative.
Let $\mathcal{T}_{\rho_0}$ be the connected component of~$\rho_0$ in the space of $P_1^{p,q}$-Anosov representations from $\Gamma$ to $\PO(p,q)$.
By Corollary~\ref{cor:pqqp}, for any $\rho\in\mathcal{T}_{\rho_0}$, the group $\rho(\Gamma)$ is $\HH^{p,q-1}$-convex cocompact.
\end{example}

By \cite{mes90,bar15}, when $p=m$ and $q=2$ and when the hyperbolic $m$-manifold $M$ is closed, the space $\mathcal{T}_{\rho_0}$ of Example~\ref{ex:Hpq-quasi-Fuchsian} is a full connected component of $\Hom(\Gamma,\PO(p,2))$, consisting of so-called \emph{AdS quasi-Fuchsian} representations; that $\rho(\Gamma)$ is $\HH^{p,1}$-convex cocompact in that case follows from \cite{mes90,bm12}.

\begin{example} \label{ex:gen-Hitchin}
For $n\geq 2$, let
\begin{equation} \label{eqn:tau-n}
\tau_n : \SL(\RR^2) \longrightarrow \SL(\RR^n)
\end{equation}
be the irreducible $n$-dimensional linear representation of $\SL(\RR^2)$ obtained from the action of $\SL(\RR^2)$ on the $(n-1)^{\mathrm{st}}$ symmetric power $\mathrm{Sym}^{n-1}(\RR^2) \simeq \RR^n$.
The image of~$\tau_n$ preserves the nondegenerate bilinear form $B_n := -\omega^{\otimes (n-1)}$ induced from the area form $\omega$ of~$\RR^2$.
This form is symmetric if $n$ is odd, and antisymmetric (\ie symplectic) if $n$ is even.
Suppose $n = 2m+1$ is odd.
The symmetric bilinear form $B_n$ has signature
\begin{equation} \label{eqn:kn-ln}
(k_n, \ell_n) := \left\{ \begin{array}{ll} (m+1,m) & \text{ if $m$ is odd,}\\
(m,m+1) & \text{ if $m$ is even.} \end{array} \right.
\end{equation}
If we identify the orthogonal group $\OO(B_n)$ (containing the image of~$\tau_n$) with $\OO(k_n,\ell_n)$, then there is a unique $\tau_n$-equivariant embedding $\iota_n : \partial_{\infty}\HH^2\hookrightarrow\partial_{\scriptscriptstyle\PP}\HH^{k_n,\ell_n-1}$, and an easy computation shows that its image $\Lambda_n := \iota(\partial_{\infty} \HH^2)$ is negative.
For $p \geq k_n$ and $q \geq \ell_n$, the representation $\tau_n : \SL_2(\RR)\to\OO(B_n)\simeq\OO(k_n,\ell_n)$ and the natural embedding $\RR^{k_n,\ell_n}\hookrightarrow\RR^{p,q}$ induce a representation $\tau : H = \SL_2(\RR) \to \PO(p,q)$ whose image contains an element which is proximal in $\partial\HH^{p,q-1}$, and a $\tau$-equivariant embedding $\iota : \partial_{\infty}\HH^2\hookrightarrow\partial_{\scriptscriptstyle\PP}\HH^{k_n,\ell_n-1}\hookrightarrow\partial_{\scriptscriptstyle\PP}\HH^{p,q-1}$.
The set $\Lambda:=\iota(\partial_{\infty}\HH^2)\subset\partial_{\scriptscriptstyle\PP}\HH^{p,q-1}$ is negative by construction.
Let $\Gamma$ be the fundamental group of a convex cocompact orientable hyperbolic surface, with holonomy $\sigma_0 : \Gamma\to\PSL_2(\RR)$.
The representation $\sigma_0$ lifts to a representation $\widetilde{\sigma}_0 : \Gamma\to H:=\SL_2(\RR)$.
Let $\rho_0 := \tau\circ\widetilde{\sigma}_0 : \Gamma \to G:=\PO(p,q)$.
The proximal limit set $\Lambda_{\rho_0(\Gamma)} = \iota(\Lambda_{\sigma_0(\Gamma)}) \subset \Lambda$ is negative.
Thus Corollary~\ref{cor:pqqp} implies that $\rho_0(\Gamma)$ is $\HH^{p,q-1}$-convex cocompact and so is $\rho(\Gamma)$ where $\rho$ is any representation in the connected component $\mathcal{T}_{\rho_0}$ of $\rho$ in the space of $P_1^{p,q}$-Anosov representations from $\Gamma$ to $\PO(p,q)$. 
\end{example}

It follows from \cite{lab06,fg06} (see \eg \cite[\S\,6.1]{biw14}) that when $(p,q) = (k_n, \ell_n)$ or\linebreak $(m+1,m+1)$ and when $\Gamma$ is a closed surface group, the space $\mathcal{T}_{\rho_0}$ of Example~\ref{ex:gen-Hitchin} is a full connected component of $\Hom(\Gamma,\PO(p,q))$, consisting of so-called \emph{Hitchin representations}.
Example~\ref{ex:gen-Hitchin} thus states the following.

\begin{corollary}
Let $\Gamma$ be the fundamental group of a closed orientable hyperbolic surface and let $m\geq 1$.

For any Hitchin representation $\rho : \Gamma\to\PO(m+1,m)$, the group $\rho(\Gamma)$ is $\HH^{m+1,m-1}$-convex cocompact if $m$ is odd, and $\HH^{m,m}$-convex cocompact if $m$ is even.

For any Hitchin representation $\rho : \Gamma\to\PO(m+1,m+1)$, the group $\rho(\Gamma)$ is $\HH^{m+1,m}$-convex cocompact.
\end{corollary}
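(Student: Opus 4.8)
The plan is to deduce this corollary directly from Example~\ref{ex:gen-Hitchin} together with the cited characterization of Hitchin representations. First I would recall the setup of Example~\ref{ex:gen-Hitchin}: for $n = 2m+1$ odd, the irreducible representation $\tau_n : \SL(\RR^2) \to \SL(\RR^n)$ preserves the symmetric form $B_n$ of signature $(k_n,\ell_n)$ given in~\eqref{eqn:kn-ln}, and composing a (lifted) Fuchsian holonomy $\widetilde{\sigma}_0 : \Gamma \to \SL_2(\RR)$ with $\tau$ produces a $P_1^{p,q}$-Anosov representation $\rho_0$ whose proximal limit set $\Lambda_{\rho_0(\Gamma)}$ is \emph{negative}. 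By Corollary~\ref{cor:pqqp}, every representation in the connected component $\mathcal{T}_{\rho_0}$ is then $\HH^{p,q-1}$-convex cocompact.

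The key step is then to invoke the result of \cite{lab06,fg06} (stated just before the corollary, see also \cite[\S\,6.1]{biw14}): when $\Gamma$ is a closed surface group and $(p,q) = (k_n,\ell_n)$ or $(p,q)=(m+1,m+1)$, the component $\mathcal{T}_{\rho_0}$ is a \emph{full} connected component of $\Hom(\Gamma,\PO(p,q))$, namely the Hitchin component. Thus any Hitchin representation $\rho$ lies in $\mathcal{T}_{\rho_0}$, and the conclusion of Example~\ref{ex:gen-Hitchin} applies verbatim. For $\PO(m+1,m)$ one reads off the signature from~\eqref{eqn:kn-ln}: writing $n = 2m+1$ the symmetric power construction gives signature $(m+1,m)$ precisely when $m$ is odd, so that $(p,q) = (m+1,m)$ yields $\HH^{m+1,m-1}$-convex cocompactness; when $m$ is even the signature is $(m,m+1)$, and after identifying $\PO(m+1,m)$ with $\PO(m,m+1)$ (by replacing $\langle\cdot,\cdot\rangle_{p,q}$ with its negative, as in the reduction at the start of the proof of Theorem~\ref{thm:main-POpq-reducible}) one obtains $\HH^{m,m}$-convex cocompactness. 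For $\PO(m+1,m+1)$ one uses the further inclusion $\RR^{k_n,\ell_n} \hookrightarrow \RR^{m+1,m+1}$ from Example~\ref{ex:gen-Hitchin} with $p = q = m+1$.

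The only genuine bookkeeping is to match the parity statement in~\eqref{eqn:kn-ln} against the two target groups and to keep track of the $\PO(p,q) \simeq \PO(q,p)$ identification in the even case; none of this requires new ideas, since negativity of $\Lambda_{\rho_0(\Gamma)}$ (hence of $\Lambda_{\rho(\Gamma)}$ throughout the connected component, by \cite[Prop.\,3.5]{dgk-ccHpq}) is already established in Example~\ref{ex:gen-Hitchin}. I expect the main conceptual obstacle to be purely citational rather than mathematical: one must be confident that the relevant component $\mathcal{T}_{\rho_0}$ really coincides with the entire Hitchin component for \emph{both} choices $(k_n,\ell_n)$ and $(m+1,m+1)$, which is exactly the content of \cite{lab06,fg06} invoked in the displayed remark preceding the corollary. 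Granting that, the corollary is an immediate specialization of Example~\ref{ex:gen-Hitchin} to closed surface groups, with the signature computation~\eqref{eqn:kn-ln} supplying the precise value of $q-1$ in each case.
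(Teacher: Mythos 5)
Your proposal is correct and follows exactly the paper's own (very short) argument: the paper likewise deduces the corollary by combining Example~\ref{ex:gen-Hitchin} (negativity of the Fuchsian limit set, hence $\HH^{p,q-1}$-convex cocompactness throughout the component $\mathcal{T}_{\rho_0}$ via Corollary~\ref{cor:pqqp}) with the fact from \cite{lab06,fg06} that $\mathcal{T}_{\rho_0}$ is the full Hitchin component for $(p,q)=(k_n,\ell_n)$ or $(m+1,m+1)$. Your signature bookkeeping from \eqref{eqn:kn-ln}, including the $\PO(p,q)\simeq\PO(q,p)$ identification when $m$ is even, matches the statement.
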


By \cite{biw10,biw}, when $p = m+1 = 2$ and $\Gamma$ is a closed surface group, the space $\mathcal{T}_{\rho_0}$ of Example~\ref{ex:gen-Hitchin} is a full connected component of $\Hom(\Gamma,\PO(2,q))$, consisting of so-called \emph{maximal representations}.
Example~\ref{ex:gen-Hitchin} thus states the following.

\begin{corollary} \label{cor:max-rep-PO-p-2}
Let $\Gamma$ be the fundamental group of a closed orientable hyperbolic surface and let $q\geq 1$.
Any connected component of $\Hom(\Gamma,\PO(2,q))$ consisting of maximal representations and containing a Fuchsian representation $\rho_0 : \Gamma \to \PO(2,1)_0 \hookrightarrow \PO(2,q)$ consists entirely of $\HH^{2,q-1}$-convex cocompact representations.
\end{corollary}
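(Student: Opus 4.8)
The plan is to deduce the statement directly from Example~\ref{ex:gen-Hitchin} together with the identification of maximal representation components provided by \cite{biw10,biw}. First I would specialize the construction of Example~\ref{ex:gen-Hitchin} to $m=1$, so that $n=2m+1=3$ and, by \eqref{eqn:kn-ln}, $(k_n,\ell_n)=(2,1)$. The irreducible representation $\tau_3 : \SL(\RR^2)\to\SL(\RR^3)$ realizes $\SL(\RR^2)$ in $\OO(2,1)$, and together with the standard embedding $\RR^{2,1}\hookrightarrow\RR^{2,q}$ it yields a representation $\tau : \SL_2(\RR)\to\PO(2,q)$ whose image contains an element proximal in $\partial\HH^{2,q-1}$. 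Given a Fuchsian $\sigma_0 : \Gamma\to\PSL_2(\RR)$ with lift $\widetilde{\sigma}_0 : \Gamma\to\SL_2(\RR)$, the representation $\rho_0=\tau\circ\widetilde{\sigma}_0$ is precisely a Fuchsian representation $\Gamma\to\PO(2,1)_0\hookrightarrow\PO(2,q)$ as in the statement, and Example~\ref{ex:gen-Hitchin} shows that $\rho_0$ is $P_1^{2,q}$-Anosov with \emph{negative} proximal limit set $\Lambda_{\rho_0(\Gamma)}\subset\partial\HH^{2,q-1}$.

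Next I would invoke \cite{biw10,biw}, as quoted just before the corollary, to identify the connected component of maximal representations containing $\rho_0$ with the component $\mathcal{T}_{\rho_0}$ of $\rho_0$ in the space of $P_1^{2,q}$-Anosov representations: since $\mathcal{T}_{\rho_0}$ is a full connected component of $\Hom(\Gamma,\PO(2,q))$ consisting of maximal representations, any component of maximal representations containing $\rho_0$ must equal $\mathcal{T}_{\rho_0}$. In particular every maximal representation in the given component is $P_1^{2,q}$-Anosov and lies in $\mathcal{T}_{\rho_0}$.

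Finally, since $\Lambda_{\rho_0(\Gamma)}$ is negative, property~\eqref{item:qF2} of Section~\ref{subsubsec:ex-Hpq-QF} (equivalently \cite[Prop.\,3.5]{dgk-ccHpq}) guarantees that $\Lambda_{\rho(\Gamma)}\subset\partial\HH^{2,q-1}$ is negative for \emph{every} $\rho\in\mathcal{T}_{\rho_0}$, and never positive. I would then apply Corollary~\ref{cor:pqqp}, whose dichotomy between $\HH^{2,q-1}$-convex cocompactness and $\HH^{q,1}$-convex cocompactness is governed, via Theorem~\ref{thm:main-POpq-reducible}, by the negativity versus positivity of the limit set. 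As the negative alternative holds at $\rho_0$, it holds throughout the connected set $\mathcal{T}_{\rho_0}$, so every $\rho$ in the component is $\HH^{2,q-1}$-convex cocompact, as claimed.

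There is essentially no internal obstacle, as all the geometric content is already packaged in Example~\ref{ex:gen-Hitchin} and Corollary~\ref{cor:pqqp}. The only points requiring care are the appeal to the external results of Burger--Iozzi--Wienhard, ensuring that the component of maximal representations is exactly $\mathcal{T}_{\rho_0}$ so that no maximal representation is missed, and confirming that the relevant sign of the limit set is negative rather than positive; the latter is settled by the explicit signature computation \eqref{eqn:kn-ln} at $m=1$ together with the connectedness argument of property~\eqref{item:qF2}.
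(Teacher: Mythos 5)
Your proposal is correct and follows essentially the same route as the paper, which obtains the corollary as an immediate restatement of Example~\ref{ex:gen-Hitchin} (specialized to $m=1$, $p=k_n=2$) once the result of \cite{biw10,biw} identifies the maximal component containing $\rho_0$ with the Anosov component $\mathcal{T}_{\rho_0}$, and Corollary~\ref{cor:pqqp} propagates $\HH^{2,q-1}$-convex cocompactness (via negativity of the limit set) over all of $\mathcal{T}_{\rho_0}$. You have merely made explicit the steps the paper compresses into ``Example~\ref{ex:gen-Hitchin} thus states the following.''
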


%%%%%%%%
\subsubsection{Groups with connected boundary} \label{subsubsec:connected-boundary}

We now briefly discuss a class of examples that does not necessarily come from the deformation of ``Fuchsian'' representations as above.

Suppose the word hyperbolic group $\Gamma$ has connected boundary $\partial_{\infty}\Gamma$ (for instance $\Gamma$ is the fundamental group of a closed negatively-curved Riemannian manifold).
By \cite[Prop.\,1.10 \& Prop.\,3.5]{dgk-ccHpq}, any connected component in the space of $P_1^{p,q}$-Anosov representations from $\Gamma$ to $\PO(p,q)$ consists entirely of representations $\rho$ with negative proximal limit set $\Lambda_{\rho(\Gamma)}\subset\partial\HH^{p,q-1}$ or entirely of representations with $\rho$ with positive proximal limit set $\Lambda_{\rho(\Gamma)}\subset\partial\HH^{p,q-1}$.
Theorem~\ref{thm:main-POpq-reducible} then implies that for any connected component $\mathcal{T}$ in the space of $P_1$-Anosov representations of $\Gamma$ with values in $\PO(p,q)\subset\PGL(\RR^{p+q})$, either $\rho(\Gamma)$ is $\HH^{p,q-1}$-convex cocompact for all $\rho\in\mathcal{T}$, or $\rho(\Gamma)$ is $\HH^{q,p-1}$-convex cocompact for all $\rho\in\mathcal{T}$, as in Corollary~\ref{cor:pqqp-intro}.

This applies for instance to the case that $\Gamma$ is the fundamental group of a closed hyperbolic surface and $\mathcal{T}$ is a connected component of $\Hom(\Gamma,\PO(2,q))$ consisting of maximal representations \cite{biw10,biw}. By~\cite{ctt19}, the proximal limit set is negative for all representations in any such~$\mathcal{T}$, hence these are $\HH^{2,q-1}$ convex cocompact. 
By \cite{gw10}, for $q=3$ there exist such connected components $\mathcal{T}$ that consist entirely of Zariski-dense representations, hence that do not come from the deformation of ``Fuchsian'' representations as in Corollary~\ref{cor:max-rep-PO-p-2}.

%%%%%%%%%%%%%%%%%%%%%%%%%%%%%%%%%%%%%%%%%%%%%%%%%%%
\section{Examples of groups which are convex cocompact in $\PP(V)$} \label{sec:examples}

In Section~\ref{subsec:ex-Hpq-cc} we constructed examples of discrete subgroups of $\PO(p,q) \subset \PGL(\RR^{p+q})$ which are $\HH^{p,q-1}$-convex cocompact; these groups are strongly convex cocompact in $\PP(\RR^{p+q})$ by Theorem~\ref{thm:main-POpq-reducible}.
We now discuss several constructions of discrete subgroups of $\PGL(V)$ which are convex cocompact in $\PP(V)$ but which do not necessarily preserve a nonzero quadratic form on~$V$.

Based on Theorem~\ref{thm:main-noPETs}.\ref{item:stable}--\ref{item:include}, another fruitful source of groups that are convex cocompact in $\PP(V)$ is the continuous deformation of ``Fuchsian'' groups, coming from an algebraic embedding.
These ``Fuchsian'' groups can be for instance:
\begin{enumerate}
  \item\label{item:QF-rk1} convex cocompact subgroups (in the classical sense) of appropriate rank-one Lie subgroups $H$ of $\PGL(V)$, as in Section~\ref{subsubsec:ex-Hpq-QF};
  \item\label{item:QF-div} discrete subgroups of $\PGL(V)$ dividing a properly convex open subset of some projective subspace $\PP(V_0)$ of $\PP(V)$ and acting trivially on a complementary subspace.
\end{enumerate}
The groups of \eqref{item:QF-rk1} are always word hyperbolic; in Section~\ref{subsec:QF-rk1}, we explain how to choose $H$ so that they are strongly convex cocompact in $\PP(V)$, and we prove Proposition~\ref{prop:Hitchin}.
The groups of \eqref{item:QF-div} are not necessarily word hyperbolic; they are convex cocompact in $\PP(V)$ by Theorem~\ref{thm:main-noPETs}.\ref{item:include}, but not necessarily strongly convex cocompact; we discuss them in Section~\ref{subsubsec:divisible-deform}.

We also mention other constructions of convex cocompact groups in Sections \ref{subsubsec:benoistPET} and~\ref{subsec:other-ex-nonhyp-cc}, which do not involve any deformation.

%%%%%%%%%%%%%%%%%%%%%%%%%
\subsection{``Quasi-Fuchsian'' strongly convex cocompact groups} \label{subsec:QF-rk1}

We start by discussing a similar construction to Section~\ref{subsubsec:ex-Hpq-QF}, but for discrete subgroups of $\PGL(V)$ which do not necessarily preserve a nonzero quadratic form on~$V$.

Let $\Gamma$ be a word hyperbolic group, $H$ a real semisimple Lie group of real rank one, and $\tau : H\to\PGL(V)$ a representation whose image contains a proximal element.
Any (classical) convex cocompact representation $\sigma_0 :\nolinebreak \Gamma\to\nolinebreak H$ is Anosov, hence the composition $\tau\circ\sigma_0 : \Gamma\to\PGL(V)$ is $P_1$-Anosov (see \cite[Prop.\,3.1]{lab06} and \cite[Prop.\,4.7]{gw12}).
In particular, by Theorem~\ref{thm:Ano-PGL}, the group $\tau\circ\sigma_0(\Gamma)$ is strongly convex cocompact in $\PP(V)$ as soon as $\PP(V) \smallsetminus \bigcup_{\eta \in \partial_{\infty} \Gamma} \xi^*(\eta)$ admits a $\tau\circ\sigma_0(\Gamma)$-invariant connected component, where $\xi^*: \partial_{\infty} \Gamma \to \PP(V^*)$ denotes the Anosov boundary map in dual projective space of $\tau \circ \sigma_0$.

Suppose this is the case.
By Theorem~\ref{thm:properties}.\ref{item:stable} (see Remark~\ref{rem:properties-strong-cc}), the group $\rho(\Gamma)$ remains strongly convex cocompact in $\PP(V)$ for any $\rho\in\Hom(\Gamma,\PGL(V))$ close enough to $\tau\circ\sigma_0$.
Sometimes $\rho(\Gamma)$ also remains strongly convex cocompact for some $\rho$ which are continuous deformations of $\tau\circ\sigma_0$ quite far away from $\tau\circ\sigma_0$; we now discuss this in view of proving Proposition~\ref{prop:Hitchin}.

%%%%%%%%
\subsubsection{Connected open sets of strongly convex cocompact representations}

We prove the following.

\begin{proposition} \label{prop:conn-comp-Ano}
Let $\Gamma$ be a word hyperbolic group and $\mathcal{A}$ a connected open subset of $\Hom(\Gamma,\PGL(V))$ consisting entirely of $P_1$-Anosov representations. If $\rho(\Gamma)$ is convex cocompact in $\PP(V)$ for some $\rho\in\mathcal{A}$, then $\rho(\Gamma)$ is convex cocompact in $\PP(V)$ for all $\rho\in\mathcal{A}$.
\end{proposition}

\begin{proof}
First, observe that the finite normal subgroups $\mathrm{Ker}(\rho) \subset \Gamma$ are constant over $\rho \in \mathcal A$, since representations of a finite group are rigid up to conjugation.
Hence by passing to the quotient group $\Gamma/\mathrm{Ker}(\rho)$, we may assume all representations $\rho \in \mathcal A$ are faithful.

By Theorem~\ref{thm:properties}.\ref{item:stable}, the property of being convex cocompact in $\PP(V)$ is open in $\Hom(\Gamma,\PGL(V))$.
Thus the subset of $\rho \in \mathcal A$ for which $\rho(\Gamma)$ is convex cocompact is open. 
Let us show that it is also closed.
Consider a sequence of representations $\rho_m \in \mathcal A$ converging to $\rho \in \mathcal A$.
Assume that $\rho_m(\Gamma)$ is convex cocompact for all~$m$, and let us show that $\rho$ is also convex cocompact.

Let $\xi_m : \partial_{\infty} \Gamma \to \PP(V)$ and $\xi_m^*: \partial_{\infty} \Gamma \to \PP(V^*)$ be the boundary maps for the Anosov representation~$\rho_m$, and $\xi : \partial_{\infty} \Gamma \to \PP(V)$ and $\xi^*: \partial_{\infty} \Gamma \to \PP(V^*)$ those for~$\rho$.
By~\cite[Th.\,5.13]{gw12}, the maps $\xi_m$ (\resp $\xi_m^*$) converge uniformly to $\xi$ (\resp $\xi^*$). 

By Theorem~\ref{thm:Ano-PGL}, for any~$m$, the set $\PP(V) \smallsetminus \bigcup_{\eta \in \partial_{\infty} \Gamma} \xi_m^*(\eta)$ admits a $\rho_m(\Gamma)$-invariant connected component $\Omega_m$.
After passing to a subsequence, the compact subsets $\overline{\Omega_m}$ converge to a nonempty compact subset $\mathcal K$ of $\PP(V)$ which is invariant under $\rho(\Gamma)$.
Note that for each~$m$, any open segment $(a,b)$ in $\overline{\Omega_m}$ is either contained in or disjoint from each supporting hyperplane $\xi_m^*(\eta)$ for $\eta \in \partial_{\infty} \Gamma$.
This property passes to the limit: 

\begin{enumerate}
\item[($\star$)] \label{item:property-star} For each open segment $(a,b)$ in $\mathcal K$ and each hyperplane $\xi^*(\eta)$ for $\eta \in \partial_{\infty} \Gamma$, $(a,b)$ is either contained in or disjoint from $\xi^*(\eta)$.
\end{enumerate}

Suppose first that there is a point $x \in \mathcal K$ which is not contained in any hyperplane $\xi^*(\eta)$ for $\eta \in \partial_{\infty} \Gamma$.
By compactness of $\partial_{\infty} \Gamma$, there is an open subset $U \ni x$ which does not intersect $\xi^*(\eta)$ for all $\eta \in \partial_{\infty} \Gamma$.
It follows that a slightly smaller open set $U' \ni x$ does not intersect any $\xi_m^*(\eta)$ for $\eta \in \partial_{\infty} \Gamma$ and $m$ sufficiently large, and hence that $U'$ is contained in $\Omega_m$  for all $m$ sufficiently large, and hence that $U'$ is contained in $\mathcal K$.
Thus the interior of~$\mathcal K$ is nonempty.
This interior is $\rho(\Gamma)$-invariant and, by property~($\star$), it is contained in $\PP(V) \smallsetminus \bigcup_{\eta \in \partial_{\infty} \Gamma} \xi^*(\eta)$.
This shows that $\PP(V) \smallsetminus \bigcup_{\eta \in \partial_{\infty} \Gamma} \xi^*(\eta)$ admits a $\rho(\Gamma)$-invariant connected component.
It follows from the implication \ref{item:P1Anosov-bis}~$\Rightarrow$\ref{item:ccc-hyp} of Theorem~\ref{thm:main-noPETs} that $\rho(\Gamma)$ is convex cocompact, as desired.
 
Suppose then that any point in $\mathcal K$ is contained in $\xi^*(\eta)$ for some $\eta \in \partial_{\infty} \Gamma$.
By considering such a point in the relative interior of $\mathcal K$ and using property~($\star$) above, we see that all of $\mathcal K$ is contained in $\xi^*(\eta)$.
Since $\xi_m(\partial_{\infty} \Gamma) \subset \overline{\Omega_m}$ for all~$m$ and the $\xi_m$ converge uniformly to~$\xi$, it follows that $\xi(\partial_{\infty} \Gamma) \subset \mathcal K$, hence $\xi(\partial_{\infty} \Gamma) \subset \xi^*(\eta)$.
This contradicts the transversality of the boundary maps $\xi$ and~$\xi^*$ (property \ref{item:ano-trans} in Definition~\ref{def:P1-Ano}).
\end{proof}

%%%%%%%%
\subsubsection{Hitchin representations}\label{sec:Hitchin}

We now prove Proposition~\ref{prop:Hitchin} by applying Proposition~\ref{prop:conn-comp-Ano} in the following specific context.

Let $\Gamma$ be the fundamental group of a closed orientable hyperbolic surface~$S$.
For $n\geq 2$, let $\tau_n : \SL(\RR^2) \to \SL(\RR^n)$ be the irreducible $n$-dimensional linear representation from \eqref{eqn:tau-n}.
We still denote by $\tau_n$ the representation $\PSL(\RR^2) \to \PSL(\RR^n)$ obtained by modding out by $\{\pm I\}$.
A representation $\rho\in\Hom(\Gamma,\PSL(\RR^n))$ is said to be \emph{Fuchsian} if it is of the form $\rho = \tau_n\circ \rho_0$ where $\rho_0: \Gamma \to \PSL(2,\RR)$ is discrete and faithful.
By definition, a \emph{Hitchin representation} is a continuous deformation of a Fuchsian representation; the \emph{Hitchin component} $\mathrm{Hit}_n(S)$ is the space of Hitchin representations $\rho\in\Hom(\Gamma, \PSL(\RR^n))$ modulo conjugation by $\PGL(\RR^n)$.
Hitchin~\cite{hit92} used Higgs bundles techniques to parametrize $\mathrm{Hit}_n(S)$, showing in particular that it is homeomorphic to a ball of dimension $(n^2 -1)(2g-2)$.
Labourie~\cite{lab06} proved that any Hitchin representation is $P_1$-Anosov (in fact it has the stronger property of being Anosov \emph{with respect to a minimal parabolic subgroup of $\PSL(\RR^n)$}).

In order to prove Proposition~\ref{prop:Hitchin}, we first consider the case of Fuchsian representations.

\begin{lemma}\label{lem:odd-even}
Let $\rho: \Gamma \to \PSL(\RR^2) \rightarrow \PSL(\RR^n)$ be Fuchsian.
\begin{enumerate}
  \item\label{item:odd-Fuchsian} If $n$ is odd, then $\rho(\Gamma)$ is strongly convex cocompact in $\PP(\RR^n)$.
  \item\label{item:even-Fuchsian} If $n$ is even, then the boundary map of the $P_1$-Anosov representation~$\rho$ defines a nontrivial loop in $\PP(\RR^n)$ and $\rho(\Gamma)$ does not preserve any nonempty properly convex open subset of $\PP(\RR^n)$.
\end{enumerate}
\end{lemma}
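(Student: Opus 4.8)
The plan is to identify the boundary map of the Fuchsian representation $\rho=\tau_n\circ\rho_0$ explicitly as a Veronese curve, and then to read off the two cases from the parity of the degree $n-1$. Write $\xi_0:\partial_{\infty}\Gamma\to\PP(\RR^2)$ for the boundary map of the cocompact Fuchsian representation $\rho_0$; since $S$ is closed, $\rho_0$ is convex cocompact and $\xi_0$ is a homeomorphism onto $\PP(\RR^2)=\RP^1=\partial\HH^2$ (so $\partial_{\infty}\Gamma\cong S^1$). Let $\zeta_n:\PP(\RR^2)\to\PP(\mathrm{Sym}^{n-1}(\RR^2))=\PP(\RR^n)$ be the $\tau_n$-equivariant map $[v]\mapsto[v^{n-1}]$. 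First I would check that the boundary map $\xi$ of the $P_1$-Anosov representation $\rho$ equals $\zeta_n\circ\xi_0$: for $\gamma\in\Gamma$ of infinite order with $\rho_0(\gamma)$ having attracting eigenline $[v_+]$, the element $\tau_n(\rho_0(\gamma))$ is proximal on $\mathrm{Sym}^{n-1}(\RR^2)$ with attracting fixed point $[v_+^{\,n-1}]=\zeta_n([v_+])$; since $\rho$ is dynamics-preserving and attracting fixed points are dense in $\partial_{\infty}\Gamma$, continuity forces $\xi=\zeta_n\circ\xi_0$.

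For statement~\eqref{item:even-Fuchsian} ($n$ even) the key is a homotopy computation. I would lift $\zeta_n$ through the double covers $S^1\to\PP(\RR^2)$ and $S^{n-1}\to\PP(\RR^n)$: choosing a path $v:[0,1]\to S^1\subset\RR^2$ with $v(1)=-v(0)$, so that $[v(\cdot)]$ represents a generator of $\pi_1(\PP(\RR^2))$, the normalized curve $w(t)=v(t)^{n-1}/\Vert v(t)^{n-1}\Vert$ satisfies $w(1)=(-1)^{n-1}w(0)$. When $n$ is even this equals $-w(0)$, so the loop $\zeta_n\circ[v(\cdot)]$ does not lift to a loop in $S^{n-1}$ and is therefore nontrivial in $\pi_1(\PP(\RR^n))$. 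As $\xi_0$ is a homeomorphism, $\xi=\zeta_n\circ\xi_0$ is then a nontrivial loop, which is the first assertion. For the second, suppose for contradiction that $\rho(\Gamma)$ preserves a nonempty properly convex open set $\Omega$. Since $\rho$ is $P_1$-Anosov and $\Gamma$ is nonelementary, $\rho(\Gamma)$ contains proximal elements, so Proposition~\ref{prop:max-inv-conv}.\eqref{item:Lambda-prox-in-boundary} gives $\Lambda_{\Gamma}=\xi(\partial_{\infty}\Gamma)\subset\partial\Omega\subset\overline{\Omega}$. But $\overline{\Omega}$ is a compact convex subset of an affine chart $\mathbb{A}\simeq\RR^{n-1}$, hence contractible; so the loop $\xi$ factors through a contractible set and is null-homotopic, contradicting its nontriviality. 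Thus no such $\Omega$ exists.

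For statement~\eqref{item:odd-Fuchsian} ($n$ odd) I would instead use the invariant bilinear form. Here $n-1$ is even, so $\tau_n$ preserves the symmetric nondegenerate form $B_n=-\omega^{\otimes(n-1)}$ of signature $(k_n,\ell_n)$ with $k_n+\ell_n=n$ (see \eqref{eqn:kn-ln}), whence $\rho(\Gamma)\subset\PO(k_n,\ell_n)\subset\PGL(\RR^n)$. The representation $\rho$ is $P_1$-Anosov, hence $P_1^{k_n,\ell_n}$-Anosov, and the computation recalled in Example~\ref{ex:gen-Hitchin} shows that the proximal limit set $\Lambda_{\Gamma}\subset\partial\HH^{k_n,\ell_n-1}$ is negative. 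The implication \eqref{item:Anosov-neg}~$\Rightarrow$~\eqref{item:ccc-CM-Hpq-neg} of Theorem~\ref{thm:main-POpq-reducible} then shows directly that $\rho(\Gamma)$ is strongly convex cocompact in $\PP(\RR^n)$. Equivalently, one may observe that the negativity of $\Lambda_{\Gamma}$ produces a nonempty $\Gamma$-invariant properly convex open set as in Section~\ref{subsec:proof-POpq-6->4}, and then invoke Theorem~\ref{thm:Ano-PGL}.

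The main obstacle is the precise topological identification in statement~\eqref{item:even-Fuchsian}: one must pin down the boundary map as the degree-$(n-1)$ Veronese curve and track its class in $\pi_1(\PP(\RR^n))$ through the parity of $n-1$, taking care that the argument degrades gracefully to the case $n=2$, where $\pi_1(\PP(\RR^2))=\ZZ$ rather than $\ZZ/2\ZZ$ but the lift-closing criterion $w(1)=-w(0)$ still detects a nontrivial loop. The remaining steps---the affine-chart contractibility contradiction and the appeal to the already-established equivalences of Theorem~\ref{thm:main-POpq-reducible} in the odd case---are then routine.
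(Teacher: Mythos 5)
Your proposal is correct and follows essentially the same route as the paper: identify the boundary map as the degree-$(n-1)$ Veronese curve, use the parity of $n-1$ to detect a nontrivial loop in $\pi_1(\PP(\RR^n))$ obstructing any invariant properly convex set when $n$ is even, and use the invariant form of signature $(k_n,\ell_n)$ when $n$ is odd. The only (harmless) differences are cosmetic: you detect nontriviality via the double-cover lifting criterion rather than by homotoping the Veronese curve to a linear embedding, and in the odd case you route through the negativity of $\Lambda_\Gamma$ and Theorem~\ref{thm:main-POpq-reducible} instead of citing \cite[Prop.\,1.17 \& 1.19]{dgk-ccHpq}, which amounts to the same argument.
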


For even~$n$, the fact that a Fuchsian representation~$\rho$ cannot preserve a properly convex open subset of $\PP(\RR^n)$ also follows from \cite[Th.\,1.5]{ben00}, since in this case $\rho$ takes values in the projective symplectic group $\mathrm{PSp}(n/2,\RR)$.

\begin{proof}
\eqref{item:odd-Fuchsian} Suppose $n = 2k+1$ is odd.
Then $\rho$ takes values in the projective orthogonal group $\PSO(k, k+1)\simeq\SO(k,k+1)$, and so $\rho(\Gamma)$ is strongly convex cocompact in $\PP(\RR^n)$ by \cite[Prop.\,1.17 \& 1.19]{dgk-ccHpq}.

\eqref{item:even-Fuchsian} Suppose $n = 2k$ is even.
Then $\rho$ takes values in the symplectic group $\Sp(n,\RR)$.
It is well known that, in natural coordinates identifying $\partial_{\infty} \Gamma \simeq \SS^1$ with $\PP(\RR^2)$, the boundary map $\xi: \partial_{\infty} \Gamma \to \PP(\RR^n)$ of the $P_1$-Anosov representation~$\rho$ is the \emph{Veronese curve}
$$\PP(\RR^2) \ni [x,y] \longmapsto \left [ \left ( x^{n-1}, x^{n-2}y, \ldots, xy^{n-2}, y^{n-1} \right ) \right ] \in \PP(\RR^n).$$
This map is homotopic to the map $[x,y] \mapsto [(x^{n-1}, 0, \ldots, 0, y^{n-1})]$ which, since $n-1$ is odd, is a homeomorphism from $\PP(\RR^2)$ to the projective plane spanned by the first and last coordinate vectors, hence is nontrivial in $\pi_1(\PP(\RR^n))$.
Hence the image of the boundary map $\xi$ crosses every hyperplane in $\PP(\RR^n)$.
It follows that $\rho(\Gamma)$ does not preserve any properly convex open subset of $\PP(\RR^n)$, because the image of~$\xi$ must lie in the boundary of any $\rho(\Gamma)$-invariant such set.
\end{proof}

\begin{proof}[Proof of Proposition~\ref{prop:Hitchin}]
\eqref{item:Hit-odd} Suppose $n$ is odd.
By \cite{lab06,fg06}, all Hitchin representations $\rho : \Gamma\to\PSL(\RR^n)$ are $P_1$-Anosov.
Moreover, $\rho(\Gamma)$ is convex cocompact in $\PP(V)$ as soon as $\rho$ is Fuchsian, by Lemma~\ref{lem:odd-even}.\eqref{item:odd-Fuchsian}.
Applying Proposition~\ref{prop:conn-comp-Ano}, we obtain that for any Hitchin representation $\rho : \Gamma\to\PSL(\RR^n)$ the group $\rho(\Gamma)$ is convex cocompact in $\PP(V)$, hence strongly convex cocompact since $\Gamma$ is word hyperbolic (Theorem~\ref{thm:main-noPETs}).

\eqref{item:Hit-even} Suppose $n$ is even.
By Lemma~\ref{lem:odd-even}, for any Fuchsian $\rho : \Gamma\to\PSL(\RR^n)$, the image of the boundary map of the $P_1$-Anosov representation~$\rho$ defines a nontrivial loop in $\PP(\RR^n)$.
Since the assignment of a boundary map to an Anosov representation is continuous \cite[Th.\,5.13]{gw12}, we obtain that for any Hitchin $\rho : \Gamma\to\PSL(\RR^n)$, the image of the boundary map of the $P_1$-Anosov representation~$\rho$ defines a nontrivial loop in $\PP(\RR^n)$.
In particular, $\rho(\Gamma)$ does not preserve any nonempty properly convex open subset of $\PP(\RR^n)$, by the same reasoning as in the proof of Lemma~\ref{lem:odd-even}.
More generally, $\rho$ does not preserve any nonempty properly convex subset of $\PP(\RR^n)$, since any such set would have nonempty interior by irreducibility of the Hitchin representation~$\rho$ (see \cite[Lem.\,10.1]{lab06}).
\end{proof}

%%%%%%%%%%%%%%%%%%%%%%%%%
\subsection{Convex cocompact groups coming from divisible convex sets} \label{subsec:QF-div}

Recall that any discrete subgroup $\Gamma$ of $\PGL(V)$ dividing a properly convex open subset $\Omega \subset \PP(V)$ is convex cocompact in $\PP(V)$ (Example~\ref{ex:div-implies-cc}).
The group $\Gamma$ is strongly convex cocompact if and only if it is word hyperbolic (Theorem~\ref{thm:main-noPETs}), and this is equivalent to the fact that $\Omega$ is strictly convex (by \cite{ben04} or Theorem~\ref{thm:main-noPETs} again).

In this section, we produce convex cocompact groups which are not strongly convex cocompact and do not divide convex domains.
We do this via two constructions: one by deformation (Section~\ref{subsubsec:divisible-deform}) and one by restriction to subgroups (Section~\ref{subsubsec:benoistPET}).
Both constructions start with a group dividing a properly convex but not strictly convex open set $\Omega \subset \PP(V)$; we now recall how such groups can be obtained.

%%%%%%%%
\subsubsection{Groups $\Gamma$ dividing a properly convex but not strictly convex open set} \label{subsubsec:ex-div}

Examples have been obtained in several ways:
\begin{enumerate}[label=(\roman*)] 
  \item \label{item:ex-ccc1} letting a cocompact lattice of $\SL(\RR^m)$ act on the Riemannian symmetric space of $\SL(\RR^m)$, realized as a properly convex open subset $\Omega_m$ of $\PP(\RR^{m(m+1)/2})$ as follows: identify $\RR^{m(m+1)/2}$ with the space of symmetric $(m\times m)$ real matrices, and let $\Omega_m$ be the projectivization of the positive definite symmetric matrices (the corresponding examples are called \emph{symmetric}; there are also complex, quaternionic, and octonionic variants: see \cite[\S\,2.4]{ben08});
  \item \label{item:ex-ccc2} letting certain Coxeter groups act by reflections on $\PP(\RR^n)$, for $4\leq n \leq 7$, with maximal abelian subgroups isomorphic to $\ZZ^{n-2}$: see~\cite[Prop.\,4.2]{ben06};
  \item \label{item:ex-ccc3} deforming the holonomies of certain hyperbolic 3-manifolds in $\PGL(\RR^4)$ so that the cusp groups become diagonalizable (the resulting groups are convex cocompact: see \eg Lemma~\ref{lem:PETfriendly} below), and doubling across peripheral tori \cite{bdl18} (this idea is already implicit in the examples of \cite[\S\,4.3]{ben06});
  \item \label{item:ex-ccc4} letting other Coxeter groups, with maximal abelian subgroups isomorphic to~$\ZZ^{n-3}$, divide a properly convex open subset of $\PP(\RR^n)$, for $5\leq n\leq 7$,  using a Dehn filling construction~\cite{clm20}.
\end{enumerate}

In \ref{item:ex-ccc2} for $n=4$, as well as in \ref{item:ex-ccc3}, there are subgroups (virtually) isomorphic to $\ZZ^2$ that stabilize PETs (all pairwise disjoint) in $\Omega \subset \PP(\RR^4)$: Benoist~\cite{ben06} proved that this is in fact \emph{always} the case for nonhyperbolic divisible convex sets in $\PP(\RR^4)$; the group then splits as a graph of groups, with the PET stabilizers as edge groups, and the PETs project to the JSJ decomposition of $\Gamma \backslash \Omega$ in the sense of Thurston's geometrization (all pieces are hyperbolic).

%%%%%%%%
\subsubsection{Convex cocompact deformations of groups dividing a convex set} \label{subsubsec:divisible-deform}

Let $\Gamma$ be a discrete subgroup of $\PGL(V)$ dividing a nonempty properly convex open subset $\Omega$ of $\PP(V)$.
Let $\hat \Gamma$ be the lift of $\Gamma$ to $\SL^{\pm}(V)$ that preserves a properly convex cone of $V$ lifting~$\Omega$ (see Remark~\ref{rem:lift-Gamma}).
By Theorem~\ref{thm:properties}.\ref{item:stable}--\ref{item:include}, any small deformation of the inclusion of $\hat \Gamma$ into a larger projective linear group $\PGL(V \oplus V')$ is convex cocompact in $\PP(V \oplus V')$.
The issue is to find nontrivial such deformations: for instance, there are none in case~\ref{item:ex-ccc1} above for $m\geq 3$, by Margulis superrigidity.
 
The situation is more favorable in cases \ref{item:ex-ccc2} and~\ref{item:ex-ccc3} above, for instance when $n=\nolinebreak 4$: the group splits along the PET stabilizers, hence the inclusion of $\hat \Gamma$ into $\PGL(\RR^4 \oplus \RR^{n'})$ may be deformed by a Johnson--Millson bending~\cite{jm87}.
(Note that such deformations exist already when $n'=0$, \eg so-called \emph{bulging deformations}.)
Since the PET stabilizers in $\hat \Gamma \subset \SL^\pm(\RR^4)$ have $1$ as an eigenvalue, the bending matrices may mix the summands of $\RR^4 \oplus \RR^{n'}$, so that the convex core is no longer contained in a copy of $\PP(\RR^4)$, and the group may even act irreducibly on $\PP(\RR^{4+n'})$, see the forthcoming paper~\cite{dgk-bad-ex}. Small such deformations give examples of discrete subgroups which are convex cocompact in $\PP(\RR^4 \oplus \RR^{n'})$ without being word hyperbolic, and which do not divide a properly convex~set.

%%%%%%%%
\subsubsection{Convex cocompact actions coming from divisible convex sets by taking subgroups} \label{subsubsec:benoistPET}

Nonhyperbolic groups dividing a properly convex open set $\Omega \subset \PP(\RR^4)$ admit nonhyperbolic subgroups that are still convex cocompact in $\PP(\RR^4)$, but that do not divide any properly convex open subset of $\PP(\RR^4)$.  

Indeed, let $\Gamma$ be a torsion-free, discrete subgroup of $\PGL(\RR^4)$ dividing a nonempty properly convex open subset $\Omega$ of $\PP(\RR^4)$ containing PETs, as above. 
By~\cite{ben06}, the PETs descend to a finite collection $\mathcal{T}$ of pairwise disjoint planar tori and Klein bottles in the closed manifold $N := \Gamma \backslash \Omega$.
The universal cover of one connected component $M$ of $N\smallsetminus \mathcal{T}$ identifies with a convex subset $\C_M$ of~$\Omega$ whose interior is a connected component of the complement in $\Omega$ of the union of all PETs, and whose nonideal boundary $\partialn\C_M$ is a disjoint union of PETs.
The fundamental group of~$M$ identifies with the subgroup $\Gamma_M$ of $\Gamma$ that preserves~$\C_M$, and it acts properly discontinuously and cocompactly on~$\C_M$.

\begin{lemma}\label{lem:PETfriendly}
The action of $\Gamma_M$ on~$\Omega$ is convex cocompact.
\end{lemma}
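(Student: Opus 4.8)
The plan is to deduce the statement from Lemma~\ref{lem:1-neighb-fib}.\eqref{item:C-core-filled-lim-set}, applied to the closed convex set $\C_M$. Since $\Gamma_M$ acts properly discontinuously and cocompactly on $\C_M\subset\Omega$, that lemma already gives $\partiali\C_M\subseteq\Lambdao_\Omega(\Gamma_M)$. To upgrade this to convex cocompactness of the $\Gamma_M$-action on $\Omega$ in the sense of Definition~\ref{def:cc-general}, it suffices to verify the hypothesis $\C_M\supseteq\Ccore_\Omega(\Gamma_M)$ of the ``moreover'' clause: the lemma then yields $\partiali\C_M=\Lambdao_\Omega(\Gamma_M)$, together with nonemptiness and closedness of $\Ccore_\Omega(\Gamma_M)$ and cocompactness of the $\Gamma_M$-action on it. Because $\C_M$ is convex and closed in $\Omega$, the inclusion $\Ccore_\Omega(\Gamma_M)\subseteq\C_M$ follows from the single containment $\Lambdao_\Omega(\Gamma_M)\subseteq\partiali\C_M$; combined with the reverse inclusion above, the real goal is the equality $\Lambdao_\Omega(\Gamma_M)=\partiali\C_M$.

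First I would fix a point $z\in\Omega$ and a sequence $\gamma_m\to\infty$ in $\Gamma_M$ with $\gamma_m\cdot z\to w\in\partial\Omega$, and choose a nearest-point projection $z'\in\C_M$ of $z$ for the Hilbert metric $d_\Omega$, so that $d_\Omega(\gamma_m\cdot z,\gamma_m\cdot z')=d_\Omega(z,z')$ is constant by $\Gamma_M$-invariance of $d_\Omega$. Proper discontinuity of $\Gamma_M$ on $\Omega$ forces $\gamma_m\cdot z'$ to leave every compact subset of $\Omega$; since $\gamma_m\cdot z'\in\C_M$, a subsequential limit $w'$ lies in $\overline{\C_M}\cap\partial\Omega=\partiali\C_M$.

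It then remains to show $w\in\partiali\C_M$. If $w=w'$ this is immediate. Otherwise I would use the standard Hilbert-geometry fact that two sequences converging to \emph{distinct} boundary points while staying at bounded $d_\Omega$-distance must have limits spanning a segment of $\partial\Omega$ (compare \cite[Lem.\,2.6]{dgk-ccHpq} and Corollary~\ref{cor:compare-Hilb-Eucl}); hence the open segment $(w,w')$ lies in $\partial\Omega$, and therefore in the open stratum $F_{w'}$ of $\partial\Omega$ at $w'$, giving $w\in\overline{F_{w'}}$. The key structural input, coming from Benoist's description \cite{ben06} of $\partial\Omega$, is that $\partiali\C_M$ is \emph{saturated}, i.e.\ a union of open strata of $\partial\Omega$: the only nontrivial segments of $\partial\Omega$ are the edges of PETs of $\Omega$, each edge of a PET bounding $\C_M$ lies on $\overline{\C_M}$ hence in $\partiali\C_M$, and every other point of $\partial\Omega$ meeting $\overline{\C_M}$ is an extreme point of $\Omega$. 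Thus $F_{w'}\subseteq\partiali\C_M$, and since $\partiali\C_M$ is closed in $\PP(V)$ by Lemma~\ref{lem:closed-ideal-boundary}, we conclude $w\in\overline{F_{w'}}\subseteq\partiali\C_M$. This proves $\Lambdao_\Omega(\Gamma_M)\subseteq\partiali\C_M$, whence $\Ccore_\Omega(\Gamma_M)\subseteq\C_M$ and the claim via Lemma~\ref{lem:1-neighb-fib}.

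The main obstacle will be this last step, namely controlling the full orbital limit set $\Lambdao_\Omega(\Gamma_M)$, and specifically establishing the saturation of $\partiali\C_M$. This is exactly where the geometry of Benoist's decomposition enters: the PET-walls cut $\Omega$ along totally geodesic triangles whose edges lie in $\partial\Omega$, so that the ideal boundary of each piece $\C_M$ consists of extreme points together with full PET edges. The bounded-distance boundary-limit lemma is routine, but identifying precisely which strata of $\partial\Omega$ meet $\overline{\C_M}$, and checking that each such stratum is entirely contained in $\partiali\C_M$, is the part that requires care.
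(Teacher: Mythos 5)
Your overall strategy is the right one and matches the paper's: reduce via Lemma~\ref{lem:1-neighb-fib}.\eqref{item:C-core-filled-lim-set} to showing $\Lambdao_\Omega(\Gamma_M)\subseteq\partiali\C_M$, and produce, from an orbit escaping to a point $w\in\partial\Omega\smallsetminus\partiali\C_M$, a nontrivial segment of $\partial\Omega$ joining $w$ to a point $w'\in\partiali\C_M$. But the endgame has two problems. The lesser one is technical: from $(w,w')\subseteq\partial\Omega$ you cannot conclude $(w,w')\subseteq F_{w'}$. The open stratum $F_{w'}$ only picks up open segments of $\partial\Omega$ containing $w'$ in their \emph{interior}; if $w'$ happens to be an endpoint of the maximal segment of $\partial\Omega$ on the line through $w$ and $w'$ (e.g.\ a vertex of a PET), then $F_{w'}$ is disjoint from $(w,w')$ and your conclusion $w\in\overline{F_{w'}}$ fails. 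This is fixable the way the paper does it: keep the two auxiliary boundary points $a,b$ with $a,y,z,b$ aligned, use that the cross-ratio is preserved and finite to force the four limit points to be pairwise distinct, and conclude that $w$ and $w'$ both lie in the \emph{interior} of a common segment of $\partial\Omega$.

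The serious gap is the ``saturation'' claim itself. Your justification reduces to the assertion that ``every other point of $\partial\Omega$ meeting $\overline{\C_M}$ is an extreme point,'' i.e.\ that the open edge of a PET which is \emph{not} a wall of $\C_M$ never meets $\overline{\C_M}$. You state this and flag it as the part requiring care, but you never prove it --- and it is exactly the content of the paper's proof. The limit segment lies on the boundary of a single PET $T$ (by Benoist, every segment of $\partial\Omega$ lies on a PET, and distinct PETs have disjoint closures). If $T$ is a wall of $\C_M$, then $\overline{T}\subseteq\overline{\C_M}$ and $w\in\partiali\C_M$, contrary to assumption. If $T$ is not a wall, the paper derives the contradiction: $\overline{T}$ is connected, meets $\overline{\C_M}$ at $w'$ but is not contained in it because of $w$, hence must cross the closure of one of the wall PETs constituting $\partialn\C_M$, contradicting the disjointness of PET closures. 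Without this argument (or a substitute), your key structural claim is unsupported and the proof is incomplete.
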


\begin{proof}
By Corollary~\ref{cor:ideal-bound-naive-cc}.\eqref{item:ideal-bound-cc}, it is enough to check that $\Ccore_\Omega(\Gamma_M) \subset \C_M$.
Since each component of $\partialn \C_M$ is planar, $\C_M$ is the convex hull of its ideal boundary $\partiali \C_M$, and so it is enough to show that $\Lambdao_\Omega(\Gamma_M) \subset \partiali \C_M$.
Suppose by contradiction that this is not the case: namely, there exists $x \in \Omega \smallsetminus \C_M$ and a sequence $(\gamma_m)$ in $\Gamma_M$ such that $(\gamma_m\cdot x)$ converges to some $x_{\infty} \in \Lambdao_\Omega(\Gamma_M) \smallsetminus \partiali \C_M$.
Let $y$ be the point of $\partialn\C_M$ which is closest to~$x$ for the Hilbert metric $d_{\Omega}$; it is contained in a PET of $\partialn\C_M$.
Let $a,b\in\partial\Omega$ be such that $a,x,y,b$ are aligned in that order.
Up to taking a subsequence, we may assume that $(\gamma_m\cdot a)$, $(\gamma_m\cdot y)$, $(\gamma_m\cdot b)$ converge respectively to some $a_{\infty}, y_{\infty}, b_{\infty} \in \partial\Omega$, with $y_{\infty} \in \partiali \C_M$ and $a_{\infty}, x_{\infty}, y_{\infty}, b_{\infty}$ aligned in that order.
Since $\cro{\gamma_m\cdot a}{\gamma_m\cdot x}{\gamma_m\cdot y}{\gamma_m\cdot b} = \cro{a}{x}{y}{b} \in (1,+\infty)$ for all~$m$, and $x_{\infty} \neq y_{\infty}$, and all segments $[\gamma_m \cdot a, \gamma_m \cdot b]$ and $[a_{\infty}, b_{\infty}]$ lie in an affine chart containing~$\overline{\Omega}$, the points $a_{\infty}, x_{\infty}, y_{\infty}, b_{\infty}$ are pairwise distinct and contained in a segment of $\partial\Omega$. 
However, any segment on $\partial \Omega$ lies on the boundary of some PET~\cite{ben06}.
Thus we have found a PET whose closure intersects the closure of $\C_M$ but is not contained in it; its closure must cross the closure of a second PET on $\partialn \C_M$, contradicting the fact~\cite{ben06} that PETs have disjoint closures.
\end{proof}

%%%%%%%%%%%%%%%%%%%%%%%%%
\subsection{Convex cocompact groups as free products} \label{subsec:other-ex-nonhyp-cc}

Being convex cocompact in $\PP(V)$ is a much more flexible property than dividing a properly convex open subset of $\PP(V)$, and there is a rich world of examples, which we shall explore in forthcoming work \cite{dgk-bad-ex}.
In particular, we shall prove the following.

\begin{proposition}[{\cite{dgk-bad-ex}}]
Let $\Gamma_1$ and~$\Gamma_2$ be infinite discrete subgroups of $\PGL(V)$ which are convex cocompact in $\PP(V)$ but do not divide any nonempty properly convex open subset of $\PP(V)$.
Then there exists $g\in\PGL(V)$ such that the group generated by $\Gamma_1$ and $g\Gamma_2g^{-1}$ is isomorphic to the free product $\Gamma_1\ast\Gamma_2$ and is convex cocompact in $\PP(V)$. 
\end{proposition}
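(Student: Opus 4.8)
The plan is to run a projective ping-pong argument in the spirit of the Klein combination theorem, packaging the dynamics through the bisaturated-boundary description of convex cocompactness. First I would replace each $\Gamma_i$ by a convenient geometric model: by the equivalence \eqref{item:ccc-limit-set}~$\Leftrightarrow$~\eqref{item:ccc-fib} of Theorem~\ref{thm:main-general}, each $\Gamma_i$ acts properly discontinuously and cocompactly on a properly convex set $\C_i$ with bisaturated boundary, with interior $\Omega_i := \Int(\C_i)$ and $\partiali\C_i = \Lambdao_{\Omega_i}(\Gamma_i)$ by Corollary~\ref{cor:bisat-interior}. The hypothesis that $\Gamma_i$ does not divide any properly convex open set is exactly what guarantees $\partialn\C_i \neq \emptyset$; this is the ``room'' that makes combination possible. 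By Proposition~\ref{prop:dual-precise} I would also keep the dual sets $\C_i^*$ (with bisaturated boundary) at hand, so that the limit data in $\PP(V)$ and the supporting-hyperplane data in $\PP(V^*)$ can be controlled simultaneously.

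The second step is to choose $g$ so as to put $\Lambda_1 := \partiali\C_1$ and $g\Lambda_2 := g\cdot\partiali\C_2$ in \emph{transverse general position}: I would require that no point of $g\Lambda_2$ lie on a supporting hyperplane of $\C_1$ meeting $\partiali\C_1$ (i.e.\ on a hyperplane of $\partiali\C_1^*$), and symmetrically with the roles reversed, and moreover that $g\C_2$ lie inside one of the properly convex open regions cut off from $\C_1$ by a supporting hyperplane at $\partialn\C_1$ (and vice versa). Such $g$ exist because the forbidden configurations are closed conditions of positive codimension while the nonideal boundaries supply an open set of admissible placements. This transversality is precisely what is needed to preserve proper convexity under amalgamation, mirroring the role of $\bigcup_{z^*\in\Lambda^*_{\Gamma}} z^*$ in Proposition~\ref{prop:max-inv-conv}.\eqref{item:Omega-max} and in Lemma~\ref{lem:Omega'}.

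With $\Gamma := \langle \Gamma_1, g\Gamma_2 g^{-1}\rangle$, the two separating hyperplanes produce ping-pong regions $X_1, X_2 \subset \PP(V)$ such that every nontrivial element of $\Gamma_1$ carries $X_2$ into $X_1$ and every nontrivial element of $g\Gamma_2 g^{-1}$ carries $X_1$ into $X_2$; the table-tennis lemma then yields both $\Gamma \cong \Gamma_1 * \Gamma_2$ and proper discontinuity. To obtain cocompactness I would take $\C$ to be the closure, inside a suitable $\Gamma$-invariant properly convex open set, of the convex hull of $\Gamma\cdot(\C_1 \cup g\C_2)$ (dually, cut $\PP(V)$ by the $\Gamma$-orbit of the separating hyperplanes), so that a compact fundamental domain is assembled from those of $\Gamma_1$ on $\C_1$ and of $g\Gamma_2 g^{-1}$ on $g\C_2$ together with the ``neck''. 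Verifying that $\C$ is properly convex (using the transversality above) and has bisaturated boundary, I would conclude by Theorem~\ref{thm:main-general} that $\Gamma$ is convex cocompact in $\PP(V)$.

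I expect the main obstacle to be the absence of word hyperbolicity: since $\partiali\C_i$ may contain nontrivial segments (the factors may contain PETs), there is no uniform topological contraction toward $\Lambda_i$, so the ping-pong cannot be run naively on neighborhoods of the limit sets. Instead one must phrase it coarsely, in terms of the Hilbert geometry and the bisaturated boundaries, controlling the images of the separating half-spaces under large group elements via Lemma~\ref{lem:vector-growth} and the divergence estimates of Section~\ref{sec:conv-hull}. Ensuring that the amalgamated set retains \emph{both} proper convexity and bisaturated boundary under the gluing --- rather than degenerating onto a hyperplane or having its ideal and nonideal boundaries collide --- is the delicate technical heart of the argument.
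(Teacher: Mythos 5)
First, a caveat: this proposition is stated in Section~\ref{subsec:other-ex-nonhyp-cc} with a citation to the forthcoming work \cite{dgk-bad-ex} (``in preparation''), and the present paper contains \emph{no proof of it}, so there is nothing in-paper to compare your argument against. Your overall strategy --- a Klein-type ping-pong packaged through the bisaturated-boundary and duality machinery, with the non-dividing hypothesis supplying $\partialn\C_i\neq\emptyset$ as the room needed for combination --- is certainly the natural one and is consistent with what the authors announce. You also correctly flag that the absence of word hyperbolicity forces the dynamics to be controlled via Lemma~\ref{lem:vector-growth}-type divergence rather than via Anosov contraction.

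That said, as written the proposal has genuine gaps at exactly the points you call ``delicate''. (1) The choice of $g$: the conditions you need are mutual \emph{containments}, not transversality, so a ``closed conditions of positive codimension'' argument does not produce $g$. Concretely, to run ping-pong you want something like $g\overline{\C_2}\subset\Omega_1\smallsetminus\C_1$ \emph{and} $\overline{\C_1}\subset g\Omega_2$ simultaneously (so that Lemma~\ref{lem:limit-hyperplanes}-type accumulation applies in both directions); but the obvious way to achieve the first containment --- composing with a very proximal element to shrink $\overline{\C_2}$ into the spare room of $\Omega_1$ --- also shrinks $g\Omega_2$, which then fails to contain $\overline{\C_1}$. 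Resolving this tension is a real construction, not a genericity statement. (2) The table-tennis inclusion $\gamma\cdot X_2\subset X_1$ must hold for \emph{every} nontrivial $\gamma\in\Gamma_1$, and two separating hyperplanes do not give this: the accumulation argument only controls the tail of the orbit, and $X_1$ must in addition absorb the finitely many remaining translates without meeting $X_2$. (3) Your candidate $\C$ is ``the convex hull of $\Gamma\cdot(\C_1\cup g\C_2)$ inside a suitable $\Gamma$-invariant properly convex open set'' --- but the existence of such an invariant properly convex open set for the combined group is one of the main things to be proved (it must be constructed, e.g.\ as an intersection $\bigcap_{\gamma}\gamma\cdot\mathcal{V}$ as in Lemma~\ref{lem:Omega'}, with openness established by a limit-of-hyperplanes argument), so the construction is circular as stated. (4) Even granting all of the above, cocompactness on $\Ccore_\Omega(\Gamma)$ requires showing that the convex hull of the tree-like union of translates of $\C_1\cup g\C_2$ stays within bounded Hilbert distance of that union (the ``necks'' between adjacent pieces must be controlled); this is asserted but is precisely where the argument could fail, since convex hulls of unions of convex sets are not in general neighborhoods of the union. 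So: right skeleton, but the proof is not yet there.
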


This yields many examples of non word hyperbolic convex cocompact groups. 
For instance, one could take $\Gamma_1=\Gamma_2 \subset \PGL(V')$ equal to one of the superrigid symmetric examples \ref{item:ex-ccc1} of Section~\ref{subsubsec:ex-div}, embedded into $\PGL(V)$ for some $V=V'\oplus \RR^n$.

%%%%%%%%%%%%%%%%%%%%%%%%%%%%%%%%%%%%%%%%%%%%%%%%%%%
%%%%%%%%%%%%%%%%%%%%%%%%%%%%%%%%%%%%%%%%%%%%%%%%%%%
\appendix

%%%%%%%%%%%%%%%%%%%%%%%%%%%%%%%%%%%%%%%%%%%%%%%%%%%
\section{Some open questions} \label{app:open-q}

Here we list some open questions about discrete subgroups $\Gamma$ of $\PGL(V)$ that are convex cocompact in $\PP(V)$.
The case that $\Gamma$ is word hyperbolic boils down to Anosov representations by Theorem~\ref{thm:Ano-PGL} and is reasonably understood; on the other hand, the case that $\Gamma$ is \emph{not} word hyperbolic corresponds to a new class of discrete groups whose study is still in its infancy.
Most of the following questions are interesting even in the case that $\Gamma$ divides a (nonstrictly convex) properly convex open subset of $\PP(V)$.

\emph{We fix a discrete subgroup $\Gamma$ of $\PGL(V)$ acting convex cocompactly on a nonempty properly convex open subset $\Omega$ of $\PP(V)$}.

\begin{question} \label{qu:cc-subgroup}
Which finitely generated subgroups $\Gamma'$ of~$\Gamma$ are convex cocompact in $\PP(V)$?
\end{question}

Such subgroups include all finite-index subgroups of~$\Gamma$ (see Lemma~\ref{lem:finite-index}).
In general, they are quasi-isometrically embedded in~$\Gamma$, by Corollary~\ref{cor:QI-embed}.
Conversely, when $\Gamma$ is word hyperbolic, any quasi-isometrically embedded (or equivalently quasi-convex) subgroup $\Gamma'$ of~$\Gamma$ is convex cocompact in $\PP(V)$: indeed, the boundary maps for the $P_1$-Anosov representation $\Gamma\hookrightarrow\PGL(V)$ induce boundary maps for $\Gamma'\hookrightarrow\PGL(V)$ that make it $P_1$-Anosov, hence $\Gamma'$ is convex cocompact in $\PP(V)$ by Theorem~\ref{thm:Ano-PGL}.

When $\Gamma$ is not word hyperbolic, Question~\ref{qu:cc-subgroup} becomes more subtle.
For instance, let $\Gamma\simeq\ZZ^2$ be the subgroup of $\PGL(\RR^3)$ consisting of all diagonal matrices whose entries are powers of some fixed $t>0$; it is convex cocompact in $\PP(\RR^3)$ (see Example~\ref{ex:Zn}).
Any cyclic subgroup $\Gamma' = \langle \gamma' \rangle$ of~$\Gamma$ is quasi-isometrically embedded in~$\Gamma$.
However, $\Gamma'$ is convex cocompact in $\PP(V)$ if and only if $\gamma'$ has distinct eigenvalues (see Examples~\ref{ex:triangle}.\eqref{ex:triangle:cc}--\eqref{ex:triangle:borderline}).

\begin{question}
Assume $\Omega$ is indecomposable.
Under what conditions is $\Gamma$ relatively hyperbolic, relative to a family of virtually abelian subgroups?
\end{question}

This is always the case if $\dim(V)\leq 3$.
If $\dim(V)=4$ and $\Gamma$ divides $\Omega$, then this is also seen to be true from work of Benoist~\cite{ben06}: in this case there are finitely many conjugacy classes of PET stabilizers in $\Gamma$, which are virtually $\ZZ^2$, and $\Gamma$ is relatively hyperbolic with respect to the PET stabilizers (using \cite[Th.\,0.1]{dah03}).
However, when $\dim(V)=m(m-1)/2$ for some $m\geq 3$, we can take for $\Omega\subset\PP(V)$ the projective realization of the Riemannian symmetric space of $\SL_m(\RR)$ (see \cite[\S\,2.4]{ben08}) and for $\Gamma$ a uniform lattice of $\SL_m(\RR)$: this $\Gamma$ is \emph{not} relatively hyperbolic with respect to any subgroups, see \cite{bdm09}.

\begin{question}\label{que:best}
If $\Gamma$ is not word hyperbolic, must there be a properly embedded maximal $k$-simplex invariant under some subgroup isomorphic to $\ZZ^k$ for some $k\geq 2$?
\end{question}

This question is a specialization, to the class of convex cocompact subgroups of $\PGL(V)$, of the following more general question (see \cite[Q\,1.1]{bes-questions}): if $\mathcal{G}$ is a finitely generated group admitting a finite $K(\mathcal{G},1)$, must it be word hyperbolic as soon as it does not contain any Baumslag--Solitar group $\mathrm{BS}(m,n)$?
Note that if $|n| = |m|$, then $\mathrm{BS}(m,n)$ contains $\ZZ^2$ (and indeed $\mathrm{BS}(1,1) = \ZZ^2$), while if $|n| \neq |m|$, then any linear embedding of $\mathrm{BS}(m,n)$ contains unipotent elements.
Hence, by Theorem~\ref{thm:properties}.\ref{item:cc-no-unipotent}, our convex cocompact group $\Gamma\subset\PGL(V)$ contains a Baumslag--Solitar group if and only if it contains~$\ZZ^2$.

In light of the equivalence ~\ref{item:ccc-hyp} $\Leftrightarrow$~\ref{item:P1Anosov} of Theorem~\ref{thm:main-noPETs}, we ask the following:

\begin{question}
When $\Gamma$ is not word hyperbolic, is there a dynamical description, similar to $P_1$-Anosov, that characterizes the action of $\Gamma$ on $\PP(V)$, for example in terms of $\Lambdao_\Omega(\Gamma)$ and divergence of Cartan projections?
\end{question}

While this question is vague, a good answer could lead to the definition of new classes of nonhyperbolic discrete subgroups in other higher-rank reductive Lie groups for which there is not necessarily a good notion of convexity.

A variant of the following question was asked by Olivier Guichard.

\begin{question}
In $\Hom(\Gamma,\PGL(V))$, does the interior of the set of naively convex cocompact representations consist of convex cocompact representations?
\end{question}

Here we say that a representation is naively convex cocompact (\resp convex cocompact) if it is faithful and if its image is naively convex cocompact in $\PP(V)$ (\resp convex cocompact in $\PP(V)$) in the sense of Definition~\ref{def:cc-naive} (\resp Definition~\ref{def:cc-general}).

%%%%%%%%%%%%%%%%%%%%%%%%%%%%%%%%%%%%%%%%%%%%%%%%%%%
\section{Limits of Hilbert balls} \label{app:hilbert}

For $n\geq 3$, let $\Omega \subset \PP(\RR^n)$ be a properly convex open set.
As in Definition~\ref{def:face}, for any $z \in \partial\Omega$, the \emph{open face} $F$ of $\partial\Omega$ at~$z$ is the union of $\{z\}$ and of all open segments of $\partial\Omega$ containing~$z$.
It is the largest convex subset of $\partial\Omega$ containing~$z$ which is relatively open, in the sense that it is open in the projective subspace $\PP(W)$ that is spans.
In particular, we can consider the Hilbert metric $d_F$ on~$F$ seen as a properly convex open subset of $\PP(W)$.

Let $R>0$.
By Lemma~\ref{lem:unif-neighb-face}.\eqref{item:distance-goes-down}, any Hausdorff limit of $R$-balls of $(\Omega,d_{\Omega})$ whose centers converge to~$z$, is contained in the $R$-ball of $(F,d_F)$ centered at~$z$.
In this appendix, we investigate to what extent the limit may be smaller than an $R$-ball.

The following example shows that the limit may be a point even when the face $F$ is not a point.
Thus \cite[Prop.\,7.7]{mar12bis} is not correct as stated.

\begin{example} \label{ex:twocones}
Consider $\HH^3$ as a properly convex open subset of $\PP(\RR^4)$ as in Example~\ref{ex:H-p}.
Let $\gamma \in \mathrm{Isom}(\HH^3) \subset \PGL(\RR^4)$ be a unipotent element.
Then $\gamma$ fixes pointwise a certain projective line $\ell$ of $\PP(\RR^4)$ tangent to $\partial \HH^3$ at a point~$z$.
Let $(a,b)$ be an open segment of $\ell$ containing~$z$, and let $\Omega$ be the interior of the convex hull of $\HH^3 \cup (a,b)$.
By construction, $\gamma$ preserves~$\Omega$, and $F := (a,b)$ is a face of $\partial\Omega$.
For any compact subset $B$ of $\HH^3$ (or indeed of $\PP(\RR^4) \smallsetminus \ell$), we have $\gamma^n\cdot B \to \{z\} \subset F$ as $n\to +\infty$.
In particular, we can take $B$ to be a closed ball of $(\Omega, d_\Omega)$; the sets $\gamma^n\cdot B$ are then all balls of the same radius in~$\Omega$ which limit to a point in~$F$.
\end{example}

The following lemma states that in the case that the centers of the $R$-balls converge conically to~$z$ (Definition~\ref{def:conical}), the Hausdorff limit does contain a nontrivial ball of $(F,d_F)$ centered at~$z$, possibly of smaller radius.
For $R>0$ and $x\in\Omega$ (\resp $z\in F\subset\partial\Omega$), we denote by $\overline{\mathbb{B}}_{\Omega}(x,R)$ (\resp $\overline{\mathbb{B}}_F(z,R)$) the closed ball of radius~$R$ centered at $x$ (\resp $z$) in $(\Omega,d_{\Omega})$ (\resp $(F,d_F)$).

\begin{lemma} \label{lem:loss}
Let $\Omega$ be a properly convex open subset of $\PP(V)$ and $(x_m)_{m\in\NN}$ a sequence of points of~$\Omega$ converging to some $z\in\partial\Omega$.
Suppose there exist a ray $[y,z) \subset \Omega$ and a constant $D>0$ such that $d_\Omega(x_m,[y,z)) \leq D$ for all $m\in\NN$.
Let $F$ be the open face of $z$ in $\partial\Omega$.
Then for any $R\geq 0$, any Hausdorff limit (\ie accumulation point for the Hausdorff topology) of the balls $\overline{\mathbb{B}}_{{\Omega}}(x_m, R)$ is a subset of~$F$ containing the ball $\overline{\mathbb{B}}_{F}(z, f_D(R))$ where
$$f_D(R) := \frac{1}{2} \log \left ( 1+ \frac{e^{2R}-1}{e^{2D}}\right ) \geq R \, e^{-2D}.$$
\end{lemma}

Note that the function $f_D: \RR_+ \rightarrow \RR_+$ is convex for each $D\geq 0$, with $f_0=\mathrm{Id}_{\RR^+}$.
We have $f_D(R) \sim R e^{-2D}$ as $R \rightarrow 0$, and $f_D(R)=R-D+o(1)$ as $R\rightarrow +\infty$. 

When $\Omega$ has dimension~$2$, no loss occurs: Lemma~\ref{lem:loss} holds with $f_D$ replaced by $\mathrm{Id}_{\RR^+}$ (in the proof below, $(a_\infty, b_\infty)=(a,b)$ automatically).

\begin{proof}
Up to passing to a subsequence, we may assume that the balls $\overline{\mathbb{B}}_{{\Omega}}(x_m, R)$ admit a Hausdorff limit.
This limit is a closed convex subset of $\PP(V)$ which is contained in $F\subset\partial\Omega$ by Lemma~\ref{lem:unif-neighb-face}.\eqref{item:distance-goes-down}.
It is sufficient to prove that for any maximal open segment $(a,b) \subset F$ containing~$z$, the limit of the $\overline{\mathbb{B}}_{{\Omega}}(x_m, R)$ contains the point of $(z,b)$ at $d_F$-distance $f_D(R)$ from~$z$.
We now fix such a segment $(a,b)$.

For each $m\in\NN$, choose $y_m \in [y,z)$ such that $d_\Omega(x_m, y_m) \leq D$. 
Consider an arbitrary number $h>R$ (later we will take $h\rightarrow \infty$), and let $z' \in (z,b)$ be such that $d_F(z, z')=h$.
For each $m\in\NN$, choose $y'_m \in [y, z')$ such that the line through $y_m$ and $y'_m$ intersects $[y,a)$ and $[y,b)$.
Since $\Omega$ contains the open triangle $T:=(y,a,b)$, we have $d_{\Omega}(y_m, y'_m) \leq d_T(y_m, y'_m) = \nolinebreak h$ (Remark~\ref{rem:Hilb-metric-include}).
By the triangle inequality, $d_\Omega (x_m, y'_m) \leq d_\Omega (x_m, y_m) + d_\Omega (y_m, y'_m) \leq D+h$. 

Let $a_m, b_m \in \partial \Omega$ be such that $a_m, x_m, y'_m, b_m$ are aligned in this order.
In dimension~$\geq 3$, it is not necessarily the case that $a_m \to a$ or $b_m \to b$. 
However, up to passing to a subsequence we may assume that $a_m \to a_{\infty}$ and $b_m \to b_{\infty}$ for some $a_{\infty},b_{\infty} \in \partial\Omega$ with $(a_\infty, b_\infty) \subset (a,b)$.
As in the proof of Lemma~\ref{lem:unif-neighb-face}.\eqref{item:distance-goes-down}, we have
\begin{equation} \label{eqn:3lines}
d_{(a_\infty, b_\infty)}(z, z') \leq \limsup_{m\to +\infty} d_\Omega (x_m, y'_m) \leq D + h.
\end{equation}
For any~$m$, the point $w_m \in (x_m, b_m)$ with $\cro{a_m}{x_m}{w_m}{b_m} = e^{2R}$ belongs to $\overline{\mathbb{B}}_{{\Omega}}(x_m, R)$.
Therefore the limit of the $\overline{\mathbb{B}}_{{\Omega}}(x_m, R)$ contains the point $w\in (a,b)$ such that 
\begin{equation} \label{eqn:wis}
\cro{a_\infty}{z}{w}{b_\infty}=e^{2R}.
\end{equation}
The assumption $h>R$ implies $w\in (z,z')$: indeed,
\begin{equation} \label{eqn:3nains}
0 < d_{(a_\infty, b_\infty)}(z,w) = R < h = d_{(a,b)}(z,z') \leq d_{(a_\infty, b_\infty)}(z,z')
\end{equation}
since $(a_\infty, b_\infty) \subset (a,b)$.
It is sufficient to prove that for any $\varepsilon>0$, if $h$ has been chosen large enough, then $d_F(z,w) \geq f_D(R) - \varepsilon$.

We map $\PP(\mathrm{span} \{a,b\})$ to the standard projective line by identifying $a,z,z',b$ with $0,1,e^{2h}, \infty \in \PP^1(\RR)$, so that
\begin{equation} \label{eqn:7nains}
0 = a \:\:\leq\:\: a_\infty \:\:<\:\: z = 1 \:\:<\:\: w \:\:<\:\: e^{2h} = z' \:\:<\:\: b_\infty \:\:\leq\:\: b =\infty
\end{equation}
and we aim to show $w\geq e^{2 (f_D(R)-\varepsilon)}$.
The number $\Delta:= \cro{a_\infty}{z}{z'}{b_\infty}$ satisfies
\begin{equation} \label{eqn:Deltais}
e^{2h} \underset{\text{\eqref{eqn:3nains}}}{\leq} \Delta 
\underset{\text{\eqref{eqn:3lines}}}{\leq} e^{2(h+D)} \quad \text{and} \quad
 \Delta \underset{\text{\eqref{eqn:7nains}}}{=} \cro{a_\infty}{1}{e^{2h}}{b_\infty}.
\end{equation} 
Let us express $a_\infty$ and~$w$ in terms of $(\Delta$, $b_\infty)\in [e^{2h}, e^{2(h+D)}]\times (e^{2h},+\infty]$ and of the fixed parameters $h$, $R$:
\begin{align}
a_\infty & =  \frac{(b_\infty - e^{2h})\Delta - e^{2h}(b_\infty-1)}{(b_\infty - e^{2h})\Delta - (b_\infty-1)}  \quad \quad  \text{by the equality of~\eqref{eqn:Deltais};} \label{eqn:b7} 
\end{align}
\begin{align}
w& = \frac{b_\infty e^{2R}(1 - a_\infty) + a_\infty (b_\infty-1)}{e^{2R}(1 - a_\infty) + (b_\infty-1)}
\quad \quad \text{using $z=1$ and~\eqref{eqn:wis}} \notag 
\\ &= 1 + \frac{(e^{2R}-1)(e^{2h}-1)}{\Delta-1} + \frac{(e^{2R}-1)(e^{2h}-1)^2 (\Delta-e^{2R})(\Delta-1)^{-1}}{(b_\infty-e^{2h})(\Delta-1) + (e^{2R}-1)(e^{2h}-1)} \label{eqn:dragon}
\end{align}
by substituting~\eqref{eqn:b7} for $a_\infty$ and a routine computation.
In~\eqref{eqn:dragon}, the variable $b_\infty$ appears only once, and every bracket is positive, using~\eqref{eqn:7nains}--\eqref{eqn:Deltais} and $h>R$.
Hence,
$$\min_{\Delta\in [e^{2h}, e^{2(h+D)}]} \: \min_{b_\infty \in (e^{2h},+\infty]} \, w = 1 + \frac{(e^{2R}-1)(e^{2h}-1)}{e^{2(h+D)}-1} \underset{h \rightarrow +\infty}{\longrightarrow} 1+\frac{e^{2R}-1}{e^{2D}} = e^{2 f_D(R)}. \qedhere$$
\end{proof}

The following example shows that the function $f_D$ of Lemma~\ref{lem:loss} is best possible.

\begin{example} \label{ex:two-p-cones} 
Fix $R,D>0$. 
Let $a:=(-1,0,0)$, $b:=(1,0,0)$, and for each $0<\varepsilon < 1 < p$, consider the properly convex open set
$$\Omega_{\varepsilon, p} : = \mathrm{Int} \left ( \mathrm{Conv} \left \{ a,b, (0,t,t^p)_{t\geq 0}, (0,t,|\varepsilon t|^p)_{t\leq 0} \right \}\right ) \subset \RR^3 \subset \PP^3(\RR). $$
The open segment $F := (a,b)$ is the open face of~$\Omega_{\varepsilon,p}$ at $z:=(0,0,0) \in \partial\Omega_{\varepsilon,p}$.
The map $\gamma_p: (u,v,w) \mapsto (u, v/2, w/2^p)$ preserves $\Omega_{\varepsilon,p}$, and preserves the axis $L := \{(0,0,t)_{t>0}\}$ with endpoint~$z$.
Let $x_0:=(0,1-e^{-2D},1) \in \Omega_{\varepsilon,p}$ and $x_m := \gamma_p^m \cdot x_0 \in \Omega_{\varepsilon,p}$.
As $(p,\varepsilon) \rightarrow (+\infty,0)$, the domain $\Omega_{\varepsilon,p}$ converges for the Hausdorff topology to $\Omega_\infty:=\Pi \times \RR_{>0}$ where
$$\Pi = \{(u,v) \in \RR^2 ~|~ -1 < u<1,~ v<1-|u| \}$$
(projectively, $\Omega_\infty$ is equivalent to a square-based pyramid).
Therefore $d_{\Omega_{\varepsilon,p}}(x_0, L)$ converges to $d_{\Omega_\infty}(x_0, L) = D$ as $(p,\varepsilon) \rightarrow (+\infty,0)$.
Moreover, the balls $\overline{\mathbb{B}}_{\Omega_{\varepsilon,p}} (x_0, R)$ converge, for the Hausdorff topology in $\PP^3(\RR)$, to $\overline{\mathbb{B}}_{\Omega_{\infty}} (x_0, R)$.
But the latter projects on the first axis to $[-s,s]$, where $s := (1+e^{2D-R}/\sinh(R))^{-1}$: this can be seen by computing balls in $(\Pi, d_\Pi)$ (Figure~\ref{fig:quadball}), and using that the projection $\pi_\Pi: \Omega_\infty \rightarrow \Pi$ is $1$-Lipschitz. 
Therefore, for large enough $p$ and small enough~$\varepsilon$, the Hausdorff limit of $(\gamma_p^m\cdot\overline{\mathbb{B}}_{\Omega_{\varepsilon,p}}(x_0, R))_{m\geq 0}$ becomes arbitrarily Hausdorff-close to $[-s,s]\times \{0\}^2$.
Since $\frac{1}{2} \log \cro{-1}{0}{s}{1}=f_D(R)$, it follows that: \emph{for any $R'>f_D(R)$, there exist $p>1$ (large), $\varepsilon>0$ (small) and a sequence $x_m =\gamma_p^m\cdot x_0 \rightarrow z $ in $\Omega_{\varepsilon, p}$ such that the $x_m$ lie within $D$ from the axis $L$ but the Hausdorff limit of the $\overline{\mathbb{B}}_{\Omega_{\varepsilon,p}}(x_m,R)$ does not contain $\overline{\mathbb{B}}_F(z, R')$}.

Note that we could also consider $\langle \gamma_p\rangle$-orbits in the domains $\Omega_p=\lim_{\varepsilon \rightarrow 0}\Omega_{\varepsilon, p}$ (for increasingly large $p$) and obtain similar estimates: but the $\Omega_{\varepsilon, p}$ have the clean feature that their closures intersect the supporting plane $\RR^2\times \{0\}$ precisely along $\overline{F} = [a,b]$.
\end{example}

\begin{figure}
\centering
\labellist
\small\hair 2pt
\pinlabel {$(0,1)$} [u] at 410 477 
\pinlabel {$(1,0)$} [u] at 719 195  
\pinlabel {$(-1,0)$} [u] at -3 195  
\pinlabel {$(0,0)$} [u] at 329 175 
\pinlabel {$\Pi$} [u] at 57 55 
\pinlabel {${(0 , 1\text{-}e^{\text{-}2D})}$} [u] at 300 350 
\pinlabel {$x$} [u] at 375 317 
\pinlabel {${\color{blue} \mathbb{B}_\Pi(x,R)}$} [u] at 237 230 
\pinlabel {$(s,0)$} [u] at 447 175 
\pinlabel {$(\tanh(R),0)$} [u] at 564 173 
\endlabellist
\includegraphics[width = 9cm]{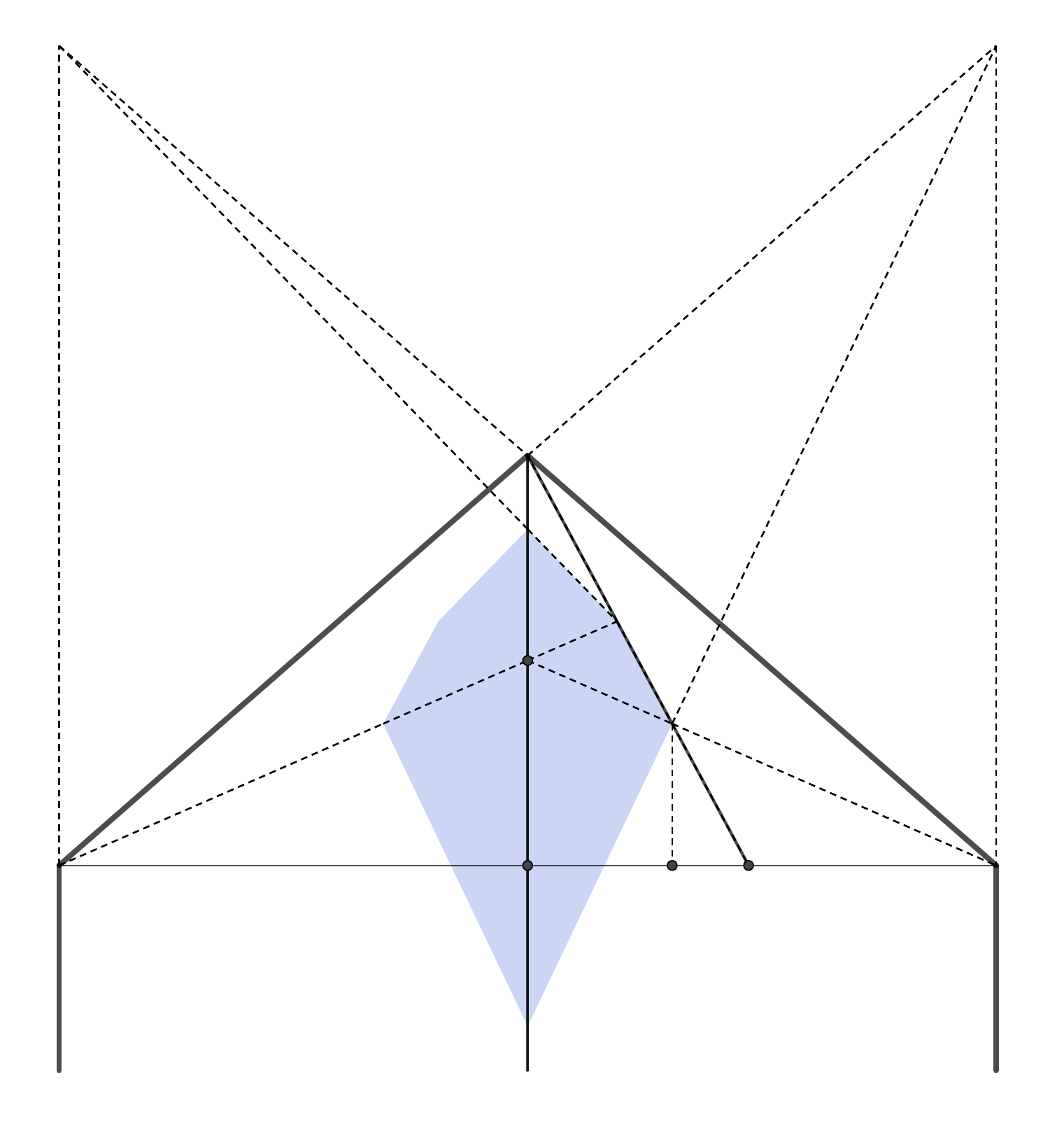}
\caption{A Hilbert ball of radius $R$ in the open set $\Pi \subset \RR^2$ of Example~\ref{ex:two-p-cones}, centered at $x:=\pi_\Pi(x_0)$. The horizontal segment is $[\pi_\Pi(a), \pi_\Pi(b)]$. The number $s$ can be computed as a function of $(R,D)$.}
\label{fig:quadball}
\end{figure}

%%%%%%%%%%%%%%%%%%%%%%%%%%%%%%%%%%%%%%%%%%%%%%%%%%%
%%%%%%%%%%%%%%%%%%%%%%%%%%%%%%%%%%%%%%%%%%%%%%%%%%%

\end{document}